\begin{document}

\newtheorem{theorem}{Theorem}[section]
\newtheorem{prop}[theorem]{Proposition}
\newtheorem{lemma}[theorem]{Lemma}
\newtheorem{cor}[theorem]{Corollary}
\newtheorem{cond}[theorem]{Condition}
\newtheorem{ing}[theorem]{Ingredients}
\newtheorem{conj}[theorem]{Conjecture}
\newtheorem{claim}[theorem]{Claim}
\newtheorem{constr}[theorem]{Construction}
\newtheorem{rem}[theorem]{Remark}
\newtheorem{scheme}[theorem]{Scheme}
\newtheorem{assume}[theorem]{Standing Assumption}

\newtheorem*{theorem*}{Theorem}
\newtheorem*{modf}{Modification for arbitrary $n$}
\newtheorem{qn}[theorem]{Question}
\newtheorem{condn}[theorem]{Condition}

\theoremstyle{definition}
\newtheorem{defn}[theorem]{Definition}
\newtheorem{eg}[theorem]{Example}
\newtheorem{rmk}[theorem]{Remark}

\newcommand{\map}{\rightarrow}
\newcommand{\boundary}{\partial}
\newcommand{\C}{{\mathbb C}}
\newcommand{\integers}{{\mathbb Z}}
\newcommand{\natls}{{\mathbb N}}
\newcommand{\ratls}{{\mathbb Q}}
\newcommand{\reals}{{\mathbb R}}
\newcommand{\proj}{{\mathbb P}}
\newcommand{\lhp}{{\mathbb L}}
\newcommand{\tr}{{\operatorname{Tread}}}
\newcommand{\rs}{{\operatorname{Riser}}}
\newcommand{\tube}{{\mathbb T}}
\newcommand{\cusp}{{\mathbb P}}
\newcommand\AAA{{\mathcal A}}
\newcommand\BB{{\mathcal B}}
\newcommand\CC{{\mathcal C}}
\newcommand\ccd{{{\mathcal C}_\Delta}}
\newcommand\DD{{\mathcal D}}
\newcommand\EE{{\mathcal E}}
\newcommand\FF{{\mathcal F}}
\newcommand\GG{{\mathcal G}}
\newcommand\HH{{\mathcal H}}
\newcommand\II{{\mathcal I}}
\newcommand\JJ{{\mathcal J}}
\newcommand\KK{{\mathcal K}}
\newcommand\LL{{\mathcal L}}
\newcommand\MM{{\mathcal M}}
\newcommand\NN{{\mathcal N}}
\newcommand\OO{{\mathcal O}}
\newcommand\PP{{\mathcal P}}
\newcommand\QQ{{\mathcal Q}}
\newcommand\RR{{\mathcal R}}
\newcommand\SSS{{\mathcal S}}
\newcommand\TT{{\mathcal T}}
\newcommand\ttt{{\mathcal T}_T}
\newcommand\tT{{\widetilde T}}
\newcommand\UU{{\mathcal U}}
\newcommand\VV{{\mathcal V}}
\newcommand\WW{{\mathcal W}}
\newcommand\XX{{\mathcal X}}
\newcommand\YY{{\mathcal Y}}
\newcommand\ZZ{{\mathcal Z}}
\newcommand\CH{{\CC\HH}}
\newcommand\TC{{\TT\CC}}
\newcommand\EXH{{ \EE (X, \HH )}}
\newcommand\GXH{{ \GG (X, \HH )}}
\newcommand\GYH{{ \GG (Y, \HH )}}
\newcommand\PEX{{\PP\EE  (X, \HH , \GG , \LL )}}
\newcommand\MF{{\MM\FF}}
\newcommand\PMF{{\PP\kern-2pt\MM\FF}}
\newcommand\ML{{\MM\LL}}
\newcommand\mr{{\RR_\MM}}
\newcommand\tmr{{\til{\RR_\MM}}}
\newcommand\PML{{\PP\kern-2pt\MM\LL}}
\newcommand\GL{{\GG\LL}}
\newcommand\Pol{{\mathcal P}}
\newcommand\half{{\textstyle{\frac12}}}
\newcommand\Half{{\frac12}}
\newcommand\Mod{\operatorname{Mod}}
\newcommand\Area{\operatorname{Area}}
\newcommand\ep{\epsilon}
\newcommand\hhat{\widehat}
\newcommand\Proj{{\mathbf P}}
\newcommand\U{{\mathbf U}}
 \newcommand\Hyp{{\mathbf H}}
\newcommand\D{{\mathbf D}}
\newcommand\Z{{\mathbb Z}}
\newcommand\R{{\mathbb R}}
\newcommand\s{{\Sigma}}
\renewcommand\P{{\mathbb P}}
\newcommand\Q{{\mathbb Q}}
\newcommand\E{{\mathbb E}}
\newcommand\til{\widetilde}
\newcommand\length{\operatorname{length}}
\newcommand\BU{\operatorname{BU}}
\newcommand\gesim{\succ}
\newcommand\lesim{\prec}
\newcommand\simle{\lesim}
\newcommand\simge{\gesim}
\newcommand{\simmult}{\asymp}
\newcommand{\simadd}{\mathrel{\overset{\text{\tiny $+$}}{\sim}}}
\newcommand{\ssm}{\setminus}
\newcommand{\diam}{\operatorname{diam}}
\newcommand{\pair}[1]{\langle #1\rangle}
\newcommand{\T}{{\mathbf T}}
\newcommand{\inj}{\operatorname{inj}}
\newcommand{\pleat}{\operatorname{\mathbf{pleat}}}
\newcommand{\short}{\operatorname{\mathbf{short}}}
\newcommand{\vertices}{\operatorname{vert}}
\newcommand{\collar}{\operatorname{\mathbf{collar}}}
\newcommand{\bcollar}{\operatorname{\overline{\mathbf{collar}}}}
\newcommand{\I}{{\mathbf I}}
\newcommand{\tprec}{\prec_t}
\newcommand{\fprec}{\prec_f}
\newcommand{\bprec}{\prec_b}
\newcommand{\pprec}{\prec_p}
\newcommand{\ppreceq}{\preceq_p}
\newcommand{\sprec}{\prec_s}
\newcommand{\cpreceq}{\preceq_c}
\newcommand{\cprec}{\prec_c}
\newcommand{\topprec}{\prec_{\rm top}}
\newcommand{\Topprec}{\prec_{\rm TOP}}
\newcommand{\fsub}{\mathrel{\scriptstyle\searrow}}
\newcommand{\bsub}{\mathrel{\scriptstyle\swarrow}}
\newcommand{\fsubd}{\mathrel{{\scriptstyle\searrow}\kern-1ex^d\kern0.5ex}}
\newcommand{\bsubd}{\mathrel{{\scriptstyle\swarrow}\kern-1.6ex^d\kern0.8ex}}
\newcommand{\fsubeq}{\mathrel{\raise-.7ex\hbox{$\overset{\searrow}{=}$}}}
\newcommand{\bsubeq}{\mathrel{\raise-.7ex\hbox{$\overset{\swarrow}{=}$}}}
\newcommand{\tw}{\operatorname{tw}}
\newcommand{\base}{\operatorname{base}}
\newcommand{\trans}{\operatorname{trans}}
\newcommand{\rest}{|_}
\newcommand{\bbar}{\overline}
\newcommand{\UML}{\operatorname{\UU\MM\LL}}
\renewcommand{\d}{\operatorname{dia}}
\newcommand{\hs}{{\operatorname{hs}}}
\newcommand{\vT}{{V(T)}}
\newcommand{\EL}{\mathcal{EL}}
\newcommand{\tsum}{\sideset{}{'}\sum}
\newcommand{\tsh}[1]{\left\{\kern-.9ex\left\{#1\right\}\kern-.9ex\right\}}
\newcommand{\Tsh}[2]{\tsh{#2}_{#1}}
\newcommand{\qeq}{\mathrel{\approx}}
\newcommand{\Qeq}[1]{\mathrel{\approx_{#1}}}
\newcommand{\qle}{\lesssim}
\newcommand{\Qle}[1]{\mathrel{\lesssim_{#1}}}
\newcommand{\simp}{\operatorname{simp}}
\newcommand{\vsucc}{\operatorname{succ}}
\newcommand{\vpred}{\operatorname{pred}}
\newcommand\fhalf[1]{\overrightarrow {#1}}
\newcommand\bhalf[1]{\overleftarrow {#1}}
\newcommand\sleft{_{\text{left}}}
\newcommand\sright{_{\text{right}}}
\newcommand\sbtop{_{\text{top}}}
\newcommand\sbot{_{\text{bot}}}
\newcommand\sll{_{\mathbf l}}
\newcommand\srr{_{\mathbf r}}
\newcommand\geod{\operatorname{\mathbf g}}
\newcommand\mtorus[1]{\boundary U(#1)}
\newcommand\A{\mathbf A}
\newcommand\Aleft[1]{\A\sleft(#1)}
\newcommand\Aright[1]{\A\sright(#1)}
\newcommand\Atop[1]{\A\sbtop(#1)}
\newcommand\Abot[1]{\A\sbot(#1)}
\newcommand\boundvert{{\boundary_{||}}}
\newcommand\storus[1]{U(#1)}
\newcommand\Momega{\omega_M}
\newcommand\nomega{\omega_\nu}
\newcommand\twist{\operatorname{tw}}
\newcommand\modl{M_\nu}
\newcommand\MT{{\mathbb T}}
\newcommand\dw{{d_{weld}}}
\newcommand\dt{{d_{te}}}
\newcommand\Teich{{\operatorname{Teich}}}
\renewcommand{\Re}{\operatorname{Re}}
\renewcommand{\Im}{\operatorname{Im}}
\newcommand{\mc}{\mathcal}
\newcommand{\ccs}{{\CC(S)}}
\newcommand{\mtdw}{{({M_T},\dw)}}
\newcommand{\tmtdw}{{(\til{M_T},\dw)}}
\newcommand{\tmldw}{{(\til{M_l},\dw)}}
\newcommand{\mtdt}{{({M_T},\dt)}}
\newcommand{\tmtdt}{{(\til{M_T},\dt)}}
\newcommand{\tmldt}{{(\til{M_l},\dt)}}
\newcommand{\trvw}{{\tr_{vw}}}
\newcommand{\ttrvw}{{\til{\tr_{vw}}}}
\newcommand{\but}{{\BU(T)}}
\newcommand{\ilkv}{{i(lk(v))}}
\newcommand{\pslc}{{\mathrm{PSL}_2 (\mathbb{C})}}
\newcommand{\tttt}{{\til{\ttt}}}
\newcommand{\bcomment}[1]{\textcolor{blue}{#1}}
\newcommand{\jfm}[1]{\marginpar{#1\quad -jfm}}

\title{Tight trees and model geometries of surface bundles over graphs}

\author{Mahan Mj}

\address{School of Mathematics, Tata Institute of Fundamental Research, Mumbai-40005, India}
\email{mahan@math.tifr.res.in}
\email{mahan.mj@gmail.com}

\subjclass[2010]{ 20F65, 20F67 (Primary), 22E40, 57M50}
\keywords{model geometry, ending lamination theorem, convex cocompact subgroup, curve complex, tight tree, tight geodesic}

\thanks{The author was supported by  the Department of Atomic Energy, Government of India, under project no.12-R\&D-TFR-5.01-0500;
	and in part by the National Science Foundation 
	under Grant No. DMS-1440140 at the Mathematical Sciences Research Institute in Berkeley
	during the Fall 2016 program in Geometric Group Theory. Research 
	also partly supported by  a DST JC Bose Fellowship, Matrics research project grant  MTR/2017/000005, CEFIPRA  project No. 5801-1, as also  by an endowment of the Infosys Foundation.} 
\date{\today}

\begin{abstract}   We generalize the notion of tight geodesics in the curve complex to tight trees. We then use tight trees to construct model geometries for certain surface bundles over graphs. This extends some aspects of the
 combinatorial model for doubly degenerate hyperbolic 3-manifolds  developed by   Brock, Canary and Minsky  during the course of their proof of the Ending Lamination Theorem. Thus we obtain uniformly Gromov-hyperbolic geometric model spaces  equipped with geometric $G-$actions, where  $G$ admits an exact sequence of the form $$1 \to \pi_1(S) \to G \to Q \to 1.$$  Here $S$ is a closed surface of genus $g > 1$ and $Q$ belongs to a special class of free convex  cocompact subgroups of the mapping class group $MCG(S)$.
\end{abstract}

\maketitle

\tableofcontents

\section{Introduction} \label{sec-intro}
A combinatorial model for doubly degenerate hyperbolic 3-manifolds was developed by   Brock, Canary and Minsky in \cite{minsky-elc1,minsky-elc2} during the course of their proof of the Ending Lamination Theorem. The combinatorial machinery guiding the construction of the combinatorial model in \cite{minsky-elc1,minsky-elc2} is based on the technology of hierarchy paths developed by Masur and Minsky in \cite{masur-minsky,masur-minsky2}. Let $\CC(S)$ denote the curve complex of a closed surface $S$. Then  the boundary $\partial \CC(S)$ consists of the ending laminations $\EL (S)$ \cite{klarreich-el}. For
a pair of ending laminations $\LL_\pm \in \partial \CC(S) = \EL (S)$, let $\gamma$ be a tight geodesic  in the curve complex $\CC(S)$ joining $\LL_\pm$.
A hierarchy of paths joining $\LL_\pm$ is then constructed in \cite{masur-minsky2,minsky-elc1} with $\gamma$ as the base tight geodesic. The hierarchy  forms the combinatorial backbone for the model (see also \cite{bowditch-model,ohshika-tams} for some alternate treatments).\\
  
 \noindent {\bf Convex cocompact subgroups of the mapping class group:} We shall extend some aspects of the combinatorial model to treat a class of free convex  cocompact subgroups of the mapping class group $MCG(S)$. 
 A subgroup $Q$ of  $MCG(S)$ is said to be {\bf convex cocompact}  \cite{farb-coco}  if some  orbit of $Q$ in the Teichm\"uller space $\Teich(S)$
 is quasiconvex.  We shall say that  $Q$ is $K-$convex  cocompact if the weak hull of the limit set of $Q$ quotiented by $Q$ has diameter at most $K$; equivalently some $Q$ orbit is $K-$quasiconvex.
 
 Associated to any $Q \subset MCG(S)$, there is an
  exact sequence \cite[Section 1.2]{farb-coco} of the form $$1 \to \pi_1(S) \to G \to Q \to 1.$$ It follows from work of Farb-Mosher  \cite{farb-coco}
  Hamenstadt \cite{hamen} and Kent-Leininger \cite{kl-coco} that the following are equivalent:
  \begin{enumerate}
  \item  $Q$ is  convex cocompact,
  \item the extension $G$ occurring in the 
  above exact sequence   is hyperbolic (see also  \cite{mahan-sardar} for an extension  to surfaces with punctures),
  \item Any orbit of $Q$ in $\CC(S)$ is qi-embedded.
  \end{enumerate}
 Our principal aim in this paper is to construct uniformly Gromov-hyperbolic geometric model spaces  equipped with geometric $G-$actions, where $G$ is as above and $Q$ belongs to a special class of free convex  cocompact subgroups of the mapping class group $MCG(S)$.

   Identifying $Q$ with an  orbit in $\CC(S)$,  	the Gromov boundary
 	$\partial Q$ of $Q$ can be canonically identified with a Cantor set in $\EL(S)$ as well as in the Thurston boundary $\PML (S) = \partial \Teich(S)$ of Teichm\"uller space. 
 	In order to construct a model space for $G$, we shall first need to construct tight geodesics and
 	a hierarchy of paths  for every pair of points $p, q$
 	in $\partial(Q)$. \\
 	
  \noindent {\bf Model geometries:}	A crucial issue that arises in the process is to check for consistency: When two such tight geodesics $\gamma_1,$ (resp. $\gamma_2$) joining $p_1, q_1$ (resp. $p_2, q_2$) cross at a vertex $v $ then the hierarchy paths joining $p_1, q_1$ (resp. $p_2, q_2$) subordinate to $v$ need to be consistent. This is one of the new and somewhat subtle features that appears when $\partial Q$ is a Cantor set as opposed to the case where $Q=\Z$ and $\partial Q$ has exactly two points.
 	There are
 	 two cases in which we can handle this problem corresponding to the following two model geometries of doubly degenerate 3-manifolds:
 	 
 	 \begin{enumerate}
 	 	\item  bounded geometry \cite{minsky-bddgeom},
 	 	\item a special case of the split geometry model investigated in \cite{mahan-split,mahan-amalgeo}.
 	 \end{enumerate}
 	 
 \subsection{Statement of results}
	  The first case that we address is that
 of {\bf bounded geometry,} where	 
 	 we assume that there exists $\ep >0$ such that for all $p, q \in \partial Q \subset \partial \Teich (S)$, the Teichm\"uller geodesic 
 	 joining $p, q$ lies in the $\ep-$thick part of  Teichm\"uller  space.
 	Suppose now that $Q$ is free. Let $\Gamma_Q$ be a Cayley graph of $Q$ with respect to a free generating set and $\Phi: \Gamma_Q \to Teich(S)$ be a piecewise geodesic equivariant map. The pull-back of the universal bundle to $\Gamma_Q$ will be denoted as $M_{Q,\Phi}$.
 	Then (see Proposition \ref{thickbdlhyp}) we have:
 	 
 	 \begin{prop}\label{thickbdlhyp-intro} Given $K, \ep \geq 0$, there exists $\delta >0$ such that the following holds:\\
 	 	Let $Q$ be a free $K-$convex cocompact subgroup and  let $o \in Teich(S)$ with $Q.o \subset Teich_\ep(S)$. There exists $\Phi: \Gamma_Q \to Teich(S)$ such that 
 	  the universal cover $\til{M_{Q,\Phi}}$ is $\delta-$hyperbolic.
 	 \end{prop}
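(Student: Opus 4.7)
The plan is to realize $\widetilde{M_{Q,\Phi}}$ as a tree of hyperbolic planes and to invoke a combination theorem with all constants depending only on $K$ and $\ep$. Since $Q$ is free, $\Gamma_Q$ is itself a tree $T$, and $\widetilde{M_{Q,\Phi}}$ is a bundle of hyperbolic planes over $T$: every fiber is isometric to $\mathbb{H}^2 = \widetilde{S}$, and the extension $G$ acts on it properly, cocompactly, and by isometries.

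To define $\Phi$ I would send each vertex $v$ of $\Gamma_Q$ to $v\cdot o$ and each edge to the Teichm\"uller geodesic joining its endpoints' images. Using $Q\cdot o\subset \Teich_\ep(S)$ together with $K$-convex cocompactness, the weak hull of $Q$, and in particular $\Phi(T)$, is confined to a uniform thick part $\Teich_{\ep'}(S)$ with $\ep'=\ep'(K,\ep)$. Along any such thick Teichm\"uller geodesic the canonical marking identification of fibers is uniformly bi-Lipschitz, so the edge-to-vertex inclusions in $\widetilde{M_{Q,\Phi}}$ are uniform quasi-isometries with constants depending only on $K$ and $\ep$.

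The main substance is a uniform hallways-flare condition for these inclusions along geodesics in $T$. For this I would combine the Farb--Mosher/Kent--Leininger/Hamenst\"adt equivalence of (1)--(3) recalled in the introduction, which supplies a $K'(K)$-qi-embedded $Q$-orbit in $\CC(S)$, with Minsky's bounded geometry theorem in $\Teich_{\ep'}(S)$, under which $T$-distance and $\CC(S)$-distance are comparable along $\Phi(T)$. Standard geometry of $\mathbb{H}^2$ then converts linear $T$-progress into exponential stretching of essential curves across fibers, which is precisely flaring in the sense of Bestvina--Feighn. Feeding this and the bi-Lipschitz edge identifications into the Bestvina--Feighn combination theorem, or the metric-bundle variant due to Mj--Sardar, yields $\delta=\delta(K,\ep)$ such that $\widetilde{M_{Q,\Phi}}$ is $\delta$-hyperbolic. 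The chief obstacle is the uniformity in flaring: one must upgrade the coarse $K$-quasiconvexity of $Q\cdot o\subset \CC(S)$ into an effective exponential divergence rate in the total space, and it is precisely the thick-orbit assumption that permits this, by making subsurface-projection progress and Teichm\"uller progress coarsely commensurate along $\Phi(T)$.
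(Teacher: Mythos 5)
Your architecture matches the paper's (Scheme \ref{scheme-thick} and the proof of Proposition \ref{thickbdlhyp}): the same $\Phi$ as in Construction \ref{thickbdlcoco}, the same reduction via $K$-convex cocompactness and $\ep$-thickness to a uniform thick part $\Teich_{\ep'}(S)$, and the same endgame of feeding uniform flaring constants into an effective Bestvina--Feighn combination theorem (Theorem \ref{effectiveBF}). The place where you diverge, and where your argument is not yet a proof, is the derivation of flaring. You assert that ``standard geometry of $\mathbb{H}^2$ converts linear $T$-progress into exponential stretching of essential curves across fibers.'' But flaring is not a property of a single fiber: it concerns how the lengths of fiber geodesics grow as one moves in the base direction, i.e.\ it is governed by the gluing/monodromy maps between fibers, and making it \emph{uniform} in $K,\ep$ is exactly the nontrivial point you flag as the chief obstacle. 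The qi-embedding of the $Q$-orbit in $\CC(S)$ plus hyperbolicity of $\mathbb{H}^2$ does not by itself produce an effective exponential divergence rate.

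The paper closes this gap differently and more cleanly. For each pair $a,b\in\partial\Gamma_Q$ it considers the universal curve $M_{ab}$ over $\Phi((a,b))$; since the corresponding Teichm\"uller geodesic is uniformly thick, Minsky's bounded geometry model theorem (Theorem \ref{minsky-elcbddgeo}, Corollary \ref{minsky-elcbddgeocor}) makes $\til M_{ab}$ uniformly bi-Lipschitz to $\mathbb{H}^3$, hence $\delta'$-hyperbolic with $\delta'=\delta'(K,\ep)$. The \emph{converse} direction of the combination theorem (Corollary \ref{effectiveflare}) then extracts uniform flaring constants $\lambda_0,m_0,H_0$ for hallways bounded by $\rho_0$-qi-sections from this uniform hyperbolicity -- no direct estimate on stretching is ever made. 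These constants, being uniform over all bi-infinite geodesics in $\Gamma_Q$, are then fed into the forward direction of Theorem \ref{effectiveBF} for the whole tree. If you replace your ``standard geometry of $\mathbb{H}^2$'' step by this hyperbolicity-plus-converse argument (or, alternatively, by a genuinely effective version of Farb--Mosher's direct flaring estimate along thick Teichm\"uller geodesics), your proof becomes complete.
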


 Generalizing the notion of a tight geodesic from \cite{masur-minsky,masur-minsky2}, we say that	 a simplicial map $i: T \to \CC (S)$ from a (not necessarily regular) simplicial tree $T$ of bounded valence defines an {\bf $L-$tight tree of non-separating curves} if  
 	 for every vertex $v$ of $T$, $i(v)$ is non-separating, and for every pair of distinct vertices $u \neq w$ adjacent to $v$ in $T$,
 	 $$d_{\CC(S \setminus i(v))} (i(u), i(w)) \geq L.$$
 	 The following (see Proposition \ref{isometrictighttree}, essentially due to Bromberg) shows that 
 	 $L$--tight trees  are isometrically embedded.

  \begin{prop}\label{isometrictighttree-intro} 
  	There exists $L\geq 3$,such that the following holds.  Let $S$ be a closed surface of genus at least $3$, and let $i: T\to \CC(S)$ define an $L$--tight tree of non-separating curves.  Then $i$ is an isometric embedding.
  \end{prop}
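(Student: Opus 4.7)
The plan is to reduce the assertion to the analogous statement for \emph{paths} in $T$ and to establish the path case by induction on length, with the Bounded Geodesic Image Theorem (BGIT) of Masur--Minsky as the central tool.

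First I would reduce to paths: for any two vertices $x,y\in V(T)$, the unique reduced path $t_0=x,\ldots,t_n=y$ in $T$ restricts to an $L$-tight configuration in $\CC(S)$, since the $L$-tight condition at each interior vertex of the sub-path is inherited verbatim from the hypothesis at the corresponding vertex of $T$. Writing $v_j = i(t_j)$, it therefore suffices to prove, for $L$ large enough, that every $L$-tight path $v_0,\ldots,v_n$ of non-separating curves satisfies $d_{\CC(S)}(v_0,v_n)=n$.

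The next step is an induction on $n$. The upper bound is immediate from the path itself, and the base cases $n\le 2$ are handled directly: if $v_0$ and $v_2$ were disjoint in $S$ they would be at $\CC(S\setminus v_1)$-distance at most $1$, contradicting $L\ge 3$. For the inductive step, take any $\CC(S)$-geodesic $\alpha=u_0,\ldots,u_m$ from $v_0$ to $v_n$ and argue $m\ge n$ by cases. If some $u_k$ equals an interior $v_j$, split $\alpha$ at $u_k$ and apply the inductive hypothesis to the $L$-tight sub-paths $v_0,\ldots,v_j$ and $v_j,\ldots,v_n$ (both of length $<n$): the two halves of $\alpha$ have length at least $j$ and $n-j$, contradicting $m<n$. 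Otherwise $\alpha$ avoids all interior $v_j$; in particular every vertex of $\alpha$ differs from $v_1$ and so admits a defined projection to $\CC(S\setminus v_1)$. Applying BGIT to $\alpha$ yields $d_{\CC(S\setminus v_1)}\bigl(v_0,\pi_{S\setminus v_1}(v_n)\bigr)\le M$, where $M$ is the BGIT constant and I have used $\pi_{S\setminus v_1}(v_0)=v_0$.

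The crucial second use of BGIT is to the sub-path $v_2,\ldots,v_n$: by the inductive hypothesis this sub-path is isometrically embedded and hence a genuine $\CC(S)$-geodesic, and the same inductive hypothesis, applied to sub-paths of length $<n$, guarantees that $v_k\neq v_1$ for all $k\ge 2$, so every vertex of the sub-path has a defined projection to $\CC(S\setminus v_1)$. BGIT then gives $d_{\CC(S\setminus v_1)}\bigl(v_2,\pi_{S\setminus v_1}(v_n)\bigr)\le M$. The triangle inequality combines the two bounds into $d_{\CC(S\setminus v_1)}(v_0,v_2)\le 2M$, contradicting the $L$-tight condition at $v_1$ as soon as $L>2M$.

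The main obstacle I expect is precisely this second application of BGIT: it requires the sub-path $v_2,\ldots,v_n$ to be a genuine curve-complex geodesic rather than merely a path, which is what the inductive hypothesis supplies. This is the device, attributed to Bromberg, that promotes a purely local tightness condition at each interior vertex into a global isometric embedding. Once it is in place, the remaining work is routine: verifying the base cases, fixing $L>2M$, and bookkeeping the subsurface-projection conventions used in the BGIT applications.
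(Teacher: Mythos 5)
Your proof is correct, but it is not the route the paper takes. The paper deduces this Proposition as an immediate consequence of the more general Lemma~\ref{ltightimpliesgeod} (for tight sequences of multicurves, possibly separating), whose proof is quite different: it uses the Behrstock inequality (Theorem~\ref{bi}) to propagate subsurface projections along the sequence, showing $d_{Y_j}(v_i,v_k)$ is within $2D$ of $d_{Y_j}(v_{j-1},v_{j+1})$ whenever $i<j<k$, then a filling claim and the Bounded Geodesic Image Theorem to force any geodesic from $v_0$ to $v_n$ to pass through $n-1$ distinct curves. Your argument is instead the direct induction with a double application of BGIT, which is precisely the proof of this special case that the paper attributes to Leininger (and which survives only as commented-out source). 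The trade-off: your induction avoids the Behrstock inequality entirely and needs only $L>2M$, but it leans crucially on a feature of \emph{non-separating} curves that you use silently and should make explicit — the only vertex of $\CC(S)$ with empty projection to $Y=S\setminus v_1$ is $v_1$ itself, so ``$\alpha$ avoids the interior vertices'' genuinely licenses the application of BGIT to $\alpha$. For separating $v_1$ every curve in the other complementary component has empty projection to $Y$, and this is exactly why the general lemma needs the Behrstock machinery. Your Case 1 (splitting $\alpha$ at an interior $v_j$ and invoking the inductive hypothesis on both halves) is also a necessary piece of care, since BGIT cannot be applied to a geodesic passing through $v_1$; the argument is complete as written once that projection remark is added.
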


Let $i: T \to \CC(S)$ be  a tight tree of non-separating curves and let  $v$ be a vertex of $T$. The link of $v$ in $T$  is denoted as $lk(v)$.
Let $W_v = S \setminus i(v)$. Then $i(lk(v))$ consists of a uniformly bounded number of vertices in $\CC(W_v)$. Hence the weak convex hull 
$CH(i(lk(v)) $ of 
$i(lk(v))$ in  $\CC(W_v)$ 
admits a uniform approximating tree $T_v$. We refer to $T_v$ as the {\bf tree-link} of $v$.
The {\bf blow-up} $\BU (T)$ of $T$ is a {\bf metric tree} obtained from $T$ by replacing the $\half-$neighborhood of each $v \in T$ by the tree-link $T_v$.

 An $L-$tight tree is said to be {\bf $R-$thick} if
for any vertices $u,v, w$ of $T$ and any proper essential subsurface $W$ of $S \setminus  i(v)$ (including essential annuli), \[ d_W ((i(u), i(w)) \leq R,\] where $d_W(\cdot\ , \cdot)$ denotes distance in $\CC(W)$ between subsurface projections onto $W$. For an $L-$tight, $R-$thick tree $T$ we construct a bundle
 $P: M_T \to \but$ over the blow-up $\but$  of $T$. $M_T$ will take the place of the model manifold of \cite{minsky-elc1}. The pre-image $P^{-1} (T_v)$ will be called the {\bf building block} corresponding to $v$ and will be denoted as $M_v$. Inside every $M_v$, there is a natural copy of $S^1 \times T_v$ corresponding to the simple closed curve $i(v) \subset S$. We refer to it as the {\bf Margulis riser} corresponding to $v$ and denote it by $\RR_v$. Margulis risers  in $M_T$ take the place of Margulis tubes in hyperbolic 3-manifolds. (The terminology "riser" is borrowed from \cite{mms} where they form parts of tracks). One can think of the geometry of Margulis tubes in \cite{minsky-elc1} as a consequence of performing hyperbolic Dehn surgery on a thickened neighborhood of Margulis risers.  Equivalently, one thickens the Margulis risers, removes the interior and performs hyperbolic Dehn  filling. For convex cocompact free subgroups of $MCG(S)$, there is no such canonical filling. Thus Margulis risers are the best replacement we could find for Margulis tubes.

 For $l$ a bi-infinite geodesic in $T$, let $l_\pm$ denote the ending laminations given by the ideal end-points of $i(l)$ in the boundary of  $\CC(S)$ and let $N_l$ denote the doubly degenerate hyperbolic 3-manifold with ending laminations  $l_\pm$. 
 We  denote the vertices of $T$ occurring along $l$ by $\VV(l)$. If $L$ is large enough, then each $i(v)$   gives a  Margulis tube  $\T_v$  in $N_l$. Let $N_l^0 = N_l \setminus \bigcup_{v \in \VV(l)} \T_v$.
 
 Let $\BU(l)$ denote the bi-infinite geodesic in $\but$ after blowing up $l$ in $T$. 
 Also let $M_l$ denote the bundle over $\BU(l)$ induced from $\Pi: M_T \to \but$. Let $M_l^0 = M_l \setminus \bigcup_{v \in \VV(l)} \RR_v$.

 \begin{theorem} \label{model-str-intr} (See Theorem \ref{model-str})
 	Given $R \geq 0$, there exist $K \geq 1, e >0$ such that if $i: T \to \CC(S)$ is  an $L-$tight $R-$thick tree of non-separating curves, then the following holds:\\ There exists a  metric $\dw$ on $M_T$ such that $P: M_T \to \but$  satisfies the following properties:
 	\begin{enumerate}
 		\item The induced metric on a Margulis riser $\RR_v$ is the metric product $S^1_e \times T_v$, where $S^1_e$ is a round circle with radius $e$. 
 		\item  For any bi-infinite geodesic $l$ in $T$, $N_l^0$ and $M_l^0$ are $K-$bi-Lipschitz homeomorphic.
 		\item Further, if there exists a subgroup $Q$ of $MCG(S)$ acting cocompactly and geometrically on $i(T)$, then this action can be lifted to an isometric fiber-preserving isometric action of $Q$ on $(M_T,\dw)$.
 		\item $P: (M_T,\dw) \to \but$ is uniformly proper.
 	\end{enumerate}
 \end{theorem}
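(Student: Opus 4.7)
The plan is to build $M_T$ block-by-block over $\but$, exploiting the fact that the $R$-thickness hypothesis reduces the construction to an "$R$-bounded geometry" version of the combinatorial model, with simple Euclidean cylinders in place of hyperbolic Margulis tubes. First I would fix, once and for all, a genus $g$ reference surface $S$ and, for each isotopy class of non-separating simple closed curve $\alpha \subset S$, a standard bounded-geometry hyperbolic metric on $S$ in which a chosen annular neighborhood of $\alpha$ is a flat cylinder $S^1_e \times [0,1]$ of circumference $2\pi e$. For each vertex $v \in T$ I then form the \emph{building block} $M_v$ by thickening the fiber $S$ over the tree-link $T_v \subset \CC(W_v)$: away from $i(v)$ the fiber is a uniformly thick surface whose marking varies along $T_v$ according to the bounded-complexity hierarchy that $T_v$ supports, and near $i(v)$ the fiber is the fixed flat annulus above. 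The neighborhood of $i(v)$ thus becomes the metric product $\RR_v = S^1_e \times T_v$, establishing (1).

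Next, I would glue adjacent blocks. If $v, w$ are adjacent in $T$, tightness makes $i(w) \in \CC(W_v)$ an "endpoint direction" of $T_v$; I glue $M_v$ to $M_w$ along the annular collar in the fiber over that endpoint, so that $\RR_w \subset M_v$ coincides with the $i(w)$-thin part of the fiber inside $M_v$, and symmetrically for $\RR_v \subset M_w$. Equivariance (item 3) is automatic from this construction: if $Q$ acts cocompactly on $i(T)$, it permutes the vertices, subsurfaces $W_v$, tree-links $T_v$, and hence the blocks $M_v$ by isometries preserving the gluings. Uniform properness of $P$ (item 4) follows from the bounded valence of $T$, the uniform bound on $\diam(T_v)$ (a consequence of $R$-thickness and the bounded complexity of $W_v$), and the bounded fiber diameter away from the flat cylinders.

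For item (2), fix a bi-infinite geodesic $l$ in $T$. By Proposition \ref{isometrictighttree-intro}, $i(l)$ is a tight geodesic in $\CC(S)$, determining ending laminations $l_\pm \in \EL(S)$ and the doubly degenerate hyperbolic $3$-manifold $N_l$. The $R$-thickness hypothesis bounds every subsurface projection of vertices of $i(l)$ onto proper subsurfaces of $S \setminus i(v)$ (for any $v \in \VV(l)$) by $R$, so in the Masur--Minsky hierarchy with base $i(l)$ there are no non-base geodesics of length exceeding a uniform constant; the only source of thin parts in $N_l$ is the collection of short curves $i(v)$ with $v \in \VV(l)$. By the bi-Lipschitz model theorem of Brock--Canary--Minsky \cite{minsky-elc1, minsky-elc2}, $N_l$ is uniformly bi-Lipschitz to this hierarchy-based combinatorial model, and that model, with its Margulis tubes excised, agrees on the nose with the restriction $M_l$ of $M_T$ over $\BU(l)$ once the Margulis risers $\RR_v$ have been removed. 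Combining the two bi-Lipschitz identifications yields the constant $K$.

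The principal obstacle is the consistency issue highlighted in the introduction: the same vertex $v$ lies on infinitely many bi-infinite geodesics $l$, and the block $M_v$ must serve as the subordinate model of the thin part around $\RR_v$ for \emph{every} such $l$ simultaneously. This is exactly what $R$-thickness and $L$-tightness purchase --- the only data entering the construction of $M_v$ are the subsurface $W_v$, its tree-link $T_v$, and a uniform-complexity marking of the fiber, none of which depend on the ambient $l$, so the same $M_v$ glued once works for all $l$ through $v$. Verifying this rigidity rigorously, and tuning the flat radius $e$ and the collar widths so that the gluings are globally compatible, is where the real work of the proof concentrates.
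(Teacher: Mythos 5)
Your overall architecture matches the paper's: geometric blocks fibered over tree-links with flat risers $S^1_e\times T_v$, assembly over $\but$, the Brock--Canary--Minsky model for item (2), and equivariance of the construction for item (3). But two of your key assertions are genuine gaps. First, your justification of item (4) rests on ``the uniform bound on $\diam(T_v)$ (a consequence of $R$-thickness)''. No such bound exists or is implied: $R$-thickness only bounds projections to \emph{proper} subsurfaces of $S\setminus i(v)$, while $d_{\CC(S\setminus i(v))}(i(u),i(w))$ for $u,w\in lk(v)$ is bounded \emph{below} by $L$ and is unbounded above; $\diam(T_v)$ is comparable to these distances and, by Proposition \ref{length=link}, is coarsely the height of the corresponding split block (the ``length'' of the Margulis tube), which can be arbitrarily large. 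If $\diam(T_v)$ were uniformly bounded one would be in the bounded-geometry situation of Proposition \ref{thickbdlhyp} and the main difficulties of the paper (flat strips $\R\times T_v$, tube-electrification) would not arise. Uniform properness instead comes from uniform thickness of the fibers (Corollary \ref{subordinatehierarchysmall}, via Theorem \ref{thickhierrachy}) and Lemma \ref{proper-block}, combined with Theorem \ref{midsurfminskymodel}, as in Lemma \ref{proper-proj}.

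Second, and more seriously, you identify the consistency of the gluings as ``the principal obstacle'' and then dispose of it by observing that the data defining $M_v$ do not depend on the ambient geodesic $l$; that begs the question. What must actually be shown is that for \emph{every} bi-infinite geodesic $l$ through an adjacent pair $v,w$, the bounded-geometry split surface of the model of $N_l$ separating the blocks of $v$ and $w$ lies within uniformly bounded distance of one fixed point $S(v,w)\in\Teich_\ep(S)$, so that the metrics induced on the common mid-surface $S_{vw}$ from $M_v$ and from $M_w$ are uniformly compatible and item (2) holds simultaneously for all $l$. This is exactly Theorem \ref{midsurfminskymodel} (proved via Proposition \ref{splsplit} and the fact that split surfaces in Minsky's model correspond to coarsely well-defined markings), supplemented by Lemma \ref{bb-deg-tree}, which supplies the fiber-preserving uniformly bi-Lipschitz comparison between the block $M_v(l)$ over the line and the block $M_v$ over the full tree-link; your claim that the excised models ``agree on the nose'' is precisely the content of these statements, not a consequence of $l$-independence of the input data. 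Relatedly, adjacent blocks are glued along entire mid-surface fibers $S_{vw}$ (so that $M_T=S\times\but$), not along annular collars, and $\RR_w=i(w)\times T_w$ lies in $M_w$, not in $M_v$; the compatibility of the two fiber metrics on $S_{vw}$ is the whole point of the consistency check.
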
  

The universal cover $\tmtdw$ contains flat strips $\R \times T_v$ coming from the universal covers of
the Margulis risers $\RR_v= S^1_e \times T_v$. We show that this is the only obstruction to effectively hyperbolizing $\til{M_T}$. Equip each $\RR_v$ with a product pseudometric that is zero on the first factor $S^1$ and agrees with the metric on $T_v$ on the second. This replacement of a product metric by a pseudo-metric is called
 partial electrification in \cite{mahan-reeves} and in the specific context of Margulis tubes, it is called tube-electrification in \cite{mahan-split}.
 The resulting pseudometric on $M_T$ is denoted as $\dt$. The main Theorem of the paper is the following (see Theorem \ref{maintech}):

\begin{theorem}\label{tel-intro} Given $R$, there exists $\delta$ such that  if $i: T \to \CC(S)$ is  an $L-$tight $R-$thick tree of non-separating curves, then 
	$\tmtdt$ is $\delta-$hyperbolic. Further, $\tmtdw$ is strongly $\delta-$hyperbolic relative to the collection $\til \RR$ of lifts  of Margulis risers.
\end{theorem}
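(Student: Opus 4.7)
The plan is to leverage Theorem~\ref{model-str-intr}(2) to reduce hyperbolicity of $\tmtdt$ to the well-understood hyperbolicity of doubly-degenerate hyperbolic $3$-manifolds along every bi-infinite geodesic in $T$, and to globalize this line-wise picture to the full tree via a combination theorem of Bestvina--Feighn / Mj--Sardar type.

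First, for every bi-infinite geodesic $l \subset T$, I would establish that $\tmldt$ is $\delta_0$-hyperbolic for a uniform $\delta_0 = \delta_0(R)$. Theorem~\ref{model-str-intr}(2) provides a uniform bi-Lipschitz homeomorphism between $M_l^0$ and $N_l^0$, and $\til{N_l}$ is isometric to $\Hyp^3$. Electrifying $\til{N_l}$ along its disjoint, uniformly quasiconvex Margulis tubes produces a uniformly hyperbolic space, and the bi-Lipschitz equivalence together with the identification of tubes with risers transfers this hyperbolicity to $\tmldt$. Next, I would view $P: \tmtdt \to \BU(T)$ as a tree of hyperbolic spaces, with vertex spaces the tube-electrified building blocks $\til M_v$ over the tree-links $T_v$ and edge spaces copies of $\til S$ over edges of $T$ joining neighboring tree-links. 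Each $\til M_v$ is itself uniformly hyperbolic by applying the line-wise argument internally to each geodesic of $T_v$, where the sub-picture is a Brock--Canary--Minsky model over a tight geodesic in $\CC(W_v)$. The edge spaces are uniformly qi-embedded in adjacent vertex spaces via the $R$-thickness bound combined with the Masur--Minsky distance formula.

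The key step, and where I expect the main obstacle, is verifying the hallways-flare / exponential-divergence hypothesis of the combination theorem. Along a single bi-infinite geodesic $l$ flaring is inherited from uniform hyperbolicity of $\tmldt$ together with qi-embedded fibers. The genuinely new content is flaring across a branch point of $T$: at a vertex $v$ with two distinct outgoing directions $u, w$, $L$-tightness yields $d_{\CC(W_v)}(i(u), i(w)) \geq L$, while $R$-thickness rules out any proper subsurface of $W_v$ contributing to the disagreement. All the subsurface-projection disagreement between the two branches is therefore concentrated in $\CC(W_v)$ at arbitrarily large distance, which via the subsurface projection machinery (Rafi, Masur--Minsky) forces exponential divergence of the associated fiber motions in $\Teich(S)$. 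With flaring established in all directions through every vertex, the Bestvina--Feighn / Mj--Sardar combination theorem yields uniform $\delta$-hyperbolicity of $\tmtdt$.

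Finally, the pseudometric $\dt$ is obtained from $\dw$ by collapsing the $S^1_e$ factor in each Margulis riser, which is precisely electrification of $\dw$ along the family $\til\RR$. The lifts $\til\RR_v = \R \times T_v$ are uniformly quasiconvex flat strips in $\tmtdw$ and are mutually cobounded, using their product structure together with the $R$-thickness subsurface projection bounds. Combined with hyperbolicity of $\tmtdt$, this yields strong relative hyperbolicity of $\tmtdw$ with respect to $\til\RR$ in the sense of Farb / Bowditch, completing the proof.
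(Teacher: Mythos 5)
Your strategy for the first assertion---uniformize along every bi-infinite geodesic $l$ via the Brock--Canary--Minsky model, extract flaring constants from the converse of the combination theorem, and feed them back into the bundle over the tree---is the paper's route (Scheme \ref{scheme-split}, Propositions \ref{telunifhyp} and \ref{thinbdlhyp}), but two of your steps do not hold up as written. First, hyperbolicity of $\tmldt$ is not obtained by ``electrifying $\til{N_l}$ along its Margulis tubes'': $\dt$ is a \emph{partial} electrification in which each riser keeps its tree direction, of length comparable to $d_{\CC(W_v)}(i(u),i(w))\geq L$ and hence unbounded, so the fully coned-off space is the wrong object. One needs the partial electrification lemma (Lemma \ref{pel}) applied to the welded model, together with the statement that the welded and tube-electrified metrics of the model manifold are uniformly bi-Lipschitz to $(M_l,\dw)$ and $(M_l,\dt)$ (Theorem \ref{minskymodel}(2)), which is stronger than the statement about the complements $N_l^0$, $M_l^0$ that you invoke from Theorem \ref{model-str-intr}(2). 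Second, your ``flaring across a branch point'' step is both unnecessary and not a proof as stated. It is unnecessary because an essential hallway projects to a geodesic segment of $\but$, and such a segment through a branch lies in $\BU(l)$ for a geodesic $l$ of $T$ passing through the branch vertex; the hallway then lies over $\BU(l)$, i.e.\ in $\tmldt$, so the line-wise flaring already covers it---the branch-point consistency is absorbed into the construction of the tree-links and Lemma \ref{bb-deg-tree}, not into a separate flaring argument. And the justification you offer is not valid on its own: large projection distance in $\CC(W_v)$ is exactly the regime in which $i(v)$ becomes short and produces a flat riser, i.e.\ the regime in which $\dw$ \emph{fails} to flare; flaring for $\dt$ there cannot be read off from subsurface-projection estimates alone, and in the paper it is extracted from hyperbolicity of the line models via Corollary \ref{effectiveBFreversetmtdt}.

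The clearest gap is in the second assertion. You claim that strong relative hyperbolicity of $\tmtdw$ with respect to $\tmr$ follows from hyperbolicity of $\tmtdt$ plus quasiconvexity and coboundedness of the flat strips, but $\tmtdt$ is not the coned-off space of $\tmtdw$ relative to $\tmr$, and strong relative hyperbolicity in the sense of Definition \ref{def-srh} (hyperbolicity after attaching hyperbolic cones) does not follow formally from those facts. The paper's Step 3 is a genuine additional argument: it first shows that $\tmtdt$ is hyperbolic \emph{relative} to $\tmr$ (Proposition \ref{tmtdtrelhyp}, with the risers playing the role of the cone loci in the Mj--Reeves scheme), and then runs Gautero's relative combination machinery, with the exponential-separation hypothesis supplied by Corollary \ref{effectiveBFreversetmtdt}, to conclude strong relative hyperbolicity of $\tmtdw$ (Proposition \ref{thinbdlrelhyp}). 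Some version of this, or a verification of Farb-type bounded coset penetration for the fully coned-off space, must be supplied; your final paragraph asserts the conclusion without it.
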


A coarse version of the model theorem of \cite{minsky-elc1, minsky-elc2} would say that there exists $\delta > 0$ such that  the model geometry $M$ corresponding to any doubly degenerate hyperbolic manifold satisfies the property that $\til M$ is $\delta-$hyperbolic. Let $\til \TT$ denote the collection of lifts of models of Margulis tubes to $\til M$. It follows that  $\til M$ is {\it  strongly $\delta-$hyperbolic relative to} the collection $\til \TT$ (see Definition  \ref{def-srh}). The second statement of Theorem \ref{tel-intro} above generalizes this statement to the coarse model $\mtdw$ for bundles over  tight trees. In fact, the hypothesis on non-separating curves can be removed completely (Corollary \ref{effectivehypliptreecor}) for the second statement. The first statement of Theorem \ref{tel-intro} is  finer and captures one of the parameters of model Margulis tubes, viz.\ the imaginary coefficient of  model Margulis tubes in \cite{minsky-elc1, minsky-elc2}). For this statement,  the hypothesis on non-separating curves can be relaxed somewhat (see Definition \ref{def-balancedtree} and Theorem \ref{maintech}) but cannot be removed altogether (see the examples in Section \ref{sec-imb}).

 \noindent {\bf Steps of the proof and technical issues:} 
Theorem \ref{tel-intro} is an
 effective hyperbolization theorem for surface bundles over trees. The broad strategy is as follows:
 
\begin{enumerate} 
	\item First, a geometric model is constructed for the  bundle $M_T$ over  $T$ with fiber $S$ (see the discussion before Theorem \ref{model-str-intr} for a summary).
	\item For any bi-infinite geodesic $l$ in $T$, we would have liked
	 to show that the restriction $M_l$ of the  bundle $M_T$ to $l$  is uniformly bi-Lipschitz to the combinatorial model of \cite{minsky-elc1} for a doubly degenerate hyperbolic 3-manifold. This is not quite true and the construction needs to be modified (see Item (1) of Theorem \ref{model-str-intr} above for a precise statement). We think of $M_l$ as a  bundle over a line.
\item Use the converse to the Bestvina-Feighn combination theorem to extract	 effective and uniform flaring constants for the bundles $M_l$ over  lines.
\item 	Feed the  uniform flaring constants  back into the bundle over $T$ to obtain effective hyperbolization.
\end{enumerate}

A number of   difficulties arise in making the above strategy work as stated. We have already mentioned the consistency check that needs to be done when tight geodesics $\gamma_1,$  $\gamma_2$ cross at a vertex $v $. 
We briefly elaborate on the  difficulty alluded to in Item (2) above. In the case we shall be most interested in this paper, the vertex $v$ will give rise to Margulis tubes $\T_1, \T_2$ in the doubly degenerate manifolds $M_1, M_2$ corresponding to $\gamma_1,$  $\gamma_2$. It turns out that 
gluing the Margulis tubes $\T_1, \T_2$, even partially, in  $M_1, M_2$ to construct a hyperbolic model over $\gamma_1 \cup \gamma_2$ is simply not possible. We can nevertheless partially glue the boundaries $\partial \T_1, \partial \T_2$.
 The precise process involved is a certain welding construction introduced by the author in \cite{mahan-split}. This construction, however,
 gives rise to flat strips obstructing effective and uniform hyperbolization of the bundle as mentioned before Theorem \ref{tel-intro}. To circumvent this, we tube-electrify the Margulis risers
  to finally obtain a uniformly hyperbolic pseudometric. \\

{\bf Outline of the paper:}
 In Section \ref{sec-tree}, we introduce the notion of tight trees in $\CC(S)$ and show that such trees $T$ are necessarily isometrically embedded.
 For links of vertices in $T$, we describe a blowup construction: we replace a small neighborhood of a  vertex $v$ by an associated finite tree called a tree-link $T_v$. The topological building blocks for the model we construct later in the paper are of the form $M_v=S \times T_v$. The blown up tree is denoted as $\but$.
 
 A geometric structure for the building blocks  $M_v$ is introduced in Section \ref{sec:bb}. Motivated by the model geometries of doubly degenerate hyperbolic 3-manifolds constructed by Minsky \cite{minsky-bddgeom,minsky-jams,minsky-elc1} and adapted in \cite{mahan-amalgeo,mahan-split} we describe the model geometry of $M_v$.  Assembling these together give us a metric $\dw$ on the bundle $M_T$ over the blowup $\but$ of $T$. An auxiliary partially electrified version $\dt$ of $\dw$ is also defined here.

 In Section \ref{sec:prelims} we recall and adapt some basic technical tools that we require for the proof of Theorems \ref{model-str} and \ref{tel-intro} (see Theorem  \ref{maintech}). We describe
   an effective version of the Bestvina-Feighn combination theorem and its converse for hyperbolic spaces. We also describe relatively hyperbolic analogs.

  Uniform  hyperbolicity 
    of $\tmtdt$ is established in Section \ref{sec-minsky}.

 \section{Trees in the Curve Complex}\label{sec-tree} The aim of this section is twofold:
 \begin{enumerate}
 \item To use subsurface projections \cite{masur-minsky2} to give a sufficient condition for an isometric embedding of a tree $T$ in the curve complex $\CC(S)$ (Lemma \ref{ltightimpliesgeod}, Propositions \ref{isometrictighttree} and \ref{isometrictighttree-sep}).
 \item To describe the topological structure of building blocks $M_v$ corresponding to vertices $v$ of $T$. The main point here is to construct a blow up of the vertex $v$ to a finite tree $T_v$, called the {\bf tree-link} of $v$,  and hence a blown-up tree $\but$ from $T$. 	 
 \end{enumerate}

A remark on notation. We shall use $MCG(S)$ to denote the mapping class group of a closed surface $S$ and $Mod(S)$ to denote its moduli space.
 
 \subsection{Subsurface projections}\label{ssp}
 
 	The {\bf complexity} of a surface $Y$ of genus $g$ with $b$ boundary components is given by
 	\[ \xi(Y) = 3g + b - 3.\]
 	
 	The curve complex of $Y$ is denoted as $\CC(Y)$ and the arc-and-curve complex of $Y$ is denoted as $\AAA\CC(Y)$.  There is a coarsely defined $2$--Lipschitz retraction $\psi_Y$ from $\AAA\CC(Y)$ to $\CC(Y)$, given by performing surgery using boundary curves  \cite[Lemma 2.2]{masur-minsky2}. In particular for any arc $a\in \AAA\CC(Y)$, $\d_Y(\psi_Y(a)) \leq 2$.
 
 \begin{defn} \label{def-subsurfproj}
 	Let $Y\subset S$ be an essential proper subsurface.   If $\gamma\in \CC(S)$ can be homotoped to be disjoint from $Y$, define $\pi_Y(\gamma) = \emptyset$.  If $\gamma$ is homotopic to an essential curve in $Y$, then $\pi_Y(\gamma) = \gamma$.
 	Else homotope $\gamma$  to intersect $\partial Y$ minimally.  Then $\gamma\cap Y$ is a set of vertices of a simplex in $\AAA\CC(Y)$.  Define \[\pi_Y(\gamma) = \bigcup_i \psi_Y(a_i).\]  
 \end{defn}

 Definition \ref{def-subsurfproj} can be easily extended to laminations. For $\LL$ a geodesic lamination on $S$, and $Y$ an essential subsurface of $S$, let $\LL|_Y=\LL\cap W$. Then $\LL|_Y$ gives an element of the arc-and-curve complex $\AAA\CC(Y)$ after identifying the arcs and closed curves of $\LL|_Y$  with their relative isotopy classes (see \cite[Section 2.2]{minsky-bddgeom} for instance). By performing surgery on the arcs along boundary components of $Y$ we obtain elements of $\CC(Y)$. Hence $\pi_Y (\LL)$ may be defined as in Definition \ref{def-subsurfproj}.
 \begin{defn}\label{def-dists}
 	Let $Y\subset S$ be an essential subsurface with $\xi(Y) > 1$. For any collection of vertices $\VV$ of $\CC(S)$, define
 	\[ \d_Y(\VV) = \d_Y\left(\bigcup\{\pi_Y(v)\vert v\in \VV\}\right).\]
 	If $F$ is a subgraph of $\CC(S)$, define \[\d_Y(F) = \d_Y(F^{(0)}).\]
 		Finally, for $X, Z$ proper essential subsurfaces of $S$, define \[\d_Y(X,Z)= \d_Y(\partial X,\partial Z).\]
 \end{defn}
 The same definitions work for laminations.
 
 \begin{theorem}\label{bgit} {\bf (Bounded geodesic image theorem)} \cite[Theorem 3.1]{masur-minsky2}, \cite[Corollary 1.3]{webb}
 	There exists $M>0$ satisfying the following.
 	Let $S$ be a surface of finite type, and let $Y\subset S$ be an essential subsurface of complexity at least $2$.  Let $\gamma$ be a (finite or infinite) geodesic segment in $\CC(S)$ so that $\pi_Y(v)\neq \emptyset$ for every vertex $v$ of $\gamma$.  
 	Then $\d_Y(\gamma)\leq M$.
 \end{theorem}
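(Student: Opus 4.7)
Since consecutive vertices $v_i, v_{i+1}$ of $\gamma$ are disjoint curves on $S$, their restrictions to $Y$ can be realized by disjoint arcs or curves, so $\d_Y(v_i, v_{i+1}) \le 4$ by the $2$--Lipschitz property of $\psi_Y$. This already yields the naive estimate $\d_Y(\gamma) \le 4\cdot \length(\gamma)$; the content of the theorem is that the diameter of $\pi_Y(\gamma)$ is bounded by a constant $M$ \emph{independent} of $\length(\gamma)$. The plan is to argue by contradiction, combining this disjointness with Teichm\"uller geometry.

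Extend $v_0$ and $v_n$ to complete clean markings $\mu_0, \mu_n$ of $S$ and let $\Gamma \subset \Teich(S)$ be the Teichm\"uller geodesic joining the corresponding points. The core input is the Masur--Minsky contraction estimate for projection to $\CC(Y)$: there exist universal constants $\epsilon_0>0$ and $B$ such that, along any subarc $\Gamma'\subset\Gamma$ on which the hyperbolic length $\ell_{\partial Y}$ stays above $\epsilon_0$, the $\CC(Y)$--projections of the shortest-marking function vary by at most $B$. Consequently, if $\d_Y(v_0,v_n)$ is very large there must exist a time $t_0\in\Gamma$ at which some component $\alpha$ of $\partial Y$ has hyperbolic length less than $\epsilon_0$; then $\alpha$ itself belongs to the shortest-marking at $t_0$ and satisfies $\pi_Y(\alpha)=\emptyset$.

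To close the argument, recall that the Masur--Minsky quasi-geodesic theorem asserts that the projection of $\Gamma$ to $\CC(S)$ via shortest curves is a uniform quasi-geodesic from $v_0$ to $v_n$. Applying the Morse lemma inside the $\delta$--hyperbolic space $\CC(S)$, the geodesic $\gamma$ must pass within uniformly bounded $\CC(S)$--distance of the curve $\alpha$. Some vertex $v_i$ of $\gamma$ is therefore at bounded $\CC(S)$--distance from $\alpha$, so after following $\gamma$ through a bounded number of further vertices one reaches a vertex disjoint from $Y$, with empty projection --- contradicting the hypothesis. The principal obstacle is the quantitative calibration of the contraction step: the constants $\epsilon_0$ and $B$ must not degrade as one concatenates pieces of $\Gamma$, so that the dichotomy ``$\ell_{\partial Y}$ short somewhere, or $\pi_Y$ barely moves'' produces a genuinely uniform $M$. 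The original Masur--Minsky argument of \cite{masur-minsky2} achieves $M=M(\xi(S))$ via Rafi-type thick/thin decomposition of $\Gamma$, while Webb's combinatorial surgery inside the arc-and-curve complex \cite{webb} replaces Teichm\"uller theory entirely and yields $M$ independent of $\xi(Y)$; I would quote whichever is needed at the final bookkeeping stage.
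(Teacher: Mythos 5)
First, a point of calibration: the paper does not prove this theorem at all --- it is quoted as a black box from \cite{masur-minsky2} and \cite{webb} --- so there is no in-paper argument to compare yours against; your proposal has to stand on its own as a proof. Your opening observation (disjoint curves that both cut $Y$ have $\d_Y$-distance at most $4$, whence a linear bound) is correct, and your main dichotomy along the Teichm\"uller geodesic $\Gamma$ --- either $\partial Y$ becomes $\epsilon_0$-short somewhere on $\Gamma$, or the $\CC(Y)$-projections of the short markings move a bounded amount --- is a genuine theorem (the Masur--Minsky contraction property, refined by Rafi) and is indeed the engine of the original proof.

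The gap is in the closing step, and it is fatal as written. From ``$\partial Y$ is short at time $t_0$'' you obtain, via the systole quasigeodesic and the Morse lemma, only that some vertex $v_k$ of $\gamma$ lies within uniformly bounded $\CC(S)$-distance of a component $\alpha$ of $\partial Y$. That does not contradict the hypothesis. The hypothesis $\pi_Y(v)\neq\emptyset$ is the topological condition that $v$ cannot be homotoped off $Y$; it is not a coarse-distance condition in $\CC(S)$. A vertex at distance $1$ or $2$ from $\alpha$ will in general still intersect $Y$ essentially --- for instance, every curve contained in $Y$ is disjoint from $\alpha$, hence at distance $1$ from it, yet has nonempty projection --- so the assertion that ``after following $\gamma$ through a bounded number of further vertices one reaches a vertex disjoint from $Y$'' is simply false. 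Recursing on the two subsegments of $\gamma$ on either side of $v_k$ (one of which still has large $\d_Y$-diameter) only reproduces the same weak conclusion and never terminates in a contradiction. What your argument actually establishes is the standard, strictly weaker corollary that large subsurface projection forces any $\CC(S)$-geodesic between the endpoints to pass uniformly close to $\partial Y$; the theorem requires a vertex with literally empty projection, and it is this strong form that the present paper uses (in the proof of Lemma \ref{ltightimpliesgeod} it extracts a vertex $\eta_i$ that \emph{does not intersect} $Y_i$). The published proofs close the argument by entirely different means: Masur--Minsky control $\pi_Y(v_i)$ for each individual vertex via its balance time along the quadratic-differential flow, so the contradiction comes from the projections of the $v_i$ themselves rather than from proximity of $\gamma$ to $\partial Y$; Webb dispenses with Teichm\"uller theory and bounds $\d_Y(v_0,v_n)$ directly by surgery in the arc-and-curve complex. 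Your sketch would need one of these mechanisms to become a proof.
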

Note that $M$ in Theorem \ref{bgit} above is a universal constant.

\begin{theorem}\label{bi} {\bf (Behrstock Inequality) \cite{behrstock-asymptotic}:}
	For a surface $S$ of finite type, there exists $D\ge 0$ such that for any three essential subsurfaces $X, Y, Z$ of $S$,
	\[\min\{\d_Y(X,Z), \d_Z(X,Y) \} \leq D.\]
\end{theorem}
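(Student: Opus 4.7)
The plan is to argue by contrapositive: assume $d_Y(\partial X, \partial Z) > B$ for a large constant $B$ depending on the Bounded Geodesic Image constant $M$ from Theorem \ref{bgit}, and deduce $d_Z(\partial X, \partial Y) \leq D$ for some absolute $D$. Two tools drive the argument: Theorem \ref{bgit} itself, and the elementary fact that if two multicurves on $S$ are disjoint and both meet a proper essential subsurface $W$ essentially, then their projections to $\CC(W)$ lie within distance at most $2$ of each other. This elementary fact is a direct consequence of the $2$-Lipschitz surgery retraction $\psi_W$ recalled just before Definition \ref{def-subsurfproj}.

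First I would reduce to the nontrivial case in which $X$, $Y$, $Z$ pairwise overlap essentially in $S$; otherwise some subsurface projection appearing in the statement is empty and the inequality holds trivially (or vacuously under the usual convention). Next, choose a $\CC(Y)$-geodesic $\gamma$ from a component of $\pi_Y(\partial X)$ to a component of $\pi_Y(\partial Z)$. The core observation is that every interior vertex $v$ of $\gamma$ is a simple closed curve contained in $Y$, and hence is disjoint on $S$ from $\partial Y$; this is what will transfer control from $\CC(Y)$ to $\CC(Z)$. I then split into two cases. If some interior vertex $v_0$ has $\pi_Z(v_0) = \emptyset$, then $v_0$ is disjoint on $S$ from $\partial Z$, so $v_0$ is at $\CC(Y)$-distance at most $1$ from $\pi_Y(\partial Z)$; this pins $v_0$ near the $\partial Z$-end of $\gamma$, and on the long initial sub-arc from $\pi_Y(\partial X)$ to $v_0$ every interior vertex now has well-defined projection to $Z$, reducing the problem to the second case on that sub-arc. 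In the remaining case, every interior $v$ satisfies $\pi_Z(v)\neq\emptyset$, so the disjointness observation applied to $v$ and $\partial Y$ yields $d_Z(\pi_Z(v),\pi_Z(\partial Y))\leq 2$. Taking $v$ adjacent to the $\partial X$-endpoint of $\gamma$ and chaining through the disjointness between $v$ and the surgery arcs making up $\pi_Y(\partial X)$ then transfers the estimate into $d_Z(\pi_Z(\partial X),\pi_Z(\partial Y))\leq D$ for an absolute constant $D$.

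The main technical hurdle is the endpoint bookkeeping: $\pi_Y(\partial X)$ is assembled from surgering the arcs of $\partial X \cap Y$ along $\partial Y$, so a $\CC(Y)$-neighbor of $\pi_Y(\partial X)$ need not be literally disjoint from $\partial X$ as a curve on $S$. The remedy is to absorb the $2$-Lipschitz slack of $\psi_Y$ into the final constant $D$, and to choose $B \geq M + 4$ so that $\gamma$ has enough interior vertices for the case analysis to engage. Theorem \ref{bgit} is precisely what converts the "adjacency in $\CC(Y)$" picture into bounded motion in $\CC(Z)$ whenever the naive chain of disjointness between vertices runs out — in particular it is used to handle the sub-case produced when $v_0$ cuts $\gamma$ into a sub-arc whose own length is comparable to $M$, forcing the $Z$-projections along that sub-arc to cluster near $\pi_Z(\partial Y)$ with diameter bounded by $M$.
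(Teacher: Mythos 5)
The paper does not actually prove this statement: Theorem \ref{bi} is imported verbatim from Behrstock's work with a citation, so there is no internal proof to compare against. Judged on its own terms, your proposal has a genuine gap at the $\partial X$ end of your chain of estimates. The one tool that transfers information from $\CC(Y)$ to $\CC(Z)$ is the fact that two curves that are \emph{disjoint on $S$} and both cut a subsurface $W$ have close projections to $\CC(W)$. This works perfectly for the $\partial Y$ side: an interior vertex $v$ of your geodesic $\gamma\subset\CC(Y)$ is a curve contained in $Y$, hence disjoint from $\partial Y$ on $S$, so $\d_Z(v,\partial Y)$ is uniformly bounded whenever $\pi_Z(v)\neq\emptyset$. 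But it does nothing for the $\partial X$ side. The vertex $v$ adjacent to the $\pi_Y(\partial X)$-endpoint of $\gamma$ is disjoint \emph{in $Y$} from a surgered curve $\psi_Y(a)$ built from one arc $a$ of $\partial X\cap Y$ together with pieces of $\partial Y$; it is in general not disjoint from $\partial X$ as a curve on $S$ (it may cross the other arcs of $\partial X\cap Y$ and, a fortiori, the part of $\partial X$ outside $Y$). Consequently nothing bounds $\d_Z(v,\partial X)$, and "absorbing the $2$-Lipschitz slack of $\psi_Y$" cannot repair this: that slack controls distances in $\CC(Y)$, whereas the quantity you need to control lives in $\CC(Z)$.

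The appeal to Theorem \ref{bgit} to close this gap is also not licensed as stated: the Bounded Geodesic Image Theorem in the paper applies to a geodesic in $\CC(S)$ projected to a subsurface of $S$ (or, in its relative version, to a geodesic in $\CC(Y)$ projected to a subsurface of $Y$), but your geodesic lives in $\CC(Y)$ while $Z$ is not contained in $Y$. Making this precise would force you through components of $Y\cap Z$ and a comparison of $\pi_{Y\cap Z}$ with $\pi_Z$, plus a coarse idempotency statement for composed projections at the endpoint $\pi_Y(\partial X)$ --- none of which is in your write-up, and the endpoint comparison is exactly where composed subsurface projections are known to be badly behaved. For reference, the standard elementary proof (due to Leininger, written up by Mangahas) avoids curve-complex geodesics entirely: one realizes $\partial X$, $\partial Y$, $\partial Z$ in minimal position and argues directly with arcs, showing that if the arcs of $\partial X\cap Z$ do not fellow-travel $\partial Y\cap Z$ (so that $\d_Z(\partial X,\partial Y)$ is large), then the arcs of $\partial X\cap Y$ are forced to run parallel to $\partial Z\cap Y$, making $\d_Y(\partial X,\partial Z)$ small. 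I would recommend either reproducing that argument or simply citing the result, as the paper does.
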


Two essential subsurfaces $Y, Y'$ of $S$ are said to fill $S$ if there exists no simple closed curve in $S$ that can be homotoped off $Y$ as well as off $Y'$. 
For multicurves $v, w$ on $S$, the subsurface filled by $v, w$ is denoted as $F(v,w)$. We adapt the notion of a tight sequence from  \cite{masur-minsky2}
below (and caution the reader that what we call a tight geodesic here is referred to as a tight sequence in  \cite{masur-minsky2}).

\begin{defn} 
A sequence of multicurves $\{v_i\}$ is said to be a {\bf tight geodesic}   if:
\begin{enumerate}
	\item for any simple closed curve $\alpha_i\in v_i$ and $\alpha_j\in v_j$, $d_{\CC(S)} (\alpha_i, \alpha_j) = |i-j|$.
	\item $v_i = \partial F(v_{i-1},v_{i+1})$.
\end{enumerate}
\end{defn}

We shall now furnish a sufficient condition for proving that a sequence of multicurves is a  tight geodesic.
We are grateful to Ken Bromberg for telling us a proof of the following:
\begin{lemma}\label{ltightimpliesgeod} There exists $L\geq 3$ such that the following holds.\\
	Let $v_0, \cdots, v_n$ be a sequence of multicurves in $\CC(S)$ such that
	\begin{enumerate}
		\item For all $i$, there exists an essential subsurface $Y_i$ of $S$ such that $\partial Y_i = v_i$ and $v_{i-1},v_{i+1} \subset Y_i$.
		\item $d_{\CC(Y)} (v_{i-1},v_{i+1}) \geq L $. 
	\end{enumerate}
	Then $\{v_0, \cdots, v_n\}$ is a tight geodesic.
\end{lemma}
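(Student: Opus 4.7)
The plan is to choose $L > M + 2D + 3$, with $M, D$ the constants from Theorem \ref{bgit} and Theorem \ref{bi}, and to argue by induction on $n$.

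Condition (2) of a tight geodesic is immediate: since $d_{\CC(Y_i)}(v_{i-1}, v_{i+1}) \geq L \geq 3$, no essential curve in $Y_i$ is disjoint from both $v_{i-1}$ and $v_{i+1}$, so $v_{i-1} \cup v_{i+1}$ fills $Y_i$, and therefore $\partial F(v_{i-1}, v_{i+1}) = \partial Y_i = v_i$. The base cases $n \leq 2$ of the geodesic condition are direct: consecutive multicurves $v_i, v_{i+1}$ are disjoint (as $v_{i+1} \subset Y_i$, $\partial Y_i = v_i$) but distinct (as $v_{i+1}$ is essential in $Y_i$), and $d(v_0, v_2) \geq 2$ since otherwise $d_{\CC(Y_1)}(v_0, v_2) \leq 1$, contradicting $L$.

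For the induction step $n \geq 3$, let $\gamma = u_0, \ldots, u_k$ be a $\CC(S)$-geodesic from $\alpha_0 \in v_0$ to $\alpha_n \in v_n$; I will produce $n-1$ strictly ordered interior \emph{witness} vertices $u_{j_i}$ with $\pi_{Y_i}(u_{j_i}) = \emptyset$. To do so, I first establish $d_{Y_i}(\alpha_0, v_{i-1}) \leq D$ by a nested induction on $i$: the base $i = 1$ is immediate (since $\alpha_0 \in v_0 \subset Y_1$); for the step, the inductive bound combined with $d_{Y_{i-1}}(v_{i-2}, v_i) \geq L$ gives $d_{Y_{i-1}}(\alpha_0, v_i) \geq L - D > D$, and since $v_{i-1} = \partial Y_{i-1} \subset Y_i$ and $v_i = \partial Y_i \subset Y_{i-1}$ mutually project, Theorem \ref{bi} applied to $(Y_i, Y_{i-1})$ and $\alpha_0$ forces the other minimum term $d_{Y_i}(\alpha_0, v_{i-1}) \leq D$. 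Nonempty projection of $\alpha_0$ to each $Y_i$ is verified from the outer induction on $n$: disjointness of $\alpha_0$ from $Y_i$ would yield $d_{\CC(S)}(v_0, v_{i-1}) \leq 1$, contradicting the inductive equality $d(v_0, v_{i-1}) = i-1$. Symmetrically, $d_{Y_i}(\alpha_n, v_{i+1}) \leq D$; combined with $d_{Y_i}(v_{i-1}, v_{i+1}) \geq L$ this gives $d_{Y_i}(\alpha_0, \alpha_n) \geq L - 2D > M$, so Theorem \ref{bgit} produces the desired witness $u_{j_i}$ with $j_i \in \{1, \ldots, k-1\}$.

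Taking $j_i$ minimal, I argue $j_1 < j_2 < \cdots < j_{n-1}$. If $j_{i+1} < j_i$, then $u_{j_{i+1}}$ lies in the sub-geodesic $u_0, \ldots, u_{j_i - 1}$, all of whose vertices project to $Y_i$ by minimality; applying Theorem \ref{bgit} to this sub-geodesic yields $d_{Y_i}(\alpha_0, u_{j_{i+1}}) \leq M$, hence $d_{Y_i}(v_{i-1}, u_{j_{i+1}}) \leq M + D$, while the empty projection of $u_{j_{i+1}}$ to $Y_{i+1} = \partial^{-1}(v_{i+1})$ gives $d_{Y_i}(u_{j_{i+1}}, v_{i+1}) \leq 1$; combined with $d_{Y_i}(v_{i-1}, v_{i+1}) \geq L$ this contradicts our choice of $L$. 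The equality case $j_i = j_{i+1}$ is ruled out by a direct case analysis: a shared witness $u$ would be at $\CC(S)$-distance $\leq 1$ from both $v_{i-1}$ and $v_{i+2}$, contradicting the inductive equality $d(v_{i-1}, v_{i+2}) = 3$ on the strictly shorter subsequence $v_{i-1}, v_i, v_{i+1}, v_{i+2}$ (where $v_i \cap v_{i+1} = \emptyset$ since $d(v_i, v_{i+1}) \geq 1$). Hence $1 \leq j_1 < j_2 < \cdots < j_{n-1} \leq k - 1$, yielding $k \geq n$ and completing the induction. The main obstacle is the tight interlock between the two inductions (on $i$ for the projection bound and on $n$ for the overall statement): each application of Behrstock's inequality or BGIT requires verifying that all relevant curves have nonempty subsurface projections, which in turn relies on the outer inductive distance equalities to rule out degenerate configurations.
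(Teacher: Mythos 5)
Your overall strategy is close in spirit to the paper's: you use the Behrstock inequality (Theorem \ref{bi}) to control the projections of the endpoints to each domain $Y_i$, and the Bounded Geodesic Image Theorem (Theorem \ref{bgit}) to force any geodesic from $v_0$ to $v_n$ through a witness vertex missing each $Y_i$. Your bounds $d_{Y_i}(\alpha_0,v_{i-1})\leq D$ and $d_{Y_i}(\alpha_n,v_{i+1})\leq D$ are essentially the paper's coarse equality $|d_{Y_j}(v_i,v_k)-d_{Y_j}(v_{j-1},v_{j+1})|\leq 2D$ specialized to the endpoints, and that part is sound (modulo harmless constants: the ``$\leq 1$'' for projections of disjoint curves should be a small uniform additive constant, and in the nonempty-projection check disjointness of $\alpha_0$ from $Y_i$ should be played off against $d_{\CC(S)}(v_0,v_i)=i$ rather than $d_{\CC(S)}(v_0,v_{i-1})$, which gives no contradiction when $i=2$). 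The genuine divergence is in how you make the witnesses distinct: you use minimal witnesses and an ordering argument along $\gamma$, whereas the paper proves that the domains $Y_i$, $Y_j$ pairwise fill $S$ (Claim \ref{claim-fill}), which makes the witnesses automatically pairwise distinct.

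There is, however, a genuine gap in your equality case $j_i=j_{i+1}$. You rule it out by appealing to ``the inductive equality $d(v_{i-1},v_{i+2})=3$ on the strictly shorter subsequence $v_{i-1},\dots,v_{i+2}$'', but when $n=3$ (the first step past your base cases) and $i=1$, that four-term window \emph{is} the whole sequence, so you are invoking exactly the statement being proved: the induction is circular there. The repair is precisely the observation your write-up never makes, and which is the first step of the paper's Claim \ref{claim-fill}: consecutive domains $Y_i$ and $Y_{i+1}$ fill $S$, since $\partial Y_i=v_i$ is essential in $Y_{i+1}$ and $\partial Y_{i+1}=v_{i+1}$ is essential in $Y_i$. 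Hence no vertex of $\gamma$ can have empty projection to both $Y_i$ and $Y_{i+1}$, so $j_i\neq j_{i+1}$ for every $n$, with no induction needed. With that substitution (and slightly more generous constants in the choice of $L$), your argument goes through.
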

\begin{proof} 
	First, observe that since $L\geq 3$, it follows that $\partial Y(v_{i-1},v_{i+1}) = v_i$, i.e.\ $v_{i-1},v_{i+1}$ fill $Y_i$. Choosing $L\geq 5$, it follows that for any simple closed curves $\sigma_{i-1} \in v_{i-1}$ and $\sigma_{i+1} \in v_{i+1}$, $\sigma_{i-1},  \sigma_{i+1} $ fill $Y_i$.  It suffices therefore to prove that for simple closed curves $\sigma_i \in v_i$, $\{\sigma_0, \cdots, \sigma_n\}$ is a geodesic.
	Choose $L \geq 4D +1$ (where $D$ is as in Theorem \ref{bi}).
	
	Recall the notation of Definition \ref{def-dists}.	
	We now use the Behrstock inequality Theorem \ref{bi} to show that if $i<j<k$ then $d_{Y_j}(v_i, v_k)$ is uniformly coarsely
	equal to $d_{Y_j}(v_{j-1},v_{j+1})$. More precisely for $D$  as in Theorem \ref{bi}, for $i<j<k$, 
	\[ |d_{Y_j}(v_i, v_k)-d_{Y_j}(v_{j-1},v_{j+1}) | \leq 2D.\] 
	
	We  argue by induction. Assuming that the statement is true for $k\le m$  we shall show that if $i<j<m+1$ then the statement holds. By induction $d_{Y_{j-1}}(v_i, v_j) $ is coarsely (up to an additive $2D$) equal to $d_{Y_{j-1}}(v_{j-2}, v_j) \geq L$ (by hypothesis).  Hence $d_{Y_{j-1}}(v_i, v_j) \geq L-2D \geq 2D + 1$. By Theorem \ref{bi} this means that $d_{Y_j}(v_{i}, v_{j-1})$ is uniformly small, bounded by $D$ (for $i=j-1$ this is trivial.) Similarly we have $d_{Y_j}(v_{j+1}, v_{m+1})$ is  uniformly small, bounded by $D$. Hence by the triangle inequality if $i<j<m+1$, \[|d_{Y_j}(v_{i}, v_{m+1}) - d_{Y_j}(v_{j+1}, v_{j-1})| \leq 2D.\] This proves the claim by induction.
	
	\begin{claim}\label{claim-fill}
		$Y_i$ and $Y_j$ fill if $i\neq j$.
	\end{claim}	
	We complete the proof modulo this claim. Choose $L > 2M$, where $M$ is as in the Bounded Geodesic Image Theorem.
	By the Bounded Geodesic Image Theorem, for every $0<i<n$ any geodesic between $v_0$ and $v_n$ must pass through a curve $\eta_i$ that does not intersect $Y_i$. 
	By Claim \ref{claim-fill}, $\eta_i$ intersects $Y_j$ for all $j \neq i$.
	Hence $\eta_i \neq \eta_j$ for all $i\neq j$ and hence any geodesic between $v_0$ and $v_n$ has length $n-1$. This implies that the original sequence $\{v_0, \cdots, v_n\}$ is a tight geodesic. 
\end{proof}

\noindent {\bf Proof of Claim \ref{claim-fill}:}
To show that any two $Y_i$ and $Y_j$ fill, we first observe that since $\partial Y_i$ is contained in $Y_{i+1}$ and $\partial Y_{i+1}$ is contained
in $Y_i$, $Y_i$ and $Y_{i+1}$  fill $S$. 

Next assume $i+1<j$. If $Y_i$ and $Y_j$ do not fill there is a curve $c$ disjoint from both $Y_i$ and $Y_j$.
In particular, $c$ is contained in $Y_{i+1}$ (since $Y_i$ and $Y_{i+1}$  fill $S$).
Hence the (subsurface) projections of both $v_i$ and $v_j$ to $Y_{i+1}$ will be disjoint from
$c$ (as proper arcs) or at a uniformly bounded distance from $c$ (if we turn them into curves a la Masur-Minsky). This contradicts $d_{Y_{j+1}}(v_i,v_j) \geq L$. \hfill $\Box$\\

\subsection{Tight trees}\label{sec-tightree} Let $S$  be a surface of finite type and $\CC(S)$  its curve-complex. 
The collection of simplices in $\CC(S)$ will be denoted as $\ccd(S)$. For a tree $T$, the set of vertices of $T$ will be denoted as $V(T)$. We generalize the notion of a tight geodesic to an isometric embedding of a tree as follows:

\begin{defn}\label{def-isomembedding}
	For any geodesic (finite, semi-infinite, or bi-infinite) $\gamma =\{\cdots, v_{-1}, v_0, v_1, \cdots\}$ in $T$, and a map $i:\vT \to \ccd(S)$, a choice of  simple closed curves $\sigma_i \in i(v_i)$ will be called a path in $\CC(S)$ induced by $\gamma$.
	
	A  map  $i: V(T) \to \ccd (S)$ will be called an isometric embedding if any path induced in $\CC(S)$ by a geodesic $\gamma$ in $T$ is a geodesic in $\CC(S)$.
\end{defn}

Much of the discussion in this subsection and Section \ref{sec-topbb} gets simplified if we assume that we are dealing with a sequence of simple non-separating curves. We therefore define  this special case first.

\begin{defn}\label{def-tighttree}
An {\bf $L-$tight tree of non-separating curves} in the curve complex $\CC(S)$ consists of a (not necessarily regular) simplicial tree $T$ of bounded valence and a simplicial map $i: T \to \CC (S)$ such that 
 for every vertex $v$ of $T$ and for every pair of distinct vertices $u \neq w$ adjacent to $v$ in $T$,
$$d_{\CC(S \setminus i(v))} (i(u), i(w) \geq L.$$

An $L-$tight tree of non-separating curves for some $L\geq 3$ will simply be called a tight tree of non-separating curves.
\end{defn}

The proof of Lemma \ref{ltightimpliesgeod} immediately gives us the following (Chris Leininger first told us the proof of this special case  of Lemma \ref{ltightimpliesgeod}):

\begin{prop}\label{isometrictighttree}
	There exists  $L\geq 3$ such that the following holds.  Let $S$ be a closed surface, and let $i: T\to \CC(S)$ define an $L$--tight tree of non-separating curves.  Then $i$ is an isometric embedding.
\end{prop}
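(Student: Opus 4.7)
The plan is to reduce Proposition \ref{isometrictighttree} directly to Lemma \ref{ltightimpliesgeod} by verifying, for each geodesic in the tree, that the hypotheses of that lemma are satisfied with the same constant $L$. By Definition \ref{def-isomembedding}, it suffices to show that for any finite geodesic segment $\gamma = (v_0, v_1, \dots, v_n)$ in $T$ and any choice of simple closed curves $\sigma_i \in i(v_i)$, the induced sequence $(\sigma_0, \dots, \sigma_n)$ is a geodesic in $\CC(S)$. Indeed, for bi-infinite or semi-infinite geodesics in $T$, geodesicity is a condition that can be checked on arbitrary finite subsegments, so the finite case is what really matters.

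First I would set $Y_i := S \ssm i(v_i)$ for each interior vertex $v_i$ of $\gamma$. Because $i(v_i)$ is non-separating, $Y_i$ is a connected essential subsurface with $\partial Y_i = i(v_i)$. Since $v_{i-1}, v_{i+1}$ are neighbors of $v_i$ in $T$ and $i$ is simplicial, the curves $i(v_{i-1})$ and $i(v_{i+1})$ are disjoint from $i(v_i)$, hence lie in $Y_i$. The $L$--tight tree condition then provides exactly
\[ d_{\CC(Y_i)}\bigl(i(v_{i-1}), i(v_{i+1})\bigr) \geq L, \]
which is hypothesis (2) of Lemma \ref{ltightimpliesgeod}. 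Thus, choosing $L$ as in that lemma, the sequence $(i(v_0), \dots, i(v_n))$ qualifies as a tight geodesic in $\CC(S)$. In particular the induced sequence $(\sigma_0,\dots,\sigma_n)$ of simple closed curves has $d_{\CC(S)}(\sigma_i, \sigma_j) = |i-j|$, which is precisely geodesicity in $\CC(S)$.

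Extending to infinite geodesics is then immediate: if $\gamma$ is (semi-)infinite in $T$ and $(\sigma_i)$ is any induced path, then for any two indices $j < k$ the finite subpath $(\sigma_j, \dots, \sigma_k)$ comes from a finite geodesic in $T$ and satisfies the tight tree hypotheses, hence is a geodesic in $\CC(S)$; so $d_{\CC(S)}(\sigma_j, \sigma_k) = k - j$. Thus $i$ is an isometric embedding in the sense of Definition \ref{def-isomembedding}.

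The only potential obstacle is really bookkeeping: one must confirm that the uniform $L$ extracted in Lemma \ref{ltightimpliesgeod} is large enough to handle the tree case without degradation as one varies over the (bounded valence) choices of triples $(u,v,w)$ around each vertex. But because the hypothesis of Lemma \ref{ltightimpliesgeod} is purely local at each interior vertex of the geodesic, and the $L$--tight tree condition is imposed uniformly over all such triples in $T$, no new constant is needed and the same $L$ works. Hence no essential difficulty beyond Lemma \ref{ltightimpliesgeod} itself arises.
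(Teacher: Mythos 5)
Your proposal is correct and matches the paper's own treatment: the paper derives Proposition \ref{isometrictighttree} (and its generalization, Proposition \ref{isometrictighttree-sep}) by exactly this reduction, observing that any induced path along a tree geodesic satisfies the hypotheses of Lemma \ref{ltightimpliesgeod} with $Y_i = S \ssm i(v_i)$, so that every induced path is a geodesic in $\CC(S)$. Your remarks about checking geodesicity on finite subsegments and about the locality of the hypothesis are the right bookkeeping and introduce no new constants.
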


 We now extend the above definition  to allow the possibility of 
multicurves,
as well as
separating curves.

\begin{defn}\label{def-tighttree-sep}
	An {\bf $L-$tight tree} in the curve complex $\CC(S)$ consists of a (not necessarily regular) simplicial tree $T$ of bounded valence and a  map $i: V(T) \to \ccd (S)$ such that 
	\begin{enumerate}
		\item 
		for every vertex $v$ of $T$,
		$S\setminus i(v)$ consists of exactly one or two components. Further, if $S\setminus i(v)$ consists of  two components and $i(v)$ contains more than one simple closed curve, then each component of $i(v)$ is individually non-separating. In this situation, $v$ is called a {\bf separating vertex} of $T$.
		\item for every pair of adjacent vertices $u \neq v$   in $T$, and any vertices $u_0, v_0$ of the simplices $i(u), i(v)$ respectively,
		\[d_{\CC(S)} (u_0, v_0) =1.\]
		\item There is a {\bf distinguished component} $Y_v$ of $S\setminus i(v)$ such that for any vertex $u$ adjacent to $v$ in $T$, $i(u) \subset Y_v$ (automatic if $i(v)$ is non-separating).
		For $i(v)$ separating, we shall refer to $Y_v':=S \setminus Y_v$ as the {\bf secondary} component for $v$.  
		\item  for every pair of distinct vertices $u \neq w$ adjacent to $v$ in $T$, and any vertices $u_0, w_0$ of the simplices $i(u), i(w)$ respectively,
		\[d_{\CC(Y_v)} (u_0, w_0) \geq L.\]
	\end{enumerate}	
	An $L-$tight tree for some $L\geq 3$ will simply be called a tight tree.
\end{defn}

Lemma \ref{ltightimpliesgeod} gives us the following generalization of 
Proposition \ref{isometrictighttree}:
\begin{prop}\label{isometrictighttree-sep}
	Let $L> \max{(2M, 4D)}$, where $M$ is the constant from the Bounded Geodesic Image Theorem and $D$ is the Behrstock constant from Theorem \ref{bi}.  Let $S$ be a closed surface of genus at least $2$, and let $i: V(T)\to \ccd(S)$ define an $L$--tight tree (as in Definition \ref{def-tighttree-sep}).  Then $i$ is an isometric embedding.
\end{prop}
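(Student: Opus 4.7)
The plan is to reduce to Lemma \ref{ltightimpliesgeod} by restricting attention to finite geodesic segments in $T$.  By Definition \ref{def-isomembedding}, it suffices to prove that for every finite geodesic $(v_0,\ldots,v_n)$ in $T$, any induced path of simple closed curves in $\CC(S)$ is a geodesic; and geodesicity of an induced path along an infinite or bi-infinite geodesic in $T$ is detected on finite subsegments, so the finite case is enough.  I would apply Lemma \ref{ltightimpliesgeod} to the sequence $(i(v_0),\ldots,i(v_n))$ of multicurves, with the subsurfaces $Y_j := Y_{v_j}$ supplied by Definition \ref{def-tighttree-sep}(3).

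The next step is to verify the two hypotheses of Lemma \ref{ltightimpliesgeod}.  For hypothesis (1), I would check that $\partial Y_{v_j} = i(v_j)$ as a multicurve in $S$ in all allowed cases.  When $i(v_j)$ is non-separating, $Y_{v_j} = S \setminus i(v_j)$ and the claim is immediate.  When $i(v_j)$ separates, Definition \ref{def-tighttree-sep}(1) forces either $i(v_j)$ to be a single separating simple closed curve (for which the claim is again immediate) or each component of $i(v_j)$ to be individually non-separating in $S$; in the latter situation every component of the multicurve lies on both sides of the cut, so $\partial Y_{v_j} = i(v_j)$ still holds.  The containment $i(v_{j\pm 1}) \subset Y_{v_j}$ is part of Definition \ref{def-tighttree-sep}(3), and hypothesis (2) of Lemma \ref{ltightimpliesgeod} is precisely Definition \ref{def-tighttree-sep}(4).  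Since the bound $L > \max(2M,4D)$ matches the one invoked in the proof of Lemma \ref{ltightimpliesgeod}, that lemma concludes that $(i(v_0),\ldots,i(v_n))$ is a tight geodesic, which by definition means any induced path of simple closed curves realizes distance $|i-j|$ in $\CC(S)$.

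The place where the separating case earns the extra hypothesis that was trivial in Proposition \ref{isometrictighttree} is the requirement in Definition \ref{def-tighttree-sep}(3) that \emph{both} $i(v_{j-1})$ and $i(v_{j+1})$ lie in the same distinguished component $Y_{v_j}$.  Without it, subsurface projection to $Y_{v_j}$ could fail to see one of the neighbors, and both the chained Behrstock-inequality telescoping and the Bounded Geodesic Image step in the proof of Lemma \ref{ltightimpliesgeod} would collapse.  With it, the argument transfers verbatim.  The one low-complexity subtlety I would want to be careful about is the case when $i(v_j)$ is a single separating curve in a genus-$2$ surface, in which $Y_{v_j}$ is a one-holed torus of complexity one; here the Bounded Geodesic Image step should be read with $\CC(Y_{v_j})$ interpreted as the Farey graph, where the analogous BGI statement is standard, while Theorem \ref{bi} already accommodates subsurfaces of low complexity.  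This is the only genuine obstacle I anticipate, and it is resolved by citing the appropriate extension of BGI rather than by any substantive change to the argument.
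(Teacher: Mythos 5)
Your argument is the same as the paper's: its entire proof of Proposition \ref{isometrictighttree-sep} consists of the reduction via Definition \ref{def-isomembedding} to showing that induced paths are geodesics, followed by an appeal to Lemma \ref{ltightimpliesgeod}, which is exactly the route you take, with the lemma applied to $Y_j=Y_{v_j}$ on finite subsegments. One caveat about the extra verification you supply: the claim that, when $i(v)$ is a separating multicurve with individually non-separating components, every component must meet both sides of the cut is false in general. For example, on a closed genus-$3$ surface one can take $i(v)=\{c_1,c_2,c_3\}$ with $c_1$ having both of its sides on one complementary component $Y$ (a four-holed sphere) and $c_2,c_3$ joining $Y$ to the other component $Y'$ (a two-holed torus); each $c_i$ is individually non-separating and $S\setminus i(v)$ has exactly two components, yet $\partial Y'=c_2\cup c_3\subsetneq i(v)$, so Definition \ref{def-tighttree-sep} does not by itself force $\partial Y_v=i(v)$ for the distinguished component. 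This point is left implicit in the paper's one-line proof as well, and it is harmless for the proposition: the geodesic (distance) part of the proof of Lemma \ref{ltightimpliesgeod} uses only the containments $\partial Y_j\subseteq v_j$ and $v_{j\pm 1}\subset Y_j$, the equality $\partial Y_j=v_j$ entering only in identifying $v_j$ with $\partial F(v_{j-1},v_{j+1})$, i.e.\ in the tightness conclusion rather than in the distance estimate, so your reduction goes through once that side-claim is dropped or replaced by this observation. Your caution about complexity-one distinguished components in genus $2$ (the Farey-graph version of the Bounded Geodesic Image Theorem) is sensible and consistent with how the paper uses these tools.
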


\begin{proof} It suffices to show 
	(cf.\ Definition \ref{def-isomembedding}) that
	any path induced in $\CC(S)$ by a geodesic $\gamma$ in $T$ is a geodesic in $\CC(S)$. But this last statement follows immediately from Lemma \ref{ltightimpliesgeod}.
\end{proof}

\begin{assume}\label{assumption}
	We shall henceforth assume throughout the paper that whenever we refer to an $L-$tight tree, $L> \max{(2M, 4D)}$ as in the hypothesis of Proposition \ref{isometrictighttree-sep}.
\end{assume}

\subsection{Topological building blocks from links} \label{sec-topbb} In this subsection, we shall first describe a construction of building blocks from a tight tree of non-separating curves motivated by Minsky's construction in \cite{minsky-elc1}. We shall then proceed to indicate the modifications necessary for more general tight trees. In this section, we shall describe only the topological part of the construction, postponing the geometric aspect of it to  Section \ref{sec:bb}.

For $(X,d)$ a hyperbolic metric space, and $\VV\subset X$,
$CH(\VV)$ will denote the union of all geodesics joining $v_i, v_j \in \VV$ and will be called the {\bf weak convex hull} of $\VV$.

Let $i: T \to \CC(S)$ be  a tight tree of non-separating curves and let  $v$ be a vertex of $T$. The link of $v$ in $T$  is denoted as $lk(v)$. Then $i(lk(v))$ consists of a uniformly bounded number of vertices in $\CC(S)$ (since $T$ has bounded valence). Let $m_T$ denote this bound. 

Since $S$ is fixed, there exists $\delta_0 >0$ such that for any essential connected subsurface $W$ of $S$, $\CC(W)$ is $\delta_0-$hyperbolic. In fact, there is a universal $\delta_0\leq 17$ independent even of $S$ \cite{slim-unicorns}, but we shall not need this.   It follows that for any essential connected subsurface $W$ of $S$ and any collection $\VV = \{v_1, \cdots, v_k\}$ of $k\leq m_T$ vertices  of  $\CC(W)$, there exists a finite tree $T_\VV \subset \CC(W)$ uniformly approximating  $CH(\VV)$, i.e.\ there exists a surjective map $\P: CH(\VV) \to T_\VV$ such that
\begin{enumerate}
	\item the pre-image of any point in $T_\VV$ under $\P$ has diameter uniformly bounded by $(2\delta_0 +1) m_T$ (the exact constant is not important; it will suffice for our purposes to have a uniform bound in terms of $\delta_0$ and $m_T$).
	\item $d_{\CC(W)}(v_i,v_j) = d_{T_\VV}(\P(v_i), \P(v_j))$.
	\item The vertices $\{\P(v_i)\}$ are precisely the extremal/leaf vertices of ${T_\VV}$, i.e.\ ${T_\VV}$ is precisely the convex hull of the collection of points $\{\P(v_i)\}$ in ${T_\VV}$.
\end{enumerate}

Note that the tree $T_\VV$ constructed from $\VV$ is not unique, but only coarsely so, in the sense that any two such trees are uniformly quasi-isometric to $CH(\VV)$ by maps taking $\Pi(v_i) $ to $v_i$. 

In the light of Proposition \ref{isometrictighttree} we define:

\begin{defn}\label{def-treelink}
	For a tight tree  $i: T \to \CC(S)$ of non-separating curves,
	there exists $k\geq 1$ such that for all $v \in T$ there exists a tree $T_v$ (by the above discussion) satisfying the following: \\For  $W=S\setminus i(v)$, there exists a surjective $k-$quasi-isometry $$\P_W :CH(i(lk(v))  \to T_v,$$ where $CH(i(lk(v))$ denotes the weak convex hull  of  $i(lk(v))$ in $\CC(W)$.
	
	We shall refer to $T_v$ as the {\bf tree-link} of $v$.
\end{defn}

\begin{defn}\label{def-topbb} Let $i: T \to \CC(S)$ be  a tight tree of non-separating curves and let  $v$ be a vertex of $T$.
	The {\bf topological building block corresponding to $v$ } is $$M_v =S \times T_v.$$
\end{defn}

Thus the topological building block corresponding to $v$ is the trivial $S-$bundle over its tree-link. Note that $M_v$ contains a distinguished `annulus' $i(v)\times  T_v$, where, as before, $i(v)$ is identified with a non-separating simple closed curve on $S$. We shall refer to $i(v)\times  T_v \subset M_v$ as the {\bf Margulis riser} in $M_v$ or simply as the Margulis riser corresponding to $v$. The reason for this terminology will become clearer when we describe the geometric structure on $M_v$.  

In order to assemble the building blocks  corresponding to vertices together, we shall need an auxiliary `blow-up' construction of the tree $T$. We pass to the first barycentric subdivision $S_1(T)$ of $T$ and label the mid-point of an edge in $T$ joining $v_i, v_j$ by $v_iv_j$. These vertices will be referred to simply as the mid-point vertices of $S_1(T)$.
For each vertex $v$ of $T$ we define the {\bf half-star} 
$\hs (v) \subset S_1(T)$ of $v$ to be the  (usual) star of $v$ in $S_1(T)$. 

\begin{defn}\label{def-blowup} 
	Let $i: T \to \CC(S)$ be  a tight tree of non-separating curves.
	The {\bf blow-up} $\BU (T)$ of $T$ is a tree obtained from $S_1(T)$ by replacing each half-star $\hs(v)$ by the tree-link $T_v$. 
	
	More precisely, we proceed in two steps:\\ First, attach for each $v$, the  metric tree-link $T_v$ to $S_1(T)$ by gluing $\P(v_i)$ to the mid-point vertex $v_iv$ as $\P(v_i)$ ranges over all the terminal vertices  of $T_v$. In the second step remove the interiors
	of the half-stars $\hs(v)$ from $S_1(T)$ for all $v\in V$.
	
	We retain the labels of the mid-point vertices of  $S_1(T)$  in $\BU (T)$ and refer to them as the mid-point vertices of  $\BU (T)$.
\end{defn}

The topological model for the tight tree is obtained by gluing together the topological building blocks $M_v$  corresponding to $v$ according to the combinatorics of the blow-up $\BU (T)$. Topologically this is simply the product:
\begin{defn}\label{def-topmodeltree}
	Let $i: T \to \CC(S)$ be  a tight tree of non-separating curves.
	The {\bf topological model corresponding to  $T$ } is $$M_T =S \times \BU(T).$$
\end{defn}

Let $P: M_T  \to \BU(T)$ denote the natural projection. 
Note that $\BU(T)$ has distinguished finite subtrees corresponding to the tree-links $T_v$. We also identify  $P^{-1}(T_v)$ with $M_v$. Note that a mid-point vertex $vw$ in $\BU(T)$ is the intersection of the tree-links of $v,w$: $$\{vw\} = T_v \cap T_{lk(i(w))}.$$ We shall denote $P^{-1}(vw)$ by $S_{vw}$ and refer to them as {\bf mid-surfaces}.

\subsection{Balanced trees} We now indicate the modifications necessary for a general tight tree. Let $i: V(T) \to \ccd(S)$ be  a tight-tree. Tree-links $T_v$ are defined as in Definition \ref{def-treelink} with the understanding that for $i(v)$ separating, the weak convex hull $CH(\ilkv)$ is constructed in the curve complex $\CC(Y_v)$ of the distinguished component $Y_v$ of $S\setminus i(v)$. It remains to construct tree-links for the secondary component $Y_v'$ when $i(v)$ is separating. To construct tree-links for the secondary component $Y_v'$ we need to restrict the class of tight trees we are considering. 

For $w$ adjacent to $v$ let $T_w'$ denote the connected component of $T\setminus \{v\}$ containing $w$. Let $\Pi'_v (T_w')$ denote the subsurface projection of $i(V(T_w'))$ onto $\CC(Y_v')$.

\begin{defn}\label{def-balancedtree}
	A tight tree  $i: V(T) \to \ccd(S)$ is said to be a {\bf balanced} tree with parameters $D,k$ if 
	\begin{enumerate}
		\item For every separating vertex $v$ of $T$, \[\d(\Pi'_v (T_w')) \leq D.\]
		\item Let $\ilkv' \subset \CC(Y_v')$ denote the collection of  curves $w_0 \in \Pi'_v (T_w')(\subset \CC(Y_v'))$ as $w$ ranges over all vertices adjacent to $v$ in $T$. Let $CH(\ilkv')$ denote the weak convex hull of $\ilkv' $ in $\CC(Y_v')$. We require that there exists a  surjective $k-$quasi-isometry \[\P' : CH(\ilkv') \to T_v\] to the tree-link $T_v$ such that for a vertex $w$ of $T$ adjacent to $v$, \[\P'( \Pi'_v (T_w')) = \P (w),\] (where $\P$ is the projection defined in Definition \ref{def-treelink}).
	\end{enumerate} 
	
\end{defn}

\noindent {\bf Building blocks for balanced trees:} The notions of topological building block (Definition \ref{def-topbb}) in the case of balanced trees, blow-up (Definition \ref{def-blowup}), topological model (Definition \ref{def-topmodeltree})  now go through exactly as before.
	The notion of a balanced tree (Definition \ref{def-balancedtree}) ensures that the weak convex hulls $CH(\ilkv) \subset \CC(Y_v)$ and $CH(\ilkv') \subset \CC(Y_v')$ are coarsely quasi-isometric to each other and to the tree-link $T_v$.

\section{Geometry of building blocks}
\label{sec:bb}
The purpose of this section is to construct a model geometry on the topological building blocks $M_v$ (Definition \ref{def-topbb}) and the  topological model $M_T = S \times \BU (T)$ (Definition \ref{def-topmodeltree}) corresponding to a tight tree of non-separating vertices, and more generally for a balanced tree $T$ (Definition \ref{def-balancedtree}). 

\subsection{Model geometries of doubly degenerate 3-manifolds}\label{sec:model3mfld}
It will be convenient to recall some model geometries on doubly degenerate 3-manifolds as these form the motivation and the background for the model geometry on $M_v$.

\subsubsection{A quick summary}\label{sec:summarydm}
 
 \begin{ing}\label{3ings} The model geometry on doubly degenerate 3-manifolds $M$ that is relevant to that on the topological building block $M_v$ is built from the following ingredients:
 \begin{enumerate}
\item the general combinatorial model in \cite{minsky-elc1} built from the hierarchy machinery of tight geodesics and hierarchy paths \cite{masur-minsky,masur-minsky2};
\item the  model for bounded geometry doubly degenerate 3-manifolds built from a thick Teichm\"uller geodesic in \cite{minsky-jams}.
\item The main theorem of \cite{minsky-bddgeom}  establishing a combinatorial model for bounded geometry doubly degenerate 3-manifolds along with a dictionary between the combinatorics of such a model and the model geometry from a a thick Teichm\"uller geodesic in Item (2) above.
\end{enumerate}
 \end{ing}
We  briefly describe these 3 ingredients in this section and use them in Definition \ref{def-splsplitcombin} to define the geometry that will lead to the model geometry on $M_v$. As usual $\Teich(S)$ will denote the Teichm\"uller space of $S$. \\ 

\noindent {\bf  Item(1): The combinatorial model of \cite{minsky-elc1}.} The general combinatorial model on a doubly degenerate 3-manifold $M$  in \cite{minsky-elc1} is built as follows. Let $\LL_\pm$ denote the ending laminations of $M$. Identify $\LL_\pm$ with a pair of points on the boundary $\partial \CC(S)$ of the curve complex (using Klarreich's theorem \cite{klarreich-el}). Let $\gamma \subset \CC(S)$ be a tight geodesic joining $\LL_\pm$. The hierarchy path joining $\LL_\pm$ is built inductively by \cite{minsky-elc1,masur-minsky2}. One starts with $\gamma$ as the base geodesic. For every vertex (or simplex) $v$ in $\gamma$, one (roughly speaking) constructs geodesics in $\CC(S\setminus \{v\})$ joining the predecessor of $v$ to its successor. This process is repeated inductively for the geodesics constructed at this second stage and so on. The general combinatorial model in \cite{minsky-elc1} is built from standard building blocks ($S_{0,4} \times I$ or $S_{1,1} \times I$ equipped with some standard metrics) by assembling them according to the combinatorics 
dictated by the hierarchy path joining  $\LL_\pm$.\\

\noindent {\bf  Item (2): the model for bounded geometry 3-manifolds $M$ with ending laminations $\LL_\pm$.} Recall that $M$ is homeomorphic to $S \times \R$. We first replace the laminations  $\LL_\pm$ on $S$ by  singular foliations $\FF_\pm$ (differing from $\LL_\pm$ by bounded homotopies) and equip $S$ with a singular Euclidean metric where the $x-$(resp. $y-$)co-ordinate is given by $\FF_+$ (resp. $\FF_-$). Note that fixing co-ordinates implicitly converts $\FF_\pm$  into {\it measured}  singular foliations. Then the model geometry on $M$ is locally given by a singular Sol-type metric (see \cite{CTpub} or \cite[p. 567]{minsky-jams}) \[ds^2 = e^{2t}dx^2 +e^{-2t}dy^2 + dt^2,\] where $t$ parametrizes the $\R-$direction in 
$M=S \times \R$. So far we have not used the bounded geometry hypothesis. There exists a more canonical parametrization of the $\R-$direction when $M$ has bounded geometry. In \cite{minsky-top,minsky-jams}, Minsky showed that when $M$ has bounded geometry the Teichm\"uller geodesic $\gamma$ in $\Teich(S)$ joining $\FF_\pm \in \partial \Teich(S)$ is thick, i.e.\ it projects to a geodesic lying inside a compact region in moduli space $Mod(S)$:

\begin{defn}\label{def-thickgeod}
A geodesic $\gamma$ in $\Teich(S)$ is said to be {\bf $\ep_0-$thick} if the systole of any surface $S_x , \ x \in \gamma$ (thought of as a hyperbolic surface) is bounded below by $\ep_0$. 

A geodesic $\gamma$ in $\Teich(S)$ is {\bf thick} if it is $\ep_0-$thick for some $\ep_0 > 0$.
\end{defn}
For a thick Teichm\"uller geodesic $\gamma$, joining $\FF_\pm \in \partial \Teich(S)$ the parameter $t$ may be identified with the arc-length of the Teichm\"uller geodesic $\gamma$. 

Rafi \cite{rafi-gt} characterized thick 
Teichm\"uller geodesics in terms of subsurface projections.
To state this characterization we  recall that in Definitions \ref{def-subsurfproj} and \ref{def-dists} the notion of subsurface projections was defined. As pointed out after Definition \ref{def-subsurfproj}  these notions  can be naturally   extended to $d_Y(\lambda, \mu)$ for  laminations $\lambda, \mu$ on $S$ and arbitrary essential subsurfaces $Y$ of $S$ \cite[p. 150-151]{minsky-bddgeom}.

\begin{theorem}\cite{rafi-gt} \label{thickgeod}
	Let $\gamma$ be a bi-infinite geodesic in $\Teich(S)$ with end-points   $\LL_\pm \in \PML (S) = \partial \Teich(S)$. Then $\gamma$ is of bounded geometry if and only if there exists $D>0$ such that for every essential subsurface $W$ of $S$ (including annular domains), $d_W(\LL_+,\LL_-) \leq D$.
\end{theorem}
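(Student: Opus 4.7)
The plan is to prove both directions by relating shortness of curves along $\gamma$ to largeness of subsurface projections, using Minsky's product regions theorem as the fundamental quantitative bridge. Throughout, one identifies $\LL_\pm$ with the vertical and horizontal measured foliations of the quadratic differential defining $\gamma$, so that subsurface projections of $\LL_\pm$ can be compared with the projections of the short markings at points $S_t \in \gamma$.

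For the easy direction (bounded projections $\Rightarrow$ thick), I would argue contrapositively. Suppose $\gamma$ is not thick, so there is a sequence of points $S_{t_n}\in \gamma$ on which some curve $\alpha_n$ has hyperbolic (equivalently, extremal) length going to $0$. Passing to a subsequence, take $\alpha_n=\alpha$ constant. On the time interval where $\alpha$ is thin, the product regions theorem says the Teichm\"uller metric is $O(1)$-close to the sup of the ambient metric on $\Teich(S\setminus\alpha)$ and the hyperbolic metric on the upper half plane coordinate $(\ell_\alpha,\tw_\alpha)$ associated to $\alpha$. A bi-infinite Teichm\"uller geodesic that enters this horoball region deeply must travel far along the horocycle direction, which forces the twisting parameter $\twist_\alpha(\LL_+,\LL_-)$ to be large. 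Identifying the annular projection $d_\alpha(\LL_+,\LL_-)$ with this twisting (up to an additive error of $2$) yields $d_\alpha(\LL_+,\LL_-)\to\infty$, contradicting the hypothesis that all $d_W$ are bounded.

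For the converse (thick $\Rightarrow$ bounded projections), suppose $\gamma$ is $\ep_0$-thick and let $W$ be any essential subsurface. The strategy is to show that on a thick geodesic, the subsurface projection $\pi_W(\LL_\pm)$ is coarsely computed by the projections $\pi_W(\mu_{t_\pm})$ of short markings at the two ``active endpoints'' $t_\pm$ of $W$, where $t_\pm$ are the first/last times along $\gamma$ at which $\partial W$ has bounded-length representative in the short marking. For annular $W$ with core $\alpha$, thickness prevents $\alpha$ from becoming short, so the twisting $\twist_\alpha$ accumulated between short markings along $\gamma$ is bounded per unit time by an $\ep_0$-dependent constant, and the total twist between $\LL_+$ and $\LL_-$ is controlled by the length of the ``active interval'' for $\alpha$; thickness caps this interval length uniformly. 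For non-annular $W$, Rafi's shadow lemma says $d_W(\LL_+,\LL_-)$ is coarsely the length in $\CC(W)$ of the projection of $\gamma$ during the active interval for $W$; again, thickness bounds this length uniformly because $\partial W$ stays of bounded extremal length off the active interval, and the active interval itself is uniformly short.

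The main obstacle is the quantitative identification between three a priori different quantities: (i) subsurface projection distance $d_W(\LL_+,\LL_-)$, (ii) twisting/combinatorial length of the shadow of $\gamma$ on $\CC(W)$, and (iii) the geometric ``active interval'' during which $\partial W$ is short. Step (i)$\leftrightarrow$(ii) uses the fact that short markings along $\gamma$ coarsely track $\LL_\pm$ at $t\to\pm\infty$ and behave $1$-Lipschitz under subsurface projection. Step (ii)$\leftrightarrow$(iii) is the essence of Rafi's combinatorial model for Teichm\"uller geodesics and requires careful use of the product regions theorem together with Minsky's formula $\ell_\alpha \asymp 1/(\ell_{\mathrm{ext}}(\alpha)\cdot d_\alpha)$ type estimates to rule out projections being large outside the active interval. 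Once these identifications are in place, both implications reduce to the statement that $\ep_0$-thickness is equivalent to the absence of deep excursions into the thin part, which is exactly what the product regions theorem controls.
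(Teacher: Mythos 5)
The paper offers no proof of this statement: it is quoted verbatim as a theorem of Rafi \cite{rafi-gt} (the ``bounded combinatorics $\Leftrightarrow$ cobounded geodesic'' characterization), so there is nothing internal to compare your argument against. Judged on its own terms, your proposal identifies the right toolbox --- Minsky's product regions theorem and the comparison of $\pi_W(\LL_\pm)$ with projections of short markings along $\gamma$ --- but it is a roadmap rather than a proof, and one step is genuinely wrong. In the ``bounded projections $\Rightarrow$ thick'' direction you claim that a deep excursion into the $\alpha$-thin part forces the geodesic to ``travel far along the horocycle direction,'' hence forces the annular coefficient $d_\alpha(\LL_+,\LL_-)$ to be large. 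This fails: the product regions theorem gives the \emph{sup} of the metric on $\Teich(S\setminus\alpha)$ and the hyperbolic metric on the $\alpha$-coordinate, so the geodesic can plunge deep into the thin part while its upper-half-plane coordinate moves essentially vertically, the shortness of $\alpha$ being driven instead by a large non-annular coefficient $d_Y(\LL_+,\LL_-)$ for some $Y$ with $\alpha\subset\partial Y$. Rafi's actual criterion is a dichotomy (a curve gets short iff \emph{either} some such $d_Y$ is large \emph{or} $\log d_\alpha$ is large), and your argument captures only one horn; the contradiction with uniform boundedness of all $d_W$ survives once both horns are acknowledged, but the step as written does not.

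In the converse direction the argument defers to ``Rafi's shadow lemma'' and ``Rafi's combinatorial model for Teichm\"uller geodesics,'' which are precisely the content of the paper being cited: the quantitative identification you flag as ``the main obstacle'' --- between $d_W(\LL_+,\LL_-)$, the length of the shadow of $\gamma$ in $\CC(W)$, and the length of the active interval for $\partial W$ --- is the theorem, and the proposal does not carry it out. Note also that the annular coefficient enters the short-curve criterion through its logarithm, another indication that the quantitative bridge between (i), (ii), (iii) is subtler than sketched. To make this a proof one would need to supply the extremal-length estimates along the geodesic (Minsky's $\ell \asymp$ max-of-coefficients type formula) that convert thinness at a single time into a statement about the pair $(\LL_+,\LL_-)$, which is exactly what \cite{rafi-gt} does.
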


\begin{defn} \label{def-thickminmod}
For a  bounded geometry doubly degenerate 3-manifold $M$ without parabolics (homeomorphic to $S \times \R$)  with ending laminations $\LL_\pm$, the {\bf thick Minsky model} on $M$ is given by the singular Sol-type metric \[ds^2 = e^{2t}dx^2 +e^{-2t}dy^2  + dt^2,\] where $t$ parametrizes (according to arc length)  the Teichm\"uller geodesic $\gamma$ joining $\LL_\pm$ and $x,y$ are co-ordinates for  singular  foliations boundedly homotopic to $\LL_\pm$.

For $S=S_{g,n}$ with marked points, let $M^h$  (homeomorphic to $S \times \R$) be  a  bounded geometry doubly degenerate 3-manifold  with ending laminations $\LL_\pm$ and let $M$ denote $M^h$ minus a small neighborhood of cusps. The singular Sol-type metric $ds^2 = e^{2t}dx^2 +e^{-2t}dy^2  + dt^2$ on $S \times \R$ is given as before; but the latter contains a distinguished set of geodesics through each of the marked points $p_1,\cdots, p_n$ given by $(p_i,t)$. we refer to these as {\bf cusp geodesics}. 
\end{defn}

This will be elaborated upon in Section \ref{sec:bddgeo} below. \\

\noindent {\bf  Item (3): the relationship between the thick Minsky  model of a bounded geometry manifold $M$ as per Definition \ref{def-thickminmod} in Item (2) and its combinatorial model given in Item (1).}
The main theorem of \cite{minsky-bddgeom} establishes the necessary dictionary (note the similarity with Theorem \ref{thickgeod}).

\begin{theorem}\cite[p. 144]{minsky-bddgeom} \label{thickcombin}
Let $M$ be a doubly degenerate hyperbolic 3-manifold $M$ with ending laminations $\LL_\pm$. Then $M$ is of bounded geometry if and only if there exists $D>0$ such that for every essential subsurface $W$ of $S$ (including annular domains), $d_W(\LL_+,\LL_-) \leq D$.
\end{theorem}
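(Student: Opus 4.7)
The plan is to use Rafi's characterization (Theorem \ref{thickgeod}) as the combinatorial bridge and to reduce both directions of the theorem to the statement that $M$ has bounded geometry if and only if the Teichm\"uller geodesic $\gamma \subset \Teich(S)$ with endpoints $\LL_\pm$ is thick in the sense of Definition \ref{def-thickgeod}. Granting this geometric equivalence, Theorem \ref{thickgeod} immediately converts the bounded geometry hypothesis on $M$ into the uniform bound $d_W(\LL_+,\LL_-)\leq D$, and conversely.

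For the forward direction, suppose $M$ has bounded geometry, so there is $\ep>0$ with $\inj(M)\geq\ep$. I would realize $\gamma$ as a one-parameter family of marked pleated surfaces in $M$ via the Thurston--Canary sweepout/interpolation construction, using the fact that the hyperbolic structure on the Riemann surface $S_x$ at each $x\in\gamma$ is boundedly comparable to the induced metric on the corresponding pleated surface in $M$. A simple closed curve $\alpha$ with short hyperbolic length on some $S_x$ would be realized geodesically on a nearby pleated surface, giving a short closed geodesic in $M$ and contradicting $\inj(M)\geq\ep$. Hence $\gamma$ is $\ep'$--thick for some $\ep'=\ep'(\ep)>0$, and Theorem \ref{thickgeod} closes this direction.

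For the reverse direction, assume $d_W(\LL_+,\LL_-)\leq D$ for every essential subsurface $W\subset S$ (including annuli). Theorem \ref{thickgeod} yields that $\gamma$ is $\ep_0$--thick for some $\ep_0=\ep_0(D,S)>0$. I would then invoke the Short Curve Theorem from the hierarchy machinery of \cite{masur-minsky2,minsky-elc1}: a simple closed curve $\alpha$ is short in $M$ only when some essential subsurface $W$ with $\partial W\supseteq\alpha$, or the annulus around $\alpha$ itself, records a large coefficient $d_W(\LL_+,\LL_-)$. The uniform bound on all subsurface projections therefore forces every closed geodesic in $M$ to have hyperbolic length bounded below by some $\ell_0=\ell_0(D,S)>0$, and the Margulis lemma then converts this into a uniform lower bound on $\inj(M)$.

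The main obstacle will be the reverse direction, and specifically justifying the Short Curve Theorem without invoking the full model-manifold construction of \cite{minsky-elc1}. The standard route proceeds via a priori length bounds along a hierarchy for $\LL_\pm$: one builds a hierarchy path, and via Minsky's pivot/Lipschitz estimates together with the thickness of $\gamma$ one bounds the length of every vertex cycle appearing along the hierarchy. Extracting genuinely geometric conclusions from this combinatorial control requires interpolating pleated surfaces realizing successive curves in the hierarchy and arguing that any short geodesic in $M$ must be realized on one of them; the delicate point is to show that the absence of large subsurface coefficients propagates a uniform injectivity-radius bound throughout the entire convex core of $M$, not merely along a preferred family of pleated surfaces.
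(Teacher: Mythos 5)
The first thing to note is that the paper does not prove this statement at all: Theorem \ref{thickcombin} is quoted verbatim as Minsky's Bounded Geometry Theorem from \cite{minsky-bddgeom} and is used as a black box, so there is no internal proof to compare your argument against. Judged on its own terms, your proposal is a reasonable map of how the result sits in the literature, but it is not a proof: the entire difficulty of the ``if'' direction is concentrated in the Short Curve Theorem, which you invoke rather than establish, and that theorem is essentially equivalent to the hard implication you are trying to prove. The same issue infects your opening reduction. The implication ``$M$ of bounded geometry $\Rightarrow \gamma$ thick'' is indeed Minsky's pleated-surface interpolation argument from \cite{minsky-top,minsky-jams} and, composed with Rafi's characterization (Theorem \ref{thickgeod}), gives a clean proof of the ``only if'' direction (note, though, that Minsky's own proof of that direction in \cite{minsky-bddgeom} is different: it is a geometric-limit argument producing an accidental parabolic, with no Teichm\"uller geodesics involved). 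But the converse implication ``$\gamma$ thick $\Rightarrow M$ of bounded geometry'' was not available prior to \cite{minsky-bddgeom}; Minsky's earlier work assumes bounded geometry of $M$ as a hypothesis and builds the Sol-type model from it, so you cannot use the thickness equivalence as a free geometric bridge in both directions without circularity.

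Concretely, the gap is this: from the hypothesis $d_W(\LL_+,\LL_-)\le D$ for all $W$ you must rule out arbitrarily short geodesics anywhere in $M$, and the only known routes are (i) Minsky's original argument in \cite{minsky-bddgeom}, which combines a priori length bounds on hierarchy vertices (proved via pleated surfaces realizing hierarchy slices and a careful ``projection bound implies no thin part'' argument that sweeps out the whole convex core), or (ii) the full model-manifold machinery of \cite{minsky-elc1}, where the Short Curve Theorem is a consequence of the Lipschitz model map. Your final paragraph correctly identifies this as the delicate point, but identifying the obstacle is not the same as overcoming it; as written, the reverse direction reduces the theorem to an unproved statement of comparable depth. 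If your goal is only to situate the theorem (as this paper does), citing \cite{minsky-bddgeom} suffices; if the goal is a proof, the a priori bounds and the sweep-out of the convex core by controlled pleated surfaces must actually be carried out.
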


We turn now to a special geometry that will be relevant to this paper.
We describe in terms of subsurface projections the conditions that define the model relevant to the geometry on the topological building block $M_v$ (Definition \ref{def-topbb}). 

\begin{defn}\label{def-splsplitcombin}
	Let $M$ (homeomorphic to $S \times \R$) be a doubly degenerate hyperbolic 3-manifold with ending laminations $\LL_\pm$. Then $M$ will be said to be of {\bf special split geometry}  with parameters $L, R$ if it satisfies the following conditions:\\
	1) Let $\gamma$ be a tight geodesic joining $\LL_\pm$ in $\CC(S)$. Then for every simplex  $v$ of $\gamma$ and every component $Y$ of $S\setminus v$,  \[d_Y(\LL_+,\LL_-) \geq L.\] We refer to the components $Y$ of $S\setminus v$, $v \in \gamma$ as {\bf principal component domains}. \\
	2) For every proper essential subsurface $W$ of $S$ that is {\em not} a principal component domain,
	 \[d_W(\LL_+,\LL_-) \leq R.\]
	 
	 Further, if each simplex of the base tight geodesic $\gamma$ is a single vertex $v$ corresponding to a non-separating simple closed curve on $S$ then $M$ is said to be of  special split geometry {\bf with non-separating curves}. 
\end{defn}

It follows from \cite[Theorem 8.1]{minsky-elc1} that for $L$ large enough, each vertex $v$ gives a Margulis tube:

\begin{lemma}\label{v-thin} For every $\epsilon_0 > 0$, 
	there exists $L_0$ such that for $L\geq L_0$ the following holds. Let $M$ be of special split geometry with parameters $L, R$ as in Definition \ref{def-splsplitcombin}. Then every vertex of the tight geodesic $\gamma$ in Definition \ref{def-splsplitcombin} gives an $\epsilon_0-$Margulis tube in $M$.
\end{lemma}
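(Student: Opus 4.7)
The plan is to derive a short length bound on the geodesic representative of $v$ in $M$ from the subsurface projection hypothesis of special split geometry, and then invoke the Margulis lemma to promote short length to an $\epsilon_0$-Margulis tube. The key input is the quantitative short curve portion of the Brock--Canary--Minsky model \cite{minsky-elc1}, which is exactly the reference cited in the lemma statement. More precisely, I would extract from \cite[Theorem 8.1]{minsky-elc1} a function $\phi:[0,\infty)\to[0,\infty)$ depending only on $S$, with $\phi(L)\to 0$ as $L\to\infty$, having the following property: whenever $v$ is a vertex of a tight geodesic joining $\LL_\pm$ and $d_Y(\LL_+,\LL_-)\geq L$ for every component $Y$ of $S\setminus v$, the closed geodesic representative of $v$ in $M$ has length at most $\phi(L)$. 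The point is that the subsurface projection coefficient $\Momega(v)$ associated to $v$ in the Minsky model is bounded below in terms of the projections onto the components of $S\setminus v$, so large projections on these components force $\Momega(v)$ to be large, which in turn forces $\ell_M(v)$ to be small.

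With this length bound in hand, the rest is routine. Let $\epsilon_M$ denote the Margulis constant in dimension three and set $\epsilon_1=\min(\epsilon_0,\epsilon_M)$. Pick $L_0$ large enough that $\phi(L_0)<\epsilon_1$. For any $L\geq L_0$ and any doubly degenerate $M$ of special split geometry with parameters $L,R$, condition (1) of Definition \ref{def-splsplitcombin} gives $d_Y(\LL_+,\LL_-)\geq L\geq L_0$ for every component $Y$ of $S\setminus v$, so the length bound above yields $\ell_M(v)<\epsilon_1\leq \epsilon_0$. The Margulis lemma then guarantees that the $\epsilon_0$-thin component of $M$ containing the geodesic representative of $v$ is an embedded solid torus, i.e.\ an $\epsilon_0$-Margulis tube around $v$.

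The main obstacle is isolating the precise quantitative statement in \cite{minsky-elc1}: one needs the \emph{effective} implication ``large subsurface projections imply short length'' with a bound depending only on the topological type of $S$, not on $M$, so that a single $L_0$ works across all doubly degenerate $M$ satisfying the hypothesis. A purely qualitative characterization of short curves would not suffice. This effectivity is built into the Brock--Canary--Minsky length estimates, so the lemma follows directly by specializing that estimate to the restricted projection data furnished by special split geometry.
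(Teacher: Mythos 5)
Your proposal is correct and follows essentially the same route as the paper, which gives no detailed argument but simply asserts the lemma ``follows from \cite[Theorem 8.1]{minsky-elc1}''; your write-up is the standard expansion of that citation (large projections onto the components of $S\setminus v$ force the model coefficient of $v$ to be large, hence $\ell_M(v)$ short, hence a Margulis tube via the Margulis lemma), with the effectivity in $L$ coming from the Brock--Canary--Minsky length estimates exactly as you describe.
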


 \subsubsection{The bounded geometry model}\label{sec:bddgeo} We turn now to the second item of Ingredients \ref{3ings} and adapt it to bundles over quasiconvex subsets of $\CC(S)$ or $\Teich(S)$. Let $N^h$  be a doubly degenerate hyperbolic 3-manifold corresponding to a surface $S$ with or without punctures. Let $N$ denote $N^h$ minus a small neighborhood of the cusps. We normalize so that the boundary components of $N$ are isometric to products $S^1_e \times \R$, where $S^1_e$ are round circles of radius $e$.
 We define the {\bf systole} of a manifold or more generally a length space to be  the infimum of the length of closed geodesics (thus ignoring cusps).
 If there exists $\ep > 0$ such that the systole of $N^h$ (and hence $N$)  is bounded below by $\ep$, then $N^h$ is said to be  of {\bf bounded geometry}.
 
 The following theorem is essentially  due to Minsky
 \cite{minsky-jams} (see, however, the paragraphs following Theorem \ref{minsky-elcbddgeo} for references to the literature, from where the refinement we need can be culled). It establishes a bi-Lipschitz equivalence between the hyperbolic structure on a bounded geometry doubly degenerate hyperbolic 3-manifold and its thick Minsky model:
 
 \begin{theorem}\cite[Cor. 5.10]{minsky-jams} For $S=S_{g,n}$ a surface of genus $g$ and $n$ punctures, and $\ep >0$, let $N^h$ be  a doubly degenerate hyperbolic 3-manifold corresponding to $S$ with injectivity radius bounded by $\ep >0$, and $N$ denote $N^h$ minus a small neighborhood of the cusps as above. Then there exists $L \geq 1$ such that the following holds:\\
 	Let $\LL_\pm$ as above be the ending laminations of $N^h$, let $l$ be the bi-infinite geodesic in $\Teich(S)$ joining  $\LL_\pm$. Let $Q^h$ denote the thick Minsky model as in Definition \ref{def-thickminmod}.
  Let $Q$ denote $Q^h$ minus a small neighborhood of the cusp geodesics (Definition \ref{def-thickminmod}) with boundary components  normalized to be isometric to products $S^1_e \times \R$. Then $Q$ and $N$ are $L-$bi-Lipschitz homeomorphic.
 	\label{minsky-elcbddgeo}
 \end{theorem}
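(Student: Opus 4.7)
The plan is to build the bi-Lipschitz homeomorphism $Q \to N$ from Minsky's pleated surface machinery, after first reducing the bounded geometry hypothesis on $N^h$ to a thickness hypothesis on the Teichm\"uller geodesic $l$. First I would invoke Theorem \ref{thickcombin} (equivalently the characterization \ref{thickgeod} of Rafi): since the systole of $N^h$ is bounded below by $\ep$, subsurface projections $d_W(\LL_+,\LL_-)$ are uniformly bounded for every essential $W \subsetneq S$, and hence $l$ is $\ep_0$-thick for some $\ep_0 = \ep_0(\ep) > 0$. This gives uniform control on every surface $S_t \in l$ (systole, diameter, geometry of the singular flat structure induced by the quadratic differential dual to $\FF_\pm$) and justifies parametrizing the $t$-direction of $Q^h$ by Teichm\"uller arclength.

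Next I would construct the model map $F: S \times \R \to N^h$ as in \cite{minsky-jams, minsky-top}. For each $t$ one produces a pleated surface $f_t: S \to N^h$ pleated along a simplicial approximation to $\FF_+$ (or $\FF_-$) whose transverse measure is controlled by $e^{-t}$ (resp.\ $e^{t}$). Because $l$ is thick, each $f_t$ lies in the $\ep$-thick part of $N^h$ away from cusps and has uniformly bounded injectivity radius both above and below, so the derivative of $f_t$ is comparable (up to universal constants) to the identity on the singular flat structure. Interpolating the $f_t$'s linearly in $t$ produces a map $F$ whose derivative in the $x$-direction contracts like $e^{-t}$, in the $y$-direction expands like $e^{t}$, and in $t$ is essentially the unit normal field to $f_t(S)$; these are exactly the three scales appearing in the Sol-type metric $ds^2 = e^{2t}dx^2 + e^{-2t}dy^2 + dt^2$. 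Efficiency of pleated surfaces (Thurston) plus the uniform thickness give a two-sided Lipschitz bound depending only on $\ep$ and $\xi(S)$.

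For the cusp case $S = S_{g,n}$, one applies the same construction to relatively doubly degenerate structures. The pleating laminations avoid small horocyclic neighborhoods of the punctures, so each $f_t$ maps a truncated surface $S^{cpt}$ into $N$. The boundary tori of $N$ are foliated by a horocyclic $S^1$ factor (of length normalized to $2\pi e$) and an $\R$-factor along which the cusp runs, while in $Q^h$ the cusp geodesics $\{(p_i,t)\}$ have a normal bundle that is likewise ruled by a horocyclic circle (of length $e^{-t} \cdot \ell_x + e^{t}\cdot \ell_y$ in the Sol model, constant after the standard normalization) and the $t$-direction. Matching these product structures on $\partial N$ and $\partial Q$ extends $F$ to a bi-Lipschitz homeomorphism $Q \to N$.

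The main obstacle is the cusp normalization. The original theorem in \cite{minsky-jams} is stated for closed $S$; for $S_{g,n}$ one needs the pleated-surface/interpolation argument to respect the thin parts in both source and target, and to verify that the model's cusp collars, after the $S^1_e \times \R$ normalization, match the standard cusp neighborhoods of $N$ to within a uniform bi-Lipschitz constant. Once this is settled (using Minsky's estimates on horocyclic lengths along pleated surfaces and the fact that cusp geometry depends only on $\ep$), the rest is routine, and the constant $L$ can be chosen to depend only on $\ep$ and the topological type of $S$.
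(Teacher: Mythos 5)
Your outline follows Minsky's original pleated-surface interpolation, and the first step (bounded geometry of $N^h$ implies thickness of $l$, via Theorem \ref{thickgeod}/\ref{thickcombin}) is correct and matches what the paper uses. But there is a genuine gap at the crucial point: the map $F$ you build by interpolating the pleated surfaces $f_t$ is a priori only an immersion, not an embedding. Pleated surfaces in $N^h$ need not be embedded, and Minsky's \cite[Cor.\ 5.10]{minsky-jams} accordingly produces only a quasi-isometry at the level of universal covers, lifted from a possibly non-injective map between $Q$ and $N$. Your proposal asserts a bi-Lipschitz \emph{homeomorphism} without ever addressing injectivity; "efficiency of pleated surfaces plus uniform thickness" gives two-sided derivative bounds but says nothing about global injectivity of $F$. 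The paper is explicit about this defect and closes it by invoking Lott's argument \cite[Proposition 8]{lott} (essentially a properness/degree-one argument upgrading a uniformly bi-Lipschitz local homeomorphism between these particular manifolds to a homeomorphism), or alternatively by appealing to the full Ending Lamination Theorem. Without one of these inputs your argument does not prove the statement as claimed.

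The cusped case has the same issue compounded. Your discussion there concentrates on normalizing the boundary circles to $S^1_e\times\R$, which is the easy part; what actually needs proving is that one gets a sequence of equispaced pleated surfaces \emph{with boundary} whose successive Teichm\"uller distances are bounded above and below (this is where the main theorem of \cite{minsky-top} enters, applied to the punctured surface), and then again that the resulting immersed surfaces can be upgraded to embedded ones, for which Lott's argument must be rerun on the manifold with boundary $N$. So the correct structure of the proof is: Minsky's estimates give the uniformly bi-Lipschitz \emph{immersed} model map; a separate topological argument promotes it to a homeomorphism. Your write-up is missing the second half entirely.
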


We point out that Minsky established a quasi-isometric map at the level of  universal covers that is a lift of a possibly non-injective map between $N, Q$ in the case that $N^h=N$, i.e.\ in the absence of cusps.
 We refer the reader to
\cite[Proposition 8]{lott} for the relevant refinement  in the absence of cusps. The conclusion also follows from the full strength of
the ending lamination theorem \cite{minsky-elc1, minsky-elc2} applied to this
special case. 

A word about cusps. A coarse model in the case of surfaces
with punctures may be found in \cite{bowditch-ct}, \cite[Section 1.1]{mahan-pared} or \cite[Section 5]{mahan-bddgeo}. One removes a small neighborhood of the cusps from $N^h$ to obtain $N$ as in the statement of Theorem \ref{minsky-elcbddgeo}. Then one proves the existence of a sequence of pleated surfaces in $N^h$, such that the intersections of these pleated surfaces 
with $N$ give a  sequence of equispaced pleated surfaces with boundary. 
 The main theorem of \cite{minsky-top} applies equally to the punctured surface case to show that the Teichm\"uller distance between successive
 pleated surfaces with boundary
  is uniformly bounded both above and below. However,
 as in \cite{minsky-jams}, these pleated surfaces may be immersed, and not embedded. To upgrade immersed surfaces to embedded surfaces, Lott's
 argument in \cite[Proposition 8]{lott} applies equally to the manifold with boundary $N$, concluding the proof.

 Theorems \ref{thickcombin} and \ref{minsky-elcbddgeo} thus establish two different descriptions of bounded geometry doubly degenerate hyperbolic 3-manifolds. \\

 \noindent {\bf Universal bundles:} The thick Minsky model in Definition \ref{def-thickminmod} will need to be generalized to a situation where the base space is a quasiconvex subset of $\Teich(S)$ rather than a geodesic. To do this it will be more convenient to obtain a description in terms of hyperbolic metrics on $S$ rather than the singular Euclidean metric in Definition \ref{def-thickminmod}. The natural structure is given in terms of {\bf universal bundles or universal curves} over $\Teich(S)$. The following remark recalls the necessary notion from \cite{wolpert-univcurve}.
 
 \begin{rmk}\label{ubundle}{\rm 
The moduli space $Mod(S)$ is a quasiprojective variety \cite{mumford-ens}. A finite-sheeted cover of $Mod(S)$ is actually a manifold (and also a quasiprojective variety) and can be naturally equipped with a K\"ahler metric: the Weil-Petersson metric 
 \cite[p. 420]{wolpert-univcurve}. We specialize to the case where $g\geq 2, n=1$ and denote $S=S_{g,0}$. Then $Mod(S_{g,1})$ admits a natural bundle structure fibering over $Mod(S)=Mod(S_{g,0})$ with fiber over $x\in Mod(S_{g,0})$ the curve $x$. This is called the universal curve \cite[p. 419]{wolpert-univcurve}. The cover of  $Mod(S_{g,1})$ corresponding to the fundamental group $\pi_1(S_{g,0})$ of the fiber is then called the 
 {\bf  universal bundle} $U\Teich(S)$ over $\Teich(S)$. The fiber $S_x$ over $x \in \Teich(S)$ is then the marked hyperbolic structure given by $x$. The metric induced on $S_x$ is the restriction of the Weil-Petersson metric on  $U\Teich(S)$ and equals the hyperbolic metric (up to a global scale factor).
 
 Finally, for any $X \subset \Teich(S)$ the restriction of $U\Teich(S)$ to $X$ gives topologically a product $UX= S \times X$. The  metric on $UX$ is the path-metric induced from $U\Teich(S)$ on $UX$.}
  \end{rmk}
  
   There is a natural fiberwise uniformization map $\Phi$ from the thick Minsky model to the universal curve over a thick Teichm\"uller geodesic $l$. Since the systole of every fiber $S_x, \ x \in l$ is uniformly bounded below, there exists $K\geq 1$, depending only on the lower bound on systole, such that $\Phi^{-1}$ is  $K-$bi-Lipschitz on $S_x$ for every $x$.  It follows that the universal bundle over $l$ with its metric is  bi-Lipschitz homeomorphic to the thick Minsky model under a fiber-preserving  homeomorphism.
  
  \begin{rmk}
An alternate coarse description of universal bundles for $S$ closed may be given as follows. Let $X \subset \Teich(S)$ be contained in the $\ep-$thick part $\Teich_\ep(S)$ of $\Teich(S)$, i.e.\ for every $x \in X$ the hyperbolic surface $S_x$ has systole at least $\ep$. Further, suppose that $X$ is quasiconvex (with respect to the Teichm\"uller metric). Note that the quotient $\Teich_\ep(S)/MCG(S)$ by the mapping class group is compact and hence the inclusion $MCG(S).o \subset \Teich_\ep(S)$ is a quasi-isometry where 
 $o \in \Teich_\ep(S)$ is some  base-point.
 Hence there is a subset $K \subset MCG(S)$ such that $K.o $ is quasi-isometric to $X$ with the same constants. Further, if $$1 \to \pi_1(S) \stackrel{i}\rightarrow MCG(S,*) \stackrel{q}\rightarrow MCG(S) \to 1$$ denote the Birman exact sequence, then $q^{-1}(K)$ projects  to $K$ under $q$ and there is a coarsely  fiber-preserving quasi-isometry between $q^{-1}(K)$ and the universal cover of the universal curve over $X$  (see the notion of metric bundles in \cite[Definition 1.2]{mahan-sardar} for more details). 
    \end{rmk}

  \subsubsection{Relations between model geometries}\label{sec-3models} In what follows, we shall need to go between three different geometries of doubly degenerate hyperbolic manifolds:
  
  \begin{enumerate}
  \item The hyperbolic metric.
  \item The combinatorial model \cite{minsky-elc1}.
  \item A model obtained by interbreeding the thick model of Theorem \ref{minsky-elcbddgeo} above with the combinatorial model {\it in a certain special case} that we shall amplify below. This last model will be called a {\bf special split geometry} model following \cite{mahan-split}.
  \end{enumerate}
  
 It follows essentially from the ending lamination theorem \cite{minsky-jams,minsky-elc1,minsky-elc2} that these three different geometries will give us metrics that are uniformly bi-Lipschitz to each other. We shall say more about Item (3) above below (see especially Theorem \ref{thickhierrachy}).

 \subsubsection{The special split geometry model}\label{sec-thin} We shall now proceed to elaborate on the special split geometry model given in Definition \ref{def-splsplitcombin} and provide an alternate description of the model that we shall need later. The alternate description  is culled out of \cite{minsky-elc1,mahan-split} (see especially   \cite[Sections 1.1.2, 1.1.3, 4.1]{mahan-split}, and the description of models of `graph amalgamation geometry' in \cite{mahan-amalgeo}).  
 
 A piecewise smooth embedded incompressible surface $S$ in a hyperbolic 3-manifold is said to have $(\ep, D)-$bounded geometry if, with respect to the induced path metric
 \begin{enumerate}
 	\item the systole of $S$ is bounded below by $\ep$
 	\item the diameter of $S$ is bounded above $D$.
 \end{enumerate}

 We summarize the part of the discussion in Section 4.1 of \cite{mahan-split} that will be necessary for us. Let $N$ be a doubly degenerate hyperbolic 3-manifold  of special split geometry (Definition \ref{def-splsplitcombin}) with ending laminations $l_\pm$. Let $E_\pm$ be the two ends of $N$ and $\gamma =\{ \cdots, v_{i-1}, v_i, v_{i+1}, \cdots \}$ be the tight geodesic of simplices joining $l_\pm$ occurring in Definition \ref{def-splsplitcombin}. Then Proposition 4.2 of \cite{mahan-split} gives a sequence of bounded geometry surfaces $\{S_i\}, \ i \in \Z$ exiting the ends $E_\pm$. Proposition 4.3 of \cite{mahan-split} now shows that the region between $S_i, S_{i+1}$ has a finite number of Margulis tubes corresponding to the simple closed curves occurring as vertices of the simplex $v_i$. Further, away from these Margulis tubes, the systole of $N$ is uniformly bounded away from zero. We summarize the conclusions of this construction (see p. 36 of \cite{mahan-split}) as follows.

 \begin{prop}
 	\label{splsplit} For all $ R$ there exist $\ep, C, D > 0$ such that the following holds.\\ Let $N$ be a doubly degenerate hyperbolic 3-manifold  of special split geometry with parameters $L\geq 3, \ R$ and ends $E_\pm$. Then (see figure below):
 		\begin{enumerate}
 		\item  There exists a sequence
 		$\{ S_i \}, i \in \Z$  of   disjoint,  embedded, incompressible, $\ep, D-$bounded geometry surfaces exiting  the ends $E_\pm$ as $ i \to \pm \infty$ respectively. The surfaces are ordered so that  $i<j$ implies that
 		$S_j$ is contained in the unbounded component of $E_+ \setminus S_i$. The topological product region  between $S_i$ and $S_{i+1}$ is denoted $B_i$ and is termed a {\bf split block}.
 		\item corresponding to each such product region $B_i$, there exists a finite number of  Margulis tubes  corresponding to disjoint simple closed curves on $S_i$. The disjoint union of these  Margulis tubes is called a multi-Margulis tube and denoted as $\tube_i$. Then $\tube_i \subset B_i$.  Further, 
 		$\tube_i \cap S_i$ and  $\tube_i \cap S_{i+1}$ are (multi-)annuli on $S_i$ and $S_{i+1}$ respectively, with core curves homotopic to the  core curve of $\tube_i$. We think of $\tube_i$ as splitting the $i$th {\bf split block} $B_i$ and call it a {\bf splitting tube}. The complementary components $K_{ij}$ 
 		of $B_i \setminus \tube_i$  and their lifts $\til{K_{ij}}$ to $\til N$  are called {\bf split components}. The top and bottom boundary surfaces
 		$ S_{i+1}, S_i$ of $B_i$ are called {\bf split surfaces}.
 		\item The core curves of $\tube_i$ correspond to a  simple closed multicurve $\tau_i$ on $S$.
 		\item Further 
 	 $B_i \setminus \tube_i$  has systole uniformly bounded below by $\ep$ for all $i$. 
 		\item The  geometry of the Margulis tubes $\tube_i$ is as follows. For a component of a splitting tube $\tube_i$ in a split block $B_i$,
 		the vertical boundaries $A_{i}^\pm$, corresponding to the left and right vertical annuli
 		in the figure below,    are $C-$bi-Lipschitz homeomorphic to  products, $A_{i}^\pm=S^1 \times [0,l_{i}^\pm]$, of the unit circle(a  normalization condition) and an interval of length $l_i^\pm$. 
 		The horizontal boundaries of $\tube_i$ are $C-$bi-Lipschitz homeomorphic to (each other and to) products, $S^1 \times [-e,e]$ for a fixed (small) $e$ independent of $i$. 
 	\end{enumerate}
 \end{prop}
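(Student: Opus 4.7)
The plan is to invoke the split-geometry construction of \cite[Propositions 4.2 and 4.3]{mahan-split}, specialized to the present setting where all large subsurface projections are concentrated on the principal component domains $S \setminus v$, $v \in \gamma$. The starting point is Lemma \ref{v-thin}: for $L$ chosen large (in terms of the Margulis constant $\epsilon_0$), every component of each simplex $v_i$ on the base tight geodesic $\gamma$ produces an $\epsilon_0$-Margulis tube in $N$. Assembled over the components of $v_i$, these give the multi-Margulis tube $\tube_i$ and the multicurve $\tau_i$ of items (2) and (3).

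The next step is to show that, outside $\bigcup_i \tube_i$, the injectivity radius of $N$ is uniformly bounded below by some $\ep = \ep(L,R) > 0$. The key input is the upper bound (2) of Definition \ref{def-splsplitcombin}: by the length estimates from the model manifold construction of \cite{minsky-elc1}, every short geodesic in $N$ corresponds to a curve some of whose subsurface projections of $\LL_\pm$ are large, and condition (2) forces the relevant subsurface to be a principal component $Y = S \setminus v_i$. Within such a $Y$ the subsidiary hierarchy machinery produces a uniformly bounded collection of additional short curves, which can be absorbed into $\tube_i$. This establishes item (4) and completes item (2).

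For item (1), the plan is to construct the split surfaces $S_i$ by taking pleated (or simplicial hyperbolic) surfaces realizing the multicurves $v_i$, cutting each along annuli isotopic to the core curves of $\tube_i$, and performing a transverse perturbation as in \cite{mahan-amalgeo, mahan-split} so that the resulting family is embedded, mutually disjoint, and meets $\partial \tube_i$ in boundary-parallel curves. The lower injectivity-radius bound from the previous step, together with the area bound from Gauss--Bonnet, gives $(\ep, D)$-bounded geometry; the ordering along $E_\pm$ is inherited from the order of the $v_i$ along $\gamma$, and $B_i$ is the product region between $S_i$ and $S_{i+1}$. For item (5) one appeals to the Margulis lemma description of a tube: each component of $\tube_i$ is uniformly bi-Lipschitz to a standard model solid torus with Euclidean boundary torus. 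After normalizing the meridian of $\partial \tube_i$ to unit length, the cuts by $S_i$ and $S_{i+1}$ carve out horizontal annuli bi-Lipschitz to $S^1 \times [-e,e]$ (with $e$ independent of $i$, by the Margulis lemma bound on the meridian length) and vertical annuli $A_i^\pm$ bi-Lipschitz to $S^1 \times [0, l_i^\pm]$, with the lengths $l_i^\pm$ measuring the exit heights of the split surfaces along $\partial \tube_i$.

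The main obstacle I expect is the second step: identifying precisely which curves become short in $N$ and organizing them per block. Once the ending lamination theorem and the bounded-projection hypothesis combine to produce a finite, uniformly-bounded per-block list of short curves, the rest of the construction is a standard assembly of pleated-surface technology with the geometry of Margulis tubes, and the constants $\ep, C, D$ emerge as functions of $R$ alone (with $L \geq 3$ already absorbed into the Margulis constant via Lemma \ref{v-thin}).
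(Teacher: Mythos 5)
Your proposal matches the paper's treatment: the paper does not prove this proposition from scratch but presents it as a summary of Propositions 4.2 and 4.3 of \cite{mahan-split} (pleated/bounded-geometry split surfaces exiting the ends, Margulis tubes from the vertices of $\gamma$ via Lemma \ref{v-thin}, and a uniform systole bound away from the tubes coming from the bounded-projection hypothesis), which is exactly the route you take. The only slight difference is item (5), where the paper explicitly derives the uniform product structure of the tube boundaries from the combinatorial model of \cite{minsky-elc1} together with the bi-Lipschitz theorem of \cite{minsky-elc2} rather than from the Margulis lemma alone, but this is a matter of emphasis rather than a different argument.
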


 \begin{center}
 	
 	\includegraphics[height=6cm]{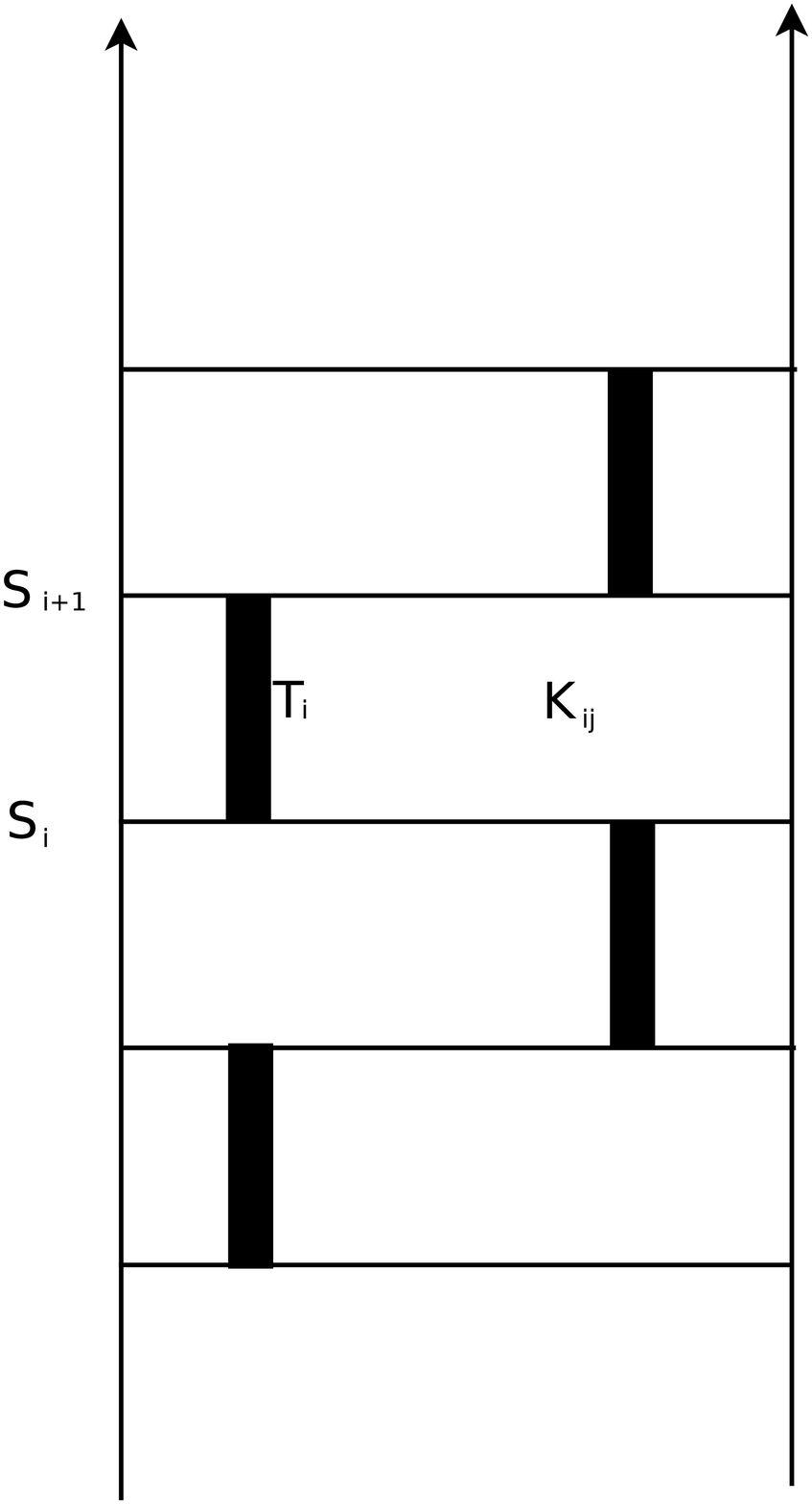}

 	\smallskip
 	
 	\underline{Figure:  {\it A schematic representation of the model geometry} }
 	
 \end{center}
 
 \begin{rmk}
 When the tight geodesic $\gamma$ of Definition \ref{def-tighttree} joining the ending laminations $\LL_\pm$ of $N$ in Proposition \ref{splsplit} consists of simple closed curves $\tau_i$, then each multi-Margulis tube $\tube_i$ is in fact a Margulis tube with core curve isotopic to $\tau_i$. 
 \end{rmk}
 
 \begin{rmk}
 The geometry of the Margulis tubes in Proposition \ref{splsplit}  really originates in the geometry of such tubes in the combinatorial model of \cite{minsky-elc1}. Using the  bi-Lipschitz homeomorphism of \cite{minsky-elc2} between the combinatorial model and the hyperbolic  metric we obtain the structure of  Margulis tubes given in Proposition \ref{splsplit}. 
 \end{rmk}
 
 \begin{rmk}
 We remark that  the general case of {\it weak split geometry} described in \cite[Remark 4.9]{mahan-split} allows for each multi-Margulis tube $\tube_i$ to split a uniformly bounded number of blocks. For special split geometry, this number is precisely one.
\end{rmk}
 
 Let $l_i = \min(l_i^+,l_i^-)$
 Let $\Phi_i^\pm : S^1 \times [0,l_i^\pm] \to S^1 \times [0, l_i]$ be maps that are identity in the first factor and affine surjective maps in the second factor.
 \begin{defn}\label{weld}\cite[p. 38]{mahan-split}
 	A {\bf welded split block} ${B_{i,weld}}$ (homeomorphic to $S \times [0,1]$) is a split block equipped with the following quotient path metric on each splitting tube:
 	\begin{enumerate}
 		\item horizontal boundaries $S^1 \times [-e,e]$ quotiented down to $S^1 \times \{ 0\}$ by projecting the second co-ordinate to $0$,
 		\item the vertical boundaries of splitting tubes are identified with each other  via the   maps $\Phi_i^\pm$. 
 	\end{enumerate}
 	The resulting annuli in ${B_{i,weld}}$ after the identification shall simply be called  {\bf standard annuli in ${B_{i,weld}}$}. The resulting metric on ${B_{i,weld}}$ will be denoted by ${d_{i,weld}}$.
 	We shall also refer to $l_i$ as the {\bf height} of the standard annulus in $B_i$, or simply the  {\bf height} of $B_i$.
 	
 	The composition of the two maps above give a quotienting map
 	$f_i: \partial T_i \to S^1 \times [0,l_i]$.
 \end{defn}

 The definition of  a welded manifold we have used here is slightly different from the one in \cite{mahan-split}, where all the $l_i$'s were equal to one.

 We shall equip ${B_{i,weld}}$ with a new pseudometric.
 Equip the standard annulus $S^1 \times [0,l_i]$  with the product of the zero metric on the $S^1$-factor
 and the Euclidean metric on the $[0,l_i]$ factor.  Let $(S^1 \times [0,l_i], d_0)$ denote the resulting pseudometric.
 
 \begin{defn}\label{tubeel}\cite[p. 39]{mahan-split}
 	The {\bf tube-electrified metric} $d_{te}$   is defined to be the pseudometric metric that
 	agrees with ${d_{weld}}$ away from the standard annuli in ${B_{weld}}$ and with $d_0$ on 
 	the standard annuli in ${B_{i,weld}}$.
 \end{defn}
 
 To distinguish it from $({B_{i,weld}}, {d_{i,weld}})$
 the new space and  pseudometric will be denoted as $(B_{i, te},d_{i, te})$.
 Note that all the top and   bottom  split surfaces of  split blocks $B_i$ (before or after tube-electrification)
 are homeomorphic to  a fixed hyperbolic $S$ via uniformly bi-Lipschitz homeomorphisms.

 Gluing successive welded blocks along common split surfaces we obtain the {\bf welded model manifold} 
 $({N_{weld}}, {d_{weld}})$ homeomorphic to
 $S \times  \mathbb{R}$ corresponding to the original doubly degenerate manifold $N$.
 
 \subsection{Model geometry of topological building blocks $M_v$}\label{sec-interbred} The purpose of this section is twofold.
 First, it furnishes an alternate explicit model geometry (the special split model geometry) for the split blocks of Proposition \ref{splsplit} by interbreeding the thick Minsky model (Theorem \ref{minsky-elcbddgeo}) with the combinatorial model of \cite{minsky-elc1}. 
 Secondly,  armed with the model geometries of bounded geometry doubly degenerate  3-manifolds (Theorem \ref{minsky-elcbddgeo}) and the special split geometry model (Proposition \ref{splsplit}), we 
 describe a model geometry (i.e.\ a metric) on the topological building blocks $M_v$ (Definition \ref{def-topbb}). The metric on $M_v$ shall be denoted as $d_v$ and the metrized building block $(M_v,d_v)$ shall be called the {\bf geometric building block}.

 \begin{rmk}\label{rmk-splsplit}{\rm 
 Suppose that the tree $T$ of Definition \ref{def-tighttree-sep} is a simplicial tree $l$ with underlying space $\R$ and with vertices at $\Z$.
   Let $l_\pm$ denote the ending laminations corresponding to the end-points of $i(l) \subset \ccd(S)$ in $\partial \CC(S)$. Let $M_l$ be the (unique up to isometry \cite{minsky-elc1,minsky-elc2}) doubly degenerate hyperbolic 3-manifold  with  ending laminations  $l_\pm$. Then $M_l$ is of special split geometry as in Proposition \ref{splsplit} Let
 $v $ be a vertex in the vertex set $\Z$.
Then
  the model geometries using $\dw, \dt$ that we describe below on $M_v$ will respectively be uniformly bi-Lipschitz to the metrics on the welded split block (Definition \ref{weld}) and the tube-electrified metric $\dt$ of Definition \ref{tubeel}.}
  \end{rmk}

 Recall that the  topological building block corresponding to $v$  is given by $M_v =S \times T_v.$

 \begin{defn}\label{def-geometricbb}
 A {\bf special split geometry} on $M_v$ with parameters $k,\ep$ is built from the following:
 \begin{enumerate}
 \item  A $k-$bi-Lipschitz section $\sigma_W: T_v \to \Teich_\ep(W)$ for  each component $W$ of $S \setminus {i(v)}$ such that $\sigma_W(T_v) $ is $k-$quasiconvex in $\Teich(W)$. Note that by Definition \ref{def-balancedtree}, $T_v$ is coarsely independent of the component $W$.
 \item The Margulis riser $i(v)\times  T_v$ corresponding to $v$ is metrized by equipping it with the product metric so that each circle of $i(v)$ is a round circle $S^1_e$ of radius $e>0$.
  \item  Let $W_v$ denote the universal metric bundle (see Remark \ref{ubundle} and the discussion following it) over  $\sigma_W(T_v)$ with a neighborhood of the cusps removed. We further demand that each annular boundary component of $W_v$ (corresponding to circular boundary components of $W$) is a metric product $S^1_e \times \sigma_W(T_v)$ (equivalently, we excise the cusps of a fiber over any $x \in \sigma_W(T_v)$ in such a way that the boundary curves are isometric to $S^1_e$).
 \item Let $A_W$ be an annular boundary component of some $W_v$ ($W$ ranges over components  of $S \setminus {i(v)}$). Then there exists a {\bf simple} closed curve $v_A \subset v$ such that $A_W$ corresponds to the Margulis riser $v_A \times T_v$ and is isometric to the metric product $S^1_e \times \sigma_W(T_v)$. We glue the annular boundary component $A_W$ to the Margulis riser $v_A \times T_v$   via the map $(Id, \, \sigma_W^{-1})$.
 \item We do this for every component $W$ of $S \setminus {i(v)}$.
 \end{enumerate}
 The  resulting quotient metric on $M_v$ is denoted $d_v$.
 $M_v$ equipped with $d_v$ will be called the {\bf  building block of special split geometry} corresponding to $v$.   The natural projection from  $(M_v, d_v)$ to $T_v$ will be denoted by $P_v$.
 \end{defn}

\begin{defn}\label{def-unifproper}
We shall say that a map $f: (A, d_A) \to (B, d_B)$ of metric spaces is 
{\bf $c-$proper} if for any  $B_1 \subset B$ of diameter at most one, $f^{-1} (B_1)$ has diameter at most $c$. If $f$ is $c-$proper for some $c$ we shall simply say that it is uniformly proper.
\end{defn}
We observe an immediate consequence of Definition \ref{def-geometricbb}.

\begin{lemma}\label{proper-block}
Given $k, \ep$, there exists $c$ such that  if $M_v$, as in Definition \ref{def-geometricbb}, is of special split geometry with parameters $k,\ep$, then $P_v: (M_v, d_v)\to T_v$ is $c-$proper.
\end{lemma}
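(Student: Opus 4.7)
The plan is to establish $c$-properness of $P_v$ by separately bounding two quantities and then combining them: (i) the diameter of an individual fiber $P_v^{-1}(x)$ for $x \in T_v$, and (ii) the cost of moving between fibers over points at distance at most $1$ in $T_v$.

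\textbf{Bounding individual fiber diameters.} For each component $W$ of $S \setminus i(v)$, the fiber $W_x := W_v \cap P_v^{-1}(x)$ is the hyperbolic surface corresponding to $\sigma_W(x) \in \Teich_\ep(W)$ with cusp neighborhoods excised so that each boundary component is a standard circle of radius $e$. Since $\sigma_W(T_v) \subset \Teich_\ep(W)$ and since $S$ has only finitely many topological types of essential subsurfaces, Mumford compactness yields a uniform upper bound $D_0 = D_0(\ep, S)$ on $\diam(W_x)$. The Margulis riser fibers $i(v) \times \{x\}$ consist of finitely many round circles of radius $e$ and therefore have uniformly bounded diameter. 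Gluing the pieces $W_x$ together along the standard annular fibers yields a closed surface $P_v^{-1}(x)$ of diameter at most some $D_1 = D_1(\ep, S)$.

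\textbf{Moving between nearby fibers.} Since $\sigma_W : T_v \to \Teich(W)$ is $k$-bi-Lipschitz, $d_{T_v}(x,x') \le 1$ implies $d_{\Teich}(\sigma_W(x), \sigma_W(x')) \le k$. The path metric on the universal bundle $W_v$ (Remark \ref{ubundle}), combined with $\ep$-thickness of $\sigma_W(T_v)$, then ensures that any point in the fiber of $W_v$ over $\sigma_W(x)$ lies within distance at most some $C_1 = C_1(k,\ep)$ of the fiber over $\sigma_W(x')$. On the Margulis riser pieces, which carry product metrics $S^1_e \times T_v$, unit displacement in the $T_v$-factor has unit cost. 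Since these bounds are compatible with the identifications of Definition \ref{def-geometricbb}(4), we conclude that whenever $d_{T_v}(x,x') \le 1$, every point of $P_v^{-1}(x)$ lies within $d_v$-distance at most $C_1$ of $P_v^{-1}(x')$.

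\textbf{Combining.} Given $B_1 \subset T_v$ with $\diam(B_1) \le 1$, fix any $x_0 \in B_1$. For $y_1,y_2 \in P_v^{-1}(B_1)$, the second step gives points $y_i' \in P_v^{-1}(x_0)$ with $d_v(y_i, y_i') \le C_1$, and the first step gives $d_v(y_1',y_2') \le D_1$. Hence $d_v(y_1,y_2) \le 2C_1 + D_1$, so the constant $c := 2C_1 + D_1$, depending only on $k$ and $\ep$, works.

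The main technical point is the horizontal control asserted in the second step: that in the path metric on $W_v$, distance between fibers over nearby base points is controlled by the Teichm\"uller distance on the base. This is precisely where both hypotheses enter --- the $k$-bi-Lipschitz condition on $\sigma_W$ translates $T_v$-distances into bounded $\Teich(W)$-distances, and $\ep$-thickness of $\sigma_W(T_v)$ prevents fibers from degenerating and keeps the universal bundle metric uniformly well-behaved along the section.
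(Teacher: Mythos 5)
Your argument is correct. Note that the paper itself offers no written proof here: Lemma \ref{proper-block} is recorded as ``an immediate consequence of Definition \ref{def-geometricbb}.'' Your two-step decomposition is precisely the content that makes it immediate, and it matches how the paper handles the analogous points elsewhere: the uniform bound on fiber diameters (your $D_1$) is the same Gauss--Bonnet/thickness observation invoked later in Lemma \ref{qisecexists}, and the bounded-cost horizontal motion between nearby fibers (your $C_1$) is exactly condition (3) of the metric-bundle Definition \ref{def-mbdl}, which the paper asserts for $P\colon \tmtdw \to \but$ in Remark \ref{tmtdtcaveat} without further comment. The only place where your write-up leans on an unproved assertion is the one you flag yourself: that in the path metric on the universal bundle $W_v$, fibers over points at bounded Teichm\"uller distance in the $\ep$-thick part are at bounded distance. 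This is true, but since the bundle metric of Remark \ref{ubundle} is induced by the Weil--Petersson metric while the $k$-bi-Lipschitz hypothesis on $\sigma_W$ is with respect to the Teichm\"uller metric, a careful version of this step should cite the comparability of the two metrics on $\Teich_\ep(W)$ (which is where $\ep$ genuinely enters, in addition to preventing fiber degeneration); with that sentence added, the constant $c=2C_1+D_1$ depends only on $k,\ep$ (and the topology of $S$), exactly as the lemma claims.
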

 
 \begin{rmk}\label{onesepgeomodel}{\rm {\bf Special case of a single non-separating curve:}\\
 We describe a quick informal way of thinking about  the geometric building block $(M_v,d_v)$ when $v$ consists of a single non-separating curve, so that  $W = S\setminus v$ is connected. Here, $\sigma_W(T_v)$ is a $k-$quasiconvex tree in the $\ep-$thick part $\Teich(W)$. The metric on $W_v$ away from the cusps is the universal bundle metric over $\sigma_W(T_v)$. Thus, away from the cusps, the metric on $W_v$ is like  the bounded geometry metric given by Theorem \ref{minsky-elcbddgeo}. After excising the cusps this bundle is glued to the metric product Margulis riser  $S^1_e \times T_v$ by a map that is identity in the first co-ordinate and $\sigma_W^{-1}$ in the second.}
 \end{rmk}

 We proceed to define a tube-electrified (pseudo-)metric on $M_v$ following Definition \ref{tubeel}. 
 Equip each Margulis riser $S^1_e \times T_v$  with the product of the zero metric on the $S^1_e$-factor
 and the usual (tree) metric on the $T_v$ factor.  Let $(S^1 \times T_v, d_0)$ denote the resulting pseudometric.
 
 \begin{defn}\label{def:tubeelbb}
 	The {\bf tube-electrified metric} $d_{te}$ on $M_v$  is defined to be the pseudometric  that
 	agrees with ${d_{v}}$ away from the Margulis risers in $M_v$ and with $d_0$ on 
 	the Margulis risers in $M_v$.
 \end{defn}
 
 $M_v$ equipped with the tube-electrified metric $d_{te}$
 will be denoted as $(M_v,d_{te})$ (as in Definition \ref{tubeel}).

 An alternate description of the model geometry on $M_v$ (Definition \ref{def-geometricbb}) can be given in terms of hierarchy paths along the lines of the dictionary established by Theorem \ref{thickcombin}. We give a quick informal recapitulation following \cite{minsky-bddgeom}. Let $\MM(S)$ and $\PP(S)$ denote respectively, the marking complex and the pants complex of $S$. Fix a base-point $o \in  \Teich_e(S)$ and let $MCG(S)$ denote the mapping class group of $S$ acting on $\Teich_e(S)$. Note that  $MCG(S)$  (with respect to a word metric for a finite generating set) and  $\MM(S)$ are quasi-isometric.
 Let $\P_M: \Teich_e(S) \to \MM(S)$ denote a projection (coarsely well-defined,
see \cite{masur-minsky,masur-minsky2}) taking a point $x$ of $\Teich_e(S)$ to a nearest point $g.o$ in the mapping class group orbit $MCG(S).o$ and hence via a quasi-isometry to $\MM(S)$. Also, let $\P_C: \Teich_e(S) \to \CC(S)$ denote a projection (again coarsely well-defined,
see \cite{masur-minsky,minsky-elc1}) taking a point $x$ of $\Teich_e(S)$ to the collection of short curves (where shortness is defined by a Bers' constant). We may and will assume that $\P_C$ factors through $\P_M$.

 We shall need a slight generalization of Theorems \ref{thickgeod} and \ref{thickcombin} due to Rafi \cite{rafi-gt} and Minsky \cite{minsky-bddgeom}. Using the projection $\P_C$, subsurface projections $\pi_W (x)$ of points  $x\in \Teich(S)$ onto the curve complex $\CC(W)$ of an essential subsurface $W$ and distances $d_W(x,y)$ between $x,y\in \Teich(S)$ can be defined in a straightforward fashion \cite{minsky-bddgeom,rafi-gt}. The hierarchy machinery of Masur-Minsky in the papers \cite{masur-minsky2,minsky-bddgeom} is needed to state the Theorem below.  Theorems \ref{thickgeod} and \ref{thickcombin} have been stated for bi-infinite geodesics. However, in  \cite{minsky-bddgeom,rafi-gt} these are proven  for geodesic segments and rays as well using the projection $\P_C$ above. We restate these in the form we need them (see Section 2.6 and the Bounded Geometry Theorem on p. 144 of \cite{minsky-bddgeom}):
 
 \begin{theorem}\cite{minsky-bddgeom,rafi-gt} \label{thickhierrachy}
 	For $K \geq 0$ and $\ep >0$, there exists $R > 0$ such that if
 	  $H$ is a bounded $K-$quasiconvex subset of $\Teich_\ep(S)$ then for any $x,y \in H$ and any proper essential subsurface $W$ of $S$, the hierarchy path in $W$ subordinate to any tight geodesic joining $\P_C(x), \P_C(y)$ is either empty or has length at most $R$.
 	  
 	  Conversely, for any $R > 0$ there exists $\ep, K >0$, such that the following holds. Suppose that
 	  \begin{enumerate}
 	  \item  $u, v \in \ccd(S)$ are maximal simplices equipped with transversals $t(u), t(v)$, 
 	  \item  for any proper essential subsurface $W$ of $S$ (including annular domains), the hierarchy path in $W$ subordinate to any tight geodesic joining $\P_C(x), \P_C(y)$ is either empty or has length at most $R$.
 	  \end{enumerate} 
 	  Then
 	  \begin{enumerate}
 	  	\item the set of points $x$ (resp. $y$) in $\Teich(S)$ where $u, t(u)$ (resp. $v, t(v)$) are short (bounded by the Bers' constant, say) lies in a ball of radius $K$ in $\Teich_\ep(S)$.
 	  	\item The Teichm\"uller geodesic joining such pairs $x, y$ lies in 
 	  	$\Teich_\ep(S)$.
 	  \end{enumerate}
\end{theorem}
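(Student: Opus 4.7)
The strategy is to reduce both directions to a combination of two known results: Rafi's characterization of thick Teichm\"uller geodesics by uniformly bounded subsurface projections (Theorem \ref{thickgeod}), and the Masur--Minsky hierarchy machinery that translates subsurface projection distances into combinatorial lengths of hierarchy paths. The projection $\P_C$ is the device that makes these two sides speak the same language: subsurface projections of points of $\Teich(S)$ are defined by first passing through $\P_C$ to short curves, after which all the statements about tight geodesics and hierarchies in $\CC(S)$ apply verbatim.

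For the forward direction, given a bounded $K$-quasiconvex $H\subset \Teich_\ep(S)$, I would first show that there exists $\ep'=\ep'(K,\ep)>0$ such that every Teichm\"uller geodesic joining a pair of points $x,y\in H$ lies in $\Teich_{\ep'}(S)$. This follows from quasiconvexity of $H$ together with a standard continuity/compactness argument: a point on such a geodesic is uniformly close to $H$, and the systole function is continuous on $\Teich(S)$ and bounded below on any compact subset. Rafi's theorem (Theorem \ref{thickgeod}), in the form stated in \cite{rafi-gt} for segments and extended via $\P_C$ in \cite{minsky-bddgeom}, then provides a constant $D=D(\ep')$ such that $d_W(x,y)\leq D$ for every proper essential subsurface $W$ (including annular domains). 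Finally, the Masur--Minsky hierarchy theorem (\cite{masur-minsky2}) shows that for any tight geodesic joining $\P_C(x),\P_C(y)$, a subordinate hierarchy path in $W$ has length coarsely equal to $d_W(\P_C(x),\P_C(y))$, which differs from $d_W(x,y)$ by a uniform additive error; hence its length is at most some $R=R(D)$, or it is empty.

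The converse direction runs the same chain in reverse. If every subordinate hierarchy path in any $W$ has length at most $R$, then by the large link axiom and the construction of hierarchies in \cite{masur-minsky2}, we obtain a uniform bound $d_W(\P_C(x),\P_C(y))\leq D(R)$ for every proper essential subsurface $W$, and hence $d_W(x,y)\leq D'(R)$ after accounting for the additive error coming from $\P_C$. The converse direction of Rafi's theorem then guarantees that the Teichm\"uller geodesic joining $x,y$ is $\ep$-thick for some $\ep=\ep(R)>0$, establishing conclusion (2). For conclusion (1), the point is that if $u$ and $t(u)$ together form a maximal marking, then Minsky's product regions machinery (as used throughout \cite{minsky-bddgeom}) shows that the set of $x\in \Teich(S)$ where this marking is simultaneously short has Teichm\"uller diameter bounded in terms of how short we require it to be; combined with the $\ep$-thickness established above, such $x$ are confined to a ball of radius $K=K(R)$ in $\Teich_\ep(S)$, and similarly for $y$.

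The main obstacle in executing this plan is bookkeeping of the various choices of constants between the point set $H\subset \Teich(S)$ and the simplicial setting of $\CC(S)$: one must verify that the additive errors introduced by $\P_C$ (which is only coarsely well defined, and whose output is a bounded-diameter collection of Bers-short curves) do not accumulate in a way that ruins the uniformity of $R$, $\ep$, and $K$. This is, however, precisely the point of Minsky's formulation on p.~144 of \cite{minsky-bddgeom}, where the necessary uniformity is extracted from the hierarchy machinery, so the argument reduces to a careful citation and repackaging rather than new analytic input.
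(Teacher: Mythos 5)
Your plan matches the paper's treatment: the paper offers no proof of this theorem, stating it as a restatement of Rafi's thickness criterion and Minsky's Bounded Geometry Theorem (together with the Masur--Minsky hierarchy machinery), adapted to segments and rays via the projection $\P_C$ — precisely the citation-and-repackaging you describe. One small caution: to get $\ep'$ depending only on $K$ and $\ep$ (and not on the particular bounded set $H$), replace your continuity/compactness argument by Wolpert's lemma on how hyperbolic lengths (hence the systole) vary under bounded Teichm\"uller distance.
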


\begin{defn}\label{def-rethickss}
 A subset $X$ of $\ccs$ is $R-$thick, if for any $v \in X$ and $v_1, v_2 \in X$ adjacent to $v$, and any component $W$ of $S \setminus v$, \begin{enumerate}
\item any geodesic $\gamma$ joining $v_1, v_2$ in $\CC(W)$ is of length at most $R$,
\item any geodesic in a hierarchy path joining $v_1, v_2$ and subordinate to a geodesic $\gamma$ as in the previous condition  is of length at most $R$.  
\end{enumerate}
\end{defn}

As an immediate consequence of Theorem \ref{thickhierrachy} we have the following:

\begin{cor}\label{pA} For $S=S_{g,n}$, let $\phi$ be a pseudo-Anosov homeomorphism. Then there exists $R>0$ such that any tight geodesic $\gamma$ in $\ccs$ preserved 
by $\phi$ is $R-$thick.

More generally, let $\phi_1, \cdots, \phi_k$ freely generate a free convex cocompact subgroup $Q=F_k$. There exists $R$ such that if $Q$ preserves a quasi-isometrically embedded tree $T_Q \subset \CC(S)$, then $T_Q$ is also $R-$thick. 
\end{cor}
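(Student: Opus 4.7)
The first assertion is the special case $k=1$ of the second, so it suffices to handle the free convex cocompact case. The plan is to deduce $R$-thickness of $T_Q$ directly from the forward direction of Theorem \ref{thickhierrachy} applied to a $Q$-orbit in the Teichm\"uller space.

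First, since $Q$ is convex cocompact, by the Farb--Mosher / Kent--Leininger / Hamenst\"adt characterization recalled in the introduction there is an $\ep>0$ and a basepoint $o\in \Teich_\ep(S)$ such that the orbit $H=Q.o$ is $K$-quasiconvex in $\Teich_\ep(S)$ for some $K$. Theorem \ref{thickhierrachy} then produces $R_1>0$ such that for every pair $x,y\in H$ and every proper essential subsurface $W\subset S$ (including annular domains), the hierarchy path in $W$ subordinate to any tight geodesic joining $\P_C(x)$ and $\P_C(y)$ is either empty or has length at most $R_1$. In particular, the geodesic in $\CC(W)$ joining $\pi_W(x)$ and $\pi_W(y)$, being the base geodesic of this hierarchy path, has length at most $R_1$.

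Second, I would transfer this bound from $\P_C(H)$ to $T_Q$. Both $\P_C(H)$ and $T_Q$ are $Q$-invariant quasiconvex subsets of $\CC(S)$ on which $Q$ acts cocompactly (the former by convex cocompactness of $Q$ and coarse $Q$-equivariance of $\P_C$, the latter by hypothesis), so they lie within uniformly bounded Hausdorff distance of each other. Consequently, for any adjacent vertex triple $v_1,v,v_2$ in $T_Q$, one can find $x,y\in H$ with $\P_C(x),\P_C(y)$ within a uniform distance of $v_1,v_2$ in $\CC(S)$. Using the bounded geodesic image theorem (Theorem \ref{bgit}) together with the Behrstock inequality (Theorem \ref{bi}), the subsurface projections satisfy $|d_W(v_1,v_2)-d_W(x,y)|\le C$ for a uniform constant $C$, for every proper essential subsurface $W$ of $S$ --- including the components of $S\setminus v$. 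Hence geodesics in $\CC(W)$ joining $\pi_W(v_1)$ and $\pi_W(v_2)$ have length at most $R_1+C$, which is the first clause in Definition \ref{def-rethickss}.

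Finally, for the second clause of $R$-thickness, the bound on hierarchy paths subordinate to geodesics in $\CC(W)$ is exactly what Theorem \ref{thickhierrachy} delivers: since such subordinate geodesics sit inside the full hierarchy in $W$ subordinate to the tight geodesic in $\CC(S)$ joining $v_1,v_2$, and this full hierarchy has length at most $R_1$ after the above Hausdorff-closeness transfer, each subordinate geodesic is itself of length at most $R_1+C$. Setting $R=R_1+C$ yields the desired $R$-thickness. The only delicate step is the transfer in the second paragraph: one must be careful that the coarse closeness in $\CC(S)$ translates to coarse closeness of subsurface projections for \emph{every} proper subsurface $W$, but this is precisely the content of the Behrstock inequality combined with the bounded geodesic image theorem, so no genuine obstacle arises.
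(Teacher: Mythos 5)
Your overall route---convex cocompactness gives a $K$-quasiconvex orbit in $\Teich_\ep(S)$, to which the forward direction of Theorem \ref{thickhierrachy} applies---is the one the paper intends, since the corollary is asserted there as an immediate consequence of that theorem. The genuine gap is your transfer step in the second paragraph. Uniform closeness of $\P_C(x),\P_C(y)$ to $v_1,v_2$ \emph{in $\CC(S)$} does not yield $|d_W(v_1,v_2)-d_W(x,y)|\le C$ for every proper essential subsurface $W$, and neither Theorem \ref{bgit} nor Theorem \ref{bi} produces such a bound: the Behrstock inequality compares projections to two different domains and never converts $\CC(S)$-distance into $\CC(W)$-distance, while the bounded geodesic image theorem needs every vertex of a geodesic to project nontrivially to $W$, which fails for short geodesics from $v_i$ to $\P_C(x)$ that pass through a curve disjoint from $W$ (for instance through $v$ itself, or through $\partial W$). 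Concretely, two curves contained in a component $W$ of $S\setminus v$ are at distance at most $2$ in $\CC(S)$ while their distance in $\CC(W)$ is arbitrary; so Hausdorff closeness of $\P_C(Q.o)$ and $T_Q$ in $\CC(S)$ gives no control at all on the quantities in Definition \ref{def-rethickss}. The sentence claiming this is ``precisely the content'' of those two theorems is exactly where the proof breaks.

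The repair is to route the comparison through the endpoints at infinity rather than through nearby orbit points. Given adjacent $v_1,v,v_2$ in $T_Q$, take a bi-infinite geodesic of $T_Q$ through them and let $\lambda_\pm\in\partial Q\subset\EL(S)$ be its ideal endpoints; by $K$-convex cocompactness the Teichm\"uller geodesic joining $\lambda_\pm$ is uniformly thick, so Theorem \ref{thickgeod} bounds $d_W(\lambda_+,\lambda_-)$ for every proper essential $W$. For $W$ a component of $S\setminus v$, every curve at distance at least $2$ from $v$ in $\CC(S)$ essentially intersects $W$, so Theorem \ref{bgit} applies to geodesic rays from $v_1$ to $\lambda_-$ and from $v_2$ to $\lambda_+$ (here one must also replace induced paths of $T_Q$, which is only qi-embedded, by genuine geodesics with the same endpoints), and the triangle inequality bounds $d_W(v_1,v_2)$, giving clause (1) of Definition \ref{def-rethickss}. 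Clause (2), which concerns domains $W'\subsetneq W$ where bare BGIT along these rays no longer applies, is where Theorem \ref{thickhierrachy} must be invoked, applied to points $x,y$ of the thick hull tending to $\lambda_\pm$, using that lengths of subordinate hierarchy geodesics are coarsely the corresponding subsurface coefficients of $\lambda_\pm$---not the unavailable projection comparison with nearby orbit points.
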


\begin{defn}\label{def-subordgeod}  Let $i:\vT\to \ccd(S)$ be a balanced tree (see Definition \ref{def-balancedtree}) and $v\in T$. Let $W$ be a component  of $S \setminus i(v)$ and let $T_{v,W}$  denote a    bi-Lipschitz embedded image of the tree-link $ T_v$ of $v$ in $\CC(W)$
	(with parameters as in  Definition \ref{def-balancedtree}).

	For any two terminal vertices $u, w$ of $T_{v,W}$, any tight geodesic $\gamma_W$ joining them in $\CC(W)$, and any proper essential subsurface $W'$ of $W$, a tight geodesic supported on $W'$ and occurring in a hierarchy of geodesics subordinate to $\gamma_W$ will be called a {\bf geodesic subordinate to the tree-link $T_{v,W}$}. 
	
	If there exists a component $W$  of $S \setminus i(v)$ 
	such that  $\gamma$ is a geodesic  subordinate to the tree-link $T_{v,W}$, then $\gamma$ is  called a {\bf geodesic  subordinate to the tree-link $T_{v}$}.
	
	If there exists a vertex $v$ of $T$ such that  $\gamma$ is a geodesic  subordinate to the tree-link $T_{v}$, then $\gamma$ is simply called a {\bf geodesic  subordinate to the tree $T$}.
\end{defn}

As a consequence of Theorem \ref{thickhierrachy}, we have the following alternate description of a building block $M_v$ of special split geometry corresponding to $v$. The Corollary follows by applying Theorem \ref{thickhierrachy} to the tree-link of $v$.

\begin{cor}\label{subordinatehierarchysmall} For all $k,\ep>0$, there exists $R>0$ such that the following holds:\\ If a model building block of special split geometry has parameters 
$k,\ep>0$ then every geodesic  subordinate to the tree-link $T_{v}$ has length at most $R$.

Conversely, given $R>0$, there exists $k,\ep>0$ such that the following holds.\\ For a topological building block $M_v$ with tree-link $T_v$ if every geodesic  subordinate to the tree-link $T_{v}$ has length at most $R$ then $M_v$ admits a special split geometry structure with parameters 
$k,\ep>0$.
\end{cor}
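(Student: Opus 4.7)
The plan is to derive both directions of the corollary as componentwise applications of Theorem \ref{thickhierrachy} (due to Minsky and Rafi), together with the cross-component compatibility supplied by the balanced tree hypothesis (Definition \ref{def-balancedtree}). The guiding dictionary is this: a special split geometry on $M_v$ encodes, for each component $W$ of $S\setminus i(v)$, a bounded $k$-quasiconvex subset $\sigma_W(T_v)\subset \Teich_\ep(W)$, while the hypothesis that all geodesics subordinate to $T_v$ have length at most $R$ is exactly the subsurface-projection smallness condition appearing on the hypothesis side of Theorem \ref{thickhierrachy}.

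For the forward direction, fix $k,\ep>0$ and a building block $M_v$ of special split geometry with these parameters. Let $\gamma$ be any geodesic subordinate to the tree-link $T_v$. By Definition \ref{def-subordgeod} there is a component $W$ of $S\setminus i(v)$, terminal vertices $u,w$ of $T_{v,W}\subset\CC(W)$, a tight geodesic $\gamma_W$ joining $u$ to $w$ in $\CC(W)$, and a proper essential subsurface $W'\subset W$ such that $\gamma$ is a tight geodesic in $\CC(W')$ occurring in a hierarchy subordinate to $\gamma_W$. By Definition \ref{def-geometricbb}, $\sigma_W(T_v)$ is a bounded, $k$-quasiconvex subset of $\Teich_\ep(W)$, and the terminal vertices of $T_{v,W}$ are $\P_C$-images of the terminal points of $\sigma_W(T_v)$ (up to the coarse identification between the tree-link and $\sigma_W(T_v)$ given by Definition \ref{def-treelink}). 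The forward assertion of Theorem \ref{thickhierrachy} applied with $H=\sigma_W(T_v)$ and the pair of $\P_C$-preimages of $u,w$ then yields a bound $R=R(k,\ep)$ on the length of $\gamma$, and uniformity over the finitely many topological types of $W\subset S$ and $W'\subset W$ finishes this direction.

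For the converse, fix $R>0$ and assume every geodesic subordinate to $T_v$ has length at most $R$. We construct each $\sigma_W$ as follows. For each vertex $u$ of $T_{v,W}$, complete $u$ to a maximal simplex of $\CC(W)$ with a transversal dictated by a shortest marking compatible with the local combinatorics of $T_{v,W}$ at $u$; the bound $R$ on subordinate geodesics translates, via the Masur--Minsky hierarchy machinery, into a uniform subsurface-projection bound on these markings. The converse assertion of Theorem \ref{thickhierrachy} then produces, for some $\ep=\ep(R)>0$ and $K_0=K_0(R)$, a point $x_u\in\Teich_\ep(W)$ where $u$ with its transversal is short, and guarantees that for consecutive vertices $u,u'$ along edges of $T_{v,W}$ the Teichm\"uller geodesic $[x_u,x_{u'}]$ stays inside $\Teich_\ep(W)$ within a $K_0$-tube. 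Concatenating these Teichm\"uller arcs along $T_{v,W}$ gives a $k$-bi-Lipschitz map $\sigma_W:T_v\to\Teich_\ep(W)$ with $k=k(R)$, and quasiconvexity of $\sigma_W(T_v)$ follows from the Rafi subsurface-projection characterization applied pairwise to its endpoints.

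The main obstacle lies in the converse direction: the sections $\sigma_W$ for the various components $W$ of $S\setminus i(v)$ must be built in a mutually compatible way so that they glue via the prescribed identifications of annular boundary components with the Margulis riser $i(v)\times T_v$ (items (3)--(5) of Definition \ref{def-geometricbb}). This is precisely what the balanced tree hypothesis delivers: both $CH(\ilkv)\subset \CC(Y_v)$ and $CH(\ilkv')\subset \CC(Y_v')$ are coarsely quasi-isometric to the single abstract tree-link $T_v$ in a vertex-compatible way, so the choices $x_u$ in the different Teichm\"uller spaces can be indexed by the same parameter in $T_v$. A secondary cosmetic point is normalizing every annular boundary to a metric product $S^1_e\times\sigma_W(T_v)$ with the same circle radius $e$ on both sides of each Margulis riser; this rescaling is absorbed into the bi-Lipschitz constant $k$ and so does not affect the final parameters.
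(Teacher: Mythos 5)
Your proposal is correct and follows essentially the same route as the paper, which simply observes that the corollary follows by applying Theorem \ref{thickhierrachy} to the tree-link of $v$; your write-up is a detailed unpacking of exactly that application (forward half for the bound $R=R(k,\ep)$, converse half plus the balanced-tree compatibility to build the sections $\sigma_W$ of Definition \ref{def-geometricbb}).
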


The advantage of Corollary \ref{subordinatehierarchysmall} over Definition \ref{def-geometricbb} is that the problem is reduced to looking only at the curve complex rather than varying Teichm\"uller spaces.

\begin{rmk}\label{rmk-interval}{\rm 
We observe that the  welded split  block in Definition \ref{weld} is a  special case of  a model building block of special split geometry when the tree link $T_v$ is an interval of the form $[0,n]$ with vertices at the integer points. 

A word of caution: The   split  block of Proposition \ref{splsplit} may be quite different from the  welded split  block in Definition \ref{weld} as far as the geometry of the tubes $\tube_i$ are concerned. In the split block, the Margulis tubes have the geometry of  solid hyperbolic tori. In the welded split block, these are replaced by  flat annuli. }
\end{rmk}

We expand on Remark \ref{rmk-splsplit} and
 explicitly state here the relationship between the geometry of split blocks in totally degenerate 3-manifolds (Proposition \ref{splsplit}) and 
the special split geometry of $M_v$ as in Definition \ref{def-geometricbb}.
Let $i:\vT\to \ccd(S)$ be a balanced tree and $v\in T$. Let $l$ be a bi-infinite geodesic in $T$ through $v$. We further equip $l$ with the simplicial tree structure induced by $T$. Let $\but$ denote the blown-up tree and let $\BU(l)$ denote the blow up of $l$. Let $T_v(l)$ denote the tree-link of $v$ in $\BU(l)$ and let $M_v(l)$ denote the associated geometric building block. Let $P_v: M_v \to T_v$ and $P_v(l): M_v(l) \to T_v(l)$ denote the natural projections.

\begin{lemma}\label{bb-deg-tree} Given $R, D, k\geq 1$, $n \geq 2$ there exists $C\geq 1$ such that the following holds: \\
	Let $i:\vT\to \ccd(S)$ be an $L-$tight $R-$thick balanced tree with parameters $D, k$ (see Definition \ref{def-balancedtree}) such that each vertex of $T$ has valence at most $n$. Let $T_v, M_v, l, T_v(l), M_v(l), P_v, P_v(l)$ be as above. Then there exist
	\begin{enumerate}
		\item a $C-$bi-Lipschitz embedding $\psi_v: T_v(l) \to T_v$ taking the end-points of $T_v(l)$ to the corresponding  end-points of $T_v$.
		\item a $C-$bi-Lipschitz embedding $\phi_v: M_v(l) \to M_v$
	\end{enumerate}
such that $\psi_v \circ P_v(l) = P_v\circ\phi_v$, i.e.\ $\phi_v$ preserves fibers.
\end{lemma}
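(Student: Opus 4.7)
The plan is to first construct $\psi_v$ directly from the tree-link definition (Definition \ref{def-treelink}), exploiting the fact that a bi-infinite geodesic $l$ through $v$ contributes only two neighbors $u,w$ of $v$. Then I extend $\psi_v$ to a fiber-preserving map $\phi_v$ by comparing the universal bundle structures on $M_v$ and $M_v(l)$ piece by piece, using $R$-thickness and Theorem \ref{thickhierrachy} to control the fiber geometry.

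\textbf{Construction of $\psi_v$.} Let $u, w$ be the two vertices of $T$ adjacent to $v$ along $l$. By Definition \ref{def-treelink} applied to $l$, the tree-link $T_v(l)$ is a $k$-bi-Lipschitz tree approximation to $CH(\{i(u), i(w)\})$ in $\CC(Y_v)$ (and the same in $\CC(Y_v')$ when $i(v)$ separates, using Definition \ref{def-balancedtree}). Because this convex hull is a single geodesic, $T_v(l)$ is a metric arc whose length equals $d_{\CC(Y_v)}(i(u),i(w))$ up to a factor of $k$, with leaves $\P_l(i(u)), \P_l(i(w))$ (where $\P_l$ denotes the projection associated with the tree-link in $l$). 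On the other hand, $\{u, w\} \subset lk_T(v)$, so $\P(i(u)), \P(i(w))$ are among the leaves of $T_v$, and the tree distance between them in $T_v$ equals $d_{\CC(Y_v)}(i(u), i(w))$ by property (2) in the discussion preceding Definition \ref{def-treelink}. I define $\psi_v$ as the affine map from the arc $T_v(l)$ onto the $T_v$-geodesic between $\P(i(u))$ and $\P(i(w))$, sending the corresponding endpoints. This is $C_1$-bi-Lipschitz with $C_1 = C_1(k)$.

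\textbf{Extension to $\phi_v$.} By Definition \ref{def-geometricbb}, both $M_v$ and $M_v(l)$ split into a Margulis riser ($S^1_e \times T_v$, respectively $S^1_e \times T_v(l)$) and, for each component $W$ of $S \setminus i(v)$, a universal bundle over a $k$-bi-Lipschitz $k$-quasiconvex section $\sigma_W : T_v \to \Teich_\ep(W)$ (resp.\ $\sigma'_W : T_v(l) \to \Teich_\ep(W)$), glued along normalized boundary annuli. On the Margulis riser I set $\phi_v(x,t) = (x, \psi_v(t))$, which is $C_1$-bi-Lipschitz. On each non-riser piece I compare the universal bundles over $\sigma_W \circ \psi_v(T_v(l))$ and over $\sigma'_W(T_v(l))$ inside $\Teich_\ep(W)$. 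By $R$-thickness (which applies along $l$ since $l\subset T$), Corollary \ref{subordinatehierarchysmall}, and Theorem \ref{thickhierrachy}, both sections are $k'$-bi-Lipschitz $k'$-quasiconvex arcs in $\Teich_\ep(W)$ whose endpoints lie in the Bers short-marking loci determined by $u$ and $w$; these loci have uniformly bounded diameter in $\Teich_\ep(W)$. The converse direction of Theorem \ref{thickhierrachy}, together with standard fellow-travelling of quasiconvex arcs sharing coarse endpoints in the thick part of Teichm\"uller space, shows that $\sigma_W \circ \psi_v$ and $\sigma'_W$ are uniformly Hausdorff-close. Cocompactness of the universal bundle metric modulo $MCG$ then produces a fiber-preserving uniformly bi-Lipschitz map between the two universal bundles. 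Gluing this with the Margulis riser map---consistently along the boundary annuli thanks to the normalization in Definition \ref{def-geometricbb}(4)---yields $\phi_v$, with $\psi_v \circ P_v(l) = P_v \circ \phi_v$ by construction.

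\textbf{Main obstacle.} The subtle step is the comparison of $\sigma_W \circ \psi_v$ and $\sigma'_W$: two $k'$-bi-Lipschitz quasiconvex arcs in $\Teich_\ep(W)$ need not be Hausdorff-close in general, and closeness requires that they share the same coarse endpoints \emph{and} have uniformly bounded subsurface projections onto proper subsurfaces of $W$. Both are ensured here: the endpoint matching comes from the balanced tree structure and the Bers short-marking interpretation of the leaves of the tree-links, and the bounded subsurface projections come from $R$-thickness via Theorem \ref{thickhierrachy}. A minor secondary subtlety is matching the bi-Lipschitz map on the Margulis riser with that on each $W_v$-piece along their common boundary annulus, which is handled by the normalization in Definition \ref{def-geometricbb}(4). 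The resulting constant $C$ depends on all of $R, D, k$, and on the valence bound $n$ (via the bounded complexity of $T_v$).
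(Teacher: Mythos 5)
Your proof follows the same two-step route as the paper's own (much terser) argument: extract $\psi_v$ directly from the tree-link construction of Definition \ref{def-treelink}, then promote it to a fiber-preserving bi-Lipschitz map $\phi_v$ using the special split geometry structure of Definition \ref{def-geometricbb}, with constants controlled via Corollary \ref{subordinatehierarchysmall}. The fellow-travelling comparison of the two Teichm\"uller sections that you spell out is precisely the detail the paper leaves implicit, so the proposal is correct and essentially identical in approach.
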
 

\begin{proof}
The construction of the tree-link in Definition \ref{def-treelink} guarantees the existence of a  $C-$bi-Lipschitz embedding $\psi_v: T_v(l) \to T_v$ taking the end-points of $T_v(l)$ to the corresponding  end-points of $T_v$, where $C$ depends only on $n$.

The construction of the model geometry on $M_v$ in Definition \ref{def-geometricbb} now guarantees the bi-Lipschitz embedding $\phi_v$ with constant $C$ depending only on the parameters $k, \ep$ of the model geometries of $M_v(l), M_v$. Since $k, \ep$ depend only on $R$ by Corollary \ref{subordinatehierarchysmall}, the Lemma follows.
\end{proof}

For doubly degenerate manifolds of special split geometry, the height $l_i$ of the block $B_i$ has a nice interpretation that we now
recall.
From the construction of the Minsky model for such manifolds,
\cite[Theorem 8.1]{minsky-elc1} (see the summary in \cite[Sections 1.1.2 and 3]{mahan-split}) $l_i$ may be taken to be approximately equal to $d_{\CC(S\setminus v_i)} (v_{i-1},v_{i+1})$:

\begin{prop}\label{length=link} Given $R>0$, there exists $c_0$ such that the following holds.
	Let $l$ be an $L-$tight $R-$thick tree whose underlying topological space is homeomorphic to $\R$ and whose vertices $v_i$ are simple non-separating curves. Let $M_l$ be the corresponding model manifold of special split geometry. Then for every vertex $v_i$  of $T$, the height $l_i$ of the $i$th split block $B_i$  may be chosen to equal $l_i^+ = l_i^-$ (thus $C=1$ in Proposition \ref{splsplit}) and
	$$\vert d_{\CC(S\setminus v_i)} (v_{i-1},v_{i+1})-l_i\vert \leq c_0.$$
\end{prop}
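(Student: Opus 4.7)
The plan is to read the height $l_i$ directly off Minsky's combinatorial model and then translate it into the curve complex distance. By \cite[Theorem 8.1]{minsky-elc1}, associated to each vertex $v_i$ of the base tight geodesic in the doubly degenerate manifold $M_l$ there is a model Margulis tube $\T_{v_i}$ whose geometry is encoded by a complex coefficient $\omega_{v_i}$; the real part controls the circumference of the tube (which in the welded/split picture of Proposition \ref{splsplit}(5) is normalized away), while the imaginary part controls the ``vertical'' length of the tube's boundary annulus, i.e.\ precisely the height $l_i^\pm$ of the split block on either side of the splitting tube.

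First, I would use the fact that $v_i$ is a single non-separating curve, so $S \setminus v_i$ is connected, and both the predecessor $v_{i-1}$ and successor $v_{i+1}$ project nontrivially to $\CC(S\setminus v_i)$. The Minsky coefficient $\omega_{v_i}$ is, up to bounded additive error, the sum of contributions from the subordinate hierarchy geodesics supported in subsurfaces of $S\setminus v_i$ (including the annulus $v_i$ itself, which controls twist and hence the real part we do not need here). By the Bounded Geodesic Image Theorem (Theorem \ref{bgit}) together with the $R$-thickness hypothesis (Definition \ref{def-rethickss}), every subordinate geodesic supported in a proper essential subsurface of $S\setminus v_i$ has length at most $R$; so the only term that can grow with $i$ is the base geodesic in $\CC(S\setminus v_i)$ itself, whose length is exactly $d_{\CC(S\setminus v_i)}(v_{i-1},v_{i+1})$. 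This gives the estimate $|\mathrm{Im}(\omega_{v_i}) - d_{\CC(S\setminus v_i)}(v_{i-1},v_{i+1})| \leq c_0$ for a constant $c_0 = c_0(R)$.

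Next, I would invoke the bi-Lipschitz equivalence between the Minsky combinatorial model and the hyperbolic metric \cite{minsky-elc2}, together with the summary in Proposition \ref{splsplit} and \cite[Sections 1.1.2, 3]{mahan-split}, to identify $\mathrm{Im}(\omega_{v_i})$ with the height $l_i^\pm$ of the vertical boundary annuli $A_i^\pm$ of the splitting tube up to the constant $C$ appearing there. Finally, since $v_i$ is non-separating there is a unique component of $S\setminus v_i$ meeting both sides of the tube, so the two sides $l_i^+$ and $l_i^-$ receive identical data from the hierarchy; one may therefore renormalize the vertical parametrization of the welded block (Definition \ref{weld}) to set $l_i^+ = l_i^- =: l_i$, eliminating the constant $C$ for this statement.

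The main obstacle is bookkeeping: Minsky's coefficient $\omega_{v_i}$ is not literally the curve complex distance, but a weighted sum over all subordinate domains meeting $v_i$. Converting it to the single term $d_{\CC(S\setminus v_i)}(v_{i-1},v_{i+1})$ requires combining the Bounded Geodesic Image Theorem with the $R$-thickness hypothesis (so all other summands are controlled) and then absorbing the annular (twist) contribution into $\mathrm{Re}(\omega_{v_i})$, which does not enter the height. Once these normalizations are fixed, the proposition is immediate from the existing model technology.
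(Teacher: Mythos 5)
Your proposal is correct and takes essentially the same route as the paper, which gives no separate proof but derives the statement directly from the construction of the Minsky model (\cite[Theorem 8.1]{minsky-elc1}, as summarized in \cite{mahan-split}): the height of the split block is read off the model as the length of the subordinate hierarchy geodesic in $\CC(S\setminus v_i)$, with $R$-thickness (via the bounded geodesic image theorem) controlling all other contributions and the non-separating hypothesis giving $l_i^+=l_i^-$. Your additional bookkeeping through the tube coefficient $\omega_{v_i}$ and the bi-Lipschitz model theorem of \cite{minsky-elc2} is simply an explicit version of the citation the paper relies on.
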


 \subsection{Model geometry on the topological model $M_T = S \times \BU (T)$} We now describe how to glue the geometric building blocks together to obtain a model geometry on  $M_T = S \times \BU (T)$. Since the model geometry will be quite similar to the metric in Definition \ref{weld}, the resulting metric on $M_T$ will also be denoted as $d_{weld}$. There are two points of view one can adopt in describing the model geometry: hierarchy paths or geodesics in Teichm\"uller space. It will be more convenient to define the model using hierarchy paths as observed after Corollary \ref{subordinatehierarchysmall}.

 \begin{defn}\label{def-rthick}
 A balanced tree $i: V(T)\to \ccd(S)$ is said to be $L-$tight and $R-$thick if
 \begin{enumerate}
 \item it is $L-$tight in the sense of Definition \ref{def-tighttree}, and
 \item all  geodesics  subordinate to the tree $T$ have length at most $R$.
 \end{enumerate} 
 \end{defn}
 
 To recover the model geometry on $M_T = S \times \BU (T)$ from Definition \ref{def-rthick} we shall need the model geometry used in the Ending Lamination Theorem \cite{minsky-elc1,minsky-elc2} of Brock-Canary-Minsky.
 Note that for any $v\in T$, Corollary \ref{subordinatehierarchysmall} furnishes a model building block $M_v$ of special split geometry as a bundle over the tree-link $T_v$. To construct the model geometry on $M_T $, it remains to assemble the pieces given by $M_v$. Note also that:
 \begin{enumerate}
 \item Every terminal vertex of $T_v$ corresponds to a mid-point vertex $vw$ of the blown-up tree $\BU(T)$ (Definition \ref{def-blowup}), where $w$ is adjacent to $v$ in $T$.
 \item For every terminal vertex $vw$ of $T_v$, the mid-surface $S_{vw}$ (Definition \ref{def-topmodeltree}) is of (uniformly, independent of $v,w$) bounded geometry, i.e.\ it has injectivity radius uniformly bounded below and diameter uniformly bounded above.
 \end{enumerate}
 
 In order to assemble the pieces given by $M_v$ therefore, it suffices to determine (at least coarsely) the gluing maps between $M_v$ and $M_w$ at $S_{vw}$ as $v,w$ range over adjacent vertices in $T$. Since $S_{vw}$ is of uniformly bounded geometry, it will suffice to show that, up to a choice of a base-point in $\Teich_\ep(S)$ (where $\ep$ is as in Corollary \ref{subordinatehierarchysmall}), $S_{vw}$ lies in a uniformly (independent of $v,w$) bounded ball in $\Teich_\ep(S)$. It is precisely this fact that is furnished by the Minsky model as summarized and explained in Sections 1.1.2 and 1.1.3 of \cite{mahan-split}. 
 
 We briefly recall the necessary facts and the argument for completeness. We shall find it convenient to think of $T$ as rooted, with root vertex $\ast$. Let $l$ be any bi-infinite geodesic in $T$ through $\ast$. Then $i(l)$ is a tight geodesic in $\CC(S)$ by our hypothesis on $i:T \to \ccd (S)$ and gives a bi-infinite tight geodesic in $\CC(S)$ converging to ending laminations $l_\pm \in \EL(S) = \partial \CC(S)$ \cite{klarreich-el}. Given such a tight geodesic, Minsky \cite{minsky-elc1} constructs a combinatorial model $M_l$ for a hyperbolic 3-manifold $N_l$ with ending laminations $l_\pm$. Finally, Brock-Canary-Minsky \cite{minsky-elc2}  prove that $M_l$ is uniformly bi-Lipschitz homeomorphic to $N_l$. The construction of $M_l$ in \cite[Theorem 8.1]{minsky-elc1} shows in particular that the bounded geometry surfaces in $M_l$ correspond to markings and hence give coarsely well-defined points of $\Teich(S)$ (once a base surface is chosen and identified with a base-point of $\Teich(S)$).
 
 Proposition \ref{splsplit} now shows that if moreover $l$ is  $L-$tight (for some $L\geq 3$) and $R-$thick, then 
 \begin{enumerate}
 \item $M_l$ admits a  bi-Lipschitz homeomorphism to a model of special split geometry (Definition \ref{def-splsplitcombin}).  Further, the bi-Lipschitz constant and the parameters $\ep, D>0$ occurring in Proposition \ref{splsplit} depend only on $R$.
 \item The split surface (Item (2) of Proposition \ref{splsplit}) between split blocks  corresponding  to adjacent vertices $v,w$ in $l$ gives a coarsely well-defined element $S(v,w)$ of $\Teich(S)$. 
 \end{enumerate}
 
 We restate the last conclusion more precisely.
 Given $R>0$  there exists $r, \ep>0$ such that the following holds:\\ Let $i:\vT \to \ccd(S)$ be $L-$tight  and $R-$thick. Then
 for any pair of adjacent vertices $v,w\in T$,    and any bi-infinite  geodesic $i(l)$,  passing through $i(v), i(w)$ and $\ast$, the split surface  between split blocks  corresponding  to  $v,w\in l$ lies in $N_r(S(v,w)) \subset \Teich_\ep(S)$.
 Note that $r, \ep>0$ depend on $R$ but not $L$. 
 
 Thus we have a coarsely well-defined element $S(v,w)$ of $\Teich(S)$ corresponding to the mid-surface $S_{vw}$ independent of the bi-infinite geodesic $l$  passing through $v,w$.
 We summarize the above discussion as follows:
 \begin{theorem} 
 	\label{midsurfminskymodel} There exists $C_0 \geq 1$ depending only on the topology of $S$ and given $R>0$, $D_0, k_0 \geq 1$ there exist $r, \ep,  >0, C, D, k \geq 1$ such that the following holds:\\
 	Suppose that $i: \vT\to \ccd(S)$ is an $L-$tight  $R-$thick balanced tree with  parameters $D_0, k_0$ as in Definition \ref{def-balancedtree}.  Let $\ast$ be a root of $T$. Let $l$ be  any bi-infinite tight geodesic  in $i(T)$ through $\ast$ with end-points $l_\pm \in \EL(S) =\partial \CC(S)$.  Then
 	\begin{enumerate}
 	\item The  doubly degenerate hyperbolic 3-manifolds $N_l$ with end-invariants $l_\pm$ are of special split geometry with constants $\ep, D>0, C\geq 1$ as in Proposition  \ref{splsplit}.
 	\item The model manifold $M_l$ is $C_0-$bi-Lipschitz homeomorphic to $N_l$. 
 	\end{enumerate} 
 	Further, for any pair of adjacent vertices $v,w\in T$, there exists $S(v,w) \in \Teich_\ep(S)$  such that for any   geodesic $l$ in $T$,  passing through $i(v), i(w), \ast$, the split surface  between split blocks in $M_l$ corresponding  to  $v,w\in l$ lies in $N_r(S(v,w)) \in \Teich_\ep(S)$.
 \end{theorem}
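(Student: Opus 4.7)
The plan is to assemble Theorem \ref{midsurfminskymodel} in three stages, following the structure of the statement itself.

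First, I would handle conclusions (1) and (2). Given any bi-infinite tight geodesic $l$ in $i(T)$ through $\ast$, Klarreich's theorem identifies the endpoints $l_\pm$ with laminations in $\EL(S)$, and by Minsky's construction \cite{minsky-elc1} there is a combinatorial model $M_l$ built from $l$ as a base tight geodesic. The Brock--Canary--Minsky bi-Lipschitz model theorem \cite{minsky-elc2} then gives a universal constant $C_0$, depending only on the topology of $S$, so that $M_l$ is $C_0$-bi-Lipschitz to the hyperbolic manifold $N_l$. To upgrade to special split geometry with parameters $\ep, D, C$ depending only on $R$, I would invoke Definition \ref{def-splsplitcombin} together with Lemma \ref{v-thin}: $L$-tightness of $l$ gives the lower bound $d_Y(l_+,l_-)\geq L$ on all principal component domains, and $R$-thickness (Definition \ref{def-rethickss}) together with the Behrstock inequality (Theorem \ref{bi}) and the Bounded Geodesic Image Theorem (Theorem \ref{bgit}) bounds $d_W(l_+,l_-)\leq R'$ on all other proper essential subsurfaces, where $R'$ depends only on $R$. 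Proposition \ref{splsplit} then directly yields the desired special split structure on $N_l$.

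Second, for the existence of a coarsely well-defined $S(v,w)\in \Teich_\ep(S)$ attached to each adjacent pair $v,w\in T$, the strategy is to show that the split surface sitting between the split blocks for $v$ and $w$ depends (up to bounded Teichm\"uller distance) only on the subsurface-projection data that is \emph{local} to the edge $vw$. Concretely, I would fix adjacent $v, w$ and consider two bi-infinite tight geodesics $l_1, l_2$ in $i(T)$ through $\ast$, both passing through $i(v), i(w)$. Let $\sigma_j$ denote the split surface between the blocks for $v,w$ in $M_{l_j}$. By the first part, each $\sigma_j$ corresponds (up to bounded ambiguity) to a marking $\mu_j$ on $S$ whose shortest curves include representatives of $i(v), i(w)$. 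It then suffices to show that for every proper essential subsurface $W$ of $S$, $d_W(\mu_1,\mu_2)$ is bounded by a constant depending only on $R, D_0, k_0$.

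Third, and this is what I expect to be the main obstacle, I would verify that local subsurface projections are forced by the balanced-tree data. For any $W$ disjoint from $i(v)\cup i(w)$, the Bounded Geodesic Image Theorem implies that the subsurface projection $\pi_W(\mu_j)$ is controlled by the hierarchy path subordinate to the tree-link $T_v$ or $T_w$ (depending on which principal component $W$ sits in), and by Definition \ref{def-balancedtree} and $R$-thickness these hierarchy paths are uniformly bounded and depend only on $lk(v), lk(w)$ in $T$, not on $l_j$. For $W$ crossing $i(v)$ or $i(w)$, the Behrstock inequality forces $d_W(\pi_W(\mu_j),\partial i(v))$ or the analogous quantity for $w$ to be uniformly bounded, again independently of $l_j$. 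For the remaining subsurfaces, those that sit inside principal components but far from $v,w$, one uses the Bounded Geodesic Image Theorem along the base tight geodesic to absorb all deviation into a uniform constant. Combining these cases gives $d_W(\mu_1,\mu_2)\leq R''$ for all $W$, where $R''$ depends only on $R, D_0, k_0, M, D$. By the converse direction of Theorem \ref{thickhierrachy}, the set of $x\in \Teich(S)$ on which $\mu_1$ (equivalently $\mu_2$) is short lies in a ball of uniformly bounded radius $r$ in $\Teich_\ep(S)$, yielding the desired point $S(v,w)$ and completing the proof.
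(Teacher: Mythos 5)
Your proposal is correct and follows the same overall route as the paper for conclusions (1) and (2): Klarreich's theorem to identify $l_\pm$ with ending laminations, Minsky's combinatorial model built on the tight geodesic $i(l)$, the Brock--Canary--Minsky bi-Lipschitz theorem for the constant $C_0$, and Proposition \ref{splsplit} (after verifying the two conditions of Definition \ref{def-splsplitcombin} from $L$-tightness and $R$-thickness) for the special split structure. Where you genuinely diverge is in the treatment of the well-definedness of $S(v,w)$: the paper records only that split surfaces correspond to markings in the construction of \cite[Theorem 8.1]{minsky-elc1} and defers the independence-of-$l$ claim to the discussion in Sections 1.1.2--1.1.3 of \cite{mahan-split}, whereas you reconstruct an explicit argument by comparing the subsurface projections of the two markings $\mu_1,\mu_2$ over all proper essential $W$ and then invoking the converse direction of Theorem \ref{thickhierrachy} (in effect the Masur--Minsky distance formula together with the fact that a complete marking pins down a uniformly bounded ball in $\Teich_\ep(S)$). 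This is a legitimate and more self-contained route, and it makes visible exactly which hypotheses of Definition \ref{def-balancedtree} and of $R$-thickness are consumed. One case in your trichotomy needs to be stated separately: an annular domain $W$ whose core is a component of $i(v)$ or $i(w)$ is neither ``disjoint from $i(v)\cup i(w)$'' nor ``crossing'' them, and it is not covered by the Behrstock step since $\pi_W(i(v))=\emptyset$ there. For such $W$ the bound on $d_W(\mu_1,\mu_2)$ comes from the $R$-thickness hypothesis applied to $W$ viewed as an essential annulus in $S\setminus i(w)$ (resp.\ $S\setminus i(v)$), combined with the Bounded Geodesic Image Theorem applied to the tails of $l_1,l_2$ beyond the vertices at tree-distance two from $v$ (resp.\ $w$), where every vertex does intersect the core. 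With that case added your argument closes, and the constants you extract depend only on $R, D_0, k_0$ and the topology of $S$, as required.
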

 
 \begin{defn}\label{def-mg}
 Theorem \ref{midsurfminskymodel} implies in particular that the mid-surfaces $S_{vw}$ of $\BU(T)$ are (coarsely) well-defined points of $\Teich(S)$. Thus the image of $lk(v) (\subset \BU(T))$ in $\Teich(S)$ is (coarsely)  well-defined under a qi-section as a finite set of points (of uniformly bounded cardinality). Interpolating the model building blocks $(M_v,d_v)$ of special split geometry finally gives us the {\bf model metric} $\dw$ on $M_T$. The pair $(M_T,\dw)$ will be called the {\bf model of special split geometry} on the topological model $M_T$.
 
 Replacing each $(M_v, d_v)$ in $(M_T,\dw)$ with the tube-electrified (pseudo-)metric $(M_v, \dt)$ (Definition \ref{def:tubeelbb}) gives us the  tube-electrified metric $\dt$ on $M_T$. The pair $(M_T,\dt)$ will be called the {\bf tube electrified model of special split geometry} on the topological model $M_T$.  $P: (M_T,\dw) \to \but$
 and $P: (M_T,\dt) \to \but$ will denote the natural projections.
 
The lift of the metric $\dw$ (resp. $\dt$)  to the universal cover $\til M_T$  is also denoted by $\dw$ (resp. $\dt$).
Also, $P: \tmtdw \to \but$
and $P: \tmtdt \to \but$ will denote the natural projections.
\end{defn}
 
We should remind the reader of the caveat in Remark \ref{rmk-interval}:  the model metrics on $(M_v,d_v)$ differ from the model metrics on the split blocks of Proposition  \ref{splsplit} at the Margulis tubes.

Lemma \ref{proper-block} and Theorem \ref{midsurfminskymodel} give us the following:

\begin{lemma}\label{proper-proj}  Given a surface $S$, $D, k \geq 1$ and $R>0$, there exist $c\geq 1$ such that the following holds:\\
	Suppose that $i: \vT\to \ccd(S)$ is an $L-$tight  $R-$thick balanced tree with  parameters $D, k$ as in Definition \ref{def-balancedtree}.
	Then $P: \mtdw \to \but$ and  $P: \mtdt \to \but$ are $c-$proper.
\end{lemma}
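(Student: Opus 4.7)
The plan is to reduce the global properness statement to the local building-block statement Lemma~\ref{proper-block}, using the bounded geometry of the mid-surfaces supplied by Theorem~\ref{midsurfminskymodel} to control the gluings.

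First, I would observe that under the $R$-thickness hypothesis (Definition~\ref{def-rthick}), every tree-link $T_v$ has diameter bounded by a constant $R'=R'(R,n)$ depending only on $R$ and the valence bound $n$ on $T$. Indeed, the terminal vertices of $T_v$ are in bijection with $lk(v)$, so there are at most $n$ of them, and pairwise distances in $T_v$ between terminal vertices coincide (up to the bi-Lipschitz constant $k$ of the approximation $\P_W$ in Definition~\ref{def-treelink}) with lengths of geodesics subordinate to $T_v$, hence bounded by $R$. Consequently, any subset $B_1\subset\BU(T)$ of diameter at most $1$ meets at most a uniformly bounded number $N=N(R,n)$ of tree-links.

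Second, by Corollary~\ref{subordinatehierarchysmall} the special split geometry parameters $(k,\ep)$ on each $(M_v,d_v)$ depend only on $R$. Hence Lemma~\ref{proper-block} provides a uniform constant $c_0=c_0(R,n)$ such that whenever $A\subset T_v$ has diameter $\le 1$, $P_v^{-1}(A)$ has $d_v$-diameter at most $c_0$. Applying this to each of the at most $N$ tree-links intersecting $B_1$ gives a uniform bound on the diameter of $P^{-1}(B_1)\cap M_v$ in each block.

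Third, I would patch these local bounds together using Theorem~\ref{midsurfminskymodel}: adjacent blocks $M_v, M_w$ in $(M_T,\dw)$ are glued along a mid-surface $S_{vw}$ which, via the coarsely well-defined point $S(v,w)\in \Teich_\ep(S)$, has diameter uniformly bounded by a constant $D_0=D_0(R,n,D,k)$. Therefore travelling from one block to an adjacent one across a mid-surface costs at most $D_0$ in $\dw$. Iterating over the at most $N$ blocks intersected by $P^{-1}(B_1)$, and summing the within-block contribution $c_0$ and the mid-surface contribution $D_0$, we obtain a uniform bound $c = c(R,n,D,k) = N\cdot c_0 + (N-1)\cdot D_0$ on the $\dw$-diameter of $P^{-1}(B_1)$. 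This proves the statement for $P\colon\mtdw\to\but$.

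Finally, the tube-electrified statement follows immediately: since the Margulis riser metric is replaced by a pseudo-metric that agrees with $\dw$ outside risers and with the pseudo-metric having zero $S^1$-factor inside risers, we have $\dt \leq \dw$ pointwise. Hence the $\dt$-diameter of $P^{-1}(B_1)$ is bounded by its $\dw$-diameter, and the same $c$ works for $P\colon\mtdt\to\but$. The only point requiring genuine care is the first step: without $R$-thickness the tree-links could have arbitrary diameter, and the argument would collapse; with it, the reduction to finitely many building blocks is robust and the rest is essentially bookkeeping.
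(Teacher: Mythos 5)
Your overall route is the paper's: uniform split-geometry parameters for each block from Corollary~\ref{subordinatehierarchysmall}, block-level properness from Lemma~\ref{proper-block}, control of the gluings via the uniformly bounded-geometry mid-surfaces coming from Theorem~\ref{midsurfminskymodel}, and the inequality $\dt\leq\dw$ to deduce the electrified case. However, the step you single out as the crucial one is wrong as stated: $R$-thickness does \emph{not} bound the diameter of a tree-link $T_v$. The distances in $T_v$ between its terminal vertices are, by construction, the distances $d_{\CC(S\setminus i(v))}(i(u),i(w))$ for $u,w$ adjacent to $v$, i.e.\ the lengths of the \emph{base} geodesics $\gamma_W$; these are not ``geodesics subordinate to the tree-link'' in the sense of Definition~\ref{def-subordgeod}, which only concerns tight geodesics supported on \emph{proper} essential subsurfaces $W'\subsetneq S\setminus i(v)$ occurring in the hierarchy. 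Tightness forces these base distances to be at least $L$, and the thickness hypothesis of Definition~\ref{def-rthick} places no upper bound on them; by Proposition~\ref{length=link} they are coarsely the heights of the Margulis risers, which are unbounded in general. If tree-link diameters were uniformly bounded one would be in the bounded-geometry regime (compare the second statement of Proposition~\ref{thinbdlrelhyp}), and the tube-electrification machinery of the paper would be superfluous.

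Fortunately, the only consequence you draw from this false claim --- that a subset $B_1\subset\but$ of diameter at most one meets a uniformly bounded number of tree-links --- is true for a different reason: distinct terminal (mid-point) vertices of a single tree-link are at mutual distance at least $L\geq 3$, since the approximating tree realizes the $\CC(W)$-distances between the images of the vertices adjacent to $v$. Hence $B_1$ can meet at most two tree-links, namely two that share a single mid-point vertex. With this substitution your patching argument (within-block bound from Lemma~\ref{proper-block}, uniformly bounded mid-surface diameter, and $\dt\leq\dw$) goes through and coincides with the paper's proof; but as written, the justification of your first step rests on a misreading of Definitions~\ref{def-subordgeod} and~\ref{def-rthick}.
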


\begin{proof} By
Corollary \ref{subordinatehierarchysmall}, there exist $k, \ep$ depending on $R$  such that each $M_v$ is of split geometry with parameters  $k, \ep$. Theorem \ref{midsurfminskymodel} now shows that the mid-surfaces $S_{vw}$ of $\BU(T)$ are coarsely well-defined points of $\Teich(S)$:  the constant $r$ occurring in the conclusion of Theorem \ref{midsurfminskymodel} depends only on $R$. Hence $P: \mtdw \to \but$   is $c-$proper. It follows that $P: \mtdt \to \but$   is $c-$proper.
\end{proof}

\subsection{The Main  Theorems}\label{sec-maintech} We are now in a position to present the main  theorems of this paper. We carry forward the notation from the discussion preceding Lemma \ref{bb-deg-tree}:
$l$ is a bi-infinite geodesic in $T$ and $\BU(l)$ denotes the bi-infinite geodesic in $\but$ after blowing up $l$ in $T$. Further, let $\VV(l)$ denote the collection of vertices of $T$ on $l$, $N_l$ denote the doubly degenerate hyperbolic 3-manifold with ending laminations given by $l_\pm$, the ideal end-points of $i(l)$. Let $\T_v$ denote the Margulis tube in $N_l$ corresponding to $v$. Let $N_l^0 = N_l \setminus \bigcup_{v \in \VV(l)} \T_v$. Also let $M_l$ denote the bundle over $\BU(l)$ induced from $\Pi: M_T \to \but$. Let $M_l^0 = M_l \setminus \bigcup_{v \in \VV(l)} \RR_v$.

\begin{theorem} \label{model-str} Given $R > 0$, $D, k \geq 1$, there exist $K, c\geq 1, e >0$ such that the following holds. Let $i:\vT \to \ccd(S)$ be an $L-$tight  $R-$thick balanced tree  with parameters $D, k$ as in Definition \ref{def-balancedtree}. There exists a  metric $\dw$ on $M_T$ such that $P: M_T \to \but$  satisfies the following:
	\begin{enumerate}
		\item The induced metric on a Margulis riser $\RR_v$ is the metric product $S^1_e \times T_v$, where $S^1_e$ is a round circle with radius $e$. 
		\item  For any bi-infinite geodesic $l$ in $T$, $N_l^0$ and $M_l^0$ are $K-$bi-Lipschitz homeomorphic.
		\item Further, if there exists a subgroup $Q$ of $MCG(S)$ acting cocompactly and geometrically on $i(T)$, then this action can be lifted to an isometric fiber-preserving isometric action of $Q$ on $(M_T,\dw)$.
		\item $P:\mtdt \to \but$ is $c-$proper.
	\end{enumerate}
\end{theorem}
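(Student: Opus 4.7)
The plan is to verify the four conclusions essentially in reverse order of difficulty, with items (1), (3) and (4) being short and item (2) carrying the bulk of the geometric work. First, item (4) is already the content of Lemma \ref{proper-proj} applied under our hypotheses, so nothing new is needed there. Item (1) is immediate from the construction: in Definition \ref{def-geometricbb}(2) each Margulis riser is by fiat equipped with the metric product $S^1_e \times T_v$, and the assembly recipe in Definition \ref{def-mg} glues building blocks along mid-surfaces only, so the product metric on $\RR_v$ is preserved. For item (3), I would argue by equivariance of every step in the construction: the tree-links $T_v$ of Definition \ref{def-treelink}, the blow-up $\BU(T)$ of Definition \ref{def-blowup}, and the Minsky base-points $S(v,w) \in \Teich(S)$ produced by Theorem \ref{midsurfminskymodel} are all coarsely canonical, so by making choices one $Q$-orbit at a time I can arrange them to be $Q$-equivariant. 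The building blocks $M_v = S \times T_v$ then carry a canonical fiberwise $Q$-action, and the gluing maps at mid-surfaces, being determined by the $Q$-equivariant base-points in $\Teich(S)$, intertwine these actions to give an isometric fiber-preserving $Q$-action on $(M_T,\dw)$.

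The main work is in item (2). Fix a bi-infinite geodesic $l$ in $T$; the key observation is that the induced bundle $M_l$ over $\BU(l)$ is built from exactly the same kind of building blocks as $M_T$, only restricted to the tree-links coming from $l$. More precisely, Lemma \ref{bb-deg-tree} supplies, for each $v\in\VV(l)$, a fiber-preserving $C$-bi-Lipschitz embedding $M_v(l)\hookrightarrow M_v$ whose constant $C$ depends only on $R,D,k$ and the valence of $T$. Assembling these across $l$ shows that $M_l$ with its induced $\dw$ metric is uniformly bi-Lipschitz to the welded model (Definition \ref{weld}) of a doubly degenerate 3-manifold of special split geometry determined by $l_\pm$. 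On the hyperbolic side, Theorem \ref{midsurfminskymodel}(1) says that $N_l$ is itself of special split geometry with constants depending only on $R$, and Proposition \ref{splsplit} together with the ending lamination theorem (as summarized in the text before Definition \ref{weld}) shows that $N_l$ minus its Margulis tubes $\T_v$ is uniformly bi-Lipschitz to the complement of the standard annuli in the welded model of $N_l$. Since removing the Margulis risers $\RR_v$ from $M_l$ corresponds precisely to removing the standard flat annuli from this welded model, composing the two bi-Lipschitz identifications produces a $K$-bi-Lipschitz homeomorphism $M_l^0\to N_l^0$.

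The main obstacle, as usual in this circle of ideas, is uniformity: the constant $K$ must depend only on $R,D,k$ and on the topology of $S$, not on $L$ or on the choice of $l$. This forces one to check carefully that each intermediate bi-Lipschitz step has uniform constants. Proposition \ref{splsplit} gives uniform control on the bounded-geometry pieces between consecutive Margulis tubes, Corollary \ref{subordinatehierarchysmall} gives uniform Teichm\"uller parameters $k,\ep$ for every individual block, Theorem \ref{midsurfminskymodel} gives uniform positions in $\Teich_\ep(S)$ for the mid-surfaces independent of the ambient geodesic $l$, and Lemma \ref{bb-deg-tree} gives uniform comparison between the block metric inside $M_T$ and inside $M_l$. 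The remaining verification is a bookkeeping argument chaining these uniform estimates along $l$, taking care that the welding operation performed in Definition \ref{weld} only alters the tube regions, so that after excising the risers on one side and the tubes on the other the composite comparison is genuinely a bi-Lipschitz homeomorphism rather than merely a quasi-isometry.
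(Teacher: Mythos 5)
Your proposal is correct and follows essentially the same route as the paper's (much terser) proof: item (4) via Lemma \ref{proper-proj}, item (1) from the construction in Definition \ref{def-geometricbb}, item (3) by equivariance of the tree-link, blow-up, and model-geometry constructions, and item (2) by combining Proposition \ref{splsplit} with the fiber-preserving comparison of Lemma \ref{bb-deg-tree}. Your expanded discussion of uniformity of the constants in item (2) is a faithful unpacking of what those citations are doing.
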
  

\begin{proof}
	Item (1) follows immediately from the construction in Definition \ref{def-geometricbb} and Lemma \ref{bb-deg-tree}.\\
	
		Item (2) follows from Proposition \ref{splsplit} and Lemma \ref{bb-deg-tree}.\\
		
		Item (3) follows from the observation that the constructions  of the tree-link in Definition \ref{def-treelink}, the blow-up in  Definition \ref{def-blowup}, and the model geometry in Definition \ref{def-geometricbb} can all be 
done equivariantly with respect to the  action of  $Q$.\\

Item (4) follows from Lemma \ref{proper-proj}.
\end{proof}

 The lift of the pseudometric $\dt$ on $( M_T,\dt)$ to $\til M_T$ is also denoted by $\dt$. 

\begin{theorem}\label{maintech} Given $R > 0$, $D, k \geq 1$, there exists $\delta_0, L_0 \geq 0$ such that the following holds. Let $i:\vT \to \ccd(S)$ be an $L-$tight  $R-$thick balanced tree  with $L\geq L_0$
	and parameters $D, k$ as in Definition \ref{def-balancedtree}. Then
 $(\til M_T,\dt)$ is $\delta_0-$hyperbolic.
\end{theorem}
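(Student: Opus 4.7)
The plan is to carry out the four-step strategy outlined in the introduction. The key object is the bundle $P : (\til M_T,\dt) \to \BU(T)$, whose base is a bounded-valence simplicial tree and whose fibers are lifts of $S$ (roughly hyperbolic planes, up to the annular strips coming from Margulis risers). The first step is to reduce to bundles over lines: for each bi-infinite geodesic $l$ in $T$, we restrict to the bundle $(\til M_l,\dt) \to \BU(l)$. Since $\BU(T)$ is a tree and the fibers will turn out to be uniformly qi-embedded, hyperbolicity of the whole space can be extracted from uniform hyperbolicity and uniform flaring on all such lines, via a suitable Bestvina–Feighn-type combination theorem over trees (stated in Section \ref{sec:prelims}).

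The second, and most substantial, step is to show that each $(\til M_l,\dt)$ is uniformly $\delta_1$-hyperbolic, with $\delta_1$ depending only on $R,D,k$. Here I would use Theorem \ref{model-str}(2): there is a $K$-bi-Lipschitz homeomorphism $M_l^0 \to N_l^0$, where $N_l$ is the doubly degenerate hyperbolic $3$-manifold with ending laminations $l_\pm$. Lifting to universal covers gives a $K$-bi-Lipschitz homeomorphism of $\til M_l^0$ onto a subset of $\til N_l \cong \Hyp^3$, whose complement is the union of lifts of Margulis tubes $\T_v$. Since $\Hyp^3$ is $\delta$-hyperbolic and the lifts $\til\T_v$ form a collection of uniformly separated convex sets (by $L$-tightness and $R$-thickness, via Lemma \ref{v-thin} and the Bounded Geodesic Image theorem), $\til N_l$ is strongly hyperbolic relative to $\{\til\T_v\}$ with uniform parameters. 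Now observe that the tube-electrified metric $\dt$ on $M_l$ collapses each riser $\RR_v = S^1_e \times T_v$ in the $S^1$-direction, so that on the universal cover the lifted riser becomes uniformly quasi-isometric to its $T_v$-factor, which is in turn uniformly quasi-isometric to the core of the lifted Margulis tube $\til\T_v$ under the bi-Lipschitz map of Theorem \ref{model-str}(2). Consequently $(\til M_l,\dt)$ is uniformly quasi-isometric to the electrification of $\Hyp^3$ along the quasi-convex collection $\{\til\T_v\}$, and the latter is uniformly hyperbolic by standard partial electrification results (cf.\ \cite{mahan-reeves, mahan-split}).

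The third step is to feed this uniform hyperbolicity into the converse of the Bestvina–Feighn combination theorem (Section \ref{sec:prelims}) to extract uniform flaring constants $\lambda > 1$, $n_0$ and a threshold $M_0$ for the bundles $(\til M_l,\dt) \to \BU(l)$, depending only on $\delta_1$ and on the uniform properness constant $c$ from Lemma \ref{proper-proj}. Since every bi-infinite geodesic in $\BU(T)$ arises as $\BU(l)$ for some bi-infinite geodesic $l$ in $T$, and since the bi-Lipschitz embedding $\phi_v$ of Lemma \ref{bb-deg-tree} guarantees that the geometry of the bundle over $\BU(l)$ as a sub-bundle of $\til M_T$ agrees, up to uniform constants, with the intrinsic geometry of $(\til M_l,\dt)$, we obtain the same uniform flaring for every bi-infinite geodesic in the base $\BU(T)$.

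For the fourth and final step, we apply the combination theorem for bundles over trees: uniform hyperbolicity of fibers (trivial, since the fibers are just lifts of $S$ with a bounded-geometry metric and the risers collapsed), uniform properness of $P$ (Lemma \ref{proper-proj}), and the uniform flaring established in step three together imply that $(\til M_T,\dt)$ is $\delta_0$-hyperbolic for some $\delta_0 = \delta_0(R,D,k)$. The main obstacle I expect is Step 2: carefully verifying that the tube-electrification of the universal cover of a doubly degenerate hyperbolic $3$-manifold of special split geometry yields a uniformly hyperbolic space, with constants independent of $l$, requires controlling the geometry of the collection $\{\til\T_v\}$ (bounded horizontal boundary, flare of vertical boundary) uniformly in the parameters of the $L$-tight $R$-thick tree; this is where the assumption $L \geq L_0$ enters, via Lemma \ref{v-thin}, to guarantee that all $\T_v$ are genuine Margulis tubes.
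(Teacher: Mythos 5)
Your proposal follows essentially the same route as the paper: uniform strong relative hyperbolicity of $\til N_l\cong\Hyp^3$ with respect to the lifted Margulis tubes is transported to $(\til M_l,\dt)$ via the bi-Lipschitz model and the partial-electrification Lemma \ref{pel} (this is Proposition \ref{telunifhyp}), then the converse combination theorem (Corollary \ref{effectiveBFreversetmtdt}) extracts uniform flaring over every bi-infinite geodesic, and the forward combination theorem over the tree (Corollary \ref{effectiveBFfwdtmtdt}) concludes, exactly as in Scheme \ref{scheme-split} and Proposition \ref{thinbdlhyp}. The only point to phrase carefully is that the electrification of the tubes is \emph{partial} --- each riser is collapsed only in the $S^1$-direction, retaining its $T_v$-factor, which can be arbitrarily long --- as in Lemma \ref{pel}; your wording ``the electrification of $\Hyp^3$ along $\{\til\T_v\}$'' should be read in that sense, which is clearly what your argument uses.
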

In the statement of Theorem \ref{maintech} we have explicitly mentioned the constant $L_0$ from Standing Assumption \ref{assumption}.
The proof of Theorem \ref{maintech} will occupy the rest of the paper.

\section{Effective combination theorems and relative  hyperbolicity}\label{sec:prelims} Before proving  
Theorem \ref{maintech}  we shall  recall, organize and
 adapt some known material on combination theorems and relative hyperbolicity. The fundamental combination theorem in the context of trees of spaces is due to Bestvina and Feighn \cite{BF}. Its converse is due, in various forms, to Gersten \cite{gersten}, Bowditch
 \cite{bowditch-ct} and others. This was generalized to the context of relative hyperbolicity in \cite{mahan-reeves,mahan-sardar}. An effective (i.e.\ with constants) generalization is due to  Gautero \cite[Theorem 2]{gautero-conv} \cite[Theorem 2.20]{gautero} (see especially Sections 7, 8 of the last paper), \cite{gautero-weidmann}. In the context that we are interested in, the base-tree will be a metric  tree where some of the edges (corresponding to edges of the tree-links $T_v$, see Definition  \ref{def-balancedtree}) might have non-integral length. Strictly speaking, therefore we are in the context of a metric bundle in the sense of \cite{mahan-sardar} where the fibers are uniformly hyperbolic (see Remark \ref{mbdl-mgbdl} below for going back and forth between trees of spaces and metric bundles). The combination theorem and its converse for metric bundles  are proven in \cite[Theorem 4.3, Proposition 5.8]{mahan-sardar}. It is also shown in \cite{mahan-sardar} that the metric bundle is (with effective uniform constants) quasi-isometric to a metric graph bundle.

 We shall be specifically interested in the following bundles:

\begin{enumerate}
\item The universal cover  $\tmtdw$ of the bundle $\mtdw$,
\item The universal cover  $\tmtdt$ of the bundle $\mtdt$.
\end{enumerate}

Both have as base the blown-up tree $\BU(T)$ (see Definition \ref{def-blowup}). We shall denote the projection map to the base as
$P: \tmtdw \to \but$ or $P: \tmtdt \to \but$.
Metric bundles over trees are examples of trees of spaces (Section \ref{sec-effct} below) as well as metric bundles in the sense of \cite{mahan-sardar} and both points of view will be important. We shall in Section \ref{effcombbut} use the terminology of metric bundles and adapt the statements of \cite{gautero,gautero-conv,mahan-sardar} to the context of $P: \tmtdt \to \but$.

\subsection{Trees of hyperbolic spaces and effective combination theorem}\label{sec-effct}
We recall the notion of a tree of spaces.
\begin{defn} \label{tree}
 \cite{BF} Let $(X,d)$ be a geodesic
 metric space and $T$ a simplicial tree with vertex set $\VV(T)$ and edge set $\EE(T)$. $P: X \rightarrow T$ is said to be a  tree of geodesic
	metric spaces satisfying
	the \emph{quasi-isometrically embedded condition} (or \emph{qi condition}) if  there exists
	 a map $P : X \rightarrow T$, and constants
 $K \geq 1, \epsilon \geq 0$ satisfying the
	following: 
	\begin{enumerate}
	\item For all vertices $v\in{\VV(T)}$,
	$X_v = P^{-1}(v) \subset X$ with the induced path metric $d_{v}$ is
	a geodesic metric space $X_v$. Further, the
	inclusions ${i_v}:{X_v}\rightarrow{X}$
	are uniformly proper, i.e. for all $M > 0$, $v\in{T}$ and $x, y\in{X_v}$,
	there exists $N > 0$ such that $d({i_v}(x),{i_v}(y)) \leq M$ implies
	${d_{X_v}}(x,y) \leq N$.
	\item Let $e \in \EE(T)$ with initial and final vertices $v_1$ and
	$v_2$ respectively.
	Let $X_e$ be the pre-image under $P$ of the mid-point of  $e$.
	There exist continuous maps ${f_e}:{X_e}{\times}[0,1]\rightarrow{X}$, such that
	$f_e{|}_{{X_e}{\times}(0,1)}$ is an isometry onto the pre-image of the
	interior of $e$ equipped with the path metric. Further, $f_e$ is fiber-preserving,
	i.e. projection to the second co-ordinate in ${X_e}{\times}[0,1]$ corresponds via $f_e$
	to projection to the tree $P: X \rightarrow T$.
	\item Identifying $e$ with $[0,1]$, ${f_e}|_{{X_e}{\times}\{{0}\}}$ and
	${f_e}|_{{X_e}{\times}\{{1}\}}$ are $(K,{\epsilon})$-quasi-isometric
	embeddings into $X_{v_1}$ and $X_{v_2}$ respectively.
	${f_e}|_{{X_e}{\times}\{{0}\}}$ and
	${f_e}|_{{X_e}{\times}\{{1}\}}$ will occasionally be referred to as
	$f_{e,v_1}$ and $f_{e,v_2}$ respectively. 
	\end{enumerate}
	
	$K, \epsilon$ will be called the constants or parameters of the  qi-embedding condition.
\end{defn}

A tree of spaces $P: X \rightarrow T$ 
as in Definition \ref{tree} above is said to be a tree of
hyperbolic metric
spaces, if there exists $\delta > 0$ such that the vertex and edge spaces $X_v, X_e$ are all
$\delta$-hyperbolic for all vertices $v$ and edges $e$ of $T$.

\begin{defn}\label{def-hallway}\cite{BF}
	A disk $f : [-m,m]{\times}{I} \rightarrow 
	X$ is a {\bf hallway} of length $2m$ if it satisfies:
	
	\begin{enumerate}
		\item $f^{-1} ({\cup}{X_v} : v \in T) = \{-m,  \cdots , m \}{\times}
		I$
		\item $f$ maps $i{\times}I$ to a geodesic in  $X_v$ for some vertex
		space $X_v$.
		\item $f$ is transverse, relative to condition (1) to $\cup_e X_e$.
	\end{enumerate}
\end{defn}

\begin{defn}\label{def-rhothin}\cite{BF} A hallway $f : [-m,m]{\times}{I} \rightarrow 
	X$ is {\bf $\rho$-thin} if 
	$d({f(i,t)},{f({i+1},t)}) \leq \rho$ for all $i, t$.
	
	A hallway $f : [-m,m]{\times}{I} \rightarrow 
	X$ is said to be {\bf $\lambda$-hyperbolic}  if 
	$$\lambda l(f(\{ 0 \} \times I)) \leq \, {\rm max} \ \{ l(f(\{ -m \} \times I)),
	l(f(\{ m \} \times I)).$$

	The quantity ${\rm min_i} \, \{ l(f(\{ i \} \times I))\}$ is called the {\bf girth} of the hallway.

	A hallway is {\bf essential} if the edge path in $T$ 
	resulting from projecting the hallway under $P\circ f$
	onto $T$ does not backtrack (and is therefore a geodesic segment in
	the tree $T$).
\end{defn}

\begin{defn}\label{def-flare} {\bf Hallways flare condition \cite{BF}:}
	The tree of spaces, $X$, is said to satisfy the {\bf hallways flare}
	condition if there are numbers $\lambda > 1$ and $m \geq 1$ such that
	for all $\rho$ there is a constant $H:=H(\rho )$ such that  any
	$\rho$-thin essential hallway of length $2m$ and girth at least $H$ is
	$\lambda$-hyperbolic. In general, $\lambda, m$ will be called the constants of the hallways flare condition. If, in addition $\rho$ is fixed,
	$H$ will also be called a constant of the hallways flare condition.
\end{defn}

We  recall  the notion of a metric bundle from \cite{mahan-sardar}:

\begin{defn}\label{def-mbdl}
	Let $(X,d_X)$ and $(B, d_B)$ be geodesic metric spaces. Let $c, K\geq 1$ be  constants and 
	$h:{\mathbb R}^+ \rightarrow {\mathbb R}^+$  a function.
	$P: X\to B$ is called an $(h,c,K)-$ {\bf metric bundle} if
	\begin{enumerate}
		\item $P$ is 1-Lipschitz.
		\item For each  $z\in B$, $X_z=P^{-1}(z)$ is a geodesic metric space
		with respect to the path metric $d_z$ induced from $(X,d_X)$. Further, we require that the inclusion maps
		$i_z: (X_z,d_z) \rightarrow X$ are uniformly metrically proper as measured with respect to $h$, i.e.\ for  all $z\in B$ and $u, v\in X_z$,  $d_X(i_z(u),i_z(v))\leq N$ implies that $d_z(u,v)\leq f(N)$. 
		\item For $z_1,z_2\in B$ with $d_B(z_1,z_2)\leq 1$, let $\gamma$ be
		a geodesic in $B$ joining them. 
		Then for any $z\in \gamma$ and $x\in X_z$,  there is a path in $p^{-1}(\gamma)$
		of length at most $c$ joining $x$ to both $X_{z_1}$ and $X_{z_2}$.
		\item For $z_1,z_2\in B$ with $d_B(z_1,z_2)\leq 1$ and $\gamma \subset B$
		a geodesic  joining them,
		let $\phi: X_{z_1}\rightarrow X_{z_2}$, be a(ny) map such that
		for all $ x_1\in X_{z_1}$ there is a path of length at most $c$ in $P^{-1}(\gamma)$
		joining $x_1$ to $\phi(x_1)$. Then $\phi$ is a $K-$quasi-isometry.
	\end{enumerate}
If in addition, there exists $\delta'$ such that each $X_z$ is $\delta'-$hyperbolic, then $P: X\to B$ is called an $(h,c,K)-$  metric bundle
of $\delta'-$hyperbolic spaces.
\end{defn}
It is pointed out in \cite{mahan-sardar} that  condition (4) follows from the previous three (with some $K$); but it is more convenient to have it as part of our definition. For any hyperbolic metric space $F$ with more than two points in its Gromov boundary $\partial F$, there is a coarse {\bf barycenter map} $\phi : \partial^3 F \rightarrow F$ mapping any unordered triple $(a,b,c)$ of distinct points in $\partial F$ to a centroid of the ideal triangle spanned by $(a,b,c)$. We shall say that the
barycenter map $\phi : \partial^3 F \rightarrow F$ is  $N-$coarsely surjective if $F$ is contained in the $N$-neighborhood of the image of $\phi$. 
A {\bf $K-$qi-section} $\sigma: B\to X$ is a $K-$qi-embedding from $B$ to $X$ such that $P \circ \sigma $ is the identity map.
The following Proposition guarantees the existence of qi-sections for metric bundles.

\begin{prop}\label{qi-section}\cite[Section 2.1]{mahan-sardar} For all $\delta^{'},N, c, K\geq 0$ and 
	proper $f:{\mathbb{N}} \rightarrow {\mathbb{N}} $ there exists $K_0$ such that the following holds.\\
	Suppose $p : X \rightarrow B$ is an $(f,c, K)$-metric bundle of $\delta'-$hyperbolic spaces such that the barycenter maps $\phi_b : \partial^3 F_b \rightarrow F_b$ are uniformly $N-$coarsely surjective, 
	Then there is a $K_0$-qi section through each point of $X$.
\end{prop}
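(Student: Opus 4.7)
\medskip

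\noindent\textbf{Proof proposal for Proposition \ref{qi-section}.}

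The plan is to build $\sigma$ by fiberwise taking barycenters of three boundary points that are transported along the base. Fix $x\in X$ and set $b_0=P(x)$. Using the $N$-coarse surjectivity of $\phi_{b_0}:\partial^3 F_{b_0}\to F_{b_0}$, choose a triple $(a_1,a_2,a_3)\in\partial^3 F_{b_0}$ with $d_{b_0}(x,\phi_{b_0}(a_1,a_2,a_3))\le N$. Because $F_{b_0}$ is $\delta'$-hyperbolic, we may represent each $a_i$ by a geodesic ray $r_i\subset F_{b_0}$ based within uniformly bounded distance (depending on $\delta'$ and $N$) of $x$; the three rays form a tripod whose center is coarsely $x$.

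Next I would define $\sigma$ by parallel transport. For each $b\in B$, fix once and for all a geodesic $\gamma_b\subset B$ from $b_0$ to $b$ and sample it at integer-spaced points $b_0,b_1,\dots,b_n=b$. Condition (4) of Definition~\ref{def-mbdl} gives $K$-quasi-isometries $\psi_{i}:F_{b_i}\to F_{b_{i+1}}$ with fibered displacement at most $c$, and the composition $\Psi_b=\psi_{n-1}\circ\cdots\circ\psi_0:F_{b_0}\to F_{b}$ is a $(K_1,K_1)$-quasi-isometry for some $K_1=K_1(K,c,n)$ — but, crucially, it induces a well-defined boundary map $\partial\Psi_b:\partial F_{b_0}\to\partial F_{b}$ since each $\psi_i$ does. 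Let $a_i(b):=\partial\Psi_b(a_i)\in\partial F_b$ and set
\[
\sigma(b):=\phi_{b}(a_1(b),a_2(b),a_3(b)).
\]
By construction $\sigma(b_0)$ is within $N$ of $x$, so after post-composing with a bounded correction we can arrange $\sigma(b_0)=x$ (absorbing the correction into $K_0$).

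The key verification is that $\sigma$ is a $K_0$-qi-section. Since $P$ is $1$-Lipschitz, $d_B(b,b')\le d_X(\sigma(b),\sigma(b'))$, giving the lower Lipschitz bound for free. For the upper bound it suffices by the triangle inequality to treat $d_B(b,b')\le 1$. In that case conditions (3)–(4) of Definition~\ref{def-mbdl} provide a $K$-quasi-isometry $\phi:F_b\to F_{b'}$ whose graph has $X$-displacement at most $c$; its boundary extension sends each $a_i(b)$ to a point within bounded Gromov-distance of $a_i(b')$ (the two transports from $b_0$ to $b'$, one through $b$ and one direct, differ by a quasi-isometry with constants depending only on $K,c$). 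Quasi-isometries of $\delta'$-hyperbolic spaces send barycenters to barycenters up to an additive error $C=C(\delta',K)$, so $d_{F_{b'}}(\phi(\sigma(b)),\sigma(b'))\le C$, and hence $d_X(\sigma(b),\sigma(b'))\le c+C$. This gives the desired $K_0$ depending only on $\delta',N,c,K$ and $f$ (the last via the uniform properness used to pass between fiber and ambient distances).

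The main obstacle I foresee is the parallel-transport step: the composition $\Psi_b$ has quasi-isometry constants that a priori grow with $n=d_B(b_0,b)$, which would ruin the barycenter comparison. The way around this is to compare the two transports \emph{only over a unit step} $b\mapsto b'$, where the composition length is bounded; boundary points and their barycenters then only need to be compared at bounded scale at each step, and the errors do not accumulate because $\sigma$ is rebuilt from the boundary data rather than from iterated fiber maps. Making this precise — essentially the observation that boundary maps of $\delta'$-hyperbolic spaces are stable under bounded perturbation of the underlying quasi-isometry, with constants independent of the quasi-isometry's global constants — is the technical heart of the argument, and relies on the fact (used repeatedly in the hyperbolic-bundle literature) that barycenters of ideal triples deep inside a hyperbolic space are determined up to bounded ambiguity by the triple alone.
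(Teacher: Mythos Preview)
The paper does not supply its own proof of this proposition: it is quoted as a black box from \cite[Section 2.1]{mahan-sardar}, so there is nothing in the present paper to compare your argument against. That said, your outline is exactly the barycenter-transport construction carried out in the cited reference, so in that sense you have reconstructed the intended proof.

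One comment on the gap you yourself flag. Your verification that $d_X(\sigma(b),\sigma(b'))$ is bounded for $d_B(b,b')\le 1$ hinges on the claim that the transport $\Psi_{b'}$ along the fixed geodesic $\gamma_{b'}$ and the transport $\psi\circ\Psi_b$ (along $\gamma_b$ followed by a unit step) induce the \emph{same} map on $\partial F_{b_0}$. You assert that these ``differ by a quasi-isometry with constants depending only on $K,c$'', but as stated this is false: the two compositions are quasi-isometries whose constants grow with $d_B(b_0,b)$, and it is not immediate that they lie at uniformly bounded distance from each other as maps $F_{b_0}\to F_{b'}$. What actually saves you is a path-independence statement at the level of Gromov boundaries: the induced boundary homeomorphism $\partial F_{b_0}\to\partial F_b$ depends only on the endpoints $b_0,b$ and not on the chosen geodesic or its integer subdivision. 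This is proved in \cite{mahan-sardar} (and earlier for specific bundles by Mosher and others) by showing that two transports along nearby paths differ by a self-map of $F_b$ at bounded sup-distance from the identity, hence inducing the identity on $\partial F_b$; one then uses connectedness/contractibility of the space of paths. Your final paragraph gestures in this direction but does not quite say it; making this path-independence explicit is what turns your sketch into a proof.
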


\begin{rmk}\label{mbdl-mgbdl} A word of clarification is necessary regarding the relationship between 
	\begin{enumerate}
	\item Metric bundles over trees in the sense of Definition \ref{def-mbdl}, and
	\item A tree of spaces satisfying the qi-embedded condition in the sense of Definition \ref{tree} with the additional restriction that the edge-space to vertex-space maps in Item (3) of Definition \ref{tree} are $(K,\ep)$-quasi-isometries rather than just $(K,\ep)$-quasi-isometric embeddings. We refer to such a tree of spaces as a {\bf homogeneous} tree of spaces.
	\end{enumerate}
It is clear that a  homogeneous tree of spaces is an example of a metric bundle over a tree. The converse is not, strictly speaking, true as the metric on fibers $F_b$ in Definition \ref{def-mbdl} is allowed to change continuously. 

However,  all the underlying trees $\but$ of metric bundles (Definition \ref{def-blowup}) occurring in this paper can be assumed to be simplicial trees (with edges of length one) as they approximate geodesic polygons in curve complexes. Further, as shown in \cite[Lemma 1.21]{mahan-sardar}, any metric bundle over a tree can be approximated by a homogeneous tree of spaces. (In \cite{mahan-sardar} a more general result was proven approximating  general metric bundles  by  metric graph bundles.) The constants $(K,\ep)$ occurring  in Definition \ref{def-mbdl} are then  determined by the parameters $(h,c,K)$ occurring  in Definition \ref{def-mbdl}.

We shall thus assume henceforth, without mentioning it explicitly, that whenever we are talking of a metric bundle over a tree as a homogeneous tree of spaces, we have approximated  the former by the latter as  in \cite[Lemma 1.21]{mahan-sardar}.
\end{rmk}

We shall now state the main theorem of \cite{BF}  in an effective form, using   \cite[Theorem 2.20]{gautero} where the proof does not
require uniform properness of the
space. 
A converse may be found in  \cite[Theorem 2]{gautero-conv} (see also \cite{gersten,bowditch-ct}). We shall however, state the theorem and its converse \cite[Section 5.3]{mahan-sardar} in the restrictive setting of a metric bundle over a tree, where it is easier to state.
\begin{theorem}\label{effectiveBF} Suppose that there exist $\delta_0 \geq 0$ and $\rho \geq 1$  such that $P:X\to T$ is a metric bundle over a tree satisfying the following conditions:
	\begin{enumerate}
		\item  $X_z$ is $\delta_0-$hyperbolic, for every $z \in T$. 
		\item through every $x \in X$ there is a $\rho-$qi-section $\sigma_x:T \to X$.
	\end{enumerate} 
Then given $K_0, \ep_0, \lambda_0, m_0, H_0$ there exists $\delta > 0$ such that the following holds:\\
If $X$  satisfies the 
	qi-embedded condition with constants $K \leq K_0, \ep \leq \ep_0$ and the  hallways flare
	condition with constants $\lambda \geq \lambda_0, m \leq m_0, H\leq H_0$ for hallways bounded by $\rho-$qi-sections, then $X$ is  $\delta-$hyperbolic. \\
	
Conversely, given $\delta>0$,  there exist $K_0 \geq 1, \ep_0 \geq 0$ and $\lambda_0 > 1, m_0 \in \natls, H_0 \geq 0$ such that if 
 $X$ is $\delta-$hyperbolic,
then as a tree of hyperbolic metric spaces $X$ satisfies
\begin{enumerate}
\item  the 
qi-embedded condition with constants $K \leq K_0, \ep \leq \ep_0$.
\item  hallways bounded by $\rho-$qi-sections satisfy the flare
condition with constants $\lambda \geq \lambda_0, m \leq m_0, H \leq H_0$.
\end{enumerate}  
\end{theorem}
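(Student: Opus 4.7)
The statement is a two-way effective version of Bestvina--Feighn for metric bundles over trees, so the plan splits into a forward implication and a converse. Throughout, the point is not just to extract a conclusion but to produce constants depending only on the listed data; I would follow the strategies of Bestvina--Feighn and Gautero for the forward direction, and Gersten--Bowditch (as adapted by Mj--Sardar) for the converse, being careful at each step that the bookkeeping is effective.

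For the forward direction, my first step is to invoke Proposition \ref{qi-section} (or rather the hypothesis) to produce, for every $x\in X$, a $\rho$-qi-section $\sigma_x:T\to X$ through $x$. Given a geodesic $\gamma\subset T$ and a point $x\in P^{-1}(\gamma)$, I would then assemble a \emph{ladder} $L_\gamma(x)\subset P^{-1}(\gamma)$ by taking the union of qi-sections $\sigma_y$ through fiberwise geodesics connecting $x$ to $\sigma_x(\gamma)$ on each fiber $X_v$, $v\in\gamma$. The core technical step is to show that $L_\gamma(x)$ is $A$-quasiconvex in $X$, with $A=A(\delta_0,\rho,K_0,\epsilon_0,\lambda_0,m_0,H_0)$. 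Quasiconvexity is forced by the hallways flare condition: any $\rho'$-thin essential hallway of length $2m_0$ and girth $\geq H_0$ inside $L_\gamma$ must $\lambda_0$-flare, and iterating this pigeon-holes the girth of any long hallway, preventing geodesics in $X$ from straying too far from the ladder. This is precisely the content of the effective versions in \cite{gautero,gautero-conv} specialized to metric bundles via Remark \ref{mbdl-mgbdl}.

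Once ladders are known to be quasiconvex, hyperbolicity of $X$ is proved by verifying the slim-triangle condition directly. Given $x_1,x_2,x_3\in X$, project to $T$ to get an ideal tripod with center $v_0\in T$, build the three pairwise ladders $L_{ij}$, and observe that each $L_{ij}$ is a uniformly quasiconvex, uniformly hyperbolic (in the induced metric) subspace by the fiber hyperbolicity and flaring. Then the triangle $(x_1,x_2,x_3)$ is, up to uniformly bounded error in $X$, the image of a geodesic triangle in the union $L_{12}\cup L_{23}\cup L_{13}$; since each $L_{ij}$ is $(\delta_0,\rho,\dots)$-hyperbolic in its induced path metric, and the three meet near $P^{-1}(v_0)$ within a bounded neighborhood, slimness with constant $\delta=\delta(\delta_0,\rho,K_0,\epsilon_0,\lambda_0,m_0,H_0)$ follows. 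The main obstacle is propagating fiberwise hyperbolicity to hyperbolicity of the total space; this is exactly what the flare condition is designed to overcome, but keeping the constants explicit requires using the quantitative hallways argument of \cite{BF} as streamlined in \cite[Sec.~7--8]{gautero}, rather than the abstract nonsense version.

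For the converse, assume $X$ is $\delta$-hyperbolic. The qi-embedded condition with constants $(K_0,\epsilon_0)$ depending only on $\delta$ and on the metric bundle data $(h,c,K)$ of Definition \ref{def-mbdl} is immediate: edge-to-vertex maps in the homogeneous approximation (Remark \ref{mbdl-mgbdl}) are coarsely the identity on common fiber points, and uniform properness of fiber inclusions translates hyperbolic-distance bounds into fiber-distance bounds via $h$. For the flare condition along $\rho$-qi-sections, the idea is the standard exponential divergence of quasi-geodesics in a hyperbolic space: given two $\rho$-qi-sections $\sigma_1,\sigma_2$ bounding a thin essential hallway of length $2m$ and central girth $\geq H$, both are $(\rho,\rho')$-quasi-geodesics in $X$, and their restrictions to the central fiber are at $X$-distance at least $H$ apart (up to the uniform properness function). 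By the Morse lemma and the exponential divergence lemma in $\delta$-hyperbolic spaces, there exist $\lambda_0=\lambda_0(\delta)>1$ and $m_0=m_0(\delta,\rho)$ such that once $H\geq H_0(\delta,\rho)$, the two sections must be $\lambda_0$-apart at distance $m_0$ from the center. This yields the flare constants depending only on $\delta$ and $\rho$. The delicate point here is that $\rho$-qi-sections are only uniform quasi-geodesics (not geodesics), so one must invoke Morse stability rather than pure hyperbolic divergence; this is routine but is where effectiveness could be lost if one is careless.
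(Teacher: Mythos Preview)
The paper does not give its own proof of Theorem \ref{effectiveBF}: it is stated as a known result, crediting \cite{BF}, \cite[Theorem 2.20]{gautero}, \cite{gautero-conv}, and \cite[Section 5.3]{mahan-sardar} for the forward and converse directions respectively. However, the paper does spell out the forward argument in the proof of Corollary \ref{effectiveBFfwdtmtdt}, and that sketch matches your proposal closely: qi-sections through every point, ladders $C(X_1,X_2)$ between pairs of sections shown to be uniformly qi-embedded and uniformly hyperbolic (this is where flaring enters, via Gautero's argument avoiding properness), then a tripod-bundle/barycenter-section argument to reduce slimness of arbitrary triangles to hyperbolicity of a union of three ladders meeting along a common section, finished by a path-family criterion. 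For the converse, the paper (in the paragraph before Corollary \ref{effectiveBFreversetmtdt}) points to exactly the exponential-divergence argument you describe, citing \cite[Section 5.3]{mahan-sardar}.

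One small point of comparison: your ladders are built over a geodesic segment $\gamma\subset T$, while the paper (following \cite{mahan-sardar}) builds ladders $C(X_1,X_2)$ between two global qi-sections over the whole tree $T$; these are the same object in spirit, but the global version is what makes the tripod argument cleanest, since the barycenter section $X_b$ is again global and the three sub-ladders genuinely share it. Otherwise your outline is correct and aligned with the references the paper invokes.
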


\subsection{Effective relative hyperbolicity}\label{sec-effrh}

We shall also need to quantify relative hyperbolicity. If $X$ is strongly hyperbolic relative to a collection $\HH$ of parabolic subsets (see \cite{farb-relhyp, bowditch-relhyp} for definitions) we can attach a hyperbolic cone $H_h$ to each $H \in \HH$ as follows.

\begin{defn}
	For any geodesic metric space
	$(H,d)$, the {\em hyperbolic cone} (analog of a horoball)
	$H^h$ is the metric space
	$H\times [0,\infty) = H^h$ equipped with the
	path metric $d_h$ obtained from two pieces of
	data \\
	1) $d_{h,t}((x,t),(y,t)) = 2^{-t}d_H(x,y)$, where $d_{h,t}$ is the induced path
	metric on $H\times \{t\}$.  Paths joining
	$(x,t),(y,t)$ and lying on  $H\times \{t\}$
	are called {\em horizontal paths}. \\
	2) $d_h((x,t),(x,s))=\vert t-s \vert$ for all $x\in H$ and for all $t,s\in [0,\infty)$, and the corresponding paths are called
	{\em vertical paths}. \\
	3)  for all $x,y \in H^h$,  $d_h(x,y)$ is the path metric induced by the collection of horizontal and vertical paths. \\
\end{defn}

\begin{defn}
	Let $X$ be a geodesic metric space and $\HH$ be a collection of mutually disjoint uniformly separated subsets of $X$.
	$X$ is said to be  strongly hyperbolic relative to $\HH$, if the quotient space $\GG (X, \HH)$,  obtained by attaching the hyperbolic cones
	$ H^h$ to $H \in \HH$  by identifying $(z,0)$ with $z$
	for all $H\in \HH$ and $z \in H$,
	is a complete hyperbolic metric space. The collection $\{ H^h : H \in \HH \}$ is denoted
	as ${\HH}^h$. The induced path metric is denoted as $d_h$.
\end{defn}

 As per Bowditch's definition of relative hyperbolicity  \cite{bowditch-relhyp} following Gromov \cite{gromov-hypgps},  $X$ is strongly hyperbolic relative to   $\HH$ if $\GG (X, \HH)$ is hyperbolic.
 We make this effective as follows: 

\begin{defn}\label{def-srh} We say that 
	$X$ is strongly $\delta-$hyperbolic relative to a collection $\HH$ of parabolic subsets if $\GG (X, \HH)$ is $\delta-$hyperbolic.
\end{defn}

\subsubsection{Partial Electrification}
In this subsection, we give a quantitative version of the notion of  partial electrification following
\cite{mahan-reeves,mahan-pal,mahan-sardar}.

\begin{defn}
	Let $(X, \HH , \GG , \LL )$ be an ordered quadruple such that the
	following holds for some $K, \epsilon, \delta >0$:
	\begin{enumerate}
		\item $X$ is a geodesic metric space. $\HH$ is a collection of subsets $H_\alpha$ of $X$.
		$X$ is  strongly $\delta-$hyperbolic relative to $\HH$.
		\item  $\LL$ is a collection of $\delta$-hyperbolic metric spaces
		$L_\alpha$
		and $\GG$ is a collection of  coarse $(K, \epsilon)-$Lipschitz maps
		$g_\alpha : H_\alpha \rightarrow L_\alpha$. Note that the indexing set for $H_\alpha, L_\alpha, g_\alpha$ is common.
	\end{enumerate}
	The {\bf partially electrified space} or
	{\em partially coned off space}  $\PEX $
	corresponding to $(X, \HH , \GG , \LL)$
	is
	obtained from $X$ by gluing in the (metric)
	mapping cylinders for the maps
	$g_\alpha : H_\alpha \rightarrow L_\alpha$. The metric on $\PEX$ is denoted by
	$d_{pel}$.
	\label{pex}
\end{defn}

In the particular case that each $L_\alpha$ is a point and $g_\alpha$ is a constant map, this gives back the electrified, or {\bf coned-off} space $\EXH$ in the sense of Farb \cite{farb-relhyp}.
For the next two statements, see  \cite[Lemmas 1.20. 1.21]{mahan-pal}, (also \cite{mahan-reeves},\cite[Lemma 1.50]{mahan-sardar}).
\begin{lemma} For $K, \epsilon, \delta >0$ there exists $\delta', C$ such that the following holds:\\
	Let $(X, \HH , \GG , \LL )$  be an ordered quadruple as in Definition
	\ref{pex} above with constants $K, \epsilon, \delta >0$. Then
	$(\PEX ,d_{pel})$ is a $\delta'-$hyperbolic metric space and the sets $L_\alpha$
	are $C-$quasiconvex.
	\label{pel}
\end{lemma}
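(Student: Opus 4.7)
The approach is to compare $\PEX$ with the already hyperbolic space $\GG(X,\HH)$ through an intermediate enlargement. First I would form $\PEX^+$ by attaching a hyperbolic cone $L_\alpha^h$ to each $L_\alpha \subset \PEX$. On top of every mapping cylinder of $g_\alpha: H_\alpha \to L_\alpha$ there now sits a hyperbolic cone over $L_\alpha$. The plan is to construct a natural fibrewise map $\Psi \colon \PEX^+ \to \GG(X,\HH)$ which is the identity on $X$ and which identifies the ``mapping cylinder plus $L_\alpha^h$ cone'' with the hyperbolic cone $H_\alpha^h = H_\alpha \times [0,\infty)$. Concretely, pick a coarse quasi-section $s_\alpha\colon L_\alpha \to H_\alpha$ for $g_\alpha$, send $(x,t) \in H_\alpha \times [0,1]$ in the mapping cylinder to $(x,t)\in H_\alpha^h$, send $y\in L_\alpha$ to $(s_\alpha(y),1)$, and send $(y,r) \in L_\alpha^h$ to $(s_\alpha(y),1+r)$. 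Using the exponential rescaling in the cone coordinate and the $(K,\epsilon)$-Lipschitz property of $g_\alpha$, one verifies that $\Psi$ is a quasi-isometry with constants depending only on $K,\epsilon,\delta$, so $\PEX^+$ is $\delta''$-hyperbolic for some $\delta'' = \delta''(K,\epsilon,\delta)$.

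Next, since each $L_\alpha$ is the base of a hyperbolic cone $L_\alpha^h$ glued into the hyperbolic space $\PEX^+$, a standard computation with the cone construction (present in Gromov's discussion, as used e.g.\ in \cite{mahan-reeves}) shows that each $L_\alpha$ is uniformly $C_0$-quasiconvex in $\PEX^+$, with $C_0 = C_0(\delta'')$.

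Finally I would transfer hyperbolicity and quasiconvexity back to $\PEX$, which is obtained from $\PEX^+$ by deleting the interiors of the cones $L_\alpha^h$. Because each $L_\alpha$ is $\delta$-hyperbolic and is $C_0$-quasiconvex in $\PEX^+$, any geodesic segment in $\PEX^+$ that enters some $L_\alpha^h$ can be replaced by a detour along an $L_\alpha$-geodesic, at the cost of a bounded additive error controlled by $\delta$, $C_0$, and the hyperbolicity constant of the cone. Applying this replacement to each side of a geodesic triangle in $\PEX$ converts it into a $\delta''$-thin triangle in $\PEX^+$, and then the reverse conversion yields $\delta'$-thinness in $\PEX$ with $\delta' = \delta'(\delta,\delta'',C_0)$. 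The same comparison argument shows that $L_\alpha$ remains $C$-quasiconvex in $\PEX$ with $C = C(\delta,C_0)$.

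The main obstacle is the final transfer step: establishing effective constants for the ``un-coning'' from $\PEX^+$ to $\PEX$. The subtlety is that a geodesic in $\PEX$ may avoid the cones $L_\alpha^h$ entirely and yet make long excursions through mapping cylinders; the hypothesis that each $L_\alpha$ is itself $\delta$-hyperbolic is precisely what guarantees that these excursions are controlled by hyperbolic geometry rather than by the possibly wild geometry of $H_\alpha$. This is the content of the ``electric versus hyperbolic geodesics'' comparison of \cite{farb-relhyp} in the relative setting, adapted to the partial electrification as in \cite{mahan-reeves, mahan-pal}.
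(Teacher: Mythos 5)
There is a genuine gap at the heart of your Step 1: the claimed quasi-isometry $\Psi\colon \PEX^+\to\GXH$ does not exist with constants depending only on $K,\epsilon,\delta$ (indeed the two spaces need not be quasi-isometric at all). The hypothesis on $g_\alpha$ is only a one-sided bound, $d_{L_\alpha}(g_\alpha(x),g_\alpha(y))\leq K\,d_{H_\alpha}(x,y)+\epsilon$; nothing prevents $g_\alpha$ from collapsing arbitrarily large distances in $H_\alpha$ to bounded distances in $L_\alpha$. The mapping cylinder then creates shortcuts that the hyperbolic cone $H_\alpha^h$ does not have: two points $x,y\in H_\alpha$ with $d_{H_\alpha}(x,y)=N$ lie at distance at most $2+d_{L_\alpha}(g_\alpha x,g_\alpha y)$ in $\PEX^+$, whereas their images $(x,0),(y,0)$ in $H_\alpha^h$ are at distance comparable to $2\log_2 N$. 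Taking $L_\alpha$ a point (a legitimate instance of Definition \ref{pex}, in which $\PEX$ is just $\EXH$), or, closer to this paper's actual use, taking $g_\alpha$ to collapse the $\R$-factor of a lifted Margulis riser $\R\times T_v$, one gets $d(x,y)\leq 2$ in $\PEX^+$ while the image distance in $\GXH$ is unbounded; so $\Psi$ is not even coarsely Lipschitz. (There are secondary problems as well: $g_\alpha$ is not assumed coarsely surjective, so the coarse section $s_\alpha$ need not exist, and even when it does it need not be coarsely Lipschitz; and a single vertical ray over $s_\alpha(L_\alpha)$ does not coarsely fill $H_\alpha^h$, so $\Psi$ also fails coarse surjectivity.) The point you are missing is that the electrified/partially electrified space and the horoball space $\GXH$ are genuinely different hyperbolic spaces; the passage from hyperbolicity of $\GXH$ to hyperbolicity of $\EXH$ or $\PEX$ is a bounded-penetration theorem, not a quasi-isometry. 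Your final ``un-coning'' step suffers from the mirror-image defect: replacing a segment of a $\PEX^+$-geodesic inside $L_\alpha^h$ by a detour along $L_\alpha$ costs exponentially in its length (a cone geodesic of length about $2\log_2 N$ becomes a base path of length about $N$), not a bounded additive error, and comparing geodesics of $\PEX$ with those of $\PEX^+$ presupposes exactly the quasiconvexity/penetration control you are trying to establish.

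For comparison, the paper does not prove Lemma \ref{pel} at all; it quotes it from \cite{mahan-pal} (Lemmas 1.20, 1.21), \cite{mahan-reeves} and \cite[Lemma 1.50]{mahan-sardar}. The argument in those sources goes in the opposite direction from yours: one cones off each $L_\alpha$ in $\PEX$, observing that this recovers $\EXH$ up to uniform quasi-isometry (each $H_\alpha$ has uniformly bounded diameter once $L_\alpha$ is coned), and $\EXH$ is uniformly hyperbolic with bounded coset penetration because $X$ is strongly $\delta$-hyperbolic relative to $\HH$; since the $L_\alpha$ are uniformly $\delta$-hyperbolic, uniformly separated and mutually cobounded, a combination/``de-electrification'' lemma then yields uniform hyperbolicity of $\PEX$ together with uniform quasiconvexity of the $L_\alpha$. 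If you want to repair your outline, replace the comparison with $\GXH$ by this route through the coned-off space.
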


\begin{lemma}
	Let $(X, \HH , \GG , \LL )$  be an ordered quadruple with constants as in Definition
	\ref{pex} above.
	Given $K_0, \epsilon_0 \geq 0$, there exists $C_0 > 0$ such that the following
	holds: \\
	Let $\gamma_{pel}$ and $\gamma$ denote respectively a $(K_0, \epsilon_0 )$
	partially electrified quasigeodesic in $(\PEX,d_{pel})$ and a
	$(K_0, \epsilon_0 )$quasigeodesic in $(\GXH ,d_h)$ joining $a, b$. Then $\gamma \setminus
	\bigcup_{H_\alpha\in\HH} H_\alpha$
	lies in a  $C$-neighborhood of (any representative of)
	$\gamma_{pel}$ in $(X,d)$. Further, outside of  a $C$-neighborhood of the horoballs
	that $\gamma$ meets, $\gamma$ and $\gamma_{pel}$ track each other, i.e. lie in a $C$-neighborhood
	of each other.
	\label{pel-track}
\end{lemma}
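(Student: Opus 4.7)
The plan is to compare both quasigeodesics through a common quotient space, namely Farb's coned-off space $\EXH$ in which each $H_\alpha$ is collapsed to a single cone point. First I would construct coarsely Lipschitz projection maps $\pi_G : \GXH \to \EXH$ and $\pi_P : \PEX \to \EXH$, where $\pi_G$ collapses each hyperbolic cone $H_\alpha^h$ to the cone point over $H_\alpha$, and $\pi_P$ collapses each mapping cylinder over $g_\alpha : H_\alpha \to L_\alpha$ (including $L_\alpha$ itself) to that same cone point. Both projections are the identity on $X \setminus \bigcup_\alpha N_1(H_\alpha)$, and both send $(K_0,\epsilon_0)$-quasigeodesics to $(K_0',\epsilon_0')$-quasigeodesics in $\EXH$ with constants depending only on $K_0, \epsilon_0, K, \epsilon$ and $\delta$; the $\PEX$ half of this uses Lemma \ref{pel}, which gives uniform hyperbolicity and quasiconvexity of the $L_\alpha$.

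Next, since $X$ is strongly $\delta$-hyperbolic relative to $\HH$, the space $\EXH$ is itself hyperbolic with constants depending only on $\delta$, so the two quasigeodesics $\pi_G(\gamma)$ and $\pi_P(\gamma_{pel})$, sharing endpoints $a$ and $b$, fellow-travel in $\EXH$ at some uniform distance $C_1$. I would then invoke the bounded horosphere penetration property of relatively hyperbolic spaces (Farb \cite{farb-relhyp}, Bowditch \cite{bowditch-relhyp}) to transfer the $\EXH$-tracking back to an $X$-tracking: outside a uniform neighborhood of the horoballs that $\gamma$ actually penetrates, the $\GXH$ and $X$ metrics are comparable up to an additive constant, and an analogous statement holds for $\gamma_{pel}$ (it enters each mapping cylinder in a controlled way, since entering and leaving $L_\alpha$ corresponds in $\EXH$ to passing through a single cone point). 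Combining, for every $x \in \gamma$ outside a $C$-neighborhood of the horoballs met by $\gamma$, there is a $y \in \gamma_{pel}$ with $d_X(x,y) \le C_0$, and symmetrically every point of $\gamma \setminus \bigcup_\alpha H_\alpha$ lies within $C_0$ of $\gamma_{pel}$ in $X$, proving both assertions.

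The main obstacle is establishing the comparison between quasigeodesics in $\PEX$ and in $\EXH$ with effective constants. The delicate asymmetry is that a $\PEX$-quasigeodesic may briefly detour through an auxiliary space $L_\alpha$ without coming $X$-close to $H_\alpha$ itself, so one must argue that such a detour still collapses, under $\pi_P$, to a bounded excursion in $\EXH$ near the corresponding cone point. This is the content of the partial electrification machinery developed in \cite{mahan-reeves,mahan-pal,mahan-sardar}; here it suffices to record it in effective form with explicit dependence of $C_0$ on $K_0, \epsilon_0, K, \epsilon$ and $\delta$, and to observe that the quasiconvexity of $L_\alpha$ inside $\PEX$ (Lemma \ref{pel}) is exactly what prevents an unbounded $L_\alpha$-excursion.
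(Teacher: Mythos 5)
The paper offers no proof of Lemma \ref{pel-track}; it simply cites \cite[Lemmas 1.20, 1.21]{mahan-pal} (also \cite{mahan-reeves} and \cite[Lemma 1.50]{mahan-sardar}), and your argument --- comparing $\gamma$ and $\gamma_{pel}$ via their coarsely Lipschitz projections to the fully coned-off space $\EXH$, fellow-travelling there by hyperbolicity, and pulling back with bounded horosphere/coset penetration, with the $L_\alpha$-excursions controlled by the quasiconvexity from Lemma \ref{pel} --- is precisely the standard argument carried out in those references. So the proposal is correct and takes essentially the same route as the paper's source for this statement.
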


\subsection{Effective relatively hyperbolic combination theorem}\label{sec-effrhcomb}
We follow \cite{gautero} and  \cite{mahan-reeves}  here and subsequently indicate the modifications needed for us.

\begin{defn}\label{def-treerh} A tree $P: X \rightarrow T$ of geodesic
	metric spaces is said to be a  tree of relatively hyperbolic metric spaces
	if in addition to the conditions of Definition \ref{tree}
	\begin{enumerate}
		\item[(4)] each  vertex space $X_v$ is strongly hyperbolic relative to a collection of subsets $\HH_v$ and  each 
		edge space $X_e$ is strongly hyperbolic relative to a collection of subsets $\HH_e$.  The individual sets
		$H_{v,\alpha}\in \HH_{v}$ or $H_{e,\alpha}\in \HH_{e}$ will be called {\bf horosphere-like sets}.
		\item[(5)]  the maps $f_{e,v_i}$ above ($i = 1, 2$) are {\bf
			strictly type-preserving}, i.e. $f_{e,v_i}^{-1}(H_{v_i,\alpha})$, $i =
		1, 2$ (for
		any $H_{v_i,\alpha}\in \HH_{v_i}$)
		is
		either empty or some $H_{e,\beta}\in \HH_{e}$. Also, for all
		$H_{e,\beta}\in \HH_{e}$, there exists $v$ and
		$H_{v,\alpha}$, such that $f_{e,v} ( H_{e,\beta}) \subset H_{v,\alpha} $.
		\item[(6)]  There exists $\delta > 0$ such that each $\EE (X_v, \HH_v )$ is $\delta$-hyperbolic.
		\item[(7)]  The induced maps (see below) of the coned-off edge spaces into the
		coned-off vertex spaces $\hhat{f_{e,v_i}} : \EE ({X_e}, \HH_e ) \rightarrow
		\EE ({X_{v_i}}, \HH_{v_i})$ ($i = 1, 2$) are uniform quasi-isometries. This is called the
		{\bf qi-preserving electrification condition}
	\end{enumerate}
\end{defn}

Given the tree of spaces with vertex spaces $X_v$ and edge spaces $X_e$ there exists a naturally associated  tree whose vertex spaces are
$\EE (X_v, {\HH}_v)$ and edge spaces are
$\EE (X_e, {\HH}_e)$ obtained by simply coning off the respective horosphere like sets.
Condition (4) of the above definition ensures that we have natural inclusion maps of edge spaces
$\EE (X_e, {\HH}_e)$ into adjacent vertex spaces $\EE (X_v, {\HH}_v)$.

The resulting tree of coned-off spaces $P: \TC (X) \rightarrow T$
will be called the {\bf induced
	tree of coned-off spaces}. The resulting space will thus be denoted as
$\TC (X)$ when thought of as a tree of spaces.
The {\bf cone locus} of $\TC (X)$ is
the graph (actually a forest) whose vertex set $\VV$ consists of the
cone-points $c_v$ in the vertex set and whose edge-set $\EE$
consists of the
cone-points $c_e$ in the edge set.

 Each such connected component of the
 cone-locus will be called a {\bf maximal cone-subtree}. The collection
 of {\em maximal cone-subtrees} will be denoted by $\TT$ and elements
 of $\TT$ will be denoted as $T_\alpha$. Further, each maximal
 cone-subtree $T_\alpha$ naturally gives rise to a tree $T_\alpha$ of
 horosphere-like subsets depending on which cone-points arise as
 vertices and edges of $T_\alpha$. The metric space that $T_\alpha$
 gives rise to will be denoted as $C_\alpha$ and will be referred to as
 a {\bf maximal cone-subtree of horosphere-like spaces}. The induced tree of horosphere-like sets will be denoted as $g_\alpha : C_\alpha
 \to T_\alpha$. 
  The collection of these maps will be denoted as $\GG$.
 The collection
 of $C_\alpha$'s will be denoted as $\CC$. 
  Note thus that each $T_\alpha$ thus appears in two guises:\\
 1) as a subset of $\TC (X)$ \\
 2) as the underlying tree of $C_\alpha$\\

An essential  hallway of length $2m$ is {\bf cone-bounded} if
$f(i \times {\partial I})$
lies in the cone-locus for $i = \{ -m, \cdots , m\}$.

\begin{defn} {\bf Cone-bounded hallways strictly flare
		condition:} 
	The tree of spaces, $X$, is said to satisfy the {\em cone-bounded hallways flare}
	condition if there are numbers $\lambda > 1$ and $m \geq 1$ such that
	any
	cone-bounded hallway of length $2m$  is
	$\lambda$-hyperbolic. $\lambda, m$ will be called the constants of the strict flare condition.
\end{defn}

\begin{theorem}\label{effectiverelBF}\cite{mahan-reeves,gautero}
 Given $K_0 \geq 1, \ep_0 \geq 0, \delta_0 \geq 0, \lambda_0 > 1, m_0 \geq 1, \rho_0 > 1, H_0 \geq 0$ 
 there exists $\delta > 0$ such that the following holds:
	Let $P:X\to T$ be a metric bundle over a tree such that
	\begin{enumerate}
		\item  $X_z$ is $\delta_0-$relatively hyperbolic, for every $z \in T$. 
		\item through every $x \in X$ there is a $\rho_0-$qi-section $\sigma_x:T \to X$.
	\end{enumerate}  
If $X$ satisfies the
	qi-embedded condition with constants $K \leq K_0, \ep \leq \ep_0$,  the  hallways flare
	condition with constants $\lambda \geq \lambda_0, m \leq m_0, H \leq H_0$ with respect to hallways bounded by $\rho_0-$qi-sections, and the cone-bounded hallways strictly flare condition with parameters $\lambda \geq \lambda_0, m \leq m_0$, then $X$ is  $\delta-$relatively hyperbolic. 
\end{theorem}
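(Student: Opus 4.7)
The plan is to reduce the relative statement to the absolute effective combination theorem (Theorem \ref{effectiveBF}) by coning off horosphere-like sets fiberwise, and then to recover strong relative hyperbolicity of $X$ with respect to the collection $\CC$ of maximal cone-subtrees of horosphere-like spaces. I would work throughout with the induced tree of coned-off spaces $P:\TC(X)\to T$, whose vertex spaces are $\EE(X_v,\HH_v)$ and edge spaces are $\EE(X_e,\HH_e)$. By hypothesis (6) of Definition \ref{def-treerh} each coned-off vertex space is uniformly $\delta_0$-hyperbolic, and by the qi-preserving electrification condition (hypothesis (7)) the induced edge-to-vertex maps $\hhat{f_{e,v_i}}$ are uniform quasi-isometries with constants depending only on $K_0,\ep_0,\delta_0$. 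After the approximation described in Remark \ref{mbdl-mgbdl}, $\TC(X)$ is therefore a metric bundle of uniformly $\delta_0$-hyperbolic spaces satisfying the qi-embedded condition with effective parameters.

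The next step is to transfer the two flare hypotheses on $X$ into a single hallways flare condition on $\TC(X)$. Since coning off is $1$-Lipschitz and fiber-preserving, each $\rho_0$-qi-section $\sigma_x$ in $X$ projects to a $\rho_0$-qi-section in $\TC(X)$, so Proposition \ref{qi-section}-type sections are available through every point. Given a $\rho$-thin essential hallway $h$ of length $2m_0$ in $\TC(X)$ bounded by $\rho_0$-qi-sections, I would decompose the fiber geodesics in $h$ into maximal subsegments lying either entirely in the cone-locus or disjoint from it, and pull back each piece to $X$ using Lemma \ref{pel-track}: pieces away from the cone-locus are tracked by genuine geodesic segments in $X$ inside a $C$-neighborhood, producing honest hallways in $X$ that satisfy the original hallways flare condition with constants $\lambda_0,m_0,H_0$; pieces inside the cone-locus correspond to cone-bounded hallways in $X$ that strictly flare by the cone-bounded hypothesis. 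Summing girth contributions along the fiber and using that $m_0$ and $\rho_0$ are fixed then yields $\lambda'>1$ and $H'=H'(K_0,\ep_0,\delta_0,\lambda_0,m_0,\rho_0,H_0)$ such that every $\rho$-thin essential hallway of length $2m_0$ and girth $\geq H'$ in $\TC(X)$ is $\lambda'$-hyperbolic.

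With all hypotheses of Theorem \ref{effectiveBF} now verified for $\TC(X)$ with effective parameters, that theorem gives a $\delta'>0$, depending only on the input data, such that $\TC(X)$ is $\delta'$-hyperbolic. To conclude, observe that by construction $\TC(X)$ is naturally identified (up to uniform quasi-isometry) with the coned-off space $\EE(X,\CC)$ obtained by collapsing each maximal cone-subtree of horosphere-like spaces $C_\alpha\in\CC$: both operations result from coning off exactly the same horosphere-like subsets of the vertex and edge spaces. Applying Bowditch's criterion for strong relative hyperbolicity, together with the effective refinement encoded in Definition \ref{def-srh} and Lemma \ref{pel}, one upgrades $\delta'$-hyperbolicity of $\EE(X,\CC)$ to the statement that $X$ is strongly $\delta$-hyperbolic relative to $\CC$ for some $\delta$ depending only on the input constants.

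The main obstacle is the flare transfer in the middle paragraph: one must account for the fact that thin hallways in $\TC(X)$ can alternately dive into and exit the cone-locus, and then argue that the flaring contributions from the ordinary hallways flare condition and from the cone-bounded strict flare condition combine to produce uniform effective flaring in $\TC(X)$, with the girth threshold $H'$ controlled independently of the particular hallway. The qi-preserving electrification condition and the uniform availability of $\rho_0$-qi-sections are what make this bookkeeping feasible; without either, the decomposition-and-reassembly step would lose control of the constants.
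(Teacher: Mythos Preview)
The paper does not actually prove this theorem; it is stated with a citation to \cite{mahan-reeves,gautero} and used as a black box. So there is no in-paper proof to compare against. That said, your outline follows the broad architecture of the Mj--Reeves argument, and the first two stages (passing to $\TC(X)$, transferring flare) are on the right track.

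There is, however, a genuine gap in your final step. Hyperbolicity of $\TC(X)\simeq\EE(X,\CC)$ is only \emph{weak} relative hyperbolicity in Farb's sense. The conclusion of the theorem, as made precise by Definition~\ref{def-srh}, is that $\GG(X,\CC)$ --- the space with hyperbolic cones (horoballs) attached over each $C_\alpha$ --- is $\delta$-hyperbolic. These are not the same, and Lemma~\ref{pel} goes in the wrong direction: it starts from strong relative hyperbolicity and deduces hyperbolicity of a partial electrification, not the reverse. ``Bowditch's criterion'' does not let you upgrade hyperbolicity of the coned-off space to hyperbolicity of the horoballified space without an additional ingredient, typically a bounded coset penetration property or an equivalent coboundedness statement for the family $\CC$.

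In the actual proofs in \cite{mahan-reeves,gautero}, the cone-bounded hallways strictly flare condition is used a second time, and more essentially, precisely here: it controls how the maximal cone-subtrees $C_\alpha$ sit inside $X$ (they are mutually cobounded, and each $C_\alpha$ is itself a tree of spaces satisfying flare), which is what allows one to pass from hyperbolicity of $\TC(X)$ to hyperbolicity of $\GG(X,\CC)$. Equivalently, one can attach hyperbolic cones fiberwise to form a tree of spaces with vertex spaces $\GG(X_v,\HH_v)$ and run the effective combination theorem on \emph{that} tree directly. Your writeup uses the cone-bounded condition only in the flare-transfer bookkeeping, and so as written it stops one step short of the stated conclusion.
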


\subsection{$M_T$ as a bundle over $\BU(T)$}\label{effcombbut} We shall now specialize and adapt the above results to the case that will be of relevance to us:
\begin{enumerate}
\item $P: \tmtdw \to \but$, and
\item  $P: \tmtdt \to \but$.
\end{enumerate}

\begin{rmk}\label{tmtdtcaveat}{\rm 
A word of caution is necessary here. It is easy to see that  $P:\tmtdw\to \but$ is a metric bundle  as per Definition \ref{def-mbdl}. However, 
 $P:\tmtdt\to \but$ violates the properness condition (Item 2 in Definition \ref{def-mbdl}). It also violates condition 5 (the strictly type-preserving condition) and hence condition 7 (the qi-preserving electrification condition) of Definition \ref{def-treerh}. We therefore need a way around these conditions. Instead of doing this in the fullest possible generality we shall simply focus on the relevant example, namely $$P:\tmtdt \to \but,$$ and proceed to check the properties of metric bundles and trees of relatively hyperbolic spaces that go through. Much of the discussion in the remainder of this subsection is aimed at addressing the issue just discussed and pointing out adaptations of existing arguments in the literature (particularly \cite{gautero,gautero-conv,mahan-sardar}) that help us circumvent it. }
\end{rmk}

We first observe that through every point of $\tmtdw, \tmtdt$ there exist uniform qi-sections.

\begin{lemma}\label{qisecexists} Given $g \geq 2$, there exists $\rho_0$ such that the following holds. 
Let $P : \tmtdw \to \but$ and $P : \tmtdt \to \but$ be as in Definition \ref{def-mg} with fiber $S$ of genus $g$. Then through every $x \in \tmtdw$   and  $x \in \tmtdt$ there exists a $\rho_0-$qi-section.
\end{lemma}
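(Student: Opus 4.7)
The plan is to treat the two bundles separately, doing the real work for $P:\tmtdw\to \but$ and then transferring the conclusion to $P:\tmtdt\to \but$ via the $1$--Lipschitz identity map.

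First, for $P:\tmtdw\to \but$, the strategy is to verify the hypotheses of Proposition \ref{qi-section}. The bundle is a genuine metric bundle in the sense of Definition \ref{def-mbdl}: the projection is $1$--Lipschitz by construction (Definition \ref{def-mg}), and unique path-lifting from edges of $\BU(T)$ is built in via the mid-surface identifications coming from Theorem \ref{midsurfminskymodel}. The fiber $F_z$ over a point $z\in \BU(T)$ is the universal cover of a closed hyperbolic surface of genus $g\geq 2$ (with prescribed small boundary collars where $z$ lies in a Margulis riser, but these are quasi-isometric corrections of $\mathbb{H}^2$); hence each $F_z$ is uniformly $\delta'$--hyperbolic with $\delta'$ depending only on $g$. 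Moreover, since each $F_z$ is quasi-isometric with uniform constants to $\mathbb{H}^2$, the barycenter maps $\phi_z:\partial^3 F_z\to F_z$ are uniformly $N$--coarsely surjective, with $N=N(g)$. Proposition \ref{qi-section} then provides a constant $K_0=K_0(g)$ such that through every $x\in \tmtdw$ there is a $K_0$--qi-section $\sigma:\BU(T)\to \tmtdw$.  Set $\rho_0:=K_0$.

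Next, for $P:\tmtdt\to \but$, the key observation is that $\tmtdw$ and $\tmtdt$ share the same underlying point set and the identity map $\mathrm{id}:\tmtdw\to \tmtdt$ is $1$--Lipschitz, because tube electrification only shrinks distances (it replaces the round circle factor on each Margulis riser by a zero pseudometric, see Definition \ref{def:tubeelbb}). On the other hand, the projection $P:\tmtdt\to \but$ is still $1$--Lipschitz, as shrinking the fibers cannot increase how far $P$ sends pairs of points in $\BU(T)$. Given $x\in \tmtdt$, take the $\rho_0$--qi-section $\sigma:\BU(T)\to \tmtdw$ produced in the previous step, and view it as a map $\sigma:\BU(T)\to \tmtdt$. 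For any $z_1,z_2\in \BU(T)$,
\[ d_{te}(\sigma(z_1),\sigma(z_2))\;\leq\; \dw(\sigma(z_1),\sigma(z_2))\;\leq\; \rho_0\, d_{\BU(T)}(z_1,z_2)+\rho_0, \]
which gives the upper bound. For the lower bound, since $P$ is $1$--Lipschitz on $\tmtdt$ and $P\circ\sigma=\mathrm{id}$,
\[ d_{\BU(T)}(z_1,z_2)=d_{\BU(T)}(P\sigma(z_1),P\sigma(z_2))\;\leq\; d_{te}(\sigma(z_1),\sigma(z_2)). \]
Thus $\sigma$ is a $\rho_0$--qi-section of $P:\tmtdt\to \but$ through $x$.

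The main technical point is the verification of the hypotheses of Proposition \ref{qi-section} for $\tmtdw$; all other steps are essentially bookkeeping. The potential obstacle is the caveat in Remark \ref{tmtdtcaveat} that $\tmtdt$ is not a metric bundle (properness fails on Margulis risers); the device of transferring sections through a $1$--Lipschitz identity map is precisely what bypasses this issue, because existence of a qi-section only requires $P$ to be coarsely $1$--Lipschitz on both sides, which survives tube electrification.
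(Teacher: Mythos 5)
Your proposal is correct, but it takes a genuinely different route from the paper's proof. The first reduction is identical: both you and the paper observe that the partial electrification map $\tmtdw \to \tmtdt$ is $1$--Lipschitz while $P$ remains $1$--Lipschitz on $\tmtdt$, so a qi-section of $P:\tmtdw\to\but$ is automatically a qi-section of $P:\tmtdt\to\but$ with the same constant. Where you diverge is in producing the section for $\tmtdw$. The paper makes a second reduction, down to the compact quotient $P:\mtdw\to\but$ (sections lift to the universal cover), and there the construction is entirely elementary: each Margulis riser $\RR_v = S^1_e\times T_v$ is a metric product and so carries an isometric section of $T_v$, the fibers of $\mtdw$ have diameter bounded by some $D=D(g)$ via Gauss--Bonnet, and one simply concatenates the riser sections across mid-surfaces to get a $\rho_0=(2D+1)$--qi-section through any prescribed point. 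You instead stay upstairs in $\tmtdw$ and invoke Proposition \ref{qi-section}, which requires checking that $P:\tmtdw\to\but$ is a metric bundle (asserted in Remark \ref{tmtdtcaveat}), that the fibers are uniformly hyperbolic, and that the barycenter maps are uniformly coarsely surjective; all of these hold since the fibers are universal covers of uniformly bounded-geometry closed surfaces, hence uniformly quasi-isometric to $\Hyp^2$. Your route is heavier (it leans on the machinery of \cite{mahan-sardar}) but more robust: it would apply even absent a cocompact quotient with bounded-diameter fibers. The paper's route is shorter and self-contained, exploiting precisely the compactness that is available here.
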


\begin{proof}
Since the partial electrification map from $\tmtdw$ to $\tmtdt$ is 1-Lipschitz, it suffices to prove the Lemma for $P: \tmtdw\to \but$.
Further, since sections can be lifted from $\mtdw$ to $\tmtdw$,  it suffices to prove the Lemma for $P: \mtdw\to \but$.

For a building block $M_v$ of special split geometry and $P: M_v \to T_v$ the natural projection onto the associated tree-link, there is an isometric section $\sigma_v: T_v \to M_v$ lying inside the Margulis riser $\RR_v = S^1 \times T_v$ since the latter is a metric product. 
The fibers $P^{-1}(z)$ of $P: \mtdw \to \but$ have diameter bounded
by some $D = D(g)$, by the Gauss-Bonnet Theorem. Choosing $\rho_0 = 2D +1$, we can construct a $\rho_0-$qi-section from $\but$ to $\mtdw$ by connecting the sections $\sigma_v$ using paths lying in the mid-surfaces.
\end{proof}

\noindent {\bf Ladders in trees of spaces:} We shall need the technology of ladders from \cite{mitra-trees,mitra-ct} below. We extract the necessary features from the ladder construction of \cite{mitra-trees,mitra-ct} and adapt it here to the language of hallways. The following is a restatement of \cite[Theorem 3.6]{mitra-trees}  in our context (see also the construction of the ladder in \cite[Section 3]{mitra-trees}). The corresponding statement
for $\tmtdt$ follows from Lemma \ref{qisecexists}.

\begin{theorem}\label{qcladder} Given $\delta \geq 0, K\geq 1, \ep\geq 0$ there exists $D$ such that the following holds.\\ Let 
	\begin{enumerate}
	\item  $(X,d)$ be either a tree of $\delta-$hyperbolic spaces  as in Definition \ref{tree} with parameters $K, \ep$ and let $X_v$ be a vertex space,
	\item or $X = \tmtdt$ with $P: \tmtdt \to \but$, and $X_v = P^{-1}(v)$ for some $v \in \but$.
	\end{enumerate}  Then for every geodesic segment $\mu \subset (X_v, d_v)$ there exists a $D-$qi-embedded subset $\LL_\mu$ of $X$ such that the following holds.
	\begin{enumerate}
		\item $X_v \cap \LL_\mu = \mu$,
		\item For $X = \tmtdt$ and every $w \in \but$, $X_w \cap \LL_\mu$ is a geodesic $\mu_w$ in $(X_w, d_w)$.
		\item For $X$ a tree of hyperbolic metric spaces and every $w \in T$, $X_w \cap \LL_\mu$ is either empty or a geodesic $\mu_w$ in $(X_w, d_w)$. Further, there exists a subtree $T_1 \subset T$ such that the collection of vertices $w \in T$ satisfying $ X_w \cap \LL_\mu \neq \emptyset$ equals the vertex set of $T_1$.
		\item There exists $ \rho_0\geq 1$ such that  through every $z \in \LL_\mu$, there exists a $ \rho_0-$qi-section $\sigma_z$ of $[v, P(z)]$ contained in $\LL_\mu$ satisfying $$ \sigma_z (P(z)) = z, \quad
		\sigma_z (v) \in \mu.$$
		\item There exist constants $\lambda_0, m_0, H_0$ such that for every $\mu_w = X_w \cap \LL_\mu$ the following holds:\\
		There is a hallway  $\HH_w$ bounded by $ \rho_0-$qi-sections as in (4) above containing $\mu_w$ satisfying the hallways flare condition with $\lambda\geq \lambda_0, m\leq m_0, H\leq H_0$. Further, $\HH_w\cap X_v$ is a geodesic subsegment of $\mu$.
	\end{enumerate}
Further, there exists a $D-$coarse Lipschitz retraction $\Pi_\mu:X \to \LL_\mu$, i.e.\
\begin{enumerate}
\item $d(\Pi_\mu(x), \Pi_\mu(y)) \leq D d(x, y) + D, \, \forall \, x, y \in X$,
\item $\Pi_\mu(x) = x, \, \forall \, x \in \LL_\mu$.
\end{enumerate}
\end{theorem}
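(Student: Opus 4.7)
The plan is to adapt the ladder construction of Mitra \cite{mitra-trees}, handling the two cases in parallel. In both settings I would build $\LL_\mu$ by induction on the distance in the base tree from $v$. Starting with $\mu_v = \mu$, for each vertex $w$ adjacent to an already-processed vertex $w'$ by an edge $e$, push the endpoints of $\mu_{w'}$ to $X_w$: in case (1), first project $\mu_{w'}$ onto the quasiconvex subset $f_{e,w'}(X_e) \subset X_{w'}$, pull back via $f_{e,w'}$ to $X_e$, and push forward by $f_{e,w}$; in case (2), use the fiber-to-fiber bundle quasi-isometry of Definition \ref{def-mbdl}(4). Let $\mu_w$ be a geodesic in $(X_w, d_w)$ joining the pushed endpoints. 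In case (1), if at some stage the projection is empty, set $\mu_w = \emptyset$ and do not propagate further in that direction; the subtree $T_1 \subset T$ of vertices with $\mu_w \neq \emptyset$ is thereby well-defined. In case (2), Lemma \ref{qisecexists} ensures every $\mu_w$ is non-empty. Set $\LL_\mu = \bigcup_w \mu_w$.

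Next I would construct a coarse Lipschitz retraction $\Pi_\mu : X \to \LL_\mu$ fiber-wise: on each $X_w$ with $\mu_w \neq \emptyset$ take the nearest-point projection onto $\mu_w$ in $(X_w, d_w)$; in case (1), for vertices outside $T_1$ first retract along $T$ to the closest vertex of $T_1$ before applying the fiber-wise projection. Coarse Lipschitzness has two sources. Within a single fiber, nearest-point projection onto a geodesic in a $\delta$-hyperbolic space is coarsely $1$-Lipschitz with constant depending only on $\delta$. Across an edge, the uniform quasi-isometry constants $K, \ep$ (or the corresponding bundle data in case (2)) guarantee that the pushed-forward copy of $\mu_{w'}$ in $X_w$ lies in a bounded Hausdorff neighborhood of $\mu_w$, so the projections track each other. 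The $D$-qi-embedding of $\LL_\mu$ in $X$ then follows by the standard retraction argument: if $y_1, y_2 \in \LL_\mu$ and $\gamma$ is an $X$-geodesic joining them, then $\Pi_\mu(\gamma)$ is a path in $\LL_\mu$ of length at most $D\, d_X(y_1,y_2) + D$.

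Properties (1)--(3) are built into the construction. For (4), given $z \in \mu_w$, define $\sigma_z$ along the geodesic $(w_0 = v, w_1, \ldots, w_n = w)$ by reverse induction: $\sigma_z(w_n) = z$, and $\sigma_z(w_i)$ is a nearest-point projection onto $\mu_{w_i}$ of the transport of $\sigma_z(w_{i+1})$ across the edge $[w_i, w_{i+1}]$. Uniform edge-map constants combined with the coarse Lipschitz property of fiber projections produce a common $\rho_0$ depending only on $\delta, K, \ep$. For (5), take $\HH_w$ to be the hallway bounded by the two $\rho_0$-qi-sections through the endpoints of $\mu_w$; its slice at each intermediate $X_{w''}$ is a sub-geodesic of $\mu_{w''}$, and in particular its slice at $X_v$ is a sub-geodesic of $\mu$. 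The flare constants $\lambda_0, m_0, H_0$ come from the uniform edge-map constants and the hyperbolicity constant $\delta$: the $\mu_{w''}$ are defined between endpoints tracked by uniform quasi-isometries, so their length profile flares at a controlled effective rate.

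The expected main obstacle is case (2), where by Remark \ref{tmtdtcaveat} the partial electrification of $\tmtdt$ destroys both uniform properness of fiber inclusions and the strictly type-preserving condition. To handle this, I would work within the building-block decomposition $P : M_v \to T_v$ of Definition \ref{def-geometricbb}: across an unelectrified region of $M_v$ the ladder construction proceeds as above using the uniform qi-sections from Lemma \ref{qisecexists}, while across a Margulis riser $S^1 \times T_v$ the fiber geometry is a metric product controlled by the explicit geometry of Definition \ref{def-geometricbb}, and the fiber geodesics $\mu_w$ behave trivially under transport through the product. The Gauss--Bonnet diameter bound from Lemma \ref{qisecexists} then ensures that all constants $D, \rho_0, \lambda_0, m_0, H_0$ remain effective uniformly.
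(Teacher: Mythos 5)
Your proposal follows essentially the same route as the paper: the paper proves Theorem \ref{qcladder} simply by citing Mitra's ladder construction (\cite[Theorem 3.6]{mitra-trees} and \cite[Section 3]{mitra-trees}) and noting that the $\tmtdt$ case follows from the uniform qi-sections of Lemma \ref{qisecexists}, and your inductive fiber-by-fiber construction of $\LL_\mu$ together with the fiber-wise nearest-point projection retraction is precisely that argument spelled out. The only cosmetic point is that in case (1) the nearest-point projection onto $f_{e,w'}(X_e)$ is never literally empty; the correct stopping criterion (as in Mitra) is that the projection of the edge-space image onto $\mu_{w'}$ has uniformly bounded diameter, but this does not affect the substance of your argument.
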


The qi-embedded set $\LL_\mu$ is called a {\bf ladder} in \cite{mitra-ct,mitra-trees}. 
Theorem \ref{qcladder} shows in particular that there is a $(2D,2D)-$  quasigeodesic of $(X,d_X)$ joining the end-points of $\mu$ and lying on 
$\LL_\mu$. 

\begin{rmk}
Note that in Theorem \ref{qcladder}, we have {\bf not} assumed that $X$ is hyperbolic: no assumptions on the global geometry of $X$ are necessary here.
\end{rmk}

\noindent {\bf Ladders in $\tmtdt$ or $\tmtdw$:}
Given two $\rho_0-$qi-sections $X_1, X_2 \subset \tmtdt$ or $\tmtdw$ as in Lemma \ref{qisecexists}, we construct a {\bf ladder} $C(X_1, X_2)$ by joining the points 
$X_1 \cap F_b$ and $X_1 \cap F_b$ by a geodesic in $F_b$ (see \cite[Section 2.2]{mahan-sardar}). 	The  coarse Lipschitz retraction  property of Theorem \ref{qcladder} goes through in this context also. Further, in  Theorem \ref{qcladder}, the constant $D$  depends only on $\delta, K, \ep$. By Remark \ref{mbdl-mgbdl} we can pass from a metric bundle to a homogeneous tree of spaces. Unraveling definitions, $K, \ep$ depend on $R$ and parameters $D, k$ in Definition \ref{def-balancedtree} of an $L-$tight $R-$thick balanced tree. Now, for  $\tmtdt$ or $\tmtdw$,
$\delta$ depends only on the genus $g$. Thus we have the following:

\begin{lemma}\label{lem-qisxninhallway}
	Given $g \geq 2$ and $R, D, k \geq 1$, there exists $\rho_0$ such that the following holds. \\ Let $i: \vT \to \ccd (S)$ be an $L-$tight, $R-$thick balanced tree with parameters $D, k$ as in Definition \ref{def-balancedtree}. Let $ C(X_1, X_2)$ be a ladder in $\tmtdw$ or $\tmtdt$ as above. Then through every $x \in C(X_1, X_2)$, there exists a $\rho_0-$qi-section contained in $C(X_1, X_2)$.
\end{lemma}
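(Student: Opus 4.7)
The plan is to build the desired qi-section explicitly by fiberwise interpolation between the two bounding qi-sections $X_1$ and $X_2$. Given $x \in C(X_1, X_2)$ with $b := P(x)$, I would set $x_i(b) := X_i \cap F_b$ for $i = 1, 2$, observe that $x$ lies on the fiber geodesic $\gamma_b$ in $F_b$ from $x_1(b)$ to $x_2(b)$, and record the arc-length $t := d_{F_b}(x, x_1(b))$. Then for every $b' \in \but$, writing $\ell(b') := d_{F_{b'}}(x_1(b'), x_2(b'))$ and $\gamma_{b'}$ for a fiber geodesic from $x_1(b')$ to $x_2(b')$, define $\sigma_x(b')$ to be the point on $\gamma_{b'}$ at distance $\min\{t, \ell(b')\}$ from $x_1(b')$. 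By construction $\sigma_x$ is a section of $P$ with $\sigma_x(b) = x$ and $\sigma_x(\but) \subset C(X_1, X_2)$, so only the qi-section estimate is at issue.

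To verify that estimate, I would bound $d(\sigma_x(b'), \sigma_x(b''))$ uniformly for $b', b''$ adjacent in $\but$. Let $\Phi : F_{b'} \to F_{b''}$ be the fiber-change quasi-isometry supplied by the metric bundle structure (Definition \ref{def-mbdl}), whose constants depend only on $R$, $D$, $k$, $g$ via Remark \ref{mbdl-mgbdl} and the discussion preceding Lemma \ref{lem-qisxninhallway}. Each fiber is uniformly hyperbolic: literally a copy of the hyperbolic universal cover of $S$ in the $\tmtdw$ case, and a uniform partial electrification of one in the $\tmtdt$ case (hyperbolic by Lemma \ref{pel}). Hence $\Phi(\gamma_{b'})$ is a uniform quasi-geodesic in $F_{b''}$ and $c$-fellow-travels $\gamma_{b''}$ by Morse stability; moreover $\Phi(x_i(b'))$ lies within a uniform distance of $x_i(b'')$ because $X_1, X_2$ are $\rho_0$-qi-sections by Lemma \ref{qisecexists}. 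Arc-length matching along fellow-travelling quasi-geodesics then puts $\Phi(\sigma_x(b'))$ within a uniform distance of $\sigma_x(b'')$, after which Definition \ref{def-mbdl} provides a path of uniformly bounded length in the total space from $\sigma_x(b')$ to $\sigma_x(b'')$.

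The hard part will be handling the transition between the interior regime $\ell(b') \geq t$, where $\sigma_x(b')$ is a genuine interior point of $\gamma_{b'}$, and the clipped regime $\ell(b') < t$, where $\sigma_x(b') = x_2(b')$. My plan is to extract from the same fellow-travelling argument that $|\ell(b') - \ell(b'')|$ is uniformly bounded, so near the transition $\min\{t, \ell(b')\}$ agrees with $\ell(b')$ up to a uniform additive error and the interior point is forced within a uniformly bounded distance of $x_2(b')$; within the clipped regime $\sigma_x$ coincides with $X_2$ and the qi-section estimate is automatic. Assembling these cases yields a uniform bound on the edge-wise displacement of $\sigma_x$, and thereby a $\rho_0'$-qi-section through $x$ with $\rho_0'$ depending only on $g$, $R$, $D$, $k$, as required.
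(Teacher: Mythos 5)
Your construction breaks at exactly the step you flag as the key estimate. Defining $\sigma_x(b')$ by the \emph{same} arc-length parameter $t$ in every fiber is not stable under the fiber-change maps: $\Phi$ is only a $K$-quasi-isometry of fibers (with $K>1$ in general), so it sends the point of $\gamma_{b'}$ at arc-length $t$ from $x_1(b')$ to a point lying, up to bounded error, at arc-length anywhere in $[t/K, Kt]$ along $\gamma_{b''}$. The discrepancy with your point at arc-length $t$ on $\gamma_{b''}$ is therefore of order $(K-1)t$ in the fiber metric -- a multiplicative, not additive, error -- and since fiber inclusions are at best uniformly proper this forces the total-space displacement $d(\sigma_x(b'),\sigma_x(b''))$ across a single edge to grow without bound as $t$ grows. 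For the same reason your auxiliary claim that $|\ell(b')-\ell(b'')|$ is uniformly bounded for adjacent $b',b''$ is false: $\ell$ is only controlled multiplicatively, and indeed the flaring phenomenon on which the whole paper rests requires $\ell$ to grow exponentially along the base, which is incompatible with additive stability. The upshot is that your $\sigma_x$ is a qi-section only with constant depending on $t$, i.e.\ on $x$, whereas the Lemma demands a single $\rho_0$ depending only on $g,R,D,k$. (A secondary issue: in the $\tmtdt$ case your appeal to uniform properness of the fiber inclusions is precisely the hypothesis that Remark \ref{tmtdtcaveat} warns fails for $\dt$.)

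The paper's route, via the ladder constructions of \cite{mitra-trees} and \cite[Section 2.2]{mahan-sardar} recorded in Theorem \ref{qcladder}(4) together with Lemma \ref{qisecexists}, avoids transporting a fixed arc-length parameter. One defines the section inductively over the tree $\but$ rooted at $b$: to pass from $\sigma_x(b')$ to an adjacent fiber $F_{b''}$, first move $\sigma_x(b')$ into $F_{b''}$ by the fiber-change map, a move of total-space cost at most $c$ by Definition \ref{def-mbdl}, and then take a nearest-point projection onto $\gamma_{b''}$; since $\Phi(\gamma_{b'})$ is a uniform quasigeodesic whose endpoints are uniformly close to those of $\gamma_{b''}$, the Morse lemma in the uniformly hyperbolic fiber bounds this second move as well. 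This yields uniformly bounded per-edge displacement (the lower qi-bound being automatic since $P$ is $1$-Lipschitz), hence a $\rho_0$-qi-section inside $C(X_1,X_2)$ with $\rho_0$ depending only on the bundle parameters, which by Remark \ref{mbdl-mgbdl} and Corollary \ref{subordinatehierarchysmall} depend only on $g,R,D,k$. If you replace your interpolation step by this transport-and-project step, the rest of your outline (including the treatment of the clipped regime, which then becomes unnecessary) collapses to the intended argument.
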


The definition of hallways (Definition \ref{def-hallway}) now continues to make sense  for $P:\tmtdt \to \but$ and $P:\tmtdw \to \but$ with the following modification:
 the maps $f:[-m,m] \times \{0\} \to \tmtdt$, $f:[-m,m] \times \{1\} \to \tmtdt$ (or $\tmtdw$) in Definition \ref{def-hallway} are restrictions of $\rho_0-$qi-sections from $\but$ to $\tmtdt$, where $\rho_0$ is as in Lemma \ref{lem-qisxninhallway}. Note that  $P\circ f$ is an isometry onto its image. To distinguish from the hallways of Definition \ref{def-hallway}, we shall call them {\bf qi-section bounded hallways}. With this clarification, the  flaring condition
 of Definition  \ref{def-flare} continues to make sense for $P:\tmtdt \to \but$ and qi-section bounded hallways. We now state the following consequence of Theorem \ref{effectiveBF} in the form that we shall need it:

\begin{cor}\label{effectiveBFfwdtmtdt} 
	Given $ \lambda_0, m_0, H_0, \delta_0$ there exists $\delta > 0$ such that the following holds:\\
 For $b \in \but$, 	let $F_b = P^{-1} (b)$ equipped with the induced path metric and suppose that  $F_b$ is $\delta_0-$hyperbolic for all $b \in \but$.
	If  $P:\tmtdt \to \but$ satisfies the   flare
	condition with constants $\lambda \geq \lambda_0, 1\leq m \leq m_0, 1\leq H \leq H_0$ for  qi-section bounded hallways, then $\tmtdt$ is  $\delta-$hyperbolic. 
\end{cor}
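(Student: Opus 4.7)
The plan is to invoke Theorem \ref{effectiveBF} directly for $P:\tmtdt\to\but$. The only point requiring care, flagged in Remark \ref{tmtdtcaveat}, is that $P:\tmtdt\to\but$ is not literally a metric bundle in the sense of Definition \ref{def-mbdl}: partial electrification collapses the $S^1_e$ factor on each Margulis riser, so the inclusions $F_b\hookrightarrow\tmtdt$ are no longer uniformly proper. However, Gautero's form of the effective combination theorem (\cite[Theorem 2.20]{gautero}), on which Theorem \ref{effectiveBF} is built, never uses properness of those inclusions; it uses only (i) uniform hyperbolicity of fibers, (ii) existence of uniform qi-sections through every point of the total space, (iii) a uniform qi-embedded condition between adjacent fibers, and (iv) the hallways flare condition for qi-section bounded hallways. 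The proposal is therefore to verify (i)--(iv) in our setting.

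Hypotheses (i) and (iv) are given. For (ii), Lemma \ref{qisecexists} provides $\rho_0$-qi-sections through every point of $\tmtdt$. For (iii), I would check that adjacent fibers of $P:\tmtdt\to\but$ are identified by maps whose constants depend only on the topology of $S$, on $R$, and on the balanced-tree parameters $D,k$: on edges inside a tree-link $T_v$ away from the Margulis riser $\RR_v$, the pseudometric $\dt$ agrees with $\dw$ and the identification is that of the $S$-bundle of special split geometry from Definition \ref{def-geometricbb}; inside $\RR_v$ the fibers are identified via the product structure on the collapsed $S^1_e\times T_v$, which is an isometry on nearby fibers; and across a mid-surface $S_{vw}$ the identification is the identity on a surface of uniformly bounded geometry (Theorem \ref{midsurfminskymodel}). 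In all three cases the fiberwise comparison maps are $(K,\epsilon)$-quasi-isometries with uniform constants. Combining the three local cases using the blow-up structure of $\but$, one gets the global qi-embedded condition.

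The main obstacle is conceptual rather than computational: one must be confident that the Bestvina--Feighn flaring-disk argument really does survive the loss of properness, i.e.\ that it invokes only qi-sections and fiberwise qi-identifications together with the flare condition on qi-section bounded hallways. This is precisely what is spelled out in \cite[Sections 7,8]{gautero} and reformulated for metric bundles over trees in \cite[Theorem 4.3]{mahan-sardar}; in our situation it applies verbatim once we restrict attention to hallways whose boundary curves are restrictions of $\rho_0$-qi-sections as in Lemma \ref{qisecexists}. Granting this adaptation, Theorem \ref{effectiveBF} then outputs a single $\delta=\delta(\lambda_0,m_0,H_0,\delta_0,K,\epsilon,\rho_0)$ such that $\tmtdt$ is $\delta$-hyperbolic, completing the proof of the corollary.
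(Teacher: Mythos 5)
There is a real gap here, and it sits exactly at the point you wave through with ``granting this adaptation.'' Neither of the results you propose to cite applies verbatim to $P:\tmtdt\to\but$: \cite[Theorem 4.3]{mahan-sardar} is proved for metric bundles, and the uniform properness of the fiber inclusions (which $\dt$ destroys on the Margulis risers) is used in the part of that proof (Section 3 of \cite{mahan-sardar}) establishing hyperbolicity of ladders; Gautero's \cite[Theorem 2.20]{gautero} is stated for trees of hyperbolic spaces with an exponential separation (flaring) property for general thin hallways, whereas the hypothesis of the corollary only gives flaring for hallways bounded by $\rho_0$-qi-sections. So a direct, wholesale application of Theorem \ref{effectiveBF} to the total space is not available, and your verification of the fiberwise qi-identifications (your point (iii)), while fine, is not where the difficulty lies.

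The paper's proof supplies precisely the bridge you omit, by rerunning the proof scheme of \cite{mahan-sardar} rather than quoting its conclusion: (a) through any two $\rho_0$-qi-sections one builds the ladder $C(X_1,X_2)$ and shows it is uniformly qi-embedded (Theorem \ref{qcladder}, Lemma \ref{lem-qisxninhallway}, and \cite[Theorem 3.2]{mahan-sardar}, which does not need properness); (b) Gautero's flaring-implies-hyperbolicity argument is applied only to ladders, which are bundles over $\but$ with interval fibers in which every point lies on a $\rho_0$-qi-section contained in the ladder, so the restricted (qi-section bounded) flaring hypothesis genuinely suffices there, and this is the step at which properness is circumvented; (c) hyperbolicity of the whole space is then obtained not from a global flaring argument but from the tripod-bundle construction: barycenter qi-sections $X_b$, hyperbolicity of the union of the three ladders $C(X_b,X_x)\cup C(X_b,X_y)\cup C(X_b,X_z)$, and the path-family argument of \cite[Section 4]{mahan-sardar}. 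If you want to keep your ``direct'' formulation, you would at minimum have to prove that arbitrary $\rho$-thin essential hallways of bounded length can be replaced by qi-section bounded ones with controlled loss of constants and then check that Gautero's argument runs with fibers whose inclusions are not uniformly proper; as written, the proposal asserts rather than proves the crucial reduction, so it does not yet constitute a proof.
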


\begin{proof} The proof is a transcription of the relevant steps from 
	\cite{mahan-sardar} and \cite[Theorem 5.2]{gautero} and we only give a sketch.\\
	
	\noindent {\bf Step 1:} Lemma \ref{qisecexists} guarantees the existence of $\rho_0-$qi-sections through every point and $\rho_0-$qi-sections in ladders. This replaces \cite[Proposition 2.12]{mahan-sardar}.\\
	
		\noindent {\bf Step 2:}  
		Now \cite[Theorem 3.2]{mahan-sardar} shows that $C(X_1, X_2)$ is $C-$qi-embedded in $\tmtdt$ where $C$ depends only on $\delta_0$
		and the parameters $R, D, k$ of the $L-$tight $R-$thick balanced tree  (see Definition \ref{def-balancedtree} and also Lemma \ref{lem-qisxninhallway} for the dependence on constants).\\
		
	\noindent {\bf Step 3:} Then  $C(X_1, X_2)$ is a bundle over $\but$ with fibers closed intervals. Further, it satisfies the flare condition with respect to qi-section bounded hallways.
		We now invoke  Theorem \ref{effectiveBF} to conclude that there exists $\delta_1$ such that each  $C(X_1, X_2)$ is $\delta_1-$hyperbolic. Note that it is at this step that we are circumventing the  use of properness of the metric bundle as in \cite[Section 3]{mahan-sardar} by using \cite{gautero} instead, cf.\ Remark \ref{tmtdtcaveat}. We recall that the proof given by Gautero of Theorem \ref{effectiveBF} in \cite[Theorems 2.20, 5.2]{gautero}  does not use properness  of the total space and proceeds by directly   deducing effective hyperbolicity from exponential divergence of geodesics. The last condition (exponential divergence of geodesics) in turn is an immediate consequence of flaring. In particular, the proof in \cite{gautero} does not go via the original linear isoperimetric inequality proof of \cite{BF}.\\
		
		\noindent {\bf Step 4:} The rest of the proof  follows \cite[Section 4]{mahan-sardar}. Given any 3 points, $x, y, z \in \tmtdt$, let $X_x, X_y, X_z$ be 	$\rho_0-$qi-sections through $x, y, z$ respectively. The union of the ladders $C(X_x,X_y)$, $C(X_y,X_z)$, $C(X_x,X_z)$ is denoted as $C(X_x,X_y, X_z)$ is called a {\bf tripod-bundle} in \cite[Definition 4.1]{mahan-sardar}. Let $\phi_b(x,y,z) $ denote a barycenter in $F_b$ of
		$(X_x\cap F_b), (X_y\cap F_b), (X_z\cap F_b)$. Then the set $$X_b:=\{\phi_b(x,y,z)\vert b \in \but\}$$ gives a qi section  \cite[Proposition 4.2]{mahan-sardar}. The tripod-bundle  $C(X_x,X_y, X_z)$
	 can be $\delta_0-$approximated  by the union of three ladders $C(X_b,X_x), C(X_b,X_y), C(X_b,X_z)$ and any two of them intersect along $X_b$.\\
	 
	 \noindent {\bf Step 5:} By Step (3) above, each of $C(X_b,X_x), C(X_b,X_y), C(X_b,X_z)$ is $\delta_1-$hyperbolic and they all intersect along the qi-embedded subset $X_b$. Hence by Theorem \ref{effectiveBF}, there exists $\delta_2$ depending only on $\delta_1$ and the qi-embeddedness constant $\rho_0$ of $X_b$ (see Lemma \ref{lem-qisxninhallway}) such that
	 $$C(X_b,X_x) \cup C(X_b,X_y) \cup C(X_b,X_z)$$ is $\delta_2-$hyperbolic.\\
	 
	  \noindent {\bf Step 6:} Finally, by a standard path-family argument (see \cite[Theorem 4.3]{mahan-sardar})  $\tmtdt$ is  $\delta-$hyperbolic, where $\delta$  depends only on $\delta_0$ (the hyperbolicity constant of fiber spaces) $R, D, k$ (the parameters of the $L-$tight $R-$thick balanced tree). 
\end{proof}

For the converse direction, we refer the reader to Section 5.3 of \cite{mahan-sardar}, which proves the necessity of flaring. We  briefly indicate how to adapt the argument here. First, $\delta-$hyperbolicity of $\tmtdt$ guarantees that there exists $H$ (depending on $\delta$) such that qi-section-bounded hallways of girth (cf.\ Definition \ref{def-rhothin}) lying between $H$ and $H+1$ flare (see Lemma 5.9 of \cite{mahan-sardar}) so long as $\rho_0$ is chosen (again depending on $\delta$) to ensure that $\rho_0-$thin hallways exist connecting a point of $P^{-1}(z_1)$ to some point of $P^{-1}(z_2)$  for any $z_1, z_2 \in \but$ with $d_\but (z_1,z_2)\leq 1$.
Next, \cite{mahan-sardar}  (see the paragraph in  
\cite[Section 5.3 ]{mahan-sardar}
called `Flaring of general ladders') shows how to decompose a general hallway into flaring hallways of girth  between $H$ and $H$. Thus we conclude the converse direction of Theorem \ref{effectiveBF} for $P: \tmtdt \to \but$:

\begin{cor}\label{effectiveBFreversetmtdt} Given $\delta>0, \rho_0$,  there exist $\lambda_0, m_0, H_0$ such that the following holds:\\
	If $\tmtdt$ is $\delta-$hyperbolic and $\rho_0$ is as in Lemma \ref{qisecexists}, then $\tmtdt$  satisfies the
		  hallways flare condition with respect to $\rho_0-$qi-section bounded hallways,
		 with constants $\lambda \geq \lambda_0, m \leq m_0, H \leq H_0$.
\end{cor}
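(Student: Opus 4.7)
The plan is to run the converse direction of the Bestvina--Feighn combination theorem established in Section~5.3 of \cite{mahan-sardar} in the present setting, where the total space is $\tmtdt$ rather than a genuine metric bundle. The overall skeleton is twofold: (i) establish flaring for thin hallways whose girth lies in a fixed bounded window $[H, H+1]$; (ii) reduce the case of arbitrary girth to this base case by decomposing a hallway vertically into strips each of girth in $[H, H+1]$.

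For step (i), fix a $\rho_0$-thin, $\rho_0$-qi-section bounded hallway $h : [-m,m] \times I \to \tmtdt$ of length $2m$ and girth between $H$ and $H+1$. Its two side qi-sections $\sigma_-, \sigma_+$ (of lengths bounded by $2m\rho_0$ measured in $\tmtdt$) together with the top, bottom, and middle fiber geodesics $h(\{\pm m\} \times I)$ and $h(\{0\} \times I)$ bound a ladder-like quadrilateral in the sense of Theorem~\ref{qcladder}. Suppose $h$ failed to be $\lambda_0$-hyperbolic for some prescribed $\lambda_0 > 1$. Then the middle fiber geodesic $h(\{0\} \times I)$ would have length at least $H/\lambda_0$, while the top and bottom geodesics would have length at most $\lambda_0$ times the middle, i.e.\ at most $\lambda_0(H+1)$. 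Gromov $\delta$-thinness of the ladder quadrilateral (which is $C(\delta)$-qi-embedded by Theorem~\ref{qcladder}) then forces the midpoint of $h(\{0\} \times I)$ to lie within distance $2m\rho_0 + C(\delta)$ of one of the vertical sides $\sigma_\pm$, which is impossible once $H$ exceeds a threshold depending only on $\delta, m, \rho_0, \lambda_0$. This reproduces Lemma~5.9 of \cite{mahan-sardar} in the present setting.

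For step (ii), an arbitrary $\rho_0$-thin essential qi-section bounded hallway of length $2m$ and girth $\geq H_0$ can be partitioned into roughly $\lfloor H_0/(H+1)\rfloor$ parallel vertical strips, each bounded above and below by $\rho_0$-qi-sections through intermediate points of the middle fiber geodesic; the existence of such sections is guaranteed by Lemma~\ref{lem-qisxninhallway}. Each strip has girth in $[H, H+1]$ and is therefore $\lambda_0$-hyperbolic by step (i); iterating the flaring inequality across the strips upgrades this to a uniform flaring constant $\lambda \geq \lambda_0$ for the original hallway, with $\lambda_0, m_0, H_0$ ultimately depending only on $\delta$ and $\rho_0$. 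This is a direct adaptation of the paragraph ``Flaring of general ladders'' in Section~5.3 of \cite{mahan-sardar}.

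The main obstacle is the absence of honest metric-bundle properness for $P : \tmtdt \to \but$, flagged in Remark~\ref{tmtdtcaveat}, which prevents us from quoting \cite[Section 5.3]{mahan-sardar} verbatim. Consequently every step must be phrased in terms of the $\rho_0$-qi-section bounded hallways supplied by Lemma~\ref{qisecexists} and the qi-embedded ladders of Theorem~\ref{qcladder}, both of which were set up in Section~\ref{effcombbut} precisely to bypass properness. Once this substitution is made consistently, the remainder is routine bookkeeping.
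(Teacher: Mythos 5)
Your proposal follows the paper's proof essentially verbatim: the paper likewise first establishes flaring for $\rho_0$-qi-section bounded hallways of girth in $[H,H+1]$ by invoking Lemma 5.9 of \cite{mahan-sardar} (with $\rho_0$ chosen via Lemma \ref{qisecexists} so that thin hallways exist over adjacent points of $\but$), and then handles arbitrary girth via the decomposition in the ``Flaring of general ladders'' paragraph of \cite[Section 5.3]{mahan-sardar}. The only difference is that you sketch the thin-quadrilateral argument behind Lemma 5.9 and the role of Theorem \ref{qcladder} in bypassing the properness caveat of Remark \ref{tmtdtcaveat}, rather than citing these steps outright.
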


Finally, we shall combine Corollary \ref{effectiveBFreversetmtdt} with Lemma \ref{pel}. To do this, observe that for $P: (M_T, \dw) \to \but$, any tree-link $T_v \subset \but$, $z\in T_v$, the pre-image $P^{-1}(z) =S_z$ is of uniformly bounded geometry. Hence, 
\begin{enumerate}
\item The fibers $(\til{S_z}, \dw)$ of $P:\tmtdw \to \but$ are uniformly hyperbolic.
\item The fibers $(\til{S_z}, \dt)$ of $P:\tmtdt \to \but$ are uniformly hyperbolic as these are obtained by electrifying uniformly separated (independent of $z$) uniform quasigeodesics (again with constant independent of $z$) in $(\til{S_z}, \dw)$.
\end{enumerate}

We denote the collection  of Margulis risers as $$\RR_\MM \ := \ \{v \times T_v \vert T_v \subset \but \ {\rm is \ a \, tree-link} \},$$ 
and the set of all lifts of $\mr$ to $\til M_T$ as $\tmr$.

\begin{prop}\label{relhypconvtmt} Given $\delta, \rho_0 >0$,  there exist $\lambda_0 > 1, m_0\geq  1, H_0 \geq 0$ and $C\geq 0$ such that the following holds:\\
If $\tmtdw$ is strongly $\delta-$hyperbolic  relative to $\tmr$ and  $\rho_0$ is as in Lemma \ref{qisecexists}, then $\tmtdt$  satisfies the
hallways flare
condition  with respect to $\rho_0-$qi-section bounded hallways, with constants $\lambda \geq \lambda_0, m \leq m_0, H \leq H_0$. Further, each element of $\tmr$ is $C-$quasiconvex in $\tmtdt$.
\end{prop}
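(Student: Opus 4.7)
The plan is to realize $(\tmtdt, \dt)$, up to uniform quasi-isometry, as a partial electrification of $(\tmtdw, \dw)$ in the sense of Definition \ref{pex}, and then to chain Lemma \ref{pel} and Corollary \ref{effectiveBFreversetmtdt}. First I would set $X = \tmtdw$ and $\HH = \tmr$, so that by hypothesis $X$ is strongly $\delta$-hyperbolic relative to $\HH$. For each connected lift $\widetilde{\RR_v} \cong \R \times T_v$ of a Margulis riser $\RR_v = S^1_e \times T_v$, I would take the target tree-link $L_v = T_v$ (a tree, hence $0$-hyperbolic) and let $g_v : \widetilde{\RR_v} \to T_v$ be the projection that collapses the $\R$-factor; this is $1$-Lipschitz. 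By Definition \ref{def:tubeelbb}, the pseudo-metric $\dt$ is obtained from $\dw$ precisely by placing the zero metric along the $S^1$-factor of each $\RR_v$ while retaining the tree metric on $T_v$, so $(\tmtdt, \dt)$ differs from the mapping-cylinder construction of Definition \ref{pex} only by collapsing $\widetilde{\RR_v}$ onto $T_v$ at distance zero rather than attaching at distance one. The two spaces are therefore uniformly quasi-isometric with bounded additive constant.

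Next I would apply Lemma \ref{pel} to the quadruple $(X, \HH, \GG, \LL)$ with $\GG = \{g_v\}$ and $\LL = \{T_v\}$. It yields constants $\delta' = \delta'(\delta)$ and $C_0 = C_0(\delta)$ such that $\PEX$ is $\delta'$-hyperbolic and each $T_v$ is $C_0$-quasiconvex in $\PEX$. Transporting these conclusions across the uniform quasi-isometry gives a $\delta''$-hyperbolicity constant for $\tmtdt$ and $C$-quasiconvexity of each $T_v \subset \tmtdt$, with both $\delta''$ and $C$ depending only on $\delta$. Because every point of $\widetilde{\RR_v}$ lies at $\dt$-distance zero from its $g_v$-image in $T_v$, the quasiconvexity of $T_v$ upgrades directly to $C$-quasiconvexity of $\widetilde{\RR_v}$, which establishes the second conclusion of the proposition. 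Finally, with the $\delta''$-hyperbolicity of $\tmtdt$ and the given $\rho_0$ as input, Corollary \ref{effectiveBFreversetmtdt} produces $\lambda_0 > 1$, $m_0 \geq 1$, $H_0 \geq 0$ depending only on $\delta$ and $\rho_0$ such that $\tmtdt$ satisfies the hallways flare condition for $\rho_0$-qi-section bounded hallways with parameters $\lambda \geq \lambda_0$, $m \leq m_0$, $H \leq H_0$.

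The principal obstacle I anticipate is the bookkeeping in the first paragraph: one must verify with uniform control that the tube-electrified space genuinely fits the partial-electrification template of Definition \ref{pex}, since $\dt$ is a pseudo-metric that crushes each $\widetilde{\RR_v}$ onto $T_v$ whereas the mapping-cylinder construction keeps $\widetilde{\RR_v}$ at unit distance from $T_v$. The remark following Definition \ref{tubeel} (``partial electrification in \cite{mahan-reeves}\ldots tube-electrification in \cite{mahan-split}'') signals that this identification is essentially standard in the author's earlier work, so the task reduces to checking that the additive discrepancy between the two constructions preserves $\delta'$-hyperbolicity and $C_0$-quasiconvexity with controlled degradation of constants. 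Once this identification is in hand, Lemma \ref{pel} and Corollary \ref{effectiveBFreversetmtdt} do the remaining work, and the dependence of all output constants on $\delta$ and $\rho_0$ is traced directly through the two applications.
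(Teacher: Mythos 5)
Your proposal is correct and follows essentially the same route as the paper: the paper also realizes $\tmtdt$ as the partial electrification of $\tmtdw$ along the risers (taking $X=\tmtdw$, $\HH=\tmr$, $g_\alpha:(\tmr_\alpha,\dw)\to(\tmr_\alpha,\dt)$ the map collapsing the $\R$-direction), applies Lemma \ref{pel} to get hyperbolicity of $\tmtdt$ and $C$-quasiconvexity of the electrified risers, and then invokes Corollary \ref{effectiveBFreversetmtdt} for the flaring constants. Your extra care in reconciling the mapping-cylinder formulation of Definition \ref{pex} with the direct pseudometric replacement of Definition \ref{def:tubeelbb} is a reasonable bookkeeping point that the paper elides by simply identifying the two.
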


\begin{proof} We first observe that $\tmtdt$ is obtained from $\tmtdw$ by partially electrifying the $\R-$directions in $\R \times T_v$ for every lift $\R \times T_v$ of a Margulis riser to $\til M_T$. We  now   consider the quadruple $(X,\HH,\GG,\LL)$ with
	\begin{enumerate}
		\item $\tmtdw$ in place of $X$,
		\item $\tmr$ in place of $\HH$, 
		\item Indexing the elements of $\tmr$ by  $\tmr_\alpha$, define
		$$g_\alpha: (\tmr_\alpha,\dw) \to (\tmr_\alpha,\dt)$$ to be the map that partially electrifies the $\R-$directions in $\R \times T_v$ for every lift $\R \times T_v$ of a Margulis riser. Then $\GG$ is the collection of maps $g_\alpha$ and $\LL$ is the collection of spaces $(\tmr_\alpha,\dt)$.
	\end{enumerate}
	Lemma \ref{pel} applied to this quadruple $(X,\HH,\GG,\LL)$ then shows that there exist $\delta_0, C \geq 0$ such that 
	\begin{enumerate}
	\item $\tmtdt$ is $\delta_0-$hyperbolic.
	\item $(\tmr_\alpha,\dt)$ is $C-$quasiconvex in $\tmtdt$ for every $\alpha$.
	\end{enumerate}
	This proves the last statement of the proposition. The first statement now follows from Corollary \ref{effectiveBFreversetmtdt}.
\end{proof}

\subsection{Effective quasiconvexity and flaring}\label{sec-effecqc} The main purpose of this subsection is to prove  Proposition \ref{effectiveqc}. We shall apply it in its full strength in the companion paper 
\cite{mms}. For the purposes of this paper, it is used mildly in the proofs of
Propositions \ref{telunifhyp} and \ref{thinbdlhyp}.  Proposition \ref{effectiveqc} may  be regarded as
a fact supplementing the effective hyperbolicity and relative hyperbolicity
Theorems \ref{effectiveBF} and \ref{effectiverelBF}.

For the purposes of this subsection, $X$ will be 
\begin{enumerate}
\item Either a tree ($T$) of hyperbolic metric spaces satisfying the 
qi-embedded condition with constants $K, \ep$ and the  hallways flare
condition with constants $\lambda_0, m_0$. Further, if $\rho_0$ is given we shall assume an additional constant $ H_0$ as a lower bound  for girths of $\rho_0-$thin hallways. $X$ is equipped with the usual projection map $P: X \to T$.
\item OR $\tmtdt$ corresponding to an $L-$tight $R-$thick tree $T$. $P: \tmtdt \to \but$ will denote the usual projection map. The constant $\rho_0$ will be as in Lemma \ref{qisecexists} and the constants $\lambda_0, m_0, H_0$ will be as in Corollary \ref{effectiveBFreversetmtdt}.
\end{enumerate}

Also $(X_v,d_v)$ will, respectively, be a vertex space of $X$ (in the tree of spaces case) or $P^{-1}(v)$ (in the $P: \tmtdt \to \but$ case) and $Y\subset (X_v,d_v)$ will be a $C-$quasiconvex subset of $(X_v,d_v)$.

\begin{defn}\label{def-flareinall}
We shall say that $Y$ {\bf flares in all directions with parameter $K$} if for any geodesic segment $[a,b] \subset (X_v,d_v)$ with $a, b \in Y$ and any $\rho-$thin hallway $f:[0,k] \times I \to X$ satisfying 
\begin{enumerate}
	\item $\rho \leq \rho_0$,
\item $f(\{0\} \times I) = [a,b]$,
\item $l([a,b]) \geq K$,
\item $k \geq K$,
\end{enumerate}
the length of $f(\{k\} \times I)$ satisfies $$l(f(\{k\} \times I)) \geq \lambda l( [a,b]).$$
\end{defn}

Proposition \ref{effectiveqc} below is probably well-known to  experts (at least for trees of spaces) but we could not find an explicit statement in the literature.

\begin{prop}\label{effectiveqc} Given $K, C$, there exists $C_0$ such that the following holds.\\
Let $P: X \to T$ (or $P: \tmtdt \to \but$) and $X_v$ be as in Theorem \ref{qcladder} above. If $Y$ is a $C-$quasiconvex subset of $(X_v,d_v)$ and flares in all directions with parameter $K$, then $Y$ is  $C_0-$quasiconvex in $(X,d_X)$.

Conversely, given $C_0$, there exist $K, C$ such that the following holds.\\
For $P: X \to T$ (or $P: \tmtdt \to \but$) and $X_v$  as above, if $Y \subset X_v$ is   $C_0-$quasiconvex in $(X,d_X)$, then it is  $C-$quasiconvex subset in $(X_v,d_v)$ and flares in all directions with parameter $K$.
\end{prop}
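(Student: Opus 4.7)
For the forward direction, my plan is to show that any $X$-geodesic $\gamma$ joining points $a, b \in Y$ lies in a uniform $d_X$-neighborhood of $Y$, using the ladder of Theorem \ref{qcladder}. Take a geodesic $[a,b]_v$ in $X_v$; by $C$-quasiconvexity of $Y$ in $X_v$ we have $[a,b]_v \subset N_C(Y)$ in $d_v$, and hence in $d_X$. The ladder $\LL = \LL_{[a,b]_v}$ is $D$-qi-embedded in $X$ by Theorem \ref{qcladder}, and equipped with its intrinsic metric $d_\LL$ it is uniformly hyperbolic by the effective combination Theorem \ref{effectiveBF} (or Corollary \ref{effectiveBFfwdtmtdt}) applied to $\LL$ viewed as a bundle over the subtree $T_1\subset T$ with interval fibers $\mu_w$ whose hallway flaring is inherited from $X$. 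The critical use of the flares-in-all-directions hypothesis is to make $[a,b]_v$ into a uniform quasi-geodesic in $(\LL, d_\LL)$: any alternative path through a fiber $\mu_w$ with $d_T(v, w) \geq K$ has length at least $2 d_T(v,w) + |\mu_w| \geq \lambda |[a,b]_v|$, while for $d_T(v,w) < K$ the qi-embedding of edge-to-vertex maps bounds $|\mu_w|$ below by a uniform fraction of $|[a,b]_v|$, so $d_\LL(a,b) \asymp |[a,b]_v|$. The retraction $\Pi_{[a,b]_v}: X \to \LL$ of Theorem \ref{qcladder}, combined with $d_X \leq d_\LL \leq D\,d_X + D$ on $\LL$, makes $\Pi_{[a,b]_v}(\gamma)$ a uniform quasi-geodesic in $(\LL, d_\LL)$; hyperbolicity of $\LL$ then forces $\Pi_{[a,b]_v}(\gamma)$ and $[a,b]_v$ to fellow-travel in $d_\LL$, hence in $d_X$, so $\gamma$ lies in a uniform $d_X$-neighborhood of $[a,b]_v$, and thus of $Y$.

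For the converse, assume $Y$ is $C_0$-quasiconvex in $X$. Quasiconvexity of $Y$ in $X_v$ follows by combining the coarse Lipschitz nearest-point projection $r: X \to Y$ (which exists in hyperbolic $X$ by QC of $Y$) with the uniform properness of $X_v \hookrightarrow X$: for $p, q \in X_v$ close in $d_v$ they are close in $d_X$, hence $r(p), r(q)$ are close in $d_X$ by Lipschitzness of $r$, and uniform properness converts this to $d_v$-closeness, yielding a coarse Lipschitz retraction $X_v \to Y$ measured in $d_v$. For flaring in all directions, a non-flaring $\rho_0$-thin hallway of length $k \geq K$ starting at $[a,b]_v \subset Y$ would give $d_X(a,b) \leq 2\rho_0 k + |\mu_w| < 2\rho_0 k + \lambda|[a,b]_v|$ via the thin-hallway path. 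The two-sided flare condition of $X$ (Corollary \ref{effectiveBFreversetmtdt}) forces hallway girths to be $\lambda$-concave, so the only non-outward scenario is inward flaring, producing qi-sections $\sigma_a, \sigma_b$ through $a, b$ that converge exponentially from $v$ to $w$. Quasiconvexity of $Y$ in $X$ rules this out: the resulting $X$-shortcut between $a$ and $b$ would force the $X$-geodesic from $a$ to $b$ to pass through fibers at tree-distance exceeding $C_0$, but such geodesics lie in $N^X_{C_0}(Y) \subset N^X_{C_0}(X_v)$, while the interval structure of edges in the tree of spaces (resp.\ the $1$-Lipschitz projection $P$ in the $\tmtdt$ case) gives $d_X(X_v, X_{w'}) \geq d_T(v, w') - \text{const}$, producing the desired contradiction once $K$ is chosen larger than $C_0 + \text{const}$.

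The principal obstacle is the converse's flaring step. A naive quasi-isometric comparison $d_X|_Y \asymp d_v|_Y$ (from QC of $Y$ in both $X$ and $X_v$) has multiplicative constant bounded by $1$, since $d_X \leq d_v$, and hence cannot directly dominate $\lambda > 1$; the argument therefore relies on combining the $\lambda$-concavity of hallway girths with the confinement of $X$-geodesics to a uniformly narrow $d_T$-tube around $X_v$ coming from quasiconvexity of $Y$ and the edge-interval structure of $X$, and a careful quantitative choice of the threshold $K$ in terms of $C_0, \rho_0, \lambda, m_0, H_0$ is needed to close the argument.
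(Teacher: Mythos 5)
Your forward direction hinges on the claim that $[a,b]_v$ is a uniform quasigeodesic in the ladder because ``$|\mu_w|\geq \lambda\,|[a,b]_v|$ whenever $d_T(v,w)\geq K$'', and this misapplies the hypothesis. Definition \ref{def-flareinall} only governs hallways whose slice in $X_v$ is a geodesic with endpoints \emph{in} $Y$ and of length at least $K$; the hallway that item (5) of Theorem \ref{qcladder} attaches to a ladder fiber $\mu_w$ meets $X_v$ only in a subsegment $\mu_0$ of $\mu$, whose endpoints are merely $C$-close to $Y$ and which may well be shorter than $K$. One must first push the endpoints of $\mu_0$ into $Y$ using $C$-quasiconvexity of $Y$ in $(X_v,d_v)$ and pass to a perturbed hallway --- this endpoint adjustment is precisely the paper's move --- and even then flaring gives growth relative to $l(\mu_0)$, not relative to $|[a,b]_v|$. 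In addition, lower-bounding the ladder distance between $a$ and $b$ by enumerating ``alternative paths that go out to $X_w$, cross $\mu_w$, and return'' is not a valid estimate for arbitrary paths in $\LL_\mu$ (a path may drift across the fibers gradually), and the subsequent assertion that $\Pi_{[a,b]_v}(\gamma)$ is a quasigeodesic of the ladder is likewise unjustified, since the retraction may collapse long stretches of $\gamma$. The paper sidesteps all of this by contradiction: if the $(2D,2D)$-quasigeodesic $\mu^R$ on the ladder leaves a large neighborhood of $\mu$, it must cross a fiber $X_w$ far out in the tree with uniformly bounded girth (choose the minimal girth exceeding the flaring threshold); the hallway of Theorem \ref{qcladder}(5) through that slice, with its $X_v$-end adjusted into $Y$, then contradicts flaring directly.

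The converse's flaring step does not close, as your last paragraph essentially concedes. The specific contradiction you propose --- that an $X$-shortcut between $a$ and $b$ through distant fibers ``would force the $X$-geodesic from $a$ to $b$ to pass through fibers at tree-distance exceeding $C_0$'' --- is false: quasiconvexity confines the geodesic $[a,b]_X$ to $N_{C_0}(Y)\subset N_{C_0}(X_v)$ regardless of what shortcuts exist elsewhere, so the existence of a short valley path imposes no constraint on the geodesic's location and yields no contradiction by itself. What the paper uses instead is a quantitative exponential-divergence argument: because $[a,b]_X$ stays within roughly $C_0$ of $X_v$ while $P$ does not decrease tree-distance, the qi-sections $\sigma_a,\sigma_b$ over $[v,w]$ move about $d_T(v,w)$ away from $[a,b]_X$, so any path in $X_w$ joining $\sigma_a(w)$ to $\sigma_b(w)$ --- in particular one realizing the girth --- is forced to be long, and it is this divergence estimate, compared against $l([a,b]_v)\geq K$ with $K$ chosen in terms of $C_0$ and the hyperbolicity and section constants, that produces flaring in the outward direction. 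Finally, your derivation of $C$-quasiconvexity of $Y$ in $(X_v,d_v)$ from nearest-point projection relies on uniform properness of the fiber inclusion; this is available for trees of spaces but fails for $P:\tmtdt\to\but$ (Remark \ref{tmtdtcaveat}), so in that case this step also needs a different justification than the one you give.
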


\begin{proof} We first prove the forward direction. If the conclusion fails, then though $Y$ flares in all directions, it is not quasiconvex in $(X,d_X)$. In particular, for every $n \in \natls$, there exists $\mu \subset X_v$ with end-points in $Y$ such that there exists a $(2D,2D)-$  quasigeodesic $\mu^R$  (of $(X,d_X)$) joining the end-points of $\mu$, lying on 
	$\LL_\mu$ and leaving the $n-$neighborhood of $\mu$. 
	Hence there exists a vertex $w$ of $T$ such that 
	\begin{enumerate}
	\item $d_T(v,w) = O(n)$,
	\item $\mu^R\cap X_w$ contains a pair of points  $a',b'$ such that $d_w(a',b') $ is minimal amongst lengths that exceed the minimal girth ($H(\rho_0)$ in Definition \ref{def-flare}) required for flaring (see figure below). Since the flaring constant $\lambda$ is fixed,  it follows that
	$d_w(a',b') \leq \lambda H(\rho_0)$; in particular,  $d_w(a',b')$
	is uniformly bounded.
	\end{enumerate}

\begin{center}
	
	\includegraphics[height=6cm]{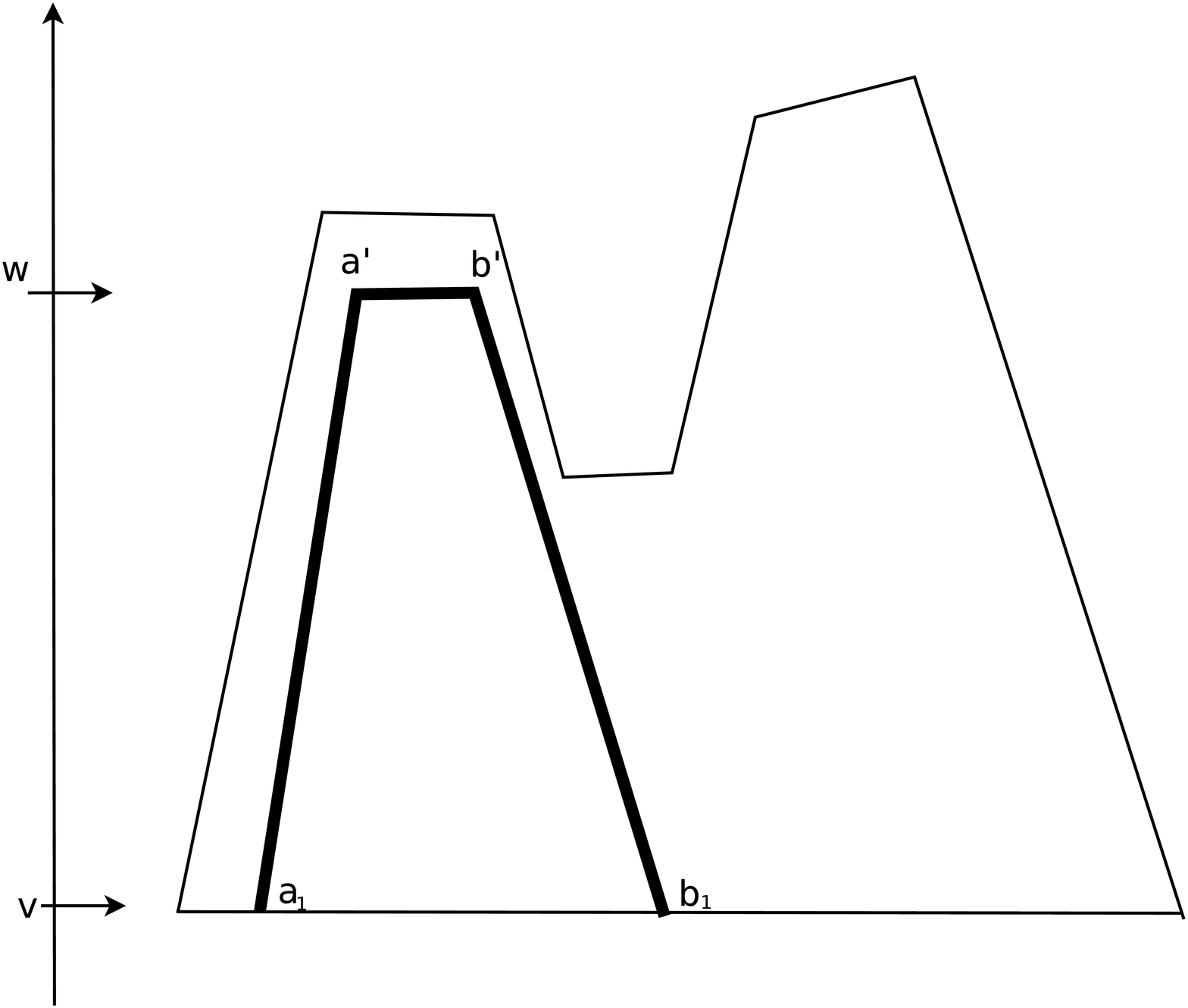}

	\smallskip
	
	\underline{Figure:  {\it Finding a flaring trapezium} }
	
\end{center}
Let $\mu_w$ be a geodesic in $(X_w,d_w)$ joining $a',b'$. By Theorem \ref{qcladder} it is contained in a hallway	$\HH_w$ such that $\HH_w\cap X_v$ is a geodesic subsegment $\mu_0$ of $\mu$. Since $Y$ is $C-$quasiconvex in $(X_v,d_v)$, there exist $a_1, b_1\in Y$ close to the end-points of $\mu_0$.

Hence there exists a hallway $\HH_w'$ (with slightly worse constants than $\HH_w$, see Lemma \ref{lem-qisxninhallway})) such that 
\begin{enumerate}
\item $\HH_w' \cap X_w = [a',b']$,
\item $\HH_w' \cap X_v = [a_1,b_1]$.
\end{enumerate}

In particular (since $d_w(a',b') =O(1)$ is uniformly bounded), the geodesic $[a',b']$ {\it does not} flare in the direction $[v,w]$ (choosing $n$ large enough). This contradiction proves the forward direction.

	We now prove the converse direction. Since $Y$ is $C_0-$quasiconvex in  $(X,d_X)$,  it is  $C_0-$quasiconvex  in $(X_v,d_v)$ the latter being a subspace of the former. Next, since $Y$ is $C_0-$quasiconvex in  $(X,d_X)$, the following holds.\\  Let
	\begin{enumerate}
		\item  $a, b \in Y$ be vertices with $d_v(a,b)$ large enough,
		 \item  $[v,w] \subset T$ (or $\but$) be a geodesic segment starting at $v$. Let $\sigma_a, \sigma_b$ be two qi-sections (with uniform constant $K_0$) of $[v,w]$ for
		 $P: X \to T$ (or $P: \tmtdt \to \but$). 
	\end{enumerate}
Then $\sigma_a, \sigma_b$ must flare with flaring constants depending on 
$K_0$ as soon as $d_T(v,w)\geq K$ (or $d_{\but}(v,w)\geq K$) for some $K$ depending only on $C$. 
This is  a simple quasification of the standard fact that
geodesics diverge 
exponentially in a
hyperbolic metric space (see 
\cite[Proposition 2.4]{mitra-endlam} for instance). Since $w\in T$ (or $\but$) was arbitrary, it follows that $Y$ flares in all directions with parameter $K$.
\end{proof}

\section{Uniform hyperbolicity of $M$}\label{sec-minsky} In this section, we establish {\bf uniform} estimates for the Gromov hyperbolicity of $\tmtdt$. 	We restate Theorem \ref{maintech} in the form that we shall prove it.

\begin{theorem}\label{effectivehyptmtdt}
Given. $R\geq 1$, and $D, k\geq 1$ there exists $\delta >0$ such that the following holds:\\
For  an $L-$tight  $R-$thick balanced tree  $T$ with parameters  $D, k\geq 1$,
\begin{enumerate}
		\item $\tmtdt$ is  $\delta-$hyperbolic.
	\item $\tmtdw$ is strongly $\delta-$hyperbolic relative to the collection $\tmr$ of lifts of Margulis risers,
\end{enumerate} 
\end{theorem}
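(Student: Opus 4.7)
The plan follows the four-step strategy sketched in the introduction: reduce to a bi-infinite geodesic $l \subset T$, prove hyperbolicity of $\tmldt$ directly from the classical theory of doubly degenerate hyperbolic $3$-manifolds, invoke the converse Bestvina--Feighn theorem to extract uniform flaring constants, and then feed these constants back into the full bundle via the forward combination theorem.

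\emph{Hyperbolicity in the line case.} Fix a bi-infinite geodesic $l \subset T$ and consider the corresponding doubly degenerate hyperbolic $3$-manifold $N_l$ with ending laminations $l_\pm$. Theorem~\ref{model-str}(2) provides a $K$-bi-Lipschitz homeomorphism $N_l^0 \to M_l^0$ with $K$ depending only on $R,D,k$. The universal cover $\til N_l = \Hyp^3$ is strongly hyperbolic relative to the collection of lifts of Margulis tubes with universal constants, so Lemma~\ref{pel} shows that the partial electrification of $\til N_l$ along this collection — electrifying the meridian $S^1$-factor of each tube — is uniformly $\delta_1$-hyperbolic with $\delta_1 = \delta_1(R,D,k)$. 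Under the bi-Lipschitz correspondence above, this partial electrification is uniformly quasi-isometric to $\tmldt$, because the tube-electrified metric $\dt$ of Definition~\ref{def:tubeelbb} encodes exactly this collapse on the Margulis riser side. Hence $\tmldt$ is uniformly hyperbolic.

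\emph{From line to tree via flaring.} Corollary~\ref{effectiveBFreversetmtdt} applied to $\tmldt$ as a bundle over $\BU(l)$ produces uniform flare constants $\lambda_0 > 1$, $m_0 \in \natls$, $H_0 \geq 0$ for $\rho_0$-qi-section bounded hallways, where $\rho_0$ is as in Lemma~\ref{qisecexists}. Now any $\rho_0$-qi-section bounded hallway in $\tmtdt$ projects to a geodesic segment in $\BU(T)$ contained in $\BU(l)$ for some bi-infinite geodesic $l\subset T$. By the fiber-preserving bi-Lipschitz embedding of Lemma~\ref{bb-deg-tree}, such a hallway and its bounding qi-sections pull back to a $\rho_0'$-qi-section bounded hallway in $\tmldt$ with $\rho_0' = \rho_0'(\rho_0, R, D, k)$. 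The flaring from $\tmldt$ therefore transfers to $\tmtdt$ with uniformly controlled constants. The fibers of $P: \tmtdt \to \BU(T)$ are uniformly hyperbolic by the discussion preceding Proposition~\ref{relhypconvtmt}, so Corollary~\ref{effectiveBFfwdtmtdt} yields $\delta$-hyperbolicity of $\tmtdt$, proving~(1).

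\emph{Relative hyperbolicity.} For~(2), I regard the partial electrification $\tmtdw \to \tmtdt$ as coning off the lifts $\tmr$ of Margulis risers along the quotient maps $\R \times T_v \to T_v$. Hyperbolicity of the coned-off space (established in~(1)) combined with uniform quasiconvexity of the risers in $\tmtdt$ (Proposition~\ref{effectiveqc} applied with flaring in all directions inherited from Step~2) and bounded penetration (immediate from the metric product structure $\R \times T_v$) yields strong $\delta$-relative hyperbolicity of $\tmtdw$ relative to $\tmr$, with $\delta$ depending only on $R,D,k$. Alternatively, one verifies the hypotheses of Theorem~\ref{effectiverelBF} directly: ordinary hallways flare by Step~2, and cone-bounded hallways flare because the cones correspond to genuine hyperbolic cones over Margulis tubes in $\til N_l$ after the line-case identification.

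\emph{Main obstacle.} The technical heart of the argument is the flaring transfer in Step~2: one must verify that every $\rho_0$-qi-section in $\tmtdt$ restricts, upon localization to a bi-infinite geodesic $l\subset T$, to a qi-section in $\tmldt$ with uniformly controlled constants independent of $l$. This depends crucially on the uniform construction of geometric building blocks (Definition~\ref{def-geometricbb}) and the fiber-preserving bi-Lipschitz embedding $M_l \hookrightarrow M_T$ of Lemma~\ref{bb-deg-tree}, and a careful case analysis is needed at separating vertices and high-valence branch points of $\BU(T)$ to ensure hallways whose base branches at such vertices decompose into hallways lying along embedded lines without loss of uniformity.
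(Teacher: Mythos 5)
Your proposal is correct and follows essentially the same route as the paper's Scheme \ref{scheme-split}: the line case is handled via the bi-Lipschitz Minsky model, relative hyperbolicity of $\til N_l$ with respect to Margulis tubes, and Lemma \ref{pel} (this is Proposition \ref{telunifhyp}); uniform flaring is then extracted by Corollary \ref{effectiveBFreversetmtdt} and fed back through Corollary \ref{effectiveBFfwdtmtdt} (Proposition \ref{thinbdlhyp}); and statement (2) is obtained by running the relatively hyperbolic combination argument of Theorem \ref{effectiverelBF}/Gautero (Propositions \ref{tmtdtrelhyp} and \ref{thinbdlrelhyp}), which is the second of your two alternatives for Step 3. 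The only caveat is that your first alternative for Step 3 treats $\tmtdt$ as the coned-off space $\EE(\tmtdw,\tmr)$, whereas it is only a partial electrification (the risers collapse to the tree-links $T_v$, not to points), so the BCP-style shortcut would need the additional quasiconvexity and separation input that the paper supplies via the Gautero route you also name.
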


Note that by Definition \ref{def-geometricbb} and  Corollary \ref{subordinatehierarchysmall}, the hypothesis on existence of $R$ in Theorem \ref{effectivehyptmtdt} is equivalent to the existence of $k_0\geq 1, \ep_0 \geq 0$  such that $M_T$ 
is
a special split geometry model with  parameters $k_0, \ep_0$ corresponding to $T$.

The proof of Theorem \ref{effectivehyptmtdt} will be given in Section \ref{unifhypcx} and will use
\begin{enumerate}
\item The fact that the Minsky model  for doubly degenerate Kleinian surface groups with injectivity radius uniformly bounded below is uniformly bi-Lipschitz to the hyperbolic metric \cite{minsky-jams,minsky-elc1,minsky-elc2}.
\item The Bestvina-Feighn combination theorem \cite{BF} and its converse in the effective form given by Corollaries \ref{effectiveBFfwdtmtdt}, \ref{effectiveBFreversetmtdt} and Proposition \ref{relhypconvtmt}.
\item The special split geometry of building blocks.
\end{enumerate}

For the purposes of this section, $N$ will denote a doubly degenerate hyperbolic 3-manifold corresponding to a surface $S$ and a a doubly degenerate surface Kleinian group $\rho (\pi_1(S)) = \pi_1(N) \subset PSL(2, \mathbb{C})$. The ending laminations of $N$ are denoted as $l_\pm$. Note that by work of Thurston \cite[Chapter 9]{thurstonnotes} and Bonahon \cite{bonahon-bouts},
$N$ is homeomorphic to $S \times \R$. Before dealing with the model $\tmtdw$ of special split geometry  and proving Theorem \ref{effectivehyptmtdt}, it will be convenient to focus on the simpler case of bounded geometry. This furnishes the same result under stronger hypotheses (Proposition \ref{thickbdlhyp}) and will serve to delineate the ingredients of the proof.

 We first recall from the Introduction some of the basics of convex cocompact
 subgroups of the mapping class group and refer the reader to \cite{farb-coco} for details. As before, $S$ is a closed surface of genus $g$ and $MCG(S)$ is its mapping class group.
 A subgroup $H$ of  $MCG(S)$ is said to be {\bf convex cocompact} if some (every) orbit of $H$ in the Teichm\"uller space $Teich(S)$
 is quasiconvex. Associated to any $H \subset MCG(S)$, there is a natural
 associated exact sequence \cite[Section 1.2]{farb-coco} of the form $$1 \to \pi_1(S) \to L_H \to H \to 1.$$ The following characterizes  convex cocompactness:
 
 \begin{theorem} \cite{farb-coco, hamen} \label{charzn}
 	A subgroup $H$ of  $MCG(S)$ is  convex cocompact if and only if the extension $L_H$ occurring in the 
 	associated exact sequence  $1 \to \pi_1(S) \to L_H \to H \to 1$ is hyperbolic.
 \end{theorem}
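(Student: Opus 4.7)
The plan is to derive both directions of Theorem \ref{charzn} from the effective Bestvina--Feighn combination theorem (Theorem \ref{effectiveBF}) applied to a natural bundle over an $H$-orbit in Teichm\"uller space, combined with Rafi's characterization of thick Teichm\"uller geodesics (Theorem \ref{thickgeod}), Klarreich's identification of $\partial \CC(S)$ with $\EL(S)$, and the Kent--Leininger criterion \cite{kl-coco} detecting convex cocompactness via qi-embedded orbits in $\CC(S)$.

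For the forward direction, fix a quasiconvex $H$-orbit $Y \subset \Teich(S)$. First I would show that $Y \subset \Teich_\varepsilon(S)$ for some $\varepsilon > 0$: every infinite-order element of a convex cocompact subgroup is pseudo-Anosov \cite{farb-coco}, and quasiconvexity of the orbit together with cocompactness of $\Teich_\varepsilon(S)/MCG(S)$ forces $Y/H$ to sit in a bounded region of moduli space. Pulling back the universal curve (Remark \ref{ubundle}) over $Y$ and passing to universal covers gives a metric bundle $P : \tilde X \to \tilde Y$ whose fibers are uniformly bi-Lipschitz to $\mathbb{H}^2$ and on which $L_H$ acts properly, cocompactly, and isometrically. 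The base $\tilde Y$ is a hyperbolic metric space (being quasiconvex in $\Teich(S)$), the fibers are hyperbolic, and the qi-embedded condition holds by construction. To verify the hallways flare condition, note that any bi-infinite geodesic $\ell \subset \tilde Y$ projects into the thick part and hence by Theorem \ref{thickgeod} has endpoints in $\EL(S)$ with bounded subsurface projections; the pseudo-Anosov-type dynamics along $\ell$ (via Thurston's stretch factor on the singular $\mathrm{Sol}$-model of Definition \ref{def-thickminmod}) then produces uniform exponential divergence of any two qi-sections bounding a thin hallway. Theorem \ref{effectiveBF} applied to $\tilde X$ concludes that $\tilde X$, and hence $L_H$, is hyperbolic.

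For the converse, suppose $L_H$ is hyperbolic and fix $\alpha \in \CC(S)$. I would show the orbit map $h \mapsto h \cdot \alpha$ is a quasi-isometric embedding of $H$ into $\CC(S)$; by \cite{kl-coco} this implies convex cocompactness. Choose any equivariant map from a Cayley graph $\Gamma_H$ into $\Teich(S)$ and form the pullback bundle $X \to \Gamma_H$ of the universal curve, so that $L_H$ acts geometrically on $\tilde X$. Hyperbolicity of $L_H$ transfers to $\tilde X$, and the converse half of Theorem \ref{effectiveBF} yields uniform flaring of $\rho_0$-qi-section-bounded hallways. The key step is to translate flaring in $\tilde X$ into qi-embeddedness of the $H$-orbit in $\CC(S)$: if the orbit failed to be qi-embedded, one could produce a sequence of geodesic segments in $\Gamma_H$ whose endpoints lie at bounded distance in $\CC(S)$, forcing (by subsurface projection / Masur--Minsky hierarchy machinery and the Bounded Geodesic Image Theorem \ref{bgit}) some proper subsurface $W$ to absorb arbitrarily large projection. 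Along such a segment, the fiberwise $\mathbb{H}^2$ geometry stays coarsely constant on curves supported in $W$, contradicting flaring.

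The principal obstacle is the last implication in the converse: extracting qi-embeddedness of the orbit in $\CC(S)$ from flaring of qi-sections in $\tilde X$. The cleanest route is to avoid reproving it from scratch and instead invoke the equivalence established by Kent--Leininger \cite{kl-coco} between qi-embeddedness in $\CC(S)$ and convex cocompactness in $\Teich(S)$, using Hamenst\"adt's criterion \cite{hamen} to identify hyperbolicity of $L_H$ with a uniform lower bound on translation lengths on $\CC(S)$. The role of Theorem \ref{effectiveBF} is then to make all constants explicit and uniform, which is what will be needed in subsequent sections when applied to free subgroups $Q$ acting on tight trees.
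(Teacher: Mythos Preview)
The paper does not prove Theorem \ref{charzn}; it is quoted as a result from the literature, with the attribution spelled out in the paragraph immediately following: the ``if'' direction (and the free-group case of both directions) is due to Farb--Mosher \cite{farb-coco}, and the ``only if'' direction in general is due to Hamenst\"adt \cite{hamen}. So there is no paper-proof to compare against; you are attempting to supply one where the paper chose to cite.

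That said, your proposal has two genuine problems. First, in the forward direction you invoke Theorem \ref{effectiveBF}, but that statement, as formulated here, is for metric bundles over a \emph{tree}. Your base $\tilde Y$ is an $H$-orbit (or its hull) in $\Teich(S)$, which is not a tree unless $H$ is free. The more general metric-bundle combination theorem over hyperbolic bases from \cite{mahan-sardar} would be needed, and even then you must first know that $H$ itself is hyperbolic; you gesture at quasiconvexity in $\Teich(S)$, but $\Teich(S)$ is not hyperbolic, so quasiconvexity there does not by itself give hyperbolicity of the orbit. You would need the Kent--Leininger equivalence \cite{kl-coco} (qi-embedding in $\CC(S)$) as input, which is already part of the package of equivalent characterizations being asserted.

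Second, and more seriously, your converse direction is circular. You write that the ``cleanest route'' is to invoke ``Hamenst\"adt's criterion \cite{hamen} to identify hyperbolicity of $L_H$ with a uniform lower bound on translation lengths on $\CC(S)$.'' But that is precisely the content of the ``only if'' direction you are meant to be proving. Your own sketch of a direct argument (failure of qi-embedding in $\CC(S)$ $\Rightarrow$ large subsurface projection $\Rightarrow$ non-flaring hallways) is the right shape, but the step ``fiberwise $\mathbb{H}^2$ geometry stays coarsely constant on curves supported in $W$'' is not justified and is essentially where the real work lies; this is what Hamenst\"adt's argument (or Farb--Mosher's in the free case) actually carries out, and it does not fall out of the converse half of Theorem \ref{effectiveBF} alone.
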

 
 Theorem \ref{charzn} was proved for free groups by Farb and Mosher \cite{farb-coco} as was the  `if' direction in general.
 Hamenstadt \cite{hamen} proved the only if direction.
 In \cite[Proposition 5.17]{mahan-sardar} this was extended to surfaces with punctures. The proof there was in fact effective (see also \cite{gautero, mahan-reeves}). We shall recall this in Section \ref{sec-thickcusps}.

 The next statement observes the absence of $\Z \oplus \Z$ in extensions of purely pseudo-Anosov subgroups of $MCG(S)$.
 
 \begin{prop}\cite[Theorem 8.1]{kl-survey} \label{klpa} Let $H \subset MCG(S)$. If $H$ is purely pseudo-Anosov, then $L_G$ contains no Baumslag-Solitar subgroups and hence no copy of $\Z \oplus \Z$. 
 \end{prop}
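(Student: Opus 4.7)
The plan is to suppose a nontrivial Baumslag--Solitar subgroup $B=\langle a,b\mid ba^mb^{-1}=a^n\rangle$, $m,n\in\Z\setminus\{0\}$, embeds in $L_H$ and to derive a contradiction. Let $q\colon L_H\to H$ denote the quotient with kernel $\pi_1(S)$, and set $\bar a=q(a)$, $\bar b=q(b)$. By hypothesis each of $\bar a,\bar b$ is either trivial or pseudo-Anosov. Two standard facts about a pseudo-Anosov $\phi$ drive the argument: (i) the centralizer of $\phi$ in $MCG(S)$ is virtually cyclic, and (ii) $\phi$ fixes no nontrivial conjugacy class in $\pi_1(S)$ (the hyperbolic length function of any such class is strictly convex along the Teichm\"uller axis of $\phi$, so cannot be periodic under the translation induced by $\phi$).

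First I would handle the case $|m|\neq|n|$. If $\bar a$ were pseudo-Anosov, then projecting the BS relation to $H$ and using that conjugation in $MCG(S)$ preserves the dilation factor $\lambda(\bar a)>1$ would give $\lambda(\bar a)^{|m|}=\lambda(\bar a)^{|n|}$, a contradiction; hence $\bar a=1$, i.e.\ $a\in\pi_1(S)\setminus\{1\}$. If additionally $\bar b=1$, then $B\subset\pi_1(S)$, impossible since the hyperbolic group $\pi_1(S)$ contains no Baumslag--Solitar subgroup. Otherwise $\bar b$ is pseudo-Anosov; writing the conjugacy class $[a]$ as $c^k$ with $c$ primitive and descending the BS relation to the $\bar b$-action on conjugacy classes of $\pi_1(S)$ yields $(\bar b\cdot c)^{km}=c^{kn}$. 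In the torsion-free hyperbolic group $\pi_1(S)$ two elements with a common nontrivial power lie in a single maximal cyclic subgroup, whose only primitive elements are $\pm$ its generator; since $c$ and $\bar b\cdot c$ are both primitive, this forces $\bar b\cdot c=c^{\pm 1}$ and hence $\bar b^2$ fixes $[c]$, contradicting (ii).

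Next I would handle $|m|=|n|$. A direct check shows that in $BS(m,\pm m)$ the subgroup generated by $a^m$ together with $b$ (if $m=n$) or $b^2$ (if $m=-n$) is a copy of $\Z\oplus\Z$, so it suffices to show $L_H$ contains no $\Z\oplus\Z$. Let $\Z\oplus\Z=\langle\alpha,\beta\rangle\subset L_H$. If $\bar\alpha=\bar\beta=1$, then $\Z\oplus\Z\subset\pi_1(S)$, impossible. If exactly one is trivial, say $\bar\alpha$, then $\beta\alpha\beta^{-1}=\alpha$ exhibits the pseudo-Anosov $\bar\beta$ as fixing the nontrivial class $[\alpha]\in\pi_1(S)$, contradicting (ii). If both $\bar\alpha,\bar\beta$ are pseudo-Anosov they commute in $H$, so by (i) and the torsion-freeness of pure pseudo-Anosov subgroups of $MCG(S)$ the group $\langle\bar\alpha,\bar\beta\rangle$ is infinite cyclic; hence $\bar\alpha^p=\bar\beta^q$ for some nonzero $p,q$. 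Then $\gamma=\alpha^p\beta^{-q}$ lies in $\pi_1(S)$, is nontrivial (as $\Z\oplus\Z$ is free abelian of rank two), and commutes with $\beta$, so $\bar\beta$ fixes $[\gamma]$, again contradicting (ii).

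The hard part is really the opening reduction: one must know that the dilation factor is a $MCG(S)$-conjugacy invariant (equivalently, $\log\lambda(\bar a)$ is the translation length of $\bar a$ on Teichm\"uller space). Once this and the inputs (i), (ii) are available, each of the remaining cases collapses by a short algebraic manipulation to a pseudo-Anosov fixing some nontrivial conjugacy class, which (ii) forbids.
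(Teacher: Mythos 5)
The paper offers no internal proof of this statement: it is quoted from Kent--Leininger (\cite[Theorem 8.1]{kl-survey}), so the only comparison available is with the standard argument, and your proof is essentially that argument, carried out correctly. Your case division is sound: for $|m|\neq|n|$ the dilatation is a conjugacy invariant and $\lambda(\bar a^m)=\lambda(\bar a)^{|m|}$, so $\bar a$ must die in $H$, i.e.\ $a\in\pi_1(S)\setminus\{1\}$; then either the whole Baumslag--Solitar group lies in the hyperbolic group $\pi_1(S)$ (impossible), or conjugation by $b$ gives an honest automorphism $\phi_b$ of the fiber with $\phi_b(c)^{km}=c^{kn}$ for a primitive root $c$ of $a$, and the common-power/centralizer argument in the torsion-free hyperbolic group $\pi_1(S)$ forces $\phi_b(c)=c^{\pm1}$, whence a power of the pseudo-Anosov $\bar b$ fixes a nontrivial conjugacy class --- a contradiction. (Note this subcase closes even faster: $c^{\pm km}=c^{kn}$ already forces $|m|=|n|$, so the appeal to fixed conjugacy classes is not needed there.) Your reduction of $|m|=|n|$ to $\Z\oplus\Z$, and the three subcases for $\langle\alpha,\beta\rangle\cong\Z\oplus\Z$ (both in the fiber; one in the fiber and fixed by a pseudo-Anosov; both projecting to commuting pseudo-Anosovs, hence into a torsion-free virtually cyclic, i.e.\ cyclic, subgroup of $H$, producing a nontrivial $\gamma=\alpha^p\beta^{-q}$ in the fiber centralized by $\beta$) are all correct, and your verification that $BS(m,\pm m)$ contains $\Z\oplus\Z$ is fine.

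The one point you should repair is the parenthetical justification of fact (ii). Hyperbolic length functions are \emph{not} convex along Teichm\"uller geodesics (convexity holds along Weil--Petersson geodesics and earthquake paths; along Teichm\"uller geodesics only quasiconvexity is known, by Lenzhen--Rafi), so "strict convexity along the Teichm\"uller axis" is not a legitimate argument. The fact itself is standard and should be proved, e.g., via the invariant measured foliations: $i(\phi(c),\mu^{\pm})=\lambda^{\mp1}\,i(c,\mu^{\pm})$, so a fixed free homotopy class would have zero intersection with the filling pair $\mu^{\pm}$, which is impossible for an essential closed curve (this works for non-simple curves via geodesic currents, or one can quote the Nielsen--Thurston theory directly). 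With that substitution, and the minor rephrasing that the relation $(\bar b\cdot c)^{km}=c^{kn}$ should be read as the genuine equation $\phi_b(c)^{km}=c^{kn}$ in $\pi_1(S)$ (which you have, since $\pi_1(S)$ is normal in $L_H$), your proof stands as a complete and self-contained substitute for the citation.
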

 
 For convenience of the reader, we outline the strategy that will go into the proof of Proposition \ref{thickbdlhyp}. We shall modify this strategy to prove Theorem \ref{effectivehyptmtdt} in Section \ref{unifhypcx}.
 \begin{scheme}\label{scheme-thick}
 	The steps of the proof of Proposition \ref{thickbdlhyp} are:
 	\begin{enumerate}
 		\item The Minsky model (Theorem \ref{minsky-elcbddgeo}) shows that the universal bundles over bi-infinite geodesics are uniformly hyperbolic.
 		\item The converse direction of the combination Theorem \ref{effectiveBF} furnishes effective flaring constants.
 		\item Feeding these  effective flaring constants into the bundle $\til{M_H}$  over $\Gamma_H$ furnishes (effective) hyperbolicity of $\til{M_H}$.
 	\end{enumerate}
 \end{scheme}

\subsection{Thick Minsky model: No cusps}\label{sec-thick} We now turn to proving the analog of Theorem \ref{effectivehyptmtdt} for bounded geometry. By Theorem \ref{minsky-elcbddgeo} the bounded geometry hypothesis is equivalent to the (union of the) assumptions that 
\begin{enumerate}
\item The parameter $R$ in the underlying $L-$tight and $R-$thick tree (cf.\ Definition \ref{def-rthick}) is uniformly bounded above.
\item There exists $L' \geq L$ such all the subsurface projections onto 
$S \setminus i(v)$ are bounded by $L'$ for all $v \in V(T)$. (Note that this
is stronger than in the statement of Theorem \ref{effectivehyptmtdt}, where
only $R$ is bounded above.)
\end{enumerate}

 For the time being, we focus on the case of closed $S$. Let $l$ be an $\ep-$thick  bi-infinite Teichm\"uller geodesic (i.e.\ a bi-infinite Teichm\"uller geodesic contained in $\Teich_\ep (S)$) with end-points $l_\pm \in \partial \Teich(S) = \PML(S)$. By forgetting the underlying measure, we identify $l_\pm $ with the underlying elements of the ending lamination space $\EL (S)$. Let $M_l$ be the universal curve over $l$ equipped with the universal curve metric as in Remark \ref{ubundle}. Then, by \cite{minsky-bddgeom,minsky-jams} $M_l$ is uniformly bi-Lipschitz to  the unique hyperbolic manifold $N(l\pm)$ with ending laminations $l_\pm$.
  As a consequence of Theorem \ref{minsky-elcbddgeo}, we thus have

\begin{cor}\label{minsky-elcbddgeocor}
For $S=S_{g,0}$ a closed surface of genus $g$, and $\ep >0$, there exists $\delta > 0$ such that the following holds:\\ For $l$  an $\ep-$thick  bi-infinite Teichm\"uller geodesic,   the universal cover $\til M_l$ of the  Minsky model $M_l$, equipped with the universal curve metric, is $\delta-$hyperbolic.
\end{cor}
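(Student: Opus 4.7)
The plan is to deduce this as a direct corollary of Theorem \ref{minsky-elcbddgeo} by transferring Gromov hyperbolicity from $\mathbb{H}^3$ to $\til M_l$ through a short chain of uniform bi-Lipschitz equivalences, with all constants ultimately depending only on $\ep$ and $g$.

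First, observe that an $\ep$-thick bi-infinite Teichm\"uller geodesic $l$ determines (after forgetting measures) a pair of ending laminations $l_\pm \in \EL(S)$, which by the ending lamination theorem arise as the end invariants of a unique doubly degenerate hyperbolic $3$-manifold $N = N(l_\pm)$. By Rafi's characterization (Theorem \ref{thickgeod}) together with Theorem \ref{thickcombin}, the $\ep$-thickness of $l$ is equivalent to a uniform upper bound on all subsurface projections $d_W(l_+, l_-)$, and this in turn implies that $N$ has bounded geometry, with injectivity-radius lower bound $\ep' = \ep'(\ep) > 0$.

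Next, since $S$ is closed there are no cusps to excise, so Theorem \ref{minsky-elcbddgeo} applies to give an $L$-bi-Lipschitz homeomorphism between $N$ and the thick Minsky model $Q$ of Definition \ref{def-thickminmod}, with $L = L(\ep')$. The universal curve $M_l$ with its universal curve metric is in turn $K$-bi-Lipschitz homeomorphic to $Q$ via the fiberwise uniformization map discussed in the paragraph after Theorem \ref{minsky-elcbddgeo}; since each fiber $S_x$ ($x \in l$) has systole at least $\ep$, the uniformizing map on fibers has bi-Lipschitz constant depending only on $\ep$, and fiberwise control upgrades to a bi-Lipschitz equivalence of the total spaces with constant $K = K(\ep)$.

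Composing these equivalences and lifting to universal covers yields a $KL$-bi-Lipschitz homeomorphism $\til M_l \to \til N = \mathbb{H}^3$. Since $\mathbb{H}^3$ is $\delta_0$-hyperbolic for a universal $\delta_0$, and Gromov hyperbolicity is preserved under bi-Lipschitz equivalence with effective control on the hyperbolicity constant in terms of $\delta_0$ and the bi-Lipschitz constant, we conclude that $\til M_l$ is $\delta$-hyperbolic for some $\delta = \delta(\ep, g)$. No substantive obstacle arises; the only point requiring care is that each of the two bi-Lipschitz constants depends only on $\ep$ (and the topology of $S$), and this is precisely what is provided by the bounded-geometry case of the ending lamination theorem and by uniformization on uniformly thick fibers.
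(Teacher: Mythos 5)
Your proposal is correct and follows essentially the same route as the paper: the paper's proof simply cites that $M_l$ is uniformly bi-Lipschitz to the hyperbolic manifold $N(l_\pm)$ (via Theorem \ref{minsky-elcbddgeo} and the fiberwise uniformization discussion), and then transfers hyperbolicity from $\Hyp^3$ through the lifted bi-Lipschitz equivalence. You have merely made explicit the chain of intermediate comparisons (thickness $\Rightarrow$ bounded geometry $\Rightarrow$ thick Minsky model $\Rightarrow$ universal curve) that the paper compresses into a single sentence.
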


\begin{proof}
This follows from the fact that $M_l$ is $K-$bi-Lipschitz homeomorphic to a hyperbolic manifold $M(l\pm)$, with $K$ depending only on $g, \ep$; and hence $\til M_l$ is  $K-$bi-Lipschitz homeomorphic to  $\Hyp^3$.
\end{proof}

Uniform
hyperbolicity of $\til M_l$ in Corollary \ref{minsky-elcbddgeocor} ensures uniform flaring constants by the converse part of Theorem \ref{effectiveBF}:

\begin{cor}\label{effectiveflare}
	For $S=S_{g,0}$ a closed surface of genus $g$, and $\ep >0$, there exists $ \lambda_0 , m_0, \rho_0, H_0 \geq 1$ such that the following holds:\\
	Let  $l$ be an  $\ep-$thick  bi-infinite Teichm\"uller geodesic and $P: M_l \to l$ denote the universal bundle over $l$. Let  $P:\til{M_l} \to l$ denote the lift to the universal cover. Then through every point of $\til{M_l}$ there exists a $\rho_0-$qi-section of $P:\til{M_l} \to l$. Further, hallways bounded by $\rho_0-$qi-sections in $\til{M_l}$ satisfy the flaring condition with constants  $\lambda \geq \lambda_0$, $n \leq n_0$ and $H \leq H_0$.
\end{cor}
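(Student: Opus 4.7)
The plan is to combine Corollary \ref{minsky-elcbddgeocor} with the converse direction of the effective combination theorem (Theorem \ref{effectiveBF}) together with the existence result for qi-sections (Proposition \ref{qi-section}). The point is that uniform hyperbolicity of the total space $\til{M_l}$ immediately translates, via the converse Bestvina--Feighn, into uniform flaring, provided we can first produce uniform qi-sections.

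First I would organize $P\colon \til{M_l}\to l$ as a metric bundle in the sense of Definition \ref{def-mbdl}. The fibers are universal covers of the marked hyperbolic surfaces $S_x$ for $x\in l$; since $l$ lies in $\Teich_\ep(S)$ and $S$ is closed of genus $g$, each such fiber is $K$-bi-Lipschitz homeomorphic to $\Hyp^2$ with $K=K(g,\ep)$. Hence the fibers are $\delta'$-hyperbolic with $\delta'=\delta'(g,\ep)$, and their barycenter maps $\partial^3 F\to F$ are $N$-coarsely surjective with $N=N(g,\ep)$. The metric bundle constants $(h,c,K)$ likewise depend only on $g$ and $\ep$, since the vertical geometry is that of the universal curve over an $\ep$-thick Teichm\"uller geodesic.

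Next, Proposition \ref{qi-section} applied to this metric bundle produces some $\rho_0=\rho_0(g,\ep)$ such that through every point of $\til{M_l}$ there is a $\rho_0$-qi-section of $P\colon\til{M_l}\to l$. This gives statement (1) and simultaneously equips us with the family of $\rho_0$-qi-sections needed to define and bound hallways. By Corollary \ref{minsky-elcbddgeocor}, $\til{M_l}$ is $\delta$-hyperbolic with $\delta=\delta(g,\ep)$. Now I would invoke the converse direction of Theorem \ref{effectiveBF}: given this $\delta$ and the value $\rho_0$ above, there exist $\lambda_0>1$, $m_0\in\natls$, and $H_0\geq 0$, all depending only on $\delta$ and $\rho_0$ (hence only on $g,\ep$), such that every $\rho_0$-qi-section-bounded hallway in $\til{M_l}$ satisfies the flaring condition with parameters $\lambda\geq\lambda_0$, $m\leq m_0$, $H\leq H_0$. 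This yields statement (2).

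The only mildly subtle point is ensuring that all of these constants are genuinely independent of the particular $\ep$-thick geodesic $l$; this is automatic because every ingredient (the Minsky bi-Lipschitz constant of Theorem \ref{minsky-elcbddgeo}, the barycenter surjectivity constant for $\Hyp^2$, and the effective constants in Theorem \ref{effectiveBF} and Proposition \ref{qi-section}) depends only on $g$ and $\ep$ and not on the individual bi-infinite geodesic or its endpoints in $\EL(S)$. I don't foresee a serious obstacle here; the argument is essentially a packaging of the already-effective ingredients listed in Scheme \ref{scheme-thick}.
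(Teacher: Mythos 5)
Your argument is correct and is essentially the paper's own: the corollary is obtained exactly by combining uniform hyperbolicity of $\til{M_l}$ from Corollary \ref{minsky-elcbddgeocor} with the converse direction of Theorem \ref{effectiveBF}, the existence of uniform $\rho_0$-qi-sections coming from the uniform fiber geometry over an $\ep$-thick geodesic (Proposition \ref{qi-section}, cf.\ Lemma \ref{qisecexists}). Your remark that all constants depend only on $g$ and $\ep$, not on the particular geodesic $l$, is precisely the point of the effective statements being invoked.
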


We shall say that 
a subgroup $H$ of  $MCG(S)$ is  {\bf $K-$convex cocompact} if some orbit of $H$ in $Teich(S)$
is $K-$quasiconvex. Hence there exists $o \in Teich(S)$, such that for every $l_\pm \subset \partial H \subset \partial Teich(S)$, the Teichm\"uller geodesic $l$ joining $l_\pm$ lies
at bounded Hausdorff distance $D (=D(K))$ from $H.o$. 
For $a, b \in \partial H \subset \partial Teich(S)$ the Teichm\"uller geodesic $l$ joining $a,b$ is denoted as $l_{ab}$.
Next, assume that $H$ is free. 

\begin{constr}\label{thickbdlcoco} Let $H$ be a free, convex cocompact, purely pseudo-Anosov subgroup of $MCG(S)$.
We can  choose a free generating set for $H$, construct a Cayley graph $\Gamma_H$ of $H$ and also  a map $\Phi: \Gamma_H \to Teich(S)$, such that

\begin{enumerate}
\item $\Phi(1)  = o$,
\item $\Phi$ maps edges of $\Gamma_H$ to geodesic segments,
\item For $a, b \in \partial \Gamma_H$, let $(a,b)$ denote the bi-infinite geodesic joining $a, b$ in $\Gamma_H$. Then $\Phi ((a,b))$
and $l_{ab}$ lie within bounded Hausdorff distance $D (=D(K))$ from each other. Further, we can (after choosing $D$ depending only on $S$ and $K$ appropriately) parametrize $(a,b)$ and $l_{ab}$ proportional to their respective arc lengths, such that $d_{Teich} (\Phi(t), l_{ab}(t)) \leq D$.
\item  The universal curve  over $\Phi ((a,b))$ is denoted as $M_{ab}$.
\end{enumerate}
\end{constr}

The following is now a consequence of Corollary \ref{minsky-elcbddgeocor}
(see also \cite{minsky-bddgeom,rafi-gt}):

\begin{cor}\label{thickTeichimpliesthick3}
	 For $K \geq 0$ and $e >0$, there exists $\delta > 0$ such that if
	 \begin{enumerate}
	 \item  $H$ is a $K-$convex cocompact subgroup, and
	 \item there exists $o \in Teich(S)$ with $H.o \subset Teich_e(S)$,
	 \end{enumerate} 
	 then for all $a,b \subset \partial H \subset \partial Teich(S)$, the universal curve $M_{ab}$ over $\Phi ((a,b))$ with ending laminations  $a, b$ is $\delta-$hyperbolic.
\end{cor}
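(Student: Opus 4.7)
The plan is to reduce the statement to Corollary \ref{minsky-elcbddgeocor} applied to the Teichm\"uller geodesic $l_{ab}$, and then transfer uniform hyperbolicity from the universal bundle over $l_{ab}$ to that over $\Phi((a,b))$ by constructing a fiber-preserving quasi-isometry with uniform constants. The key inputs are supplied by Construction \ref{thickbdlcoco}: the bounded Hausdorff distance $D = D(K)$ between $\Phi((a,b))$ and $l_{ab}$ and the proportional parametrization $d_{\Teich}(\Phi(t), l_{ab}(t)) \leq D$.

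First I would verify that both $\Phi((a,b))$ and $l_{ab}$ lie in a uniform thick part of $\Teich(S)$. Since $H.o \subset \Teich_e(S)$ and $\Phi((a,b))$ lies in a uniform neighborhood of $H.o$ (of radius depending only on $K$ and the chosen generating set), and since the systole is continuous with respect to the Teichm\"uller metric, there exists $e_1 = e_1(K,e) > 0$ with $\Phi((a,b)) \subset \Teich_{e_1}(S)$. As $l_{ab}$ lies within Hausdorff distance $D$ of $\Phi((a,b))$, the same argument gives $e_2 = e_2(K,e) > 0$ with $l_{ab} \subset \Teich_{e_2}(S)$. Corollary \ref{minsky-elcbddgeocor} then furnishes $\delta_0 = \delta_0(g, e_2)$ such that $\til M_{l_{ab}}$ is $\delta_0$-hyperbolic.

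The final and decisive step is to construct a fiber-preserving quasi-isometry $F : M_{ab} \to M_{l_{ab}}$ with constants depending only on $K, e$, and lift it to universal covers. On the base, the proportional parametrizations of $\Phi((a,b))$ and $l_{ab}$ already give a bi-Lipschitz identification. On fibers, paired points lie in $\Teich_{e_2}(S)$ at Teichm\"uller distance at most $D$, so the marked hyperbolic surfaces at paired parameters are uniformly bi-Lipschitz. Gluing these fiber maps along the base and lifting to universal covers gives a uniform quasi-isometry $\til M_{ab} \to \til M_{l_{ab}}$, and quasi-isometry invariance of Gromov hyperbolicity yields the required $\delta = \delta(K,e)$.

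The main obstacle is the uniform control of the fiberwise bi-Lipschitz constant: one needs that two hyperbolic surfaces representing points of $\Teich_{e_2}(S)$ at Teichm\"uller distance at most $D$ are $C$-bi-Lipschitz for $C = C(e_2,D)$. This is a standard consequence of Wolpert's lemma comparing hyperbolic lengths under Teichm\"uller deformation, but it is essential that the lower systole bound $e_2$ holds on both paths so that the comparison is quantitative; it is precisely for this reason that Step one needs to produce a uniform $e_2 > 0$ rather than merely verify that the geodesic $l_{ab}$ is thick in some non-effective sense.
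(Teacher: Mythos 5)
Your proposal is correct and follows essentially the route the paper intends: the corollary is stated there without proof as a direct consequence of Corollary \ref{minsky-elcbddgeocor}, i.e.\ uniform hyperbolicity of the universal bundle over the thick Teichm\"uller geodesic $l_{ab}$ is transferred to the fellow-travelling path $\Phi((a,b))$ via the uniform fiber-preserving quasi-isometry implicit in Construction \ref{thickbdlcoco}. Your write-up supplies exactly the details the paper leaves implicit (uniform thickness of both paths, the Wolpert-type fiberwise bi-Lipschitz control, and lifting to universal covers), so no further changes are needed.
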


\begin{prop}\label{thickbdlhyp} Given $K, \ep \geq 0$, there exists $\delta >0$ such that the following holds:\\
	Let $H$ be a free $K-$convex cocompact subgroup and  let $o \in Teich(S)$ with $H.o \subset Teich_\ep(S)$. Let $\Gamma_H$ be a Cayley graph of $H$ with respect to a free generating set and $\Phi: \Gamma_H \to Teich(S)$ be as in Construction \ref{thickbdlcoco} above.
	Let $M_H$ be the universal bundle  over $\Phi (\Gamma_H)$ (equipped with the universal bundle metric as before). Then the universal cover $\til{M_H}$ is $\delta-$hyperbolic.
\end{prop}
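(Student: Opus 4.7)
The plan is to execute Scheme \ref{scheme-thick} step by step. Since $H$ is free, $\Gamma_H$ is a simplicial tree and $P\colon\til{M_H}\to\Gamma_H$ exhibits $\til{M_H}$ as a metric bundle over a tree (Definition \ref{def-mbdl}), the fibers being $\til S$ equipped with the hyperbolic metric pulled back from the point of $\Teich_\ep(S)$ over the relevant vertex. In particular each fiber is uniformly hyperbolic, the fiberwise barycenter maps are uniformly coarsely surjective, and Proposition \ref{qi-section} furnishes $\rho_0=\rho_0(g,\ep)$ such that through every point of $\til{M_H}$ passes a $\rho_0$--qi-section of $P$. With the fiber-hyperbolicity and qi-section hypotheses in hand, the plan is to verify the forward direction of the effective combination theorem (Theorem \ref{effectiveBF}), for which only the uniform hallways flare condition remains to be checked.

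To obtain uniform flaring, I would first pass to the restriction of $P$ over each bi-infinite geodesic in $\Gamma_H$. Given $a,b\in\partial\Gamma_H$, Construction \ref{thickbdlcoco} places $\Phi((a,b))$ within Hausdorff distance $D=D(K)$ of the Teichm\"uller geodesic $l_{ab}\subset\Teich_\ep(S)$, and the induced universal bundle $M_{ab}$ is uniformly bi-Lipschitz to the thick Minsky model of the doubly degenerate manifold with ending laminations $a,b$. By Corollary \ref{thickTeichimpliesthick3} (i.e., Corollary \ref{minsky-elcbddgeocor} together with Construction \ref{thickbdlcoco}), $\til{M_{ab}}$ is $\delta_0$-hyperbolic with $\delta_0=\delta_0(K,\ep)$. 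Applying the converse direction of the combination theorem to $\til{M_{ab}}$, as packaged in Corollary \ref{effectiveflare}, produces uniform constants $\lambda_0>1$, $m_0\in\natls$, $H_0\geq 0$ such that every $\rho_0$-thin essential hallway in $\til{M_{ab}}$ bounded by $\rho_0$-qi-sections, of length $\geq 2m_0$ and girth $\geq H_0$, is $\lambda_0$-hyperbolic; crucially, these constants depend only on $\delta_0$ and $\rho_0$, hence only on $K$ and $\ep$.

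Finally, any essential hallway $f\colon [-m,m]\times I\to\til{M_H}$ projects to a geodesic segment in the tree $\Gamma_H$, which, because $\Gamma_H$ is a tree, extends to some bi-infinite geodesic $(a,b)$. Both $f$ and its two bounding $\rho_0$-qi-sections then sit inside $\til{M_{ab}}\subset\til{M_H}$, and since $\Phi$ and hence the bundle structures agree over the common finite segment, fiber metrics, thinness, and girth are identical in the two bundles there. Flaring of $f$ thus transfers from $\til{M_{ab}}$ to $\til{M_H}$ with the same constants, verifying the hallways flare condition for $P\colon\til{M_H}\to\Gamma_H$ uniformly in $K,\ep$. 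Invoking Theorem \ref{effectiveBF} yields the required $\delta=\delta(K,\ep)$. The step I expect to require the most care is this last transfer: one must confirm that the data of a hallway (fibers, $\rho_0$-thinness of connecting arcs, $\rho_0$-qi-section property of the boundary paths) is intrinsic to the bundle structure over the finite segment $P\circ f([-m,m])$ and does not depend on whether the segment is viewed as a piece of $\Gamma_H$ or of $(a,b)$---in effect, checking that the two descriptions of the bounded geometry universal bundle over $\Phi$ of this segment agree up to fiber-preserving isometry, so that flaring proved in one yields flaring in the other.
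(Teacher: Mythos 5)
Your proposal is correct and follows essentially the same route as the paper's proof: uniform hyperbolicity of $\til{M_{ab}}$ over bi-infinite geodesics via the thick Minsky model (Corollary \ref{thickTeichimpliesthick3}), the converse combination theorem (Corollary \ref{effectiveflare}, suitably quasified since $\Phi((a,b))$ is only a quasigeodesic) to extract uniform flaring constants, and the forward direction of Theorem \ref{effectiveBF} over $\Gamma_H$. The only difference is that you make explicit the transfer of hallways from $\til{M_H}$ into the sub-bundles $\til{M_{ab}}$ (every segment of the tree extends to a bi-infinite geodesic), a step the paper leaves implicit.
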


\begin{proof} For $a, b \in \partial \Gamma_H \subset \partial \Teich(S)$, let $l_{ab}$ denote the Teichm\"uller geodesics joining $a, b$ and let $(a,b)$ denote the bi-infinite geodesic in $\Gamma_H$ joining $a, b$. By $K-$convex cocompactness and $\ep-$thickness, there exists $e'$ such that $l_{ab}$   lies in the $e'-$thick part of Teichm\"uller space for all $a, b \in \partial H$.
	Let $M_{ab}$ denote the universal curve  over $\Phi ((a,b))$.
	Then by Corollary \ref{thickTeichimpliesthick3}, there exists $\delta'$ such that  $\til M_{ab}$ is  $\delta'-$hyperbolic.
	
	Let $P: \til M_{ab} \to  (a,b)$ denote the natural projection. 
	By Lemma \ref{qisecexists}, there exists $\rho_0$ such that through every point of $\til M_{ab}$ there exists a $\rho_0-$qi-section of $P$.
	By (a straightforward quasification of) Corollary \ref{effectiveflare}, there exist $\lambda_0, m_0, H_0$ (depending only on $K, \epsilon > 0$), such that the
	 universal cover $\til {M_{{ab}}}$ of the universal curve $M$ over $\Phi ((a,b))$
	 satisfies  the flaring condition with $\lambda \geq \lambda_0$, $m \leq m_0$ and $H \leq H_0$ with respect to $\rho_0-$qi-section bounded hallways.
	
	Hence,  by (the forward part of) Theorem \ref{effectiveBF}, there exists $\delta >0$ depending only on  $\lambda_0, m_0, \rho_0$ (and hence only on $K, \epsilon > 0$) such that  $\til M_H$ is $\delta-$hyperbolic. 
	\end{proof}
	
	\begin{rmk}\label{thickbdlhyprmk}
	We note here that Proposition \ref{thickbdlhyp} and its proof  go through if $\Gamma_H$ is replaced by any convex subset of $\Gamma_H$, i.e.\ by a connected sub-tree of $\Gamma_H$. All we need to do is assume that the image of the convex subset (instead of the image of the whole Cayley graph) is $K-$quasiconvex  and that it lies in the $e-$thick part of $Teich(S)$. 
	\end{rmk}

\subsection{Thick Minsky model: Cusped case}\label{sec-thickcusps} We describe now a relative version of Proposition \ref{thickbdlhyp} when $S$ has cusps. Though we shall not need it directly, we provide a statement and a sketch, 
as the proof is a fairly straightforward combination of
\cite[Proposition 5.17]{mahan-sardar} and the proof of Proposition \ref{thickbdlhyp} above. 
We  now state the quantitative version of \cite[Proposition 5.17]{mahan-sardar}:

\begin{prop} \label{coco} Given $K, e$, there exists $\delta$ such that the following holds:\\
	Let $N=\pi_1(S)$  be the fundamental group of a surface $S(=S_{g,n})$ with $n$ punctures.
	Let $N_1, \cdots, N_n$ be the cyclic peripheral subgroups.  Let $H$ be a $K-$convex cocompact subgroup of the  
	{\em pure} mapping class group of $S$ having an orbit $H.o \subset Teich_e(S)$. Let
	$$
	1\rightarrow N \rightarrow G\stackrel{p}{\rightarrow}H\rightarrow 1
	$$ be the induced exact sequence.
	The action of $H$ centralizes each $N_i$.
	Let
	$$
	1\rightarrow N_i\rightarrow Z_G(N_i) \stackrel{p}{\rightarrow} H \rightarrow 1,
	$$
	be the induced short exact sequences of peripheral  groups, where $Z_G(N_i) = N_i \times H$ denotes the normalizer (equal to the centralizer) of $N_i$ in $G$.
	Then
	$G$ is  strongly $\delta-$hyperbolic  relative to the collection $\{ N_G(K_i)\}, i=1, \cdots, n$. 
	
	Conversely, if $G$ is  (strongly) hyperbolic  relative to the collection $\{ N_G(N_i)\}, i=1, \cdots, n$,
	then $H$ is convex-cocompact.  \end{prop}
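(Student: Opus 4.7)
The plan is to adapt Scheme \ref{scheme-thick} to the cusped setting, replacing Theorem \ref{effectiveBF} by its relative analogue Theorem \ref{effectiverelBF} and Corollary \ref{minsky-elcbddgeocor} by its extension to punctured surfaces. The latter is already indicated in the paragraphs following Theorem \ref{minsky-elcbddgeo}, using pleated surfaces with boundary and Lott's upgrade from immersed to embedded surfaces.

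For the forward direction, I would first observe that for every pair $a,b \in \partial H \subset \partial \Teich(S)$, the $K$-quasiconvexity of $H.o$ in $\Teich_e(S)$ forces the Teichmüller geodesic $l_{ab}$ to lie in some $\Teich_{e'}(S)$ with $e'=e'(K,e)$. Next, by the cusped version of Theorem \ref{minsky-elcbddgeo}, the universal curve $M_{ab}$ over $l_{ab}$ with standard horocuspidal neighborhoods excised is uniformly bi-Lipschitz to the complement of the Margulis cusps in the unique doubly degenerate hyperbolic 3-manifold $N_{ab}$ with ending laminations $a,b$. Consequently $\til M_{ab}$ is uniformly strongly $\delta_0$-hyperbolic relative to the collection of lifts of the boundary annuli $S^1 \times l_{ab}$, with $\delta_0$ depending only on $K,e$. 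Applying the converse part of Theorem \ref{effectiverelBF} together with the partial electrification machinery of Section \ref{sec-effrh}, one extracts uniform constants $\lambda_0, m_0, H_0, \rho_0$ for both the ordinary hallways flare condition on $\rho_0$-qi-section bounded hallways and the cone-bounded hallways strictly flare condition for the bundle $\til M_{ab} \to (a,b)$.

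With these uniform flaring constants in hand, I would assemble the bundle $M_H$ over the Cayley graph $\Gamma_H$ as in Construction \ref{thickbdlcoco} and feed the constants into the forward direction of Theorem \ref{effectiverelBF}. The hypothesis that $H$ lies in the \emph{pure} mapping class group is used precisely at this step: it guarantees that $H$ centralizes each $N_i$ and preserves each puncture setwise, so that the strictly type-preserving and qi-preserving electrification conditions of Definition \ref{def-treerh} hold automatically, and the horocusp subbundles glue coherently into tree-of-horosphere-like subsets indexed by cosets of $Z_G(N_i) = N_i\times H$. Theorem \ref{effectiverelBF} then yields some $\delta = \delta(K,e)$ such that $\til M_H$ is $\delta$-strongly hyperbolic relative to the lifts of these subbundles. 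A quasi-isometry between $\til M_H$ and $G$ (with the cusp subbundles corresponding to cosets of $Z_G(N_i)$) transports this relative hyperbolicity to $G$ itself. The converse direction is essentially \cite[Proposition 5.17]{mahan-sardar}: electrifying the cusp subbundles in $\til M_H$ and projecting to $\ccs$ yields a qi-embedding of $H$ into $\ccs$, which is equivalent to convex cocompactness by the characterization recalled in the introduction.

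The main obstacle is ensuring that the converse of Theorem \ref{effectiverelBF} is genuinely effective at the level of a single bi-infinite $(a,b)$: one must extract uniform cone-bounded flaring constants from the relative hyperbolicity of $\til M_{ab}$, rather than merely from Gromov hyperbolicity of its coned-off version. This requires a careful application of Lemma \ref{pel-track} to control the horoball penetration of quasigeodesics joining the endpoints of qi-section bounded hallways, together with the flaring-converse argument of \cite[Section 5.3]{mahan-sardar} transcribed to the relatively hyperbolic setting in the spirit of \cite{gautero,mahan-reeves}. Once this single-geodesic estimate is established uniformly in $a,b$, the rest of the proof is formal.
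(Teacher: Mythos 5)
Your overall strategy is exactly the one the paper intends: the paper gives no detailed proof of Proposition \ref{coco}, stating it as the quantitative version of \cite[Proposition 5.17]{mahan-sardar} whose proof is ``a fairly straightforward combination'' of that result with the proof of Proposition \ref{thickbdlhyp}, and then carries out precisely your electrify-the-cusps, uniform-flaring, relative-combination-theorem argument in detail for the free case as Corollary \ref{thickbdlrelhyp}. Your identification of where pureness of $H$ enters (coherence of the strictly type-preserving condition and the product structure $Z_G(N_i)=N_i\times H$) and your flagging of the effectivity of the converse flaring extraction are both on target.

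One mismatch to fix: Proposition \ref{coco} is stated for an arbitrary $K$-convex cocompact subgroup $H$ of the pure mapping class group, with no freeness hypothesis, whereas your argument routes through Construction \ref{thickbdlcoco} and the combination theorems of Section \ref{sec:prelims}, all of which are formulated for bundles over \emph{trees} (equivalently, free $H$). As written, your proof therefore establishes only the free case, i.e.\ essentially Corollary \ref{thickbdlrelhyp}. For general $H$ the base is the Cayley graph of a hyperbolic group rather than a tree, and one must instead invoke the metric (graph) bundle combination theorem and its converse over a general hyperbolic base from \cite{mahan-sardar} --- which is exactly what the paper's citation of \cite[Proposition 5.17]{mahan-sardar} supplies. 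The rest of your argument (uniform thickness of $l_{ab}$, the cusped Minsky model, uniform relative hyperbolicity of $\til M_{ab}$, and the quasi-isometry from $\til M_H$ to $G$ matching cusp subbundles with cosets of $Z_G(N_i)$) transfers verbatim once that substitution is made.
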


We now specialize to our case of interest, where $H$ is free:

\begin{cor} Let  $S(=S_{g,n})$ be as in Proposition \ref{coco}. Given $K, e \geq 0$, there exists $\delta >0$ such that the following holds:\\
	Let $H$ be a free $K-$convex cocompact subgroup and let $o \in Teich(S)$ with $H.o \subset Teich_e(S)$. Let $\Gamma_H$ be a Cayley graph of $H$ with respect to a free generating set and $\Phi: \Gamma_H \to Teich(S)$ be as in Construction \ref{thickbdlcoco}.
	Let $M_H$ be the universal bundle  over $\Phi (\Gamma_H)$ (equipped with the universal bundle metric as before) with a neighborhood of the cusps removed.
	Let $S_0$ denote $S$ with the corresponding neighborhoods of the $n$ punctures removed.
	Let $\PP_0$ denote the connected components of $\partial S_0 \times \Phi( \Gamma_H)$. Let $\PP$ denote the collection of lifts of $P_0 \in \PP_0$ to   the universal cover $\til{M_H}$. Then $\til{M_H}$ is strongly $\delta-$hyperbolic relative to the collection $\PP$.
	\label{thickbdlrelhyp}
\end{cor}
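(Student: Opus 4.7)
The plan is to adapt the strategy of Proposition \ref{thickbdlhyp} to the relative setting by replacing Theorem \ref{effectiveBF} with the effective relatively hyperbolic combination Theorem \ref{effectiverelBF}. The bundle $\til{M_H} \to \Gamma_H$ is naturally a tree of relatively hyperbolic spaces in the sense of Definition \ref{def-treerh}: each vertex fiber is a copy of $\til S_0$ (the universal cover of the surface minus cusp neighborhoods), which is strongly hyperbolic relative to its collection of horocycles. Because $H$ lies in the \emph{pure} mapping class group, the $n$ peripheral subgroups $N_1, \dots, N_n$ are preserved individually, so the horocycles assemble coherently along edges of $\Gamma_H$. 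This purity is exactly what makes the maps $f_{e,v}$ strictly type-preserving and the induced electrified edge-to-vertex maps uniform quasi-isometries, giving conditions (5) and (7) of Definition \ref{def-treerh} with constants depending only on the topology of $S$.

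First, I would treat restrictions to bi-infinite geodesics in $\Gamma_H$. For $a, b \in \partial H$ and the bi-infinite geodesic $(a,b) \subset \Gamma_H$, Construction \ref{thickbdlcoco} and $K$-convex cocompactness plus $e$-thickness of $H.o$ together guarantee that $\Phi((a,b))$ shadows an $e'$-thick Teichm\"uller geodesic $l_{ab}$ joining $a, b$, with $e'=e'(K,e)$. The universal bundle $M_{ab}$ with cusp neighborhoods removed is then uniformly bi-Lipschitz to the cusp-excised doubly degenerate hyperbolic manifold $N(a,b)$ via Theorem \ref{minsky-elcbddgeo} in its punctured form (see the paragraphs following Theorem \ref{minsky-elcbddgeo}). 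In particular, $\til M_{ab}$ with the horoball cusps re-attached is isometric to a convex subset of $\Hyp^3$, so $\til M_{ab}$ is strongly $\delta'$-hyperbolic relative to the collection of lifted peripheral cylinders, with $\delta'$ depending only on $K, e$ and the topology of $S$.

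Next, I would extract uniform combinatorial constants. By Lemma \ref{qisecexists} (or its straightforward adaptation to the cusped setting), through every point of $\til M_{ab}$ there passes a $\rho_0$-qi-section of $P: \til M_{ab} \to (a,b)$, with $\rho_0$ depending only on $g$ and $n$. The converse direction of the effective combination theorem, in its relatively hyperbolic form (Corollary \ref{effectiveBFreversetmtdt} combined with Proposition \ref{relhypconvtmt} adapted to cusps, or alternatively \cite[Theorem 2]{gautero-conv} and \cite{mahan-reeves}), then produces uniform constants $\lambda_0 > 1, m_0, H_0$ such that (i) $\rho_0$-qi-section bounded hallways in $\til M_{ab}$ satisfy the hallways flare condition, and (ii) cone-bounded hallways along peripheral qi-sections strictly flare. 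The point for (ii) is that in the Minsky model, a horocycle in a fiber $\til S_0$ is exponentially distorted in $\til M_{ab}$ as one moves along the Teichm\"uller direction, which is precisely the cone-bounded flaring statement in the coned-off geometry.

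Finally, I would assemble the pieces over all of $\Gamma_H$. Since the above constants $\rho_0, \lambda_0, m_0, H_0, \delta'$ are \emph{uniform} over the bi-infinite geodesics $(a,b) \subset \Gamma_H$ (depending only on $K, e$, and the topology of $S$), they persist along the whole tree $\Gamma_H$: every $\rho_0$-qi-section bounded hallway and every cone-bounded hallway in $\til M_H$ projects into some bi-infinite geodesic $(a,b)$ and therefore enjoys the same flaring estimates. Together with the strict type-preservation and qi-preserving electrification observed at the outset, the hypotheses of Theorem \ref{effectiverelBF} are in force, yielding a single $\delta = \delta(K,e)$ such that $\til{M_H}$ is strongly $\delta$-hyperbolic relative to the collection $\PP$ of lifts of peripheral cylinders. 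The main obstacle I expect is condition (7) of Definition \ref{def-treerh}, the qi-preserving electrification condition; handling this rigorously is the place where purity of $H$ is used crucially, and where one must be careful that the uniform quasi-isometry constants depend only on the topology of $S$ and not on the individual mapping classes generating $H$.
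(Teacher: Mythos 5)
Your proposal is correct and follows essentially the same route as the paper: establish uniform (relative) flaring for the bundles over bi-infinite geodesics in $\Gamma_H$ — the paper does this by electrifying the cusps and invoking the argument of Proposition \ref{coco} together with Corollary \ref{effectiveflare}, which is the same mechanism as your use of the cusped Minsky model plus the converse combination theorem — and then apply the forward direction of Theorem \ref{effectiverelBF} to the whole tree. Your extra care with conditions (5) and (7) of Definition \ref{def-treerh} via purity of $H$ is consistent with the paper's setup and does not constitute a different approach.
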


\begin{proof}
We sketch a proof of the Corollary  carrying forward the notation from Proposition \ref{thickbdlhyp}.  First, by $K-$convex cocompactness, the universal curves $M_{ab}$ have systole bounded below by some $\ep' (= \ep'(K, \ep))$. 
Next, electrify the cusps of $S$. This gives us 
\begin{enumerate}
\item A tree of spaces where all the vertex and edge spaces are quasi-isometric to $(\til S, d_e)$ with electrified horocycle boundary.
\item The universal cover $(\til M_{ab}, d_{pel})$ of the universal curve $M_{ab}$  over $\Phi ((a,b))$  is consequently equipped with the {\it partially electrified} metric $d_{pel}$.
\end{enumerate} 

As in the proof of \cite[Proposition 5.17]{mahan-sardar} (cf.\ Proposition \ref{coco}
above) and Corollary \ref{effectiveflare}, the resulting tree of spaces satisfies a uniform flaring condition, i.e.\ there exist $ \lambda_0, m_0, \rho_0, H_0$ (depending only on $K, \epsilon > 0$), such that  $(\til M_{ab}, d_{pel})$ 
satisfies  $( \lambda, m, \rho)-$flaring with  $\lambda \geq \lambda_0$, $m \leq m_0$ and $\rho \leq \rho_0, H \leq H_0$.

	Hence,  by (the forward part of) Theorem \ref{effectiverelBF}, there exists $\delta >0$ depending only on $ \lambda_0, n_0, \rho_0$ such that  $\til M_H$ is strongly $\delta-$hyperbolic relative to the collection $\PP$. 
\end{proof}

\subsection{Uniform hyperbolicity of $\tmtdt$}\label{unifhypcx} 
We are now in a position to prove Theorem \ref{effectivehyptmtdt}. 
Starting with a balanced tree $i: \vT \to \ccd (S)$, let $\but$ denote the blown up tree. For $l\subset T$  a bi-infinite geodesic, $BU(l)$ will denote its blow-up in $\but$. The end-points of  $BU(l)$ in $\EL(S) =\partial \CC(S)$ will be denoted by $l_\pm$. We remind the reader of Standing Assumption \ref{assumption} about $L-$tight trees.

\begin{scheme}\label{scheme-split} {\rm We now outline the steps of the proof of Theorem \ref{effectivehyptmtdt} and the modifications  to Scheme \ref{scheme-thick} that we require.
\begin{enumerate}
	\item[Step 1:] Let $(M_l, \dw)$ (resp. $(M_l, \dt)$) denote the bundle $P: (M_T,\dw) \to \but$ (resp. $P: (M_T,\dt) \to \but$) restricted to $P: P^{-1}(BU(l)) \to BU(l)$. Let $\MM_l$ denote the  collection of intersections of Margulis risers with $M_l$.
	By Remark \ref{rmk-interval} and Theorem \ref{model-str}, $(M_l, \dw)$ (resp. $(M_l, \dt)$) is precisely the model metric obtained from the welded split blocks of Definition \ref{weld} (resp. the tube-electrified split blocks of Definition \ref{tubeel}).
	Note also that $\MM_l$ consists precisely of the welded annuli in $P^{-1}(BU(l))$.
	Let $\til{\MM_l}$ denote the collection of lifts of $\MM_l$ to the universal cover $\tmldw$.
	Theorem \ref{minskymodel} below will show that\\
	a) $\tmldw$ is (uniformly) strongly  hyperbolic relative to  the collection $\til{\MM_l}$, and\\
	b) By Lemma \ref{pel}, $\tmldt$ is (uniformly) hyperbolic and the elements of $(\til{\MM_l},\dt)$ are uniformly quasiconvex in it.
	\item[Step 2:] The converse direction of the combination theorem in this context, Corollary \ref{effectiveBFreversetmtdt} then furnishes effective flaring constants for $\tmldt$. Feeding these  effective flaring constants into the bundle $P: \tmtdt\to\but$   furnishes (effective) hyperbolicity of $\tmtdt$ by Corollary \ref{effectiveBFfwdtmtdt} and proves the first conclusion of Theorem
	\ref{effectivehyptmtdt}.
	\item[Step 3:]  Finally we extract effective  hyperbolicity of $\tmtdw$ {\it relative} to the collection $\tmr$ of lifts of Margulis risers
	and prove the second conclusion of Theorem
	\ref{effectivehyptmtdt}.
\end{enumerate}}
\end{scheme}

\noindent {\bf Step 1:} \\
For $BU(l)$  a blown-up bi-infinite geodesic in $\but$   with ending laminations $l_\pm$,  let $N_l$ be the doubly degenerate hyperbolic 3-manifold with end-invariants $l_\pm$. Theorem \ref{midsurfminskymodel}
and Theorem \ref{model-str} yield the following as a consequence.

\begin{theorem}\label{minskymodel}  \cite{minsky-elc1,minsky-elc2} Given $R, k, D_0 \geq 0$, there exists $\ep, D, C_0$ such that the following holds:\\ Let $T$ be an $L-$tight $R-$thick balanced tree with parameters $D_0, k$ and let	 $ (M_l, \dw), (M_l, \dt)$ be as above. Then
\begin{enumerate}
\item There exist   model manifolds  $M_l^m$ of special split geometry with constants $\ep, D>0$ as in Definition \ref{splsplit}.
such that $M_l^m$ is $C_0-$bi-Lipschitz homeomorphic to $N_l$.
\item The welded metrics and tube-electrified metrics  of Definitions \ref{weld}, \ref{tubeel}, \ref{def-mg} associated with  $M_l^m$ are 
$C_0-$bi-Lipschitz homeomorphic to $(M_l, \dw), (M_l, \dt)$ respectively.
\end{enumerate}
\end{theorem}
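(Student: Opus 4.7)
The plan is to deduce Theorem \ref{minskymodel} by first certifying that $N_l$ has special split geometry in the sense of Definition \ref{def-splsplitcombin}, then invoking Minsky's model theorem together with the construction of $(M_l,\dw)$ from Theorem \ref{model-str}. The two parts will be handled in sequence, with essentially no new computations beyond those already encapsulated in the results cited.

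\textbf{Step 1 (Verifying special split geometry of $N_l$).} Let $i(l) \subset \CC(S)$ be the tight geodesic obtained as the image of the bi-infinite geodesic $l$ in $T$; its ideal endpoints are $l_\pm \in \EL(S)$ by Klarreich's theorem. Since $T$ is $L$-tight, for every vertex (simplex) $v$ of $l$ and every component $Y$ of $S\setminus i(v)$, the distance $d_Y(i(v_-), i(v_+)) \geq L$ between the neighboring simplices is large; by Behrstock-type arguments and the Bounded Geodesic Image Theorem (Theorem \ref{bgit}), this propagates to $d_Y(l_+, l_-) \geq L - O(1)$, giving condition (1) of Definition \ref{def-splsplitcombin}. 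Since $T$ is $R$-thick in the sense of Definition \ref{def-rthick}, any proper essential subsurface $W$ that is \emph{not} a principal component domain has bounded subsurface projections along the hierarchy subordinate to $i(l)$; feeding this through the hierarchy machinery yields $d_W(l_+, l_-) \leq R'$ for some $R' = R'(R)$, giving condition (2). Thus $N_l$ is of special split geometry with parameters depending only on $L$ and $R$, and Proposition \ref{splsplit} supplies the split block decomposition with constants $\ep, D$ depending only on $R$.

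\textbf{Step 2 (Bi-Lipschitz equivalence of $M_l^m$ and $N_l$).} Construct $M_l^m$ as the model manifold assembled from the split blocks, splitting tubes, and split surfaces furnished by Proposition \ref{splsplit}, following the recipe of \cite{mahan-split}. The bi-Lipschitz identification $M_l^m \simeq N_l$ is then a direct application of the Brock–Canary–Minsky model theorem \cite{minsky-elc2} combined with the dictionary of \cite{minsky-bddgeom} interpolating the combinatorial and the special split geometry models, whose constants depend only on the topology of $S$ together with the special split parameters $L, R$. This yields the constant $C_0$ and establishes Item (1).

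\textbf{Step 3 (Identification of welded and tube-electrified metrics).} For Item (2), we compare the metric on $M_l$ inherited from $\mtdw$ (via $P^{-1}(BU(l))$) with the welded model metric ${d_{weld}}$ on $M_l^m$. The key input is Theorem \ref{midsurfminskymodel}: the mid-surfaces $S_{vw}$ of $\BU(l)$ are coarsely well-defined points of $\Teich_\ep(S)$ that agree (up to bounded distance) with the split surfaces of the Minsky split block decomposition. Consequently, for each vertex $v \in l$, Lemma \ref{bb-deg-tree} and Corollary \ref{subordinatehierarchysmall} show that the geometric building block $(M_v, d_v)$ of Definition \ref{def-geometricbb} restricted to $T_v(l)$ is uniformly bi-Lipschitz to the corresponding welded split block of Definition \ref{weld} (the Margulis riser $S^1_e \times T_v(l)$ matches the welded standard annulus by Proposition \ref{length=link}). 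Since both metrics are assembled by gluing these building blocks along bounded-geometry mid-surfaces, the bi-Lipschitz equivalence extends to $(M_l, \dw) \simeq (M_l^m, {d_{weld}})$. Applying the tube-electrification procedure (Definitions \ref{tubeel} and \ref{def:tubeelbb}) fiberwise preserves bi-Lipschitz equivalence of pseudometrics and yields the corresponding statement for $\dt$.

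\textbf{Main obstacle.} The only genuine content not already black-boxed by the cited results is the bookkeeping in Step 3: ensuring that the welding maps $\Phi_i^\pm$ and heights $l_i$ of Proposition \ref{splsplit} agree, up to uniformly bounded distortion, with the gluing data encoded in the tree-links $T_v(l)$ and their tree metrics. This is precisely what Proposition \ref{length=link} provides, at the cost of allowing a uniform additive constant $c_0$ in the comparison of heights; absorbing $c_0$ into the bi-Lipschitz constant $C_0$ completes the argument.
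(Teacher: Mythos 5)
Your proposal is correct and follows essentially the same route as the paper, which derives Theorem \ref{minskymodel} directly as a consequence of Theorem \ref{midsurfminskymodel} and Theorem \ref{model-str} (with Proposition \ref{splsplit}, Remark \ref{rmk-splsplit}, Lemma \ref{bb-deg-tree} and Proposition \ref{length=link} supplying the block-by-block comparison you carry out in Step 3). Your Steps 1--2 are an unpacking of Theorem \ref{midsurfminskymodel}, and your identification of the height-matching via Proposition \ref{length=link} (with the additive constant $c_0$ absorbed using the lower bound $L$) as the only non-black-boxed content is exactly right.
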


Since Margulis tubes are convex in any $N_l$ and uniformly separated from each other, it follows (see \cite{bowditch-relhyp} for instance) that  there exists $\delta_0$ such that
\begin{enumerate}
\item $\til{N_l}$ is  uniformly hyperbolic for all $l$ (since $\til{N_l}=\Hyp^3$),
\item  $\til{N_l}$ is strongly $\delta_0-$hyperbolic relative to the collection $\MM_l$ of lifts $\til T$ of Margulis tubes to $\til{N_l}$.
\end{enumerate}

Let $\partial \MM_l$ denote the collection of boundaries $\{ \partial \til{T} \vert  \til{T} \in \MM_l\}$, and let $Int(\MM_l) = \{ Int( \til{T}) \vert  \til{T} \in \MM_l \}$. Let $\til{N_l^0} = \til{N_l} \setminus \bigcup_{ Int( \til{T}) \in Int(\MM_l)}  Int( \til{T})$. 
By strong $\delta_0-$hyperbolicity of  $\til{N_l}$  relative to the collection $\MM_l$, it follows that $\til{N_l^0}$ is strongly $\delta_0-$hyperbolic relative to the collection $\partial \MM_l$. 

 Next, consider a standard annulus  isometric to $S^1 \times [0,l_i]$ in a welded block $B_{i,wel}$ (see Definition \ref{weld}) and let $f_i: \partial T_i \to S^1 \times [0,l_i]$ be the quotienting map defined in Definition \ref{weld}. Let $\til{f_i} :  \partial \til{T_i} \to \til{S^1} \times [0,l_i]$ be lifts of $f_i$ to $\til{N_l^0}$ Further, assume that $\til{S^1} \times [0,l_i]$ has been tube-electrified, by assigning the zero metric to the $\til{S^1}-$direction (note that $\til{S_i}$ is the real line  $\R$), so that after this tube-electrification operation, we obtain the universal cover $\til{N_{l,te}}$ of the tube-electrified model manifold $N_{l,te}$. Since the maps $\til{f_i} :  \partial \til{T_i} \to \til{S^1} \times [0,l_i]$ are clearly 1-Lipschitz, we 
have the following  by Lemma \ref{pel}, Theorem \ref{minskymodel} and
Proposition \ref{effectiveqc}:

\begin{prop}\label{telunifhyp}
{\bf Tube-electrified models are uniformly hyperbolic:} Given $R, k, D_0 \geq 0$, there exist $\delta', C \geq 0$ such that the following holds:\\ Let $T$ be an $L-$tight $R-$thick balanced tree with parameters $D_0, k$. For $BU(l)$ as before and $(M_l, \dw), (M_l, \dt)$ as in Theorem \ref{minskymodel},  the universal cover $(\til{M_l}, d_{te})$ of the tube-electrified model manifold $({M_l}, d_{te})$
is a $\delta'-$hyperbolic metric space. Further, each tube-electrified standard annulus
(or equivalently, each tube-electrified Margulis tube) in  $\MM_l$  equipped with $\dt$
is $C-$quasiconvex. 
\end{prop}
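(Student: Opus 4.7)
The plan is to realize $(\til{M_l}, \dt)$ as a partially electrified space to which Lemma \ref{pel} applies, with the input relatively hyperbolic structure on the welded model coming from hyperbolic $3$-manifold geometry via Theorem \ref{minskymodel}. First, I would transfer the relative hyperbolicity of $\til{N_l}=\Hyp^3$ to $(\til{M_l},\dw)$. Since Margulis tubes in $N_l$ are uniformly convex and uniformly separated, $\til{N_l}$ is strongly $\delta_0$-hyperbolic relative to the collection of lifts of Margulis tubes for some universal $\delta_0$. Removing their interiors, $\til{N_l^0}$ is strongly $\delta_0$-hyperbolic relative to the collection $\partial\MM_l$ of lifted boundary tori. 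By Theorem \ref{minskymodel}, there is a $C_0$-bi-Lipschitz homeomorphism between $\til{N_l^0}$ and the corresponding piece of $(\til{M_l},\dw)$, with $C_0$ depending only on $R, k, D_0$. The welding construction of Definition \ref{weld} identifies boundary tori in pairs via the maps $\til{f_i}$, which are $1$-Lipschitz with image the standard annuli $\til{S^1}\times [0,l_i]$, so that $(\til{M_l},\dw)$ inherits a uniform strong relative hyperbolicity with respect to the collection $\til{\MM_l}$ of lifts of Margulis risers, each isometric to a flat strip $\R\times [0,l_i]$.

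Next, I would set up the quadruple for Lemma \ref{pel} as follows: take $X=(\til{M_l},\dw)$ and $\HH=\til{\MM_l}$; for each $H_\alpha\in\til{\MM_l}$ isometric to $\R\times[0,l_i]$, let $L_\alpha=[0,l_i]$ with the usual interval metric, and let $g_\alpha:H_\alpha\to L_\alpha$ be projection onto the second factor. Each $L_\alpha$ is a tree and hence $0$-hyperbolic, and each $g_\alpha$ is $1$-Lipschitz. Lemma \ref{pel} then yields constants $\delta',C$, depending only on $\delta_0$ and the welding data, such that $(\PEX,d_{pel})$ is $\delta'$-hyperbolic and each $L_\alpha$ is $C$-quasiconvex in it.

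The final step is the identification of metrics. Comparing the mapping cylinder construction in Definition \ref{pex} with the tube-electrified metrics in Definitions \ref{tubeel} and \ref{def:tubeelbb}, the space $(\PEX,d_{pel})$ coincides up to a uniform bi-Lipschitz equivalence with $(\til{M_l},\dt)$: collapsing each $H_\alpha\cong \R\times[0,l_i]$ along the first factor through the mapping cylinder onto $[0,l_i]$ is exactly the same as replacing the $\R$-direction by the zero pseudometric, which is the definition of tube-electrification. Hence $(\til{M_l},\dt)$ is uniformly $\delta'$-hyperbolic and each tube-electrified standard annulus is uniformly $C$-quasiconvex, giving both conclusions of the proposition.

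The hard part is the careful bookkeeping in this metric identification: the welded Margulis risers are not ordinary subsets of $N_l$ but are produced by a combined quotient on the $S^1$-direction (collapsing the top and bottom annuli of $\tube_i$) together with side-annulus identifications, so the uniform bi-Lipschitz constants must be tracked through these operations using Theorem \ref{minskymodel}, and Proposition \ref{effectiveqc} can be invoked if needed to upgrade fiberwise quasiconvexity of $L_\alpha$ to global quasiconvexity with uniform constants. Once these metric comparisons are in place, the hyperbolicity and quasiconvexity statements follow immediately from Lemma \ref{pel}.
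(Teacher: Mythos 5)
Your overall skeleton — uniform relative hyperbolicity coming from convexity of Margulis tubes in $\til{N_l}=\Hyp^3$, the bi-Lipschitz model comparison of Theorem \ref{minskymodel}, a single application of Lemma \ref{pel} with $1$-Lipschitz electrification maps, and a final coarse identification with $(\til{M_l},\dt)$ — is the same as the paper's. But your first step contains a genuine gap. You claim that because the welding identifications $\til{f_i}$ are $1$-Lipschitz, $\tmldw$ "inherits" uniform strong relative hyperbolicity with respect to the collection $\til{\MM_l}$ of flat strips $\R\times[0,l_i]$, and you then feed this into Lemma \ref{pel} as the hypothesis on the pair $(X,\HH)$. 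This inheritance is not justified by anything you cite, and it is not a formal consequence of $1$-Lipschitzness: collapsing the peripheral tube boundaries onto much smaller flat strips drastically changes the coarse geometry, the strips have widths $l_i$ that are \emph{not} uniformly bounded ($R$-thickness controls projections to non-principal domains only, so $l_i\approx d_{\CC(S\setminus i(v_i))}(v_{i-1},v_{i+1})$ can be arbitrarily large), and consequently the strips are not even uniformly hyperbolic. In fact the statement you are assuming is essentially the second (and harder) conclusion of the paper's main theorem, restricted to a line: strong relative hyperbolicity of $\tmldw$ (resp. $\tmtdw$) relative to the risers is only established in Step 3 (Proposition \ref{thinbdlrelhyp}), via Gautero's machinery, \emph{after} the hyperbolicity of the tube-electrified space — i.e., after the very proposition you are proving — has been obtained. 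As written, your route is therefore circular, or at best rests on an unproved intermediate theorem.

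The repair is to apply Lemma \ref{pel} one step earlier, which is what the paper does: take $X=\til{N_l^0}$ (equivalently its $C_0$-bi-Lipschitz copy inside the split-geometry model, via Theorem \ref{minskymodel}), with $\HH=\partial\MM_l$ the lifted tube boundaries; strong $\delta_0$-hyperbolicity of $X$ relative to $\HH$ holds for elementary reasons (Margulis tubes are convex and uniformly separated in $\Hyp^3$), with $\delta_0$ independent of $l$. For the targets take $L_\alpha$ to be the \emph{tube-electrified} standard annuli, i.e.\ $\til{S^1}\times[0,l_i]$ with the zero pseudometric in the $\til{S^1}$-direction (so each $L_\alpha$ is $0$-hyperbolic, being an interval), and take $g_\alpha$ to be the lifted welding map $\til{f_i}$ followed by this electrification, which is $1$-Lipschitz. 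Lemma \ref{pel} then gives uniform $\delta'$-hyperbolicity of the partially electrified space and uniform $C$-quasiconvexity of the $L_\alpha$, and the comparison with $(\til{M_l},\dt)$ (your final bookkeeping paragraph, together with Theorem \ref{minskymodel} and Proposition \ref{effectiveqc}) goes through as you describe. This one-step electrification avoids ever needing relative hyperbolicity of the welded model with respect to the flat strips.
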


\noindent {\bf Alternate Proof:}
We furnish here an alternate proof of Proposition \ref{telunifhyp}. 
We use the notation of Proposition \ref{splsplit}. 

Recall that each special split block $B_i$ has injectivity radius bounded below by $\epsilon>0$ away from the Margulis tube $\T_i$.  Recall also that the core curve of $\T_i$ is denoted as $\tau_i$. Hence, there exists $K\geq 1$ (independent of $i$), such that $B_i \setminus \T_i$ is $K-$bi-Lipschitz to the thick part of the universal curve (i.e.\ the universal bundle minus a neighborhood of the cusps) over a thick Teichm\"uller geodesic segment $\gamma_i$ in $Teich_\ep(S \setminus \tau_i)$ for some uniform $\ep >0$. 
Then, due to uniform thickness of  Teichm\"uller geodesic segments $\gamma_i$, the bundle $(\til{M}, d_{te})$ satisfies flaring conditions with uniformly bounded constants. Strong relative hyperbolicity of $(\til{M}, d_{te})$ relative to $\MM_l$ now follows from Theorem \ref{effectiverelBF}. \hfill $\Box$
 
\smallskip 

\begin{rmk}
In applications we have in mind, especially \cite{mms}, the full strength of the model from Theorem \ref{minskymodel} used in the first proof of Proposition \ref{telunifhyp} becomes relevant. We have thus included two proofs, even though the alternate proof above does not use the full ending laminations machinery of \cite{minsky-elc1,minsky-elc2}.
\end{rmk}

 This completes Step 1 of Scheme \ref{scheme-split}.\\
 
\noindent {\bf Step 2:} \\
  Effective hyperbolicity of $(\til{M_T}, d_{te})$ now follows the same route as the proof of Proposition \ref{thickbdlhyp} (see also Remark \ref{thickbdlhyprmk} and Proposition \ref{effectiveqc}).

\begin{prop} Given $R>0$, there exists $\delta, C >0$ such that the following holds:\\ Let $T$ be an $L-$tight $R-$thick tree. Then $\tmtdt$ is $\delta-$hyperbolic.
	
	Further, each element of the set of Margulis risers $\tmr$ is
	 $C-$quasiconvex in $\tmtdt$. 
	\label{thinbdlhyp}
\end{prop}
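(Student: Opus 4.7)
The plan is to execute Step 2 of Scheme \ref{scheme-split} in a quantitative fashion, using the ``line-by-line'' hyperbolicity from Proposition \ref{telunifhyp} to extract uniform flaring data and then feeding it back into the bundle $P:\tmtdt\to\but$ via Corollary \ref{effectiveBFfwdtmtdt}.

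\textbf{Step 1 (Uniform flaring on restrictions).} For every bi-infinite geodesic $l\subset T$, let $\BU(l)\subset\but$ be its blow-up and $\tmldt=P^{-1}(\BU(l))$ with the induced tube-electrified metric. Proposition \ref{telunifhyp} gives $\delta'=\delta'(R)$ such that each $\tmldt$ is $\delta'$-hyperbolic. By Lemma \ref{qisecexists} there is a uniform $\rho_0$ such that through every point of $\tmtdt$ (and hence of each $\tmldt$) there is a $\rho_0$-qi-section of the projection to the base. Applying Corollary \ref{effectiveBFreversetmtdt} to $P:\tmldt\to\BU(l)$ then supplies uniform constants $\lambda_0>1$, $m_0\in\mathbb{N}$, $H_0\geq 0$ (depending only on $\delta'$ and $\rho_0$, hence only on $R$) such that $\rho_0$-qi-section bounded hallways in $\tmldt$ satisfy the hallways flare condition with parameters $\lambda\geq\lambda_0$, $m\leq m_0$, $H\leq H_0$.

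\textbf{Step 2 (Transfer of flaring to $\tmtdt$).} Any $\rho_0$-qi-section bounded essential hallway $f:[-m,m]\times I\to\tmtdt$ projects under $P$ to a geodesic segment in $\but$. Such a segment lies inside $\BU(l)$ for \emph{some} bi-infinite geodesic $l$ in $T$, since $T$ has no terminal vertices when one looks at an $L$-tight $R$-thick tree supporting a full model (and even when the local tree does terminate, one can extend the segment to a bi-infinite geodesic in $\but$ by prolonging through any adjacent vertex). Thus the entire hallway already sits inside some $\tmldt$, so the flaring data from Step 1 applies verbatim. Consequently the full bundle $P:\tmtdt\to\but$ satisfies the hallways flare condition with the \emph{same} constants $\lambda_0,m_0,H_0$ for $\rho_0$-qi-section bounded hallways. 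The fibers $P^{-1}(z)$ are uniformly hyperbolic by the discussion preceding Proposition \ref{relhypconvtmt}. Therefore Corollary \ref{effectiveBFfwdtmtdt} produces a $\delta=\delta(R)$ with $\tmtdt$ being $\delta$-hyperbolic.

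\textbf{Step 3 (Quasiconvexity of Margulis risers).} Fix a lift $\widetilde{\RR}_v\in\tmr$. Inside the fiber $P^{-1}(T_v)=\widetilde{M}_v$ (endowed with $\dt$) the riser is isometric to a line $\R$ after tube-electrification, hence is trivially quasiconvex with a constant depending only on $R$. To promote this to global quasiconvexity in $\tmtdt$ we apply the forward direction of Proposition \ref{effectiveqc}: it suffices to check that $\widetilde{\RR}_v$ flares in all directions with a uniform parameter. But any $\rho_0$-thin hallway emanating from $\widetilde{\RR}_v$ in a transverse direction is one of the hallways controlled in Step 2, so flaring is automatic with constants $\lambda_0,m_0,H_0$. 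This yields a uniform quasiconvexity constant $C=C(R)$.

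\textbf{Expected obstacle.} The only delicate point is Step 2: one must verify carefully that restricting to a bi-infinite line $l\subset T$ loses no information about hallways in $\tmtdt$, i.e.\ that the constants $\lambda_0,m_0,H_0$ obtained from $\tmldt$ are genuinely independent of $l$ and really apply to arbitrary hallways in $\tmtdt$. This hinges on (a) the uniform choice of $\rho_0$ from Lemma \ref{qisecexists}, which indeed depends only on the genus and on $R$; and (b) the fact that a $\rho_0$-qi-section of a geodesic in $\but$ lies inside a $\rho_0$-qi-section of any extension of that geodesic to a bi-infinite one, so the hallway truly embeds into a restriction bundle. Once this is in place the rest is a mechanical combination of the effective combination theorems of Section \ref{sec:prelims}.
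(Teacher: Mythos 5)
Your Steps 1 and 2 are essentially the paper's own proof of the hyperbolicity assertion: uniform hyperbolicity of $\tmldt$ (Proposition \ref{telunifhyp}) plus the converse combination theorem (Corollary \ref{effectiveBFreversetmtdt}) gives flaring constants independent of $l$, and these are fed into Corollary \ref{effectiveBFfwdtmtdt} for the bundle over $\but$; your extra care about why a hallway over a geodesic segment of $\but$ sits inside some $\tmldt$ is a reasonable elaboration of what the paper leaves implicit.

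The quasiconvexity half (your Step 3), however, has a genuine gap. First, $P^{-1}(T_v)=\til{M_v}$ is not a fiber but the preimage of the whole tree-link, and under $\dt$ the riser $\til{\RR_v}=\R\times T_v$ is \emph{not} a line: the pseudometric collapses the $\R$-direction (the lift of $S^1_e$) and keeps the $T_v$-direction, so $(\til\RR_v,\dt)$ is the finite tree $T_v$, which moreover meets each individual fiber $P^{-1}(z)$, $z\in T_v$, in a set of $\dt$-diameter zero. Because of this, Proposition \ref{effectiveqc} does not apply as you invoke it: its hypothesis requires $Y$ to be a quasiconvex subset of a \emph{single} vertex space $X_v=P^{-1}(v)$, whereas the riser is spread over the subtree $T_v\subset\but$; and if one restricts to a single fiber, the set $Y$ is a point and the conclusion says nothing about the riser. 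Second, even where Definition \ref{def-flareinall} would apply, ``flares in all directions'' is a one-sided growth condition, while the hallways flare condition of Definition \ref{def-flare} only controls the maximum of the two ends of a hallway; so flaring away from the riser is not ``automatic'' from your Step 2. The paper's argument for this part is different and much shorter: under $\dt$ the riser is a uniform (in fact coarsely isometric) section of $P$ over the convex subtree $T_v\subset\but$ --- since $P$ is $1$-Lipschitz and paths inside the riser realize base distances, the riser is uniformly qi-embedded in $\tmtdt$ --- and therefore, once $\delta$-hyperbolicity of $\tmtdt$ has been established, its geodesics are uniform quasigeodesics and stability of quasigeodesics yields a quasiconvexity constant $C=C(\delta)=C(R)$. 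Replacing your Step 3 by this observation repairs the argument.
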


\begin{proof} We follow the proof of Proposition \ref{thickbdlhyp}.
Uniform hyperbolicity of  $(\til{M_l}, d_{te})$ (Proposition \ref{telunifhyp}) ensures uniform flaring constants by Corollary \ref{effectiveBFreversetmtdt}  for $\tmldt$ independent of $l \subset T$.
This gives effective flaring constants for $\tmtdt$ as a bundle over $\but$. Hence by Corollary \ref{effectiveBFfwdtmtdt} there exists $\delta>0$ depending only on $R$ such that  $\tmtdt$ is $\delta-$hyperbolic.

Next, since each Margulis riser in $\tmtdt$ arises as a uniform quasi-isometric section of   a tree-link $T_v$, there exists $C >0$ such that each element of $\tmr$ is
$C-$quasiconvex in $\tmtdt$. 
\end{proof}
This completes Step 2 of Scheme \ref{scheme-split} and proves the first conclusion of Theorem \ref{effectivehyptmtdt}.\\

\noindent {\bf Step 3:}\\ We finally turn our attention to $\tmtdw$
and establish  that $\tmtdw$ is hyperbolic {\it relative to} $\tmr$ 
with effective constants. The argument will be an adaptation of very similar arguments in \cite{gautero,mahan-reeves} and we will provide a road-map through it instead of reproducing all the details.
The proof proceeds by first observing
the analogous statement for $\tmtdt$.

\begin{prop}\label{tmtdtrelhyp}
	Given $R>0$, there exists $\delta > 0$ such that the following holds:\\ Let $T$ be an $L-$tight $R-$thick tree. Then $\tmtdt$ is $\delta-$hyperbolic relative to $\tmr$.
\end{prop}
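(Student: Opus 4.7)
The plan is to deduce the relative hyperbolicity of $\tmtdt$ with respect to $\tmr$ directly from the absolute hyperbolicity already established in Proposition \ref{thinbdlhyp}. That proposition gives $\delta$-hyperbolicity of $\tmtdt$ together with uniform $C$-quasiconvexity of each element of $\tmr$, where $\delta$ and $C$ depend only on $R$, $D$, $k$. The only additional ingredient required is that in the tube-electrified pseudometric these peripheral sets are \emph{uniformly bounded in diameter}, and therefore automatically \emph{mutually cobounded}.

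The bounded-diameter observation is the heart of the matter. A lift $\til\RR_v \in \tmr$ is a copy of $\R \times T_v$ carrying the product of the zero metric on the $\R$-factor (the lift of the electrified $S^1$-factor of $\RR_v$) with the natural metric on the tree-link $T_v$. By Definitions \ref{def-treelink} and \ref{def-rthick}, the set $i(lk(v))$ sits inside $\CC(W)$ for $W=S\ssm i(v)$, has cardinality at most $m_T$, and any two of its elements are joined in $\CC(W)$ by a geodesic subordinate to $T$ of length at most $R$. Hence $\diam_{\CC(W)}(i(lk(v))) \leq R$; the weak convex hull has diameter bounded in terms of $R$ and the hyperbolicity constant $\delta_0$ of $\CC(W)$; and the $k$-quasi-isometry $\P_W$ of Definition \ref{def-treelink} yields $\diam(T_v) \leq D_1$ for some $D_1=D_1(R,k)$. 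Consequently every $\til\RR_v$ has $\dt$-diameter at most $D_1$, uniformly in $v$.

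Given this uniform bound, mutual coboundedness of $\tmr$ in $(\tmtdt,\dt)$ is immediate: for any distinct $\til\RR_1,\til\RR_2 \in \tmr$ and any $r \geq 0$,
\[
\diam\bigl(N_r(\til\RR_1) \cap N_r(\til\RR_2)\bigr) \ \leq\ \diam(\til\RR_1) + 2r \ \leq\ D_1 + 2r.
\]
A standard criterion for strong relative hyperbolicity in a hyperbolic space (Bowditch \cite{bowditch-relhyp}; or equivalently, Lemma \ref{pel} read in reverse by replacing each bounded quasiconvex riser by a hyperbolic cone, which is quasi-isometric to a ray and preserves hyperbolicity) then upgrades $\delta$-hyperbolicity of $\tmtdt$, together with the uniform quasiconvexity and mutual coboundedness of $\tmr$, into strong relative hyperbolicity of $\tmtdt$ relative to $\tmr$, with a constant $\delta'$ depending only on $\delta$ and $D_1$, hence only on $R$, $D$, $k$.

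The only substantive step is thus the diameter bound on tree-links, which is a direct consequence of $R$-thickness; once it is in hand, both quasiconvexity and mutual coboundedness reduce to formalities, and no serious obstacle remains. The proposition then serves as the stepping stone of Step 3 of Scheme \ref{scheme-split}: reinstating the $\R$-direction inside each lifted riser amounts to replacing each bounded peripheral set in $\tmtdt$ by a genuine hyperbolic cone, and Lemma \ref{pel}, applied in the forward direction, transports the strong relative hyperbolicity just obtained for $\tmtdt$ onto $\tmtdw$ relative to $\tmr$, completing the second conclusion of Theorem \ref{effectivehyptmtdt}.
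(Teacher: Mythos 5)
Your argument breaks at its central claim: the tree-links $T_v$, and hence the electrified risers, are \emph{not} uniformly bounded in diameter, and $R$-thickness does not give $\diam_{\CC(W)}(i(lk(v)))\leq R$ for $W=S\setminus i(v)$. By Definition \ref{def-subordgeod}, a geodesic subordinate to the tree-link $T_v$ is one supported on a \emph{proper} essential subsurface $W'\subsetneq W$; the base tight geodesics in $\CC(W)$ joining terminal vertices of $T_{v,W}$ are not subordinate geodesics, so Definition \ref{def-rthick} places no upper bound on their length. On the contrary, $L$-tightness forces $d_{\CC(W)}(i(u),i(w))\geq L$ for distinct $u,w\in lk(v)$, and these distances are unbounded as $v$ varies: by Proposition \ref{length=link} they agree, up to a uniform additive constant, with the heights $l_i$ of the split blocks, i.e.\ with the sizes of the Margulis risers — which is exactly why the conclusion is \emph{relative} hyperbolicity with respect to $\tmr$, and why the case of uniformly bounded tree-links is singled out separately in the second statement of Proposition \ref{thinbdlrelhyp}. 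Since $\dt$ collapses only the circle direction of a riser while the $T_v$-direction keeps its metric, the lifts $\til{\RR}_v$ have unbounded $\dt$-diameter, so your estimate $\diam\bigl(N_r(\til{\RR}_1)\cap N_r(\til{\RR}_2)\bigr)\leq D_1+2r$ has no basis, and everything downstream (the Bowditch-type upgrade via bounded peripherals, and the claimed reduction of Step 3 to coning off bounded sets) fails as written.

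The starting point — $\delta$-hyperbolicity of $\tmtdt$ together with uniform quasiconvexity of the risers, both from Proposition \ref{thinbdlhyp} — is the same as the paper's, and a Bowditch-style criterion could in principle work if you proved that the elements of $\tmr$ are mutually cobounded (uniformly bounded coarse intersections), which is a genuine assertion requiring proof, not a formality following from a diameter bound. The paper instead adds the uniform separatedness of the elements of $\tmr$, which comes from the construction of $P:\tmtdt\to\but$, and then obtains strong relative hyperbolicity by running a replica of the proof of the relatively hyperbolic combination theorem (Theorem \ref{effectiverelBF}, following \cite{mahan-reeves,gautero}), with the risers playing the role of the cone-loci in that argument.
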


\begin{proof}
Proposition \ref{thinbdlhyp} shows that $\tmtdt$ is  $\delta-$hyperbolic and the Margulis risers in $\tmr$ are uniformly quasiconvex in $\tmtdt$. Uniform separatedness of the elements of  $\tmr$ is a consequence of the construction of $P:\tmtdt \to \but$. 

The proof of  uniform hyperbolicity of $\tmtdt$ {\it relative to}
the collection $\tmr$ is now
 a replica of  the proof of Theorem \ref{effectiverelBF} (the statement was culled from \cite{mahan-reeves,gautero}).  We omit the details and  mention only   that the elements of  $\tmr$ take the place of cone-loci  in \cite{mahan-reeves};  the rest of the proof is an exact copy.  
\end{proof}

\begin{prop} Given $R>0$, there exists $\delta > 0$ such that the following holds:\\ Let $T$ be an $L-$tight $R-$thick tree. Then 
$\tmtdw$ is strongly $\delta-$hyperbolic relative to the collection $\tmr$ of lifts of Margulis risers.

Further, if there exists $L_1$ such that the diameter of any tree-link $T_v$ is bounded above by $L_1$ for every $v$, then  $\tmtdw$ is  hyperbolic.
	\label{thinbdlrelhyp}
\end{prop}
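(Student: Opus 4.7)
The plan is to deduce the strong relative hyperbolicity of $\tmtdw$ with respect to $\tmr$ by applying the effective relatively hyperbolic combination theorem, Theorem \ref{effectiverelBF}, to the bundle $P : \tmtdw \to \but$. In this framework I would treat each fiber $X_z = P^{-1}(z)$ (a lift of a surface, identified with $\til S$ up to bounded distortion) as a hyperbolic metric space equipped with the peripheral structure consisting of its intersections with the Margulis risers in $\tmr$. Each such intersection is a copy of $\R$ --- a lift of a simple closed curve on $S$ --- and these are uniformly quasiconvex and uniformly separated in $\til S$, so each fiber is uniformly relatively hyperbolic. Under this setup the maximal cone-subtrees of horosphere-like spaces arising in Definition \ref{def-treerh} coincide with the Margulis risers $\R \times T_v \in \tmr$, so the conclusion of Theorem \ref{effectiverelBF} yields precisely the desired strong relative hyperbolicity of $\tmtdw$ with respect to $\tmr$.

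Of the hypotheses of Theorem \ref{effectiverelBF}, the qi-embedded condition, strict type-preservation, and qi-preserving electrification condition are direct consequences of the bundle structure of $M_T$ built in Section \ref{sec:bb} together with the uniform bounded geometry of mid-surfaces supplied by Theorem \ref{midsurfminskymodel}. The existence of uniform $\rho_0$-qi-sections is Lemma \ref{qisecexists}. The uniform flaring of $\rho_0$-qi-section bounded hallways away from the cone-loci is inherited directly from $\tmtdt$: since $\tmtdw$ and $\tmtdt$ agree as metric bundles outside the Margulis risers, the flaring of such hallways supplied by Corollary \ref{effectiveBFreversetmtdt} applied to the hyperbolic space $\tmtdt$ (Proposition \ref{thinbdlhyp}) transfers verbatim to $\tmtdw$. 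The main obstacle will be verifying the cone-bounded hallways strictly flare condition. A cone-bounded hallway in the induced coned-off bundle has both vertical boundaries tracking maximal cone-subtrees; in the nontrivial case these correspond to tree-links $T_{v_1}, T_{v_2}$ of two different simple closed curves $v_1, v_2$ on $S$. Strict flaring then amounts to the statement that the distance in the coned-off fibers between the two cone points grows exponentially along $\but$, which I would derive from the balanced, $L$-tight, $R$-thick hypothesis on $T$ combined with the Behrstock inequality (Theorem \ref{bi}) and the bounded geodesic image theorem (Theorem \ref{bgit}) to control the subsurface projections that govern the relative geometry of the two Margulis risers.

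For the additional statement that $\tmtdw$ is absolutely $\delta$-hyperbolic when the tree-links all have diameter at most $L_1$, a more direct argument based on Corollary \ref{effectiveBFfwdtmtdt} suffices. Under the bounded tree-link hypothesis, each Margulis riser $\R \times T_v$ projects to a subset of $\but$ of diameter at most $L_1$, so any $\rho_0$-qi-section bounded hallway of sufficiently large length (with threshold depending only on $L_1$ and the flaring scale of $\tmtdt$) must contain substantial subhallways lying entirely outside all Margulis risers. These subhallways lie in the region where $\tmtdw$ and $\tmtdt$ have identical bundle metrics, and hence inherit uniform flaring from Corollary \ref{effectiveBFreversetmtdt} applied to $\tmtdt$. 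Since each Margulis-riser portion of a long hallway contributes at most a bounded flat $\but$-length $L_1$ while the outside portions flare uniformly, all sufficiently long hallways in $\tmtdw$ flare with uniform constants, and Corollary \ref{effectiveBFfwdtmtdt} then delivers the absolute $\delta$-hyperbolicity of $\tmtdw$.
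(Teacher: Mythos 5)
Your route---applying Theorem \ref{effectiverelBF} directly to $P:\tmtdw\to\but$ with the fiberwise peripheral structure given by intersections with lifts of Margulis risers---runs into exactly the problem the paper flags in Remark \ref{tmtdtcaveat}, and your claim that the strict type-preservation and qi-preserving electrification conditions are ``direct consequences of the bundle structure'' is not correct. Across a mid-surface $S_{vw}$ the horosphere-like sets change from lifts of $i(v)$ to lifts of $i(w)$, so condition (5) of Definition \ref{def-treerh} fails, and with it condition (7): the coned-off inclusion of the mid-surface cannot be a uniform quasi-isometry into both adjacent coned-off blocks, since on one side one electrifies lifts of $i(v)$ and on the other lifts of $i(w)$. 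This is precisely why the paper does not invoke Theorem \ref{effectiverelBF} verbatim but instead (i) proves that $\tmtdt$ is uniformly hyperbolic with uniformly quasiconvex risers (Proposition \ref{thinbdlhyp}), hence relatively hyperbolic with respect to $\tmr$ (Proposition \ref{tmtdtrelhyp}), (ii) extracts uniform flaring via Corollary \ref{effectiveBFreversetmtdt}, and (iii) replicates Gautero's proof of \cite[Theorem 2.20]{gautero}, whose only geometric input is this exponential-separation/flaring property. In addition, the hypothesis you yourself identify as the main obstacle---the cone-bounded hallways strictly flare condition---is left unproved: invoking the Behrstock inequality and the bounded geodesic image theorem is a heuristic, not an argument, and the relevant configurations (two lifts of the same riser over a single tree-link $T_v$) are governed by the split geometry of the block $M_v$, not merely by subsurface projections.

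For the second statement your key step fails as stated. You claim a long qi-section bounded hallway in $\tmtdw$ must contain substantial subhallways lying entirely outside all Margulis risers, where $\dw$ and $\dt$ coincide; but the risers lie over the tree-links and the tree-links cover all of $\but$, so lifts of risers meet every fiber, and a non-flaring hallway can run along lifts of the short curves throughout its length. Moreover, even away from the risers, flaring of $\dt$-hallways does not transfer ``verbatim'' to $\dw$-hallways, since a fiberwise $\dw$-geodesic need not be a $\dt$-geodesic (it may be shortcut through electrified risers). The paper's argument is different and avoids this: a uniform bound $L_1$ on tree-link diameters bounds all subsurface projections along every bi-infinite geodesic $l$ in $T$, hence by the Rafi--Minsky characterizations (Theorems \ref{thickgeod} and \ref{thickcombin}) the associated Teichm\"uller geodesics lie in a uniform thick part $\Teich_\ep(S)$, and then the bounded-geometry result, Proposition \ref{thickbdlhyp} together with Remark \ref{thickbdlhyprmk}, applies directly to give hyperbolicity of $\tmtdw$.
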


\begin{proof}  
	
\noindent {\bf First statement of Proposition \ref{thinbdlrelhyp}:}	The 
proof of the first statement of Proposition \ref{thinbdlrelhyp}, i.e.\
 that there exists $\delta$ such that $\tmtdw$ is strongly  $\delta-$hyperbolic relative to $\tmr$ is a replica of the proof of Theorem 2.20 of \cite{gautero}. Instead of reproducing the argument here,
we shall now give specific references to the main steps of the proof from
\cite{gautero} and translate its terminology and  conclusion to our context.

First, we note that 	
	the main technical condition Gautero uses \cite[Definition 2.14]{gautero} is what he calls the exponential separation property. 
	In our context, this is equivalent to the effective flaring condition.
and is provided 	by Corollary \ref{effectiveBFreversetmtdt} applied
to the conclusion of Proposition \ref{tmtdtrelhyp} above.
	
Next, the proof of  \cite[Theorem 2.20]{gautero} have, as its main steps, \cite[Theorem 5.2]{gautero} and \cite[Proposition 7.4]{gautero} (proved in \cite[Section 9.7]{gautero}).  The proofs  of \cite[Theorem 5.2]{gautero} and \cite[Proposition 7.4]{gautero}, in turn,
  depend precisely on the  
exponential separation property hypothesis, which,
 as we have observed is a consequence
of  Proposition \ref{tmtdtrelhyp} and Corollary \ref{effectiveBFreversetmtdt}.
The first statement of the Proposition is now a translation, in the context of this paper, of   \cite[Theorem 2.20]{gautero}.\\

\noindent {\bf Second statement of Proposition \ref{thinbdlrelhyp}:}	The last statement of the Proposition now follows from  Proposition \ref{thickbdlhyp} since the upper bound  $L_1$ forces each bi-infinite geodesic $l$ in $T$ to lift to a geodesic in $\Teich_\ep$ with $\ep$ uniformly bounded away from $0$.
\end{proof}

This completes Step 3 of Scheme \ref{scheme-split} and the proof of 
the second conclusion of Theorem \ref{effectivehyptmtdt}. \hfill $\Box$\\

\section{Generalizations and Examples}\label{sec-gen} The purpose of this section is to generalize Theorem \ref{effectivehyptmtdt} to general $L-$tight, $R-$thick trees (Definition \ref{def-tighttree}) rather than just balanced ones. This comes at a cost. Uniform properness (Conclusion (4) of Theorem \ref{model-str}) is no longer valid. 

The tube electrification operation  (Definitions \ref{def:tubeelbb} and \ref{def-mg} is devised to electrify {\it as little as possible}. In the more general cases below, we are forced to electrify more.

\subsection{Lipschitz trees}\label{sec-lip}  An application of the technology developed in this paper is to prove cubulability of some surface-by-free hyperbolic groups \cite{mms}.  The main theorem of 
\cite{mms} requires the construction of quasiconvex tracks in $\tmtdt$.
This in turn requires that all the distances between end-points (leaves) of any tree-link $T_v$ is large. We thus define:

\begin{defn}\label{def-longtree}
	A finite metric tree $\TT$ is said to be $\lambda-$long if the distance between any two end-points (leaves) of $\TT$ is at least $\lambda$. 
	
	A geodesic from a leaf of a finite tree to another leaf will be called a {\bf long edge}.
	A continuous map $\phi$ from a finite tree $T_1$ to a finite tree $T_2$ will be called {\bf monotonic} if
	\begin{enumerate}
		\item $\phi$ is a bijection on leaves,
		\item $\phi$ maps long edges   monotonically (but not necessarily strictly  monotonically) to long edges.
	\end{enumerate}
\end{defn}

We shall now generalize Definition \ref{def-balancedtree}. We adapt the notation of Definition \ref{def-balancedtree}: $T_v^+$ denotes the tree-link obtained as an approximating tree of $CH(\ilkv)$. Let $T_v^-$ denote an approximating tree of $CH(\ilkv)'$. Note that the constants of approximation depend only on the number of vertices in 
$\ilkv$ and hence only on the valence of $v$. 

\begin{defn}\label{def-liptree}
	An $L-$tight $R-$thick tight tree  $i: V(T) \to \ccd(S)$ is said to be a {\bf Lipschitz} tree with parameters $D,k, \lambda$ if 
	\begin{enumerate}
		\item For every separating vertex $v$ of $T$, \[\d(\Pi'_v (T_w')) \leq D.\]
		\item Let $T_v^+, T_v^-$ be as above. There exists a $\lambda-$long tree $T_v$ with the same cardinality of leaves as  $T_v^+, T_v^-$ and  surjective $k-$Lipschitz monotonic maps $\P^+$ and  $\P^-$ from $T_v^+$ and $T_v^-$ respectively to $T_v$.
	\end{enumerate} 
\end{defn}

We have thus weakened the "coarse bi-Lipschitz" condition (equivalent to the surjective quasi-isometry condition) of Item (2) of Definition \ref{def-balancedtree}  to a coarse Lipschitz condition in Definition \ref{def-liptree} above. The tube-electrification process goes through via Lipschitz maps with the following modifications:

\begin{enumerate}
	\item The tree links $T_v$ are now the $\lambda-$long trees in Definition \ref{def-liptree} above.
	\item The Margulis risers are isometric to $S^1_e \times T_v$.
\end{enumerate}

With these modifications, the proof of Theorem \ref{effectivehyptmtdt} goes through as before to yield:

\begin{theorem}\label{effectivehypliptree}
Given $R\geq 1$, and $D, k, \lambda\geq 1$ there exists $ \delta >0$ such that the following holds:\\
For an  $L-$tight  $R-$thick Lipschitz tree  $T$ with parameters  $ D, k, \lambda$,
\begin{enumerate}
	\item $\tmtdw$ is strongly $\delta-$hyperbolic relative to the collection $\tmr$ of lifts of Margulis risers,
	\item $\tmtdt$ is  $\delta-$hyperbolic.
\end{enumerate} 
\end{theorem}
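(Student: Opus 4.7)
My plan is to follow Scheme \ref{scheme-split} verbatim, after modifying the geometric model of Section \ref{sec:bb} as follows: at each vertex $v \in T$, the tree-link $T_v$ will be the $\lambda$-long tree furnished by Definition \ref{def-liptree} rather than an approximating tree of the weak hull $CH(\ilkv)$, and the Margulis riser in the associated building block $M_v$ will be $S^1_e \times T_v$. The $k$-Lipschitz monotonic projections $\P^+ : T_v^+ \to T_v$ and $\P^- : T_v^- \to T_v$ (which replace the $k$-quasi-isometries of the balanced case) will be used to prescribe the boundary data along which $M_v$ is glued to adjacent blocks. Since the trees $T_v^\pm$ themselves remain coarse bi-Lipschitz to the weak hulls $CH(\ilkv)$ and $CH(\ilkv')$, Definition \ref{def-geometricbb} will still produce a bona fide special split model $(M_v, d_v)$ whose parameters $k_0, \epsilon$ depend only on $R$, via Corollary \ref{subordinatehierarchysmall}.

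The only step where the loss of bi-Lipschitz control actually matters is Step 1 of Scheme \ref{scheme-split}. For a bi-infinite geodesic $l \subset T$, the full bi-Lipschitz identification of $(\til{M_l}, \dw)$ with a doubly degenerate hyperbolic manifold via Theorem \ref{minskymodel} will no longer be available, because the Lipschitz collapses $\P^\pm$ can shorten the welded heights below their Minsky-model values. The alternate proof of Proposition \ref{telunifhyp}, however, requires no such global identification: it needs only that each split block $B_i \setminus \T_i$ be uniformly bi-Lipschitz to the thick part of the universal curve over some thick Teichm\"uller geodesic segment, with thickness constant $\epsilon$ and bi-Lipschitz constant depending solely on $R$. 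I would verify this directly from Definition \ref{def-geometricbb} and Corollary \ref{subordinatehierarchysmall}. Uniform flaring over $\BU(l)$ will persist, and Theorem \ref{effectiverelBF} will yield uniform strong relative hyperbolicity of $(\til{M_l}, \dw)$ with respect to the lifts of Margulis risers; then Lemma \ref{pel} will deliver uniform $\delta'$-hyperbolicity of $(\til{M_l}, \dt)$ and uniform quasiconvexity of each lifted tube-electrified riser.

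Steps 2 and 3 will then be formal. In Step 2, Corollary \ref{effectiveBFreversetmtdt} applied to the uniformly hyperbolic bundle $\til{M_l} \to \BU(l)$ will extract uniform flaring constants $\lambda_0, m_0, H_0$ for $\rho_0$-qi-section-bounded hallways in any such $\til{M_l}$, independent of $l$. Since every essential hallway in $\tmtdt$ is supported on a convex subtree of $\BU(T)$ that lies inside some $\BU(l)$, these constants will carry over verbatim to the full bundle $P: \tmtdt \to \BU(T)$. Corollary \ref{effectiveBFfwdtmtdt} will then yield $\delta$-hyperbolicity of $\tmtdt$ with $\delta$ depending only on $R, D, k, \lambda$, and Proposition \ref{effectiveqc} applied to qi-sections through Margulis risers will produce uniform $C$-quasiconvexity of $\tmr$ in $\tmtdt$. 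Step 3 will mirror the Gautero-style deduction in Proposition \ref{thinbdlrelhyp}: the exponential separation property for $\tmtdw$ with respect to $\tmr$ required by \cite[Theorem 2.20]{gautero} follows from Step 2 together with Corollary \ref{effectiveBFreversetmtdt} applied to $\tmtdt$, whence $\tmtdw$ is uniformly strongly $\delta$-hyperbolic relative to $\tmr$.

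The main obstacle is Step 1: specifically, verifying that the alternate proof of Proposition \ref{telunifhyp} goes through without requiring bi-Lipschitz equivalence to the Minsky model. All subsequent steps are mechanical applications of the effective combination machinery of Section \ref{sec:prelims} and inherit their constants from the estimates produced in Step 1.
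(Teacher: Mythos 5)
Your proposal is correct and follows the paper's own route: the paper proves Theorem \ref{effectivehypliptree} simply by rerunning Scheme \ref{scheme-split} (i.e.\ the proof of Theorem \ref{effectivehyptmtdt}) after replacing the tree-links by the $\lambda$-long trees of Definition \ref{def-liptree} and taking the Margulis risers to be $S^1_e\times T_v$, with the Lipschitz monotonic maps $\P^\pm$ mediating between the riser and the bundles over the hull trees $T_v^\pm$. The one place you genuinely diverge is Step 1. You correctly isolate that what fails is only conclusion (2) of Theorem \ref{minskymodel} (the collapses $\P^\pm$ shorten the welded heights below their Minsky values), and you repair Step 1 by running the alternate proof of Proposition \ref{telunifhyp}, which needs only that each block minus its riser be uniformly bi-Lipschitz to a thick universal curve. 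The paper's stated mechanism, signalled by its closing remark that the tube-electrification now also contracts the finite riser directions via $\P^\pm$, is slightly different and more economical: the collapses are absorbed into the electrification data, so Step 1 needs only conclusion (1) of Theorem \ref{minskymodel} (which uses $L$-tightness and $R$-thickness of the line, not balancedness) together with Lemma \ref{pel}, whose hypotheses ask merely for coarse Lipschitz maps $g_\alpha:H_\alpha\to L_\alpha$ and are therefore insensitive to how much $\P^\pm$ contract. That route also sidesteps the one soft spot in your write-up: over the collapsed base the projection is no longer uniformly proper (the paper explicitly notes that conclusion (4) of Theorem \ref{model-str} is lost), and Definition \ref{def-geometricbb} does not literally apply with tree-link $T_v$, since no bi-Lipschitz section of the short tree can track the long hulls; consequently ``uniform flaring of $(\til{M_l},\dw)$ over $\BU(l)$'' must be formulated via $\rho_0$-qi-section-bounded hallways and ladders as in Remark \ref{tmtdtcaveat}, Lemma \ref{qisecexists} and Corollaries \ref{effectiveBFfwdtmtdt}--\ref{effectiveBFreversetmtdt}, not via the (degenerate) bundle structure. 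With that reformulation, or with the Lemma \ref{pel} route, your Steps 2 and 3 proceed exactly as in Propositions \ref{thinbdlhyp}--\ref{thinbdlrelhyp}, so the argument is sound and its constants depend only on $R, D, k, \lambda$ as required.
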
 

Note again that
 hyperbolicity is not an issue in Theorem \ref{effectivehypliptree}, but the tube electrification process electrifies more by
 
 \begin{enumerate}
 \item Electrifying the $\R-$direction as before in Definition \ref{def:tubeelbb},
 \item Contracting the finite directions of Margulis risers  as well via the Lipschitz maps $\P^\pm$.
 \end{enumerate}

\subsection{General tight Trees}\label{sec-imb} We finally turn to the case when no large $\lambda$ is possible. To illustrate what can go wrong, define a tripod $\tau_x(a,b,c, A, B, C)$ to be a tree with a single trivalent vertex $x$ and leaves $a, b, c$ with $|xa|=A, |xb|=B, |xc|=C$.
Now, glue $\tau_x(a,b,c, 1, L, L/2)$ to $\tau_y(d,e,c, 1, L, L/2)$ by identifying only the vertices labeled $c$ to obtain a tree $T(a,b,d,e)$ with 4 leaves $a,b, d, e$ so that $d(a,x) = 1, d(b,x)=L, d(x,y)=L, d(y,d) = 1, d(y,e) = L$; in particular $T(a,b,d,e)$ is $L-$long.  Similarly, glue  tripods $\tau_{x'}(a',e',c', L, 1, L/2)$ to $\tau_{y'}(b',d',c', 1, L, L/2)$ by identifying only the vertices labeled $c'$ to obtain a tree $T(a',b',d',e')$. 
It follows that  $d(a',x') = L, d(e',x')=1, d(x',y')=L, d(y',d') = L, d(y',b') = 1$; in particular $T(a',b',d',e')$ is also $L-$long.

However, any tree $T(a'',b'',d'',e'')$ that receives  monotonic continuous maps $\phi, \phi'$ from both $T(a,b,d,e)$ and $T(a',b',d',e')$ such that $\phi(a) =\phi'(a')=a''$, $\phi(b) =\phi'(b')=b''$, and so on, has to necessarily be a star, i.e.\ the conditions $\phi(x)=\phi(y)$ and  $\phi'(x')=\phi'(y')$ are forced. Let $\phi(x)=\phi(y)=\phi'(x')=\phi'(y')=z$. If further, $\phi, \phi'$ are required to be $1-$Lipschitz, then $d(z,a''), d(z,b''), d(z,d''), d(z,e'')$ are all of length at most $1$.
Thus the only option for $T_v$ is a star where all limbs have length one.

One can arrange so that $T(a,b,d,e)$ and $T(a',b',d',e')$ are approximating trees of $CH(ilkv)$ and $CH(ilkv)'$ in the notation of Definition \ref{def-balancedtree}. Thus, in the general case (when the restrictive hypotheses of Definition \ref{def-balancedtree} is absent or the existence of a large $\lambda$ in Definition  \ref{def-liptree} is not guaranteed), the best we can hope  is for the tree $T_v$ to be a star where each edge has length one. In this case, $\lambda=2$ in Definition  \ref{def-liptree}. 

Let $\mtdt^*$ denote the  bundle with tube-electrified metric in the special case that each $T_v$ 
 in Definition  \ref{def-liptree} is a star  with all edges of length one. Let $\tmtdt^*$ denote  the universal cover. Theorem \ref{effectivehypliptree} then gives:
 
 \begin{cor}\label{effectivehypliptreecor}
 	Given. $R\geq 1$ there exists $ \delta >0$ such that the following holds:\\
 	For  an $L-$tight  $R-$thick tight tree,
  $\tmtdt^*$ is  $\delta-$hyperbolic.
  \end{cor}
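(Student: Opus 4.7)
The plan is to realize Corollary \ref{effectivehypliptreecor} as a direct specialization of Theorem \ref{effectivehypliptree}, by showing that every $L$-tight, $R$-thick tree admits a Lipschitz-tree structure in the sense of Definition \ref{def-liptree} once we are willing to let each tree-link $T_v$ collapse to the ``universal'' star described in Section \ref{sec-imb}. This is why the tube-electrified metric considered in Corollary \ref{effectivehypliptreecor} is denoted $\dt^*$: it electrifies more than the $\dt$ of Definition \ref{def-mg}, but exactly enough to turn an arbitrary tight tree into a Lipschitz tree.

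First, I would specify the replacement tree-links. For each vertex $v$ of $T$, let $n_v$ be the valence of $v$ in $T$; by the bounded-valence hypothesis in Definition \ref{def-tighttree-sep}, $n_v$ is bounded by a universal constant $n_0$ depending only on $T$. Define $T_v$ to be the abstract star with $n_v$ leaves, each edge of length one, so that $T_v$ is $\lambda$-long with $\lambda=2$. The approximating trees $T_v^+$ (for $\CC(Y_v)$) and, when $v$ is separating, $T_v^-$ (for $\CC(Y_v')$) both have exactly $n_v$ leaves, in canonical bijection with the neighbors of $v$ in $T$. Define $\P^\pm : T_v^\pm \to T_v$ to send each leaf of $T_v^\pm$ to the corresponding leaf of $T_v$, and to map each long edge (in the sense of Definition \ref{def-longtree}) affinely-monotonically to the unique long edge of $T_v$ connecting the corresponding leaves. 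Because the diameters of $T_v^\pm$ are uniformly bounded in terms of $R$ and $n_0$ (the weak hulls $CH(\ilkv)$ and $CH(\ilkv')$ consist of at most $n_0$ vertices in a $\delta_0$-hyperbolic curve complex), the maps $\P^\pm$ are $k$-Lipschitz for some $k=k(R,n_0)$, and they are monotonic by construction. For separating $v$, the $D$-bound on $\d(\Pi'_v(T_w'))$ follows from $R$-thickness combined with the Behrstock inequality (Theorem \ref{bi}) and the Bounded Geodesic Image Theorem, because any excessively large projection would create a geodesic subordinate to the tree $T$ of length exceeding $R$.

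Second, I would verify that the resulting topological and geometric data assembled as in Definitions \ref{def-topmodeltree}, \ref{def-geometricbb}, \ref{def:tubeelbb}, and \ref{def-mg} coincide with the construction of $\mtdt^*$ that underlies Corollary \ref{effectivehypliptreecor}: each Margulis riser is isometric to $S^1_e \times T_v$ with $T_v$ the star above, and the gluings between adjacent building blocks $M_v, M_w$ over the mid-surface $S_{vw}$ are governed by the Lipschitz maps $\P^\pm$ exactly as in Section \ref{sec-lip}. Once this identification is in place, the tree $T$ equipped with the data $(T_v, \P^\pm)$ is an $L$-tight, $R$-thick Lipschitz tree with parameters $(D, k, \lambda)=(D, k(R,n_0), 2)$.

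Finally, I would invoke Theorem \ref{effectivehypliptree}, conclusion (2), which yields $\delta=\delta(R,D,k,\lambda)$ such that $\tmtdt^*$ is $\delta$-hyperbolic. Since all of $D, k, \lambda$ have been bounded purely in terms of $R$ (and the universal bound on the valence), this $\delta$ depends only on $R$, as required. The main technical obstacle I anticipate is the second step: namely, confirming that replacing the ``true'' approximating trees $T_v^\pm$ by the collapsed stars $T_v$ does not destroy any property of the model geometry used in the proof of Theorem \ref{effectivehypliptree}. The example analyzed in Section \ref{sec-imb} makes clear that one really cannot hope for more than a star when $T_v^+$ and $T_v^-$ are incompatible, so the key is to check that the proof scheme (Scheme \ref{scheme-split}), especially Steps 1--2, survives under this collapsing. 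This amounts to verifying that the welded-tube electrification via $\P^\pm$ still produces uniformly hyperbolic restrictions $(\til{M_l}, d_{te})$ for each bi-infinite geodesic $l\subset T$ and hence uniform flaring for qi-section-bounded hallways; both are true because $\P^\pm$ are Lipschitz, and Lemma \ref{pel} applies to any uniformly coarsely Lipschitz $g_\alpha$, not only to coarse quasi-isometries.
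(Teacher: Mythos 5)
Your overall route is the same as the paper's: define $\dt^*$ by replacing each tree-link by the star with unit edges (so $\lambda=2$), take the collapsing maps $\P^\pm$ as the Lipschitz data, and quote conclusion (2) of Theorem \ref{effectivehypliptree}; the paper also records the reformulation that each riser has diameter two in $\dt^*$, so $\tmtdt^*$ is $(2,2)$-quasi-isometric to $\EE(\tmtdw,\tmr)$, which in the balanced case re-derives the statement from Theorem \ref{effectivehyptmtdt}(1). However, one step of your justification is genuinely wrong: you assert that the diameters of $T_v^{\pm}$ are uniformly bounded in terms of $R$ and the valence, because the weak hulls are spanned by boundedly many vertices. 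That is false, and in fact it cannot be true: $R$-thickness only bounds projections to \emph{proper} essential subsurfaces of $S\setminus i(v)$, whereas the leaves of $T_v^+$ are the images of the neighbors of $v$ in $\CC(Y_v)$, which by $L$-tightness are pairwise at distance at least $L$ with no upper bound. (If the tree-links had uniformly bounded diameter, the risers would be uniformly bounded and $\tmtdw$ itself would already be uniformly hyperbolic, making the whole tube-electrification apparatus of Sections \ref{sec:bb}--\ref{sec-minsky} unnecessary.)

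The step you wanted this for — uniform Lipschitzness of $\P^\pm$ onto the unit star — is still true, but for the opposite reason: precisely because the leaves of $T_v^+$ are pairwise at distance at least $L\geq 3$, the map that sends the unit neighborhood of each leaf isometrically onto the corresponding edge of the star and collapses the rest of $T_v^+$ to the center is well defined, monotonic on long edges, and $1$-Lipschitz (two points in distinct leaf-neighborhoods are at distance at least $L-2\geq$ their image distance). On the secondary side $T_v^-$ the separation of leaves is \emph{not} supplied by tightness, so there you really do need the bound $\d(\Pi'_v(T_w'))\leq D$ (and some separation of the projected branches) to get a bijection on leaves and a uniform Lipschitz constant; your one-sentence appeal to Behrstock plus the Bounded Geodesic Image Theorem is only a sketch of this (the problematic configuration is $u_1,u_2\in T_w'$ separated by $w$, whose image projects trivially to $Y_v'$), though it is in the right spirit and the paper itself is equally terse on this point. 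With the Lipschitz claim repaired as above, the remainder — invoking Theorem \ref{effectivehypliptree}(2) and noting that Lemma \ref{pel} only requires coarse Lipschitz maps $g_\alpha$ — is correct and coincides with the paper's argument.
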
 

To conclude we note that each riser $\RR_v$ has diameter two in $\tmtdt^*$.
Thus  $\tmtdt^*$ is $(2,2)$-quasi-isometric to the space $\EE(\tmtdw, \tilde{\RR_\MM})$ obtained by electrifying the lifts of Margulis risers 
in $\tmtdw$. Thus, in the special case of balanced trees, Corollary \ref{effectivehypliptreecor} also follows immediately from the first statement of Theorem \ref{effectivehyptmtdt}.

\section*{Acknowledgments}    I gratefully acknowledge
several extremely helpful conversations with Dani Wise.   I thank  Ken Bromberg for the proof of Lemma \ref{ltightimpliesgeod}. 
This is the first part of  a project \cite{mms} in collaboration with Jason Manning and Michah Sageev.  In \cite{mms} we shall apply the model geometry of this paper to show that some surface-by-free hyperbolic groups are cubulable.  Much of the impetus for the present work thus comes from our collaboration and I gratefully acknowledge the contribution of Jason and Michah. Finally, I am  thankful to the anonymous referee(s) for a rather careful reading, and for pointing out the reference \cite{lott}.

This project was initiated during a visit to the McGill Mathematics Department in May 2015 and to MSRI, Berkeley in Fall 2016.
Parts of it were accomplished during  a workshop on Groups, Geometry and Dynamics in November 2017 at the International Centre for Theoretical Sciences, Bengaluru, and  during a visit to Technion, Israel, in May 2018.
I acknowledge the hospitality of these institutions.

\bibliography{modeltree}
\bibliographystyle{alpha}

\end{document}